 \def\dated#1{\def\thedate{#1}}%
\newdimen\high%
\newdimen\ul%
\newdimen\wdth%
\def\ratchet#1#2{\ifnum#1<#2\global #1=#2\fi}%
\def\ifnextchar#1#2#3{\let\@tempe%
#1\def\@tempa{#2}\def\@tempb{#3}\futurelet%
    \@tempc\@ifnch}%
\def\@ifnch{\ifx \@tempc \@sptoken \let\@tempd\@xifnch%
      \else \ifx \@tempc \@tempe\let\@tempd\@tempa\else\let\@tempd\@tempb\fi%
      \fi \@tempd}%
\def\:{\let\@sptoken= } \:  
\def\:{\@xifnch} \expandafter\def\: {\futurelet\@tempc\@ifnch}%
\let\ifnextchar\@ifnextchar%
\newdimen\axis \axis=\fontdimen22\textfont2%
\def\scalefactor#1{\ul=#1\ul \X@xbase=#1\X@xbase \Y@ybase=#1\Y@ybase}%
\def\fontscale#1{%
\if#1h\relax%
\font\xydashfont=xydash10 scaled \magstephalf%
\font\xyatipfont=xyatip10 scaled \magstephalf%
\font\xybtipfont=xybtip10 scaled \magstephalf%
\font\xybsqlfont=xybsql10 scaled \magstephalf%
\font\xycircfont=xycirc10 scaled \magstephalf%
\else%
\font\xydashfont=xydash10 scaled \magstep#1%
\font\xyatipfont=xyatip10 scaled \magstep#1%
\font\xybtipfont=xybtip10 scaled \magstep#1%
\font\xybsqlfont=xybsql10 scaled \magstep#1%
\font\xycircfont=xycirc10 scaled \magstep#1%
\fi}%
\def\bfig{\vcenter\bgroup\xy}%
\def\efig{\endxy\egroup}%
\def\car#1#2\nil{#1}%
\def\morphism{\ifnextchar({\morphismp}{\morphismp(0,0)}}%
\def\morphismp(#1){\ifnextchar|{\morphismpp(#1)}{\morphismpp(#1)|a|}}%
\def\morphismpp(#1)|#2|{\ifnextchar/{\morphismppp(#1)|#2|}%
    {\morphismppp(#1)|#2|/>/}}%
\def\morphismppp(#1)|#2|/#3/{%
    \ifnextchar<{\morphismpppp(#1)|#2|/#3/}%
    {\morphismpppp(#1)|#2|/#3/<\default,0>}}%
\def\morphismpppp(#1,#2)|#3|/#4/<#5,#6>[#7`#8;#9]{%
\xend#1\advance \xend by #5%
\yend#2\advance \yend by #6%
\domorphism(#1,#2)|#3|/#4/<#5,#6>[{#7}`{#8};{#9}]}%
\def\domorphism(#1,#2)|#3|/#4/<#5,#6>[#7`#8;#9]{%
\def\next{\car#4.\nil}%
\if@\next\relax%
 \if#3l%
  \ifnum #6>0%
   \POS(#1,#2)*+!!<0ex,\axis>{#7}\ar#4^-{#9} (\xend,\yend)*+!!<0ex,\axis>{#8}%
  \else%
   \POS(#1,#2)*+!!<0ex,\axis>{#7}\ar#4_-{#9} (\xend,\yend)*+!!<0ex,\axis>{#8}%
  \fi%
 \else \if#3m%
    \setbox0\hbox{$#9$}%
   \ifdim \wd0=0pt%
     \POS(#1,#2)*+!!<0ex,\axis>{#7}\ar#4 (\xend,\yend)*+!!<0ex,\axis>{#8}%
   \else%
     \POS(#1,#2)*+!!<0ex,\axis>{#7}\ar#4|-*+<1pt,4pt>{\labelstyle#9}%
       (\xend,\yend)*+!!<0ex,\axis>{#8}%
   \fi%
 \else \if#3r%
  \ifnum #6<0%
   \POS(#1,#2)*+!!<0ex,\axis>{#7}\ar#4^-{#9} (\xend,\yend)*+!!<0ex,\axis>{#8}%
  \else%
   \POS(#1,#2)*+!!<0ex,\axis>{#7}\ar#4_-{#9} (\xend,\yend)*+!!<0ex,\axis>{#8}%
  \fi%
 \else \if#3a%
  \ifnum #5>0%
   \POS(#1,#2)*+!!<0ex,\axis>{#7}\ar#4^-{#9} (\xend,\yend)*+!!<0ex,\axis>{#8}%
  \else%
   \POS(#1,#2)*+!!<0ex,\axis>{#7}\ar#4_-{#9} (\xend,\yend)*+!!<0ex,\axis>{#8}%
  \fi%
 \else \if#3b%
  \ifnum #5<0%
   \POS(#1,#2)*+!!<0ex,\axis>{#7}\ar#4^-{#9} (\xend,\yend)*+!!<0ex,\axis>{#8}%
  \else%
   \POS(#1,#2)*+!!<0ex,\axis>{#7}\ar#4_-{#9} (\xend,\yend)*+!!<0ex,\axis>{#8}%
  \fi%
 \else%
   \POS(#1,#2)*+!!<0ex,\axis>{#7}\ar#4 (\xend,\yend)*+!!<0ex,\axis>{#8}%
 \fi\fi\fi\fi\fi%
\else%
 \if#3l%
  \ifnum #6>0%
   \POS(#1,#2)*+!!<0ex,\axis>{#7}\ar@{#4}^-{#9} (\xend,\yend)*+!!<0ex,\axis>{#8}%
  \else%
   \POS(#1,#2)*+!!<0ex,\axis>{#7}\ar@{#4}_-{#9} (\xend,\yend)*+!!<0ex,\axis>{#8}%
  \fi%
 \else \if#3m%
    \setbox0\hbox{$#9$}%
   \ifdim \wd0=0pt%
     \POS(#1,#2)*+!!<0ex,\axis>{#7}\ar@{#4} (\xend,\yend)*+!!<0ex,\axis>{#8}%
   \else%
     \POS(#1,#2)*+!!<0ex,\axis>{#7}\ar@{#4}|-*+<1pt,4pt>{\labelstyle#9}%
         (\xend,\yend)*+!!<0ex,\axis>{#8}%
   \fi%
 \else \if#3r%
  \ifnum #6<0%
   \POS(#1,#2)*+!!<0ex,\axis>{#7}\ar@{#4}^-{#9} (\xend,\yend)*+!!<0ex,\axis>{#8}%
  \else%
   \POS(#1,#2)*+!!<0ex,\axis>{#7}\ar@{#4}_-{#9} (\xend,\yend)*+!!<0ex,\axis>{#8}%
  \fi%
 \else \if#3a%
  \ifnum #5>0%
   \POS(#1,#2)*+!!<0ex,\axis>{#7}\ar@{#4}^-{#9} (\xend,\yend)*+!!<0ex,\axis>{#8}%
  \else%
   \POS(#1,#2)*+!!<0ex,\axis>{#7}\ar@{#4}_-{#9} (\xend,\yend)*+!!<0ex,\axis>{#8}%
  \fi%
 \else \if#3b%
  \ifnum #5<0%
   \POS(#1,#2)*+!!<0ex,\axis>{#7}\ar@{#4}^-{#9} (\xend,\yend)*+!!<0ex,\axis>{#8}%
  \else%
   \POS(#1,#2)*+!!<0ex,\axis>{#7}\ar@{#4}_-{#9} (\xend,\yend)*+!!<0ex,\axis>{#8}%
  \fi%
 \else%
   \POS(#1,#2)*+!!<0ex,\axis>{#7}\ar@{#4} (\xend,\yend)*+!!<0ex,\axis>{#8}%
 \fi\fi\fi\fi\fi%
\fi\ignorespaces}%
\def\vect(#1,#2)/#3/<#4,#5>{%
 \xend#1 \yend#2 \advance\xend by #4 \advance\yend by #5%
     \POS(#1,#2)\ar#3 (\xend,\yend)}%
\def\squarepppp(#1,#2)|#3|/#4`#5`#6`#7/<#8>[#9]{%
\xpos#1\ypos#2%
\def\next|##1##2##3##4|{%
 \def\xa{##1}\def\xb{##2}\def\xc{##3}\def\xd{##4}\ignorespaces}%
\next|#3|%
\def\next<##1,##2>{\deltax=##1\deltay=##2\ignorespaces}%
\next<#8>%
\def\next[##1`##2`##3`##4;##5`##6`##7`##8]{%
    \def\nodea{##1}\def\nodeb{##2}\def\nodec{##3}\def\noded{##4}%
    \def\labela{##5}\def\labelb{##6}\def\labelc{##7}\def\labeld{##8}\ignorespaces}%
\next[#9]%
\morphism(\xpos,\ypos)|\xd|/{#7}/<\deltax,0>[\nodec`\noded;\labeld]%
\advance \ypos by \deltay%
\morphism(\xpos,\ypos)|\xb|/{#5}/<0,-\deltay>[\nodea`\nodec;\labelb]%
\morphism(\xpos,\ypos)|\xa|/{#4}/<\deltax,0>[\nodea`\nodeb;\labela]%
 \advance \xpos by \deltax%
\morphism(\xpos,\ypos)|\xc|/{#6}/<0,-\deltay>[\nodeb`\noded;\labelc]%
\ignorespaces}%
\def\square{\ifnextchar({\squarep}{\squarep(0,0)}}%
\def\squarep(#1){\ifnextchar|{\squarepp(#1)}{\squarepp(#1)|alrb|}}%
\def\squarepp(#1)|#2|{\ifnextchar/{\squareppp(#1)|#2|}%
    {\squareppp(#1)|#2|/>`>`>`>/}}%
\def\squareppp(#1)|#2|/#3`#4`#5`#6/{%
    \ifnextchar<{\squarepppp(#1)|#2|/#3`#4`#5`#6/}%
    {\squarepppp(#1)|#2|/#3`#4`#5`#6/<\default,\default>}}%
\def\ptrianglepppp(#1,#2)|#3|/#4`#5`#6/<#7>[#8]{%
\xpos#1\ypos#2%
\def\next|##1##2##3|{\def\xa{##1}\def\xb{##2}\def\xc{##3}}%
\next|#3|%
\def\next<##1,##2>{\deltax=##1\deltay=##2\ignorespaces}%
\next<#7>%
\def\next[##1`##2`##3;##4`##5`##6]{%
    \def\nodea{##1}\def\nodeb{##2}\def\nodec{##3}%
    \def\labela{##4}\def\labelb{##5}\def\labelc{##6}}%
\next[#8]%
\advance\ypos by \deltay%
\morphism(\xpos,\ypos)|\xa|/{#4}/<\deltax,0>[\nodea`\nodeb;\labela]%
\morphism(\xpos,\ypos)|\xb|/{#5}/<0,-\deltay>[\nodea`\nodec;\labelb]%
\advance\xpos by \deltax%
\morphism(\xpos,\ypos)|\xc|/{#6}/<-\deltax,-\deltay>[\nodeb`\nodec;\labelc]%
\ignorespaces}%
\def\qtrianglepppp(#1,#2)|#3|/#4`#5`#6/<#7>[#8]{%
\xpos#1\ypos#2%
\def\next|##1##2##3|{\def\xa{##1}\def\xb{##2}\def\xc{##3}}%
\next|#3|%
\def\next<##1,##2>{\deltax=##1\deltay=##2\ignorespaces}%
\next<#7>%
\def\next[##1`##2`##3;##4`##5`##6]{%
    \def\nodea{##1}\def\nodeb{##2}\def\nodec{##3}%
    \def\labela{##4}\def\labelb{##5}\def\labelc{##6}}%
\next[#8]%
\advance\ypos by \deltay%
\morphism(\xpos,\ypos)|\xa|/{#4}/<\deltax,0>[\nodea`\nodeb;\labela]%
\morphism(\xpos,\ypos)|\xb|/{#5}/<\deltax,-\deltay>[\nodea`\nodec;\labelb]%
\advance\xpos by \deltax%
\morphism(\xpos,\ypos)|\xc|/{#6}/<0,-\deltay>[\nodeb`\nodec;\labelc]%
\ignorespaces}%
\def\dtrianglepppp(#1,#2)|#3|/#4`#5`#6/<#7>[#8]{%
\xpos#1\ypos#2%
\def\next|##1##2##3|{\def\xa{##1}\def\xb{##2}\def\xc{##3}}%
\next|#3|%
\def\next<##1,##2>{\deltax=##1\deltay=##2\ignorespaces}%
\next<#7>%
\def\next[##1`##2`##3;##4`##5`##6]{%
    \def\nodea{##1}\def\nodeb{##2}\def\nodec{##3}%
    \def\labela{##4}\def\labelb{##5}\def\labelc{##6}}%
\next[#8]%
\morphism(\xpos,\ypos)|\xc|/{#6}/<\deltax,0>[\nodeb`\nodec;\labelc]%
\advance\ypos by \deltay\advance \xpos by \deltax%
\morphism(\xpos,\ypos)|\xa|/{#4}/<-\deltax,-\deltay>[\nodea`\nodeb;\labela]%
\morphism(\xpos,\ypos)|\xb|/{#5}/<0,-\deltay>[\nodea`\nodec;\labelb]%
\ignorespaces}%
\def\btrianglepppp(#1,#2)|#3|/#4`#5`#6/<#7>[#8]{%
\xpos#1\ypos#2%
\def\next|##1##2##3|{\def\xa{##1}\def\xb{##2}\def\xc{##3}}%
\next|#3|%
\def\next<##1,##2>{\deltax=##1\deltay=##2\ignorespaces}%
\next<#7>%
\def\next[##1`##2`##3;##4`##5`##6]{%
    \def\nodea{##1}\def\nodeb{##2}\def\nodec{##3}%
    \def\labela{##4}\def\labelb{##5}\def\labelc{##6}}%
\next[#8]%
\morphism(\xpos,\ypos)|\xc|/{#6}/<\deltax,0>[\nodeb`\nodec;\labelc]%
\advance\ypos by \deltay%
\morphism(\xpos,\ypos)|\xa|/{#4}/<0,-\deltay>[\nodea`\nodeb;\labela]%
\morphism(\xpos,\ypos)|\xb|/{#5}/<\deltax,-\deltay>[\nodea`\nodec;\labelb]%
\ignorespaces}%
\def\Atrianglepppp(#1,#2)|#3|/#4`#5`#6/<#7>[#8]{%
\xpos#1\ypos#2%
\def\next|##1##2##3|{\def\xa{##1}\def\xb{##2}\def\xc{##3}}%
\next|#3|%
\def\next<##1,##2>{\deltax=##1\deltay=##2\ignorespaces}%
\next<#7>%
\def\next[##1`##2`##3;##4`##5`##6]{%
    \def\nodea{##1}\def\nodeb{##2}\def\nodec{##3}%
    \def\labela{##4}\def\labelb{##5}\def\labelc{##6}}%
\next[#8]%
\multiply\deltax by 2%
\morphism(\xpos,\ypos)|\xc|/{#6}/<\deltax,0>[\nodeb`\nodec;\labelc]%
\divide\deltax by 2%
\advance\ypos by \deltay\advance\xpos by \deltax%
\morphism(\xpos,\ypos)|\xa|/{#4}/<-\deltax,-\deltay>[\nodea`\nodeb;\labela]%
\morphism(\xpos,\ypos)|\xb|/{#5}/<\deltax,-\deltay>[\nodea`\nodec;\labelb]%
\ignorespaces}%
\def\Vtrianglepppp(#1,#2)|#3|/#4`#5`#6/<#7>[#8]{%
\xpos#1\ypos#2%
\def\next|##1##2##3|{\def\xa{##1}\def\xb{##2}\def\xc{##3}}%
\next|#3|%
\def\next<##1,##2>{\deltax=##1\deltay=##2\ignorespaces}%
\next<#7>%
\def\next[##1`##2`##3;##4`##5`##6]{%
    \def\nodea{##1}\def\nodeb{##2}\def\nodec{##3}%
    \def\labela{##4}\def\labelb{##5}\def\labelc{##6}}%
\next[#8]%
\advance\ypos by \deltay%
\morphism(\xpos,\ypos)|\xb|/{#5}/<\deltax,-\deltay>[\nodea`\nodec;\labelb]%
\multiply\deltax by 2%
\morphism(\xpos,\ypos)|\xa|/{#4}/<\deltax,0>[\nodea`\nodeb;\labela]%
\advance\xpos by \deltax \divide \deltax by 2%
\morphism(\xpos,\ypos)|\xc|/{#6}/<-\deltax,-\deltay>[\nodeb`\nodec;\labelc]%
\ignorespaces}%
\def\Ctrianglepppp(#1,#2)|#3|/#4`#5`#6/<#7>[#8]{%
\xpos#1\ypos#2%
\def\next|##1##2##3|{\def\xa{##1}\def\xb{##2}\def\xc{##3}}%
\next|#3|%
\def\next<##1,##2>{\deltax=##1\deltay=##2\ignorespaces}%
\next<#7>%
\def\next[##1`##2`##3;##4`##5`##6]{%
    \def\nodea{##1}\def\nodeb{##2}\def\nodec{##3}%
    \def\labela{##4}\def\labelb{##5}\def\labelc{##6}}%
\next[#8]%
\advance \ypos by \deltay%
\morphism(\xpos,\ypos)|\xc|/{#6}/<\deltax,-\deltay>[\nodeb`\nodec;\labelc]%
\advance\ypos by \deltay \advance \xpos by \deltax%
\morphism(\xpos,\ypos)|\xa|/{#4}/<-\deltax,-\deltay>[\nodea`\nodeb;\labela]%
\multiply\deltay by 2%
\morphism(\xpos,\ypos)|\xb|/{#5}/<0,-\deltay>[\nodea`\nodec;\labelb]%
\ignorespaces}%
\def\Dtrianglepppp(#1,#2)|#3|/#4`#5`#6/<#7>[#8]{%
\xpos#1\ypos#2%
\def\next|##1##2##3|{\def\xa{##1}\def\xb{##2}\def\xc{##3}}%
\next|#3|%
\def\next<##1,##2>{\deltax=##1\deltay=##2\ignorespaces}%
\next<#7>%
\def\next[##1`##2`##3;##4`##5`##6]{%
    \def\nodea{##1}\def\nodeb{##2}\def\nodec{##3}%
    \def\labela{##4}\def\labelb{##5}\def\labelc{##6}}%
\next[#8]%
\advance\xpos by \deltax \advance\ypos by \deltay%
\morphism(\xpos,\ypos)|\xc|/{#6}/<-\deltax,-\deltay>[\nodeb`\nodec;\labelc]%
\advance\xpos by -\deltax \advance\ypos by \deltay%
\morphism(\xpos,\ypos)|\xb|/{#5}/<\deltax,-\deltay>[\nodea`\nodeb;\labelb]%
\multiply \deltay by 2%
\morphism(\xpos,\ypos)|\xa|/{#4}/<0,-\deltay>[\nodea`\nodec;\labela]%
\ignorespaces}%
\def\ptrianglep(#1){\ifnextchar|{\ptrianglepp(#1)}{\ptrianglepp(#1)|alr|}}%
\def\ptrianglepp(#1)|#2|{\ifnextchar/{\ptriangleppp(#1)|#2|}%
    {\ptriangleppp(#1)|#2|/>`>`>/}}%
\def\ptriangleppp(#1)|#2|/#3`#4`#5/{%
    \ifnextchar<{\ptrianglepppp(#1)|#2|/#3`#4`#5/}%
    {\ptrianglepppp(#1)|#2|/#3`#4`#5/<\default,\default>}}%
\def\qtrianglep(#1){\ifnextchar|{\qtrianglepp(#1)}{\qtrianglepp(#1)|alr|}}%
\def\qtrianglepp(#1)|#2|{\ifnextchar/{\qtriangleppp(#1)|#2|}%
    {\qtriangleppp(#1)|#2|/>`>`>/}}%
\def\qtriangleppp(#1)|#2|/#3`#4`#5/{%
    \ifnextchar<{\qtrianglepppp(#1)|#2|/#3`#4`#5/}%
    {\qtrianglepppp(#1)|#2|/#3`#4`#5/<\default,\default>}}%
\def\dtrianglep(#1){\ifnextchar|{\dtrianglepp(#1)}{\dtrianglepp(#1)|lrb|}}%
\def\dtrianglepp(#1)|#2|{\ifnextchar/{\dtriangleppp(#1)|#2|}%
    {\dtriangleppp(#1)|#2|/>`>`>/}}%
\def\dtriangleppp(#1)|#2|/#3`#4`#5/{%
    \ifnextchar<{\dtrianglepppp(#1)|#2|/#3`#4`#5/}%
    {\dtrianglepppp(#1)|#2|/#3`#4`#5/<\default,\default>}}%
\def\btrianglep(#1){\ifnextchar|{\btrianglepp(#1)}{\btrianglepp(#1)|lrb|}}%
\def\btrianglepp(#1)|#2|{\ifnextchar/{\btriangleppp(#1)|#2|}%
    {\btriangleppp(#1)|#2|/>`>`>/}}%
\def\btriangleppp(#1)|#2|/#3`#4`#5/{%
    \ifnextchar<{\btrianglepppp(#1)|#2|/#3`#4`#5/}%
    {\btrianglepppp(#1)|#2|/#3`#4`#5/<\default,\default>}}%
\def\Atrianglep(#1){\ifnextchar|{\Atrianglepp(#1)}{\Atrianglepp(#1)|lrb|}}%
\def\Atrianglepp(#1)|#2|{\ifnextchar/{\Atriangleppp(#1)|#2|}%
    {\Atriangleppp(#1)|#2|/>`>`>/}}%
\def\Atriangleppp(#1)|#2|/#3`#4`#5/{%
    \ifnextchar<{\Atrianglepppp(#1)|#2|/#3`#4`#5/}%
    {\Atrianglepppp(#1)|#2|/#3`#4`#5/<\default,\default>}}%
\def\Vtrianglep(#1){\ifnextchar|{\Vtrianglepp(#1)}{\Vtrianglepp(#1)|alb|}}%
\def\Vtrianglepp(#1)|#2|{\ifnextchar/{\Vtriangleppp(#1)|#2|}%
    {\Vtriangleppp(#1)|#2|/>`>`>/}}%
\def\Vtriangleppp(#1)|#2|/#3`#4`#5/{%
    \ifnextchar<{\Vtrianglepppp(#1)|#2|/#3`#4`#5/}%
    {\Vtrianglepppp(#1)|#2|/#3`#4`#5/<\default,\default>}}%
\def\Ctrianglep(#1){\ifnextchar|{\Ctrianglepp(#1)}{\Ctrianglepp(#1)|arb|}}%
\def\Ctrianglepp(#1)|#2|{\ifnextchar/{\Ctriangleppp(#1)|#2|}%
    {\Ctriangleppp(#1)|#2|/>`>`>/}}%
\def\Ctriangleppp(#1)|#2|/#3`#4`#5/{%
    \ifnextchar<{\Ctrianglepppp(#1)|#2|/#3`#4`#5/}%
    {\Ctrianglepppp(#1)|#2|/#3`#4`#5/<\default,\default>}}%
\def\Dtrianglep(#1){\ifnextchar|{\Dtrianglepp(#1)}{\Dtrianglepp(#1)|lab|}}%
\def\Dtrianglepp(#1)|#2|{\ifnextchar/{\Dtriangleppp(#1)|#2|}%
    {\Dtriangleppp(#1)|#2|/>`>`>/}}%
\def\Dtriangleppp(#1)|#2|/#3`#4`#5/{%
    \ifnextchar<{\Dtrianglepppp(#1)|#2|/#3`#4`#5/}%
    {\Dtrianglepppp(#1)|#2|/#3`#4`#5/<\default,\default>}}%
\def\Atrianglepairpppp(#1)|#2|/#3`#4`#5`#6`#7/<#8>[#9]{%
\def\next(##1,##2){\xpos##1\ypos##2}%
\next(#1)%
\def\next|##1##2##3##4##5|{\def\xa{##1}\def\xb{##2}%
\def\xc{##3}\def\xd{##4}\def\xe{##5}}%
\next|#2|%
\def\next<##1,##2>{\deltax=##1\deltay=##2\ignorespaces}%
\next<#8>%
\def\next[##1`##2`##3`##4;##5`##6`##7`##8`##9]{%
 \def\nodea{##1}\def\nodeb{##2}\def\nodec{##3}\def\noded{##4}%
 \def\labela{##5}\def\labelb{##6}\def\labelc{##7}\def\labeld{##8}\def\labele{##9}}%
\next[#9]%
\morphism(\xpos,\ypos)|\xd|/{#6}/<\deltax,0>[\nodeb`\nodec;\labeld]%
\advance\xpos by \deltax%
\morphism(\xpos,\ypos)|\xe|/{#7}/<\deltax,0>[\nodec`\noded;\labele]%
\advance\ypos by \deltay%
\morphism(\xpos,\ypos)|\xa|/{#3}/<-\deltax,-\deltay>[\nodea`\nodeb;\labela]%
\morphism(\xpos,\ypos)|\xb|/{#4}/<0,-\deltay>[\nodea`\nodec;\labelb]%
\morphism(\xpos,\ypos)|\xc|/{#5}/<\deltax,-\deltay>[\nodea`\noded;\labelc]%
\ignorespaces}%
\def\Vtrianglepairpppp(#1)|#2|/#3`#4`#5`#6`#7/<#8>[#9]{%
\def\next(##1,##2){\xpos##1\ypos##2}%
\next(#1)%
\def\next|##1##2##3##4##5|{\def\xa{##1}\def\xb{##2}%
\def\xc{##3}\def\xd{##4}\def\xe{##5}}%
\next|#2|%
\def\next<##1,##2>{\deltax=##1\deltay=##2\ignorespaces}%
\next<#8>%
\def\next[##1`##2`##3`##4;##5`##6`##7`##8`##9]{%
 \def\nodea{##1}\def\nodeb{##2}\def\nodec{##3}\def\noded{##4}%
 \def\labela{##5}\def\labelb{##6}\def\labelc{##7}\def\labeld{##8}\def\labele{##9}}%
\next[#9]%
\advance\ypos by \deltay%
\morphism(\xpos,\ypos)|\xa|/{#3}/<\deltax,0>[\nodea`\nodeb;\labela]%
\morphism(\xpos,\ypos)|\xc|/{#5}/<\deltax,-\deltay>[\nodea`\noded;\labelc]%
\advance\xpos by \deltax%
\morphism(\xpos,\ypos)|\xb|/{#4}/<\deltax,0>[\nodeb`\nodec;\labelb]%
\morphism(\xpos,\ypos)|\xd|/{#6}/<0,-\deltay>[\nodeb`\noded;\labeld]%
\advance\xpos by \deltax%
\morphism(\xpos,\ypos)|\xe|/{#7}/<-\deltax,-\deltay>[\nodec`\noded;\labele]%
\ignorespaces}%
\def\Ctrianglepairpppp(#1)|#2|/#3`#4`#5`#6`#7/<#8>[#9]{%
\def\next(##1,##2){\xpos##1\ypos##2}%
\next(#1)%
\def\next|##1##2##3##4##5|{\def\xa{##1}\def\xb{##2}%
\def\xc{##3}\def\xd{##4}\def\xe{##5}}%
\next|#2|%
\def\next<##1,##2>{\deltax=##1\deltay=##2\ignorespaces}%
\next<#8>%
\def\next[##1`##2`##3`##4;##5`##6`##7`##8`##9]{%
 \def\nodea{##1}\def\nodeb{##2}\def\nodec{##3}\def\noded{##4}%
 \def\labela{##5}\def\labelb{##6}\def\labelc{##7}\def\labeld{##8}\def\labele{##9}}%
\next[#9]%
\advance\ypos by \deltay%
\morphism(\xpos,\ypos)|\xe|/{#7}/<0,-\deltay>[\nodec`\noded;\labele]%
\advance\xpos by -\deltax%
\morphism(\xpos,\ypos)|\xc|/{#5}/<\deltax,0>[\nodeb`\nodec;\labelc]%
\morphism(\xpos,\ypos)|\xd|/{#6}/<\deltax,-\deltay>[\nodeb`\noded;\labeld]%
\advance\ypos by \deltay%
\advance\xpos by \deltax%
\morphism(\xpos,\ypos)|\xa|/{#3}/<-\deltax,-\deltay>[\nodea`\nodeb;\labela]%
\morphism(\xpos,\ypos)|\xb|/{#4}/<0,-\deltay>[\nodea`\nodec;\labelb]%
\ignorespaces}%
\def\Dtrianglepairpppp(#1)|#2|/#3`#4`#5`#6`#7/<#8>[#9]{%
\def\next(##1,##2){\xpos##1\ypos##2}%
\next(#1)%
\def\next|##1##2##3##4##5|{\def\xa{##1}\def\xb{##2}%
\def\xc{##3}\def\xd{##4}\def\xe{##5}}%
\next|#2|%
\def\next<##1,##2>{\deltax=##1\deltay=##2\ignorespaces}%
\next<#8>%
\def\next[##1`##2`##3`##4;##5`##6`##7`##8`##9]{%
 \def\nodea{##1}\def\nodeb{##2}\def\nodec{##3}\def\noded{##4}%
 \def\labela{##5}\def\labelb{##6}\def\labelc{##7}\def\labeld{##8}\def\labele{##9}}%
\next[#9]%
\advance\ypos by \deltay%
\morphism(\xpos,\ypos)|\xc|/{#5}/<\deltax,0>[\nodeb`\nodec;\labelc]%
\morphism(\xpos,\ypos)|\xd|/{#6}/<0,-\deltay>[\nodeb`\noded;\labeld]%
\advance\ypos by \deltay%
\morphism(\xpos,\ypos)|\xa|/{#3}/<0,-\deltay>[\nodea`\nodeb;\labela]%
\morphism(\xpos,\ypos)|\xb|/{#4}/<\deltax,-\deltay>[\nodea`\nodec;\labelb]%
\advance\ypos by -\deltay%
\advance\xpos by \deltax%
\morphism(\xpos,\ypos)|\xe|/{#7}/<-\deltax,-\deltay>[\nodec`\noded;\labele]%
\ignorespaces}%
\def\Atrianglepairp(#1){\ifnextchar|{\Atrianglepairpp(#1)}%
{\Atrianglepairpp(#1)|lmrbb|}}%
\def\Atrianglepairpp(#1)|#2|{\ifnextchar/{\Atrianglepairppp(#1)|#2|}%
    {\Atrianglepairppp(#1)|#2|/>`>`>`>`>/}}%
\def\Atrianglepairppp(#1)|#2|/#3`#4`#5`#6`#7/{%
    \ifnextchar<{\Atrianglepairpppp(#1)|#2|/#3`#4`#5`#6`#7/}%
    {\Atrianglepairpppp(#1)|#2|/#3`#4`#5`#6`#7/<\default,\default>}}%
\def\Vtrianglepairp(#1){\ifnextchar|{\Vtrianglepairpp(#1)}%
{\Vtrianglepairpp(#1)|aalmr|}}%
\def\Vtrianglepairpp(#1)|#2|{\ifnextchar/{\Vtrianglepairppp(#1)|#2|}%
    {\Vtrianglepairppp(#1)|#2|/>`>`>`>`>/}}%
\def\Vtrianglepairppp(#1)|#2|/#3`#4`#5`#6`#7/{%
    \ifnextchar<{\Vtrianglepairpppp(#1)|#2|/#3`#4`#5`#6`#7/}%
    {\Vtrianglepairpppp(#1)|#2|/#3`#4`#5`#6`#7/<\default,\default>}}%
\def\Ctrianglepairp(#1){\ifnextchar|{\Ctrianglepairpp(#1)}%
{\Ctrianglepairpp(#1)|lrmlr|}}%
\def\Ctrianglepairpp(#1)|#2|{\ifnextchar/{\Ctrianglepairppp(#1)|#2|}%
    {\Ctrianglepairppp(#1)|#2|/>`>`>`>`>/}}%
\def\Ctrianglepairppp(#1)|#2|/#3`#4`#5`#6`#7/{%
    \ifnextchar<{\Ctrianglepairpppp(#1)|#2|/#3`#4`#5`#6`#7/}%
    {\Ctrianglepairpppp(#1)|#2|/#3`#4`#5`#6`#7/<\default,\default>}}%
\def\Dtrianglepairp(#1){\ifnextchar|{\Dtrianglepairpp(#1)}%
{\Dtrianglepairpp(#1)|lrmlr|}}%
\def\Dtrianglepairpp(#1)|#2|{\ifnextchar/{\Dtrianglepairppp(#1)|#2|}%
    {\Dtrianglepairppp(#1)|#2|/>`>`>`>`>/}}%
\def\Dtrianglepairppp(#1)|#2|/#3`#4`#5`#6`#7/{%
    \ifnextchar<{\Dtrianglepairpppp(#1)|#2|/#3`#4`#5`#6`#7/}%
    {\Dtrianglepairpppp(#1)|#2|/#3`#4`#5`#6`#7/<\default,\default>}}%
\def\pplace[#1](#2,#3)[#4]{\POS(#2,#3)*+!!<0ex,\axis>!#1{#4}\ignorespaces}%
\def\cplace(#1,#2)[#3]{\POS(#1,#2)*+!!<0ex,\axis>{#3}\ignorespaces}%
\def\pullback#1]#2]{\square#1]\trident#2]\ignorespaces}%
\def\tridentppp|#1#2#3|/#4`#5`#6/<#7,#8>[#9]{%
\def\next[##1;##2`##3`##4]{\def\nodee{##1}\def\labele{##2}%
   \def\labelf{##3}\def\labelg{##4}}%
\next[#9]%
\advance \xpos by -\deltax%
\advance \xpos by -#7\advance \ypos by #8%
\advance\deltax by #7%
\morphism(\xpos,\ypos)|#1|/{#4}/<\deltax,-#8>[\nodee`\nodeb;\labele]%
\advance\deltax by -#7%
\morphism(\xpos,\ypos)|#2|/{#5}/<#7,-#8>[\nodee`\nodea;\labelf]%
\advance\deltay by #8%
\morphism(\xpos,\ypos)|#3|/{#6}/<#7,-\deltay>[\nodee`\nodec;\labelg]%
\ignorespaces}%
\def\trident{\ifnextchar|{\tridentp}{\tridentp|amb|}}%
\def\tridentp|#1|{\ifnextchar/{\tridentpp|#1|}{\tridentpp|#1|/{>}`{>}`{>}/}}%
\def\tridentpp|#1|/#2/{\ifnextchar<{\tridentppp|#1|/#2/}%
  {\tridentppp|#1|/#2/<500,500>}}%
\def\setmorphismwidth#1#2#3#4{%
 \setbox0=\hbox{$#1{\labelstyle#3#3}#2$}#4=\wd0%
 \divide #4 by 2 \divide #4 by \ul%
 \advance #4 by 350 \ratchet{#4}{500}}%
\def\setSquarewidth[#1`#2`#3`#4;#5`#6`#7`#8]{%
 \setmorphismwidth{#1}{#2}{#5}{\topw}%
 \setmorphismwidth{#3}{#4}{#8}{\botw}%
\ratchet{\topw}{\botw}}%
\def\Squarepppp(#1)|#2|/#3/<#4>[#5]{%
 \setSquarewidth[#5]%
 \squarepppp(#1)|#2|/#3/<\topw,#4>[#5]%
\ignorespaces}%
\def\Square{\ifnextchar({\Squarep}{\Squarep(0,0)}}%
\def\Squarep(#1){\ifnextchar|{\Squarepp(#1)}{\Squarepp(#1)|alrb|}}%
\def\Squarepp(#1)|#2|{\ifnextchar/{\Squareppp(#1)|#2|}%
    {\Squareppp(#1)|#2|/>`>`>`>/}}%
\def\Squareppp(#1)|#2|/#3`#4`#5`#6/{%
    \ifnextchar<{\Squarepppp(#1)|#2|/#3`#4`#5`#6/}%
    {\Squarepppp(#1)|#2|/#3`#4`#5`#6/<\default>}}%
\def\hsquarespppp(#1,#2)|#3|/#4/<#5>[#6;#7]{%
\Xpos=#1\Ypos=#2%
\def\next|##1##2##3##4##5##6##7|{%
 \def\Xa{##1}\def\Xb{##2}\def\Xc{##3}\def\Xd{##4}%
 \def\Xe{##5}\def\Xf{##6}\def\Xg{##7}}%
\next|#3|%
\def\next<##1,##2,##3>{\deltaX=##1\deltaXprime=##2\deltaY=##3}%
\next<#5>%
\def\next[##1`##2`##3`##4`##5`##6]{%
 \def\Nodea{##1}\def\Nodeb{##2}\def\Nodec{##3}%
 \def\Noded{##4}\def\Nodee{##5}\def\Nodef{##6}}%
\next[#6]%
\def\next[##1`##2`##3`##4`##5`##6`##7]{%
 \def\Labela{##1}\def\Labelb{##2}\def\Labelc{##3}\def\Labeld{##4}%
 \def\Labele{##5}\def\Labelf{##6}\def\Labelg{##7}}%
\next[#7]%
\dohsquares/#4/}%
\def\dohsquares/#1`#2`#3`#4`#5`#6`#7/{%
\squarepppp(\Xpos,\Ypos)|\Xa\Xc\Xd\Xf|/#1`#3`#4`#6/<\deltaX,\deltaY>%
 [\Nodea`\Nodeb`\Noded`\Nodee;\Labela`\Labelc`\Labeld`\Labelf]%
 \advance \Xpos by \deltaX%
\squarepppp(\Xpos,\Ypos)|\Xb\Xd\Xe\Xg|/#2``#5`#7/<\deltaXprime,\deltaY>%
[\Nodeb`\Nodec`\Nodee`\Nodef;\Labelb``\Labele`\Labelg]%
\ignorespaces}%
\def\hsquaresp(#1){\ifnextchar|{\hsquarespp(#1)}{\hsquarespp%
(#1)|aalmrbb|}}%
\def\hsquarespp(#1)|#2|{\ifnextchar/{\hsquaresppp(#1)|#2|}%
    {\hsquaresppp(#1)|#2|/>`>`>`>`>`>`>/}}%
\def\hsquaresppp(#1)|#2|/#3/{%
    \ifnextchar<{\hsquarespppp(#1)|#2|/#3/}%
    {\hsquarespppp(#1)|#2|/#3/<\default,\default,\default>}}%
\def\hSquarespppp(#1,#2)|#3|/#4/<#5>[#6;#7]{%
\Xpos=#1\Ypos=#2%
\def\next|##1##2##3##4##5##6##7|{%
 \def\Xa{##1}\def\Xb{##2}\def\Xc{##3}\def\Xd{##4}%
 \def\Xe{##5}\def\Xf{##6}\def\Xg{##7}}%
\next|#3|%
\deltaY=#5%
\def\next[##1`##2`##3`##4`##5`##6]{%
 \def\Nodea{##1}\def\Nodeb{##2}\def\Nodec{##3}%
 \def\Noded{##4}\def\Nodee{##5}\def\Nodef{##6}}%
\next[#6]%
\def\next[##1`##2`##3`##4`##5`##6`##7]{%
 \def\Labela{##1}\def\Labelb{##2}\def\Labelc{##3}\def\Labeld{##4}%
 \def\Labele{##5}\def\Labelf{##6}\def\Labelg{##7}}%
\next[#7]%
\dohSquares/#4/}%
\def\dohSquares/#1`#2`#3`#4`#5`#6`#7/{%
\Squarepppp(\Xpos,\Ypos)|\Xa\Xc\Xd\Xf|/#1`#3`#4`#6/<\deltaY>%
 [\Nodea`\Nodeb`\Noded`\Nodee;\Labela`\Labelc`\Labeld`\Labelf]%
 \advance \Xpos by \topw%
\Squarepppp(\Xpos,\Ypos)|\Xb\Xd\Xe\Xg|/#2``#5`#7/<\deltaY>%
[\Nodeb`\Nodec`\Nodee`\Nodef;\Labelb``\Labele`\Labelg]%
\ignorespaces}%
\def\hSquaresp(#1){\ifnextchar|{\hSquarespp(#1)}{\hSquarespp%
(#1)|aalmrbb|}}%
\def\hSquarespp(#1)|#2|{\ifnextchar/{\hSquaresppp(#1)|#2|}%
    {\hSquaresppp(#1)|#2|/>`>`>`>`>`>`>/}}%
\def\hSquaresppp(#1)|#2|/#3/{%
    \ifnextchar<{\hSquarespppp(#1)|#2|/#3/}%
    {\hSquarespppp(#1)|#2|/#3/<\default>}}%
\def\vsquarespppp(#1,#2)|#3|/#4/<#5>[#6;#7]{%
\Xpos=#1\Ypos=#2%
\def\next|##1##2##3##4##5##6##7|{%
 \def\Xa{##1}\def\Xb{##2}\def\Xc{##3}\def\Xd{##4}%
 \def\Xe{##5}\def\Xf{##6}\def\Xg{##7}}%
\next|#3|%
\def\next<##1,##2,##3>{\deltaX=##1\deltaY=##2\deltaYprime=##3}%
\next<#5>%
\def\next[##1`##2`##3`##4`##5`##6]{%
 \def\Nodea{##1}\def\Nodeb{##2}\def\Nodec{##3}%
 \def\Noded{##4}\def\Nodee{##5}\def\Nodef{##6}}%
\next[#6]%
\def\next[##1`##2`##3`##4`##5`##6`##7]{%
 \def\Labela{##1}\def\Labelb{##2}\def\Labelc{##3}\def\Labeld{##4}%
 \def\Labele{##5}\def\Labelf{##6}\def\Labelg{##7}}%
\next[#7]%
\dovsquares/#4/}%
\def\dovsquares/#1`#2`#3`#4`#5`#6`#7/{%
\squarepppp(\Xpos,\Ypos)|\Xd\Xe\Xf\Xg|/`#5`#6`#7/<\deltaX,\deltaYprime>%
[\Nodec`\Noded`\Nodee`\Nodef;`\Labele`\Labelf`\Labelg]%
 \advance\Ypos by \deltaYprime%
\squarepppp(\Xpos,\Ypos)|\Xa\Xb\Xc\Xd|/#1`#2`#3`#4/<\deltaX,\deltaY>%
 [\Nodea`\Nodeb`\Nodec`\Noded;\Labela`\Labelb`\Labelc`\Labeld]%
\ignorespaces}%
\def\vsquaresp(#1){\ifnextchar|{\vsquarespp(#1)}{\vsquarespp%
(#1)|aalmrbb|}}%
\def\vsquarespp(#1)|#2|{\ifnextchar/{\vsquaresppp(#1)|#2|}%
    {\vsquaresppp(#1)|#2|/>`>`>`>`>`>`>/}}%
\def\vsquaresppp(#1)|#2|/#3/{%
    \ifnextchar<{\vsquarespppp(#1)|#2|/#3/}%
    {\vsquarespppp(#1)|#2|/#3/<\default,\default,\default>}}%
\def\vSquarespppp(#1,#2)|#3|/#4/<#5,#6>[#7;#8]{%
\Xpos=#1\Ypos=#2%
\def\next|##1##2##3##4##5##6##7|{%
 \def\Xa{##1}\def\Xb{##2}\def\Xc{##3}\def\Xd{##4}%
 \def\Xe{##5}\def\Xf{##6}\def\Xg{##7}}%
\next|#3|%
\deltaX=#5%
\deltaY=#6%
\def\next[##1`##2`##3`##4`##5`##6]{%
 \def\Nodea{##1}\def\Nodeb{##2}\def\Nodec{##3}%
 \def\Noded{##4}\def\Nodee{##5}\def\Nodef{##6}}%
\next[#7]%
\def\next[##1`##2`##3`##4`##5`##6`##7]{%
 \def\Labela{##1}\def\Labelb{##2}\def\Labelc{##3}\def\Labeld{##4}%
 \def\Labele{##5}\def\Labelf{##6}\def\Labelg{##7}}%
\next[#8]%
\dovSquares/#4/\ignorespaces}%
\def\dovSquares/#1`#2`#3`#4`#5`#6`#7/{%
\setmorphismwidth{\Nodea}{\Nodeb}{\Labela}{\topw}%
\setmorphismwidth{\Nodec}{\Noded}{\Labeld}{\botw}%
\ratchet{\topw}{\botw}%
\setmorphismwidth{\Nodee}{\Nodef}{\Labelg}{\botw}%
\ratchet{\topw}{\botw}%
\square(\Xpos,\Ypos)|\Xd\Xe\Xf\Xg|/`#5`#6`#7/<\topw,\deltaX>%
 [\Nodec`\Noded`\Nodee`\Nodef;`\Labele`\Labelf`\Labelg]%
\advance \Ypos by \deltaX%
\square(\Xpos,\Ypos)|\Xa\Xb\Xc\Xd|/#1`#2`#3`#4/<\topw,\deltaY>%
 [\Nodea`\Nodeb`\Nodec`\Noded;\Labela`\Labelb`\Labelc`\Labeld]%
}%
\def\vSquaresp(#1){\ifnextchar|{\vSquarespp(#1)}{\vSquarespp%
(#1)|alrmlrb|}}%
\def\vSquarespp(#1)|#2|{\ifnextchar/{\vSquaresppp(#1)|#2|}%
    {\vSquaresppp(#1)|#2|/>`>`>`>`>`>`>/}}%
\def\vSquaresppp(#1)|#2|/#3/{%
    \ifnextchar<{\vSquarespppp(#1)|#2|/#3/}%
    {\vSquarespppp(#1)|#2|/#3/<\default,\default>}}%
\def\osquarepppp(#1)|#2|/#3`#4`#5`#6/<#7>[#8]{\squarepppp%
 (#1)|#2|/#3`#4`#5`#6/<#7>[#8]%
 \let\Nodea\nodea\let\Nodeb\nodeb%
\let\Nodec\nodec\let\Noded\noded\Xpos=\xpos\Ypos=\ypos%
\deltaX=\deltax \deltaY=\deltay \isquare}%
\def\osquarep(#1){\ifnextchar|{\osquarepp(#1)}{\osquarepp(#1)|alrb|}}%
\def\osquarepp(#1)|#2|{\ifnextchar/{\osquareppp(#1)|#2|}%
    {\osquareppp(#1)|#2|/>`>`>`>/}}%
\def\osquareppp(#1)|#2|/#3`#4`#5`#6/{%
    \ifnextchar<{\osquarepppp(#1)|#2|/#3`#4`#5`#6/}%
    {\osquarepppp(#1)|#2|/#3`#4`#5`#6/<1500,1500>}}%
\def\isquarepppp(#1)|#2|/#3`#4`#5`#6/<#7>[#8]{%
 \squarepppp(#1)|#2|/#3`#4`#5`#6/<#7>[#8]%
\ifnextchar|{\cubep}{\cubep|mmmm|}}%
\def\cubep|#1|{\ifnextchar/{\cubepp|#1|}{\cubepp|#1|/>`>`>`>/}}%
\def\isquare{\ifnextchar({\isquarep}{\isquarep(\default,\default)}}%
\def\isquarep(#1){\ifnextchar|{\isquarepp(#1)}{\isquarepp(#1)|alrb|}}%
\def\isquarepp(#1)|#2|{\ifnextchar/{\isquareppp(#1)|#2|}%
    {\isquareppp(#1)|#2|/>`>`>`>/}}%
\def\isquareppp(#1)|#2|/#3`#4`#5`#6/{%
    \ifnextchar<{\isquarepppp(#1)|#2|/#3`#4`#5`#6/}%
    {\isquarepppp(#1)|#2|/#3`#4`#5`#6/<500,500>}}%
\def\cubepp|#1#2#3#4|/#5`#6`#7`#8/[#9]{%
\def\next[##1`##2`##3`##4]{\gdef\Labela{##1}%
\gdef\Labelb{##2}\gdef\Labelc{##3}\gdef\Labeld{##4}}\next[#9]%
\xend\xpos \yend\ypos%
\Xend\xend\advance\Xend by -\Xpos%
\Yend\yend\advance\Yend by -\Ypos%
\domorphism(\Xpos,\Ypos)|#2|/#6/<\Xend,\Yend>[\Nodeb`\nodeb;\Labelb]%
\advance\Xpos by-\deltaX%
\advance\xend by-\deltax%
\Xend\xend\advance\Xend by -\Xpos%
\domorphism(\Xpos,\Ypos)|#1|/#5/<\Xend,\Yend>[\Nodea`\nodea;\Labela]%
\advance\Ypos by-\deltaY%
\advance\yend by-\deltay%
\Yend\yend\advance\Yend by -\Ypos%
\domorphism(\Xpos,\Ypos)|#3|/#7/<\Xend,\Yend>[\Nodec`\nodec;\Labelc]%
\advance\Xpos by\deltaX%
\advance\xend by\deltax%
\Xend\xend\advance\Xend by -\Xpos%
\domorphism(\Xpos,\Ypos)|#4|/#8/<\Xend,\Yend>[\Noded`\noded;\Labeld]%
\ignorespaces}%
\def\setwdth#1#2{\setbox0\hbox{$\labelstyle#1$}\wdth=\wd0%
\setbox0\hbox{$\labelstyle#2$}\ifnum\wdth<\wd0 \wdth=\wd0 \fi}%
\def\topppp/#1/<#2>^#3_#4{\:%
\ifnum#2=0%
   \setwdth{#3}{#4}\deltax=\wdth \divide \deltax by \ul%
   \advance \deltax by \defaultmargin  \ratchet{\deltax}{200}%
\else \deltax #2%
\fi%
\xy\ar@{#1}^{#3}_{#4}(\deltax,0) \endxy%
\:}%
\def\toppp/#1/<#2>^#3{\ifnextchar_{\topppp/#1/<#2>^{#3}}{\topppp/#1/<#2>^{#3}_{}}}%
\def\topp/#1/<#2>{\ifnextchar^{\toppp/#1/<#2>}{\toppp/#1/<#2>^{}}}%
\def\toop/#1/{\ifnextchar<{\topp/#1/}{\topp/#1/<0>}}%
\def\to{\ifnextchar/{\toop}{\toop/>/}}%
\def\twopppp/#1`#2/<#3>^#4_#5{\:%
\ifnum0=#3%
  \setwdth{#4}{#5}\deltax=\wdth \divide \deltax by \ul \advance \deltax%
  by \defaultmargin \ratchet{\deltax}{200}%
\else \deltax#3 \fi%
\xy\ar@{#1}@<2.5pt>^{#4}(\deltax,0)%
\ar@{#2}@<-2.5pt>_{#5}(\deltax,0)\endxy\:}%
\def\twoppp/#1`#2/<#3>^#4{\ifnextchar_{\twopppp/#1`#2/<#3>^{#4}}%
  {\twopppp/#1`#2/<#3>^{#4}_{}}}%
\def\twopp/#1`#2/<#3>{\ifnextchar^{\twoppp/#1`#2/<#3>}{\twoppp/#1`#2/<#3>^{}}}%
\def\twop/#1`#2/{\ifnextchar<{\twopp/#1`#2/}{\twopp/#1`#2/<0>}}%
\def\threeppppp/#1`#2`#3/<#4>^#5|#6_#7{\:%
\ifnum0=#4%
\setbox0\hbox{$\labelstyle#5$}\wdth=\wd0%
\setbox0\hbox{$\labelstyle#6$}\ifnum\wdth<\wd0 \wdth=\wd0 \fi%
\setbox0\hbox{$\labelstyle#7$}\ifnum\wdth<\wd0 \wdth=\wd0 \fi%
\deltax=\wdth \divide \deltax by \ul \advance \deltax by%
\defaultmargin \ratchet{\deltax}{300}%
\else\deltax#4 \fi%
    \xy \ifnum\wd0=0 \ar@{#2}(\deltax,0)%
    \else \ar@{#2}|{#6}(\deltax,0)\fi%
\ar@{#1}@<4.5pt>^{#5}(\deltax,0)%
\ar@{#3}@<-4.5pt>_{#7}(\deltax,0)\endxy\:}%
\def\threepppp/#1`#2`#3/<#4>^#5|#6{\ifnextchar_{\threeppppp%
  /#1`#2`#3/<#4>^{#5}|{#6}}{\threeppppp/#1`#2`#3/<#4>^{#5}|{#6}_{}}}%
\def\threeppp/#1`#2`#3/<#4>^#5{\ifnextchar|{\threepppp%
  /#1`#2`#3/<#4>^{#5}}{\threepppp/#1`#2`#3/<#4>^{#5}|{}}}%
\def\threepp/#1`#2`#3/<#4>{\ifnextchar^{\threeppp/#1`#2`#3/<#4>}%
  {\threeppp/#1`#2`#3/<#4>^{}}}%
\def\threep/#1`#2`#3/{\ifnextchar<{\threepp/#1`#2`#3/}%
  {\threepp/#1`#2`#3/<0>}}%
\def\twoar(#1,#2){{%
 \scalefactor{0.1}%
 \deltax#1\deltay#2%
 \deltaX=\ifnum\deltax<0-\fi\deltax%
 \deltaY=\ifnum\deltay<0-\fi\deltay%
 \Xend\deltax \multiply \Xend by \deltax%
 \Yend\deltay \multiply \Yend by \deltay%
 \advance\Xend by \Yend \multiply \Xend by 3%
 \ifnum \deltaX > \deltaY%
    \multiply \deltaX by 3 \advance \deltaX by \deltaY%
 \else%
    \multiply \deltaY by 3 \advance \deltaX by \deltaY%
 \fi%
 \multiply\deltax by 500%
 \multiply\deltay by 500%
 \xpos\deltax \multiply \xpos by 3 \divide\xpos by \deltaX%
 \Xpos\deltax \multiply \Xpos by \deltaX \divide \Xpos by \Xend%
 \advance \xpos by \Xpos%
 \ypos\deltay \multiply \ypos by 3 \divide\ypos by \deltaX%
 \Ypos\deltay \multiply \Ypos by \deltaX \divide \Ypos by \Xend%
 \advance \ypos by \Ypos%
 \xy \ar@{=>}(\xpos,\ypos) \endxy%
}\ignorespaces}%
\def\iiixiiipppppp(#1,#2)|#3|/#4/<#5>#6<#7>[#8;#9]{%
 \xpos#1\ypos#2\relax%
 \def\next|##1##2##3##4##5##6##7|{\def\xa{##1}\def\xb{##2}%
 \def\xc{##3}\def\xd{##4}\def\xe{##5}\def\xf{##6}\nextt|##7|}%
 \def\nextt|##1##2##3##4##5##6|{\def\xg{##1}\def\xh{##2}%
 \def\xi{##3}\def\xj{##4}\def\xk{##5}\def\xl{##6}}%
 \next|#3|%
 \def\next<##1,##2>{\deltax##1\deltay##2}%
 \next<#5>%
 \def\next<##1,##2>{\deltaX##1\deltaY##2}%
 \next<#7>%
 \def\next##1{\topw##1\relax%
 \ifodd\topw \def\za{}\else\def\za{\relax}\fi \divide\topw by 2
 \ifodd\topw \def\zb{}\else\def\zb{\relax}\fi \divide\topw by 2
 \ifodd\topw \def\zc{}\else\def\zc{\relax}\fi \divide\topw by 2
 \ifodd\topw \def\zd{}\else\def\zd{\relax}\fi \divide\topw by 2
 \ifodd\topw \def\ze{}\else\def\ze{\relax}\fi \divide\topw by 2
 \ifodd\topw \def\zf{}\else\def\zf{\relax}\fi \divide\topw by 2
 \ifodd\topw \def\zg{}\else\def\zg{\relax}\fi \divide\topw by 2
 \ifodd\topw \def\zh{}\else\def\zh{\relax}\fi \divide\topw by 2
 \ifodd\topw \def\zi{}\else\def\zi{\relax}\fi \divide\topw by 2
 \ifodd\topw \def\zj{}\else\def\zj{\relax}\fi \divide\topw by 2
 \ifodd\topw \def\zk{}\else\def\zk{\relax}\fi \divide\topw by 2
 \ifodd\topw \def\zl{}\else\def\zl{\relax}\fi}%
 \next{#6}%
 \def\next[##1`##2`##3`##4`##5`##6`##7`##8`##9]{%
 \def\nodeA{##1}\def\nodeB{##2}\def\nodeC{##3}%
 \def\nodeD{##4}\def\nodeE{##5}\def\nodeF{##6}%
 \def\nodeG{##7}\def\nodeH{##8}\def\nodeI{##9}}%
 \next[#8]%
 \def\next[##1`##2`##3`##4`##5`##6`##7]{%
 \def\labela{##1}\def\labelb{##2}\def\labelc{##3}%
 \def\labeld{##4}\def\labele{##5}\def\labelf{##6}\nextt[##7]}%
 \def\nextt[##1`##2`##3`##4`##5`##6]{%
 \def\labelg{##1}\def\labelh{##2}\def\labeli{##3}%
 \def\labelj{##4}\def\labelk{##5}\def\labell{##6}}%
 \next[#9]%
 \def\next/##1`##2`##3`##4`##5`##6`##7`##8/{%
 \advance\ypos\deltay
    \ifx\zf\empty \morphism(\xpos,\ypos)/<-/<-\deltaX,0>[\nodeD`0;]\fi
 \morphism(\xpos,\ypos)|\xf|/{##6}/<\deltax,0>[\nodeD`\nodeE;\labelf]%
    \advance \xpos\deltax
    \morphism(\xpos,\ypos)|\xg|/{##7}/<\deltax,0>[\nodeE`\nodeF;\labelg]%
    \ifx\zg\empty \advance\xpos \deltax
        \morphism(\xpos,\ypos)<\deltaX,0>[\nodeF`0;]\fi
    \xpos#1 \advance\ypos\deltay
    \ifx\zd\empty \morphism(\xpos,\ypos)/<-/<-\deltaX,0>[\nodeA`0;]\fi
    \ifx\za\empty \morphism(\xpos,\ypos)/<-/<0,\deltaY>[\nodeA`0;]\fi
    \morphism(\xpos,\ypos)|\xa|/{##1}/<\deltax,0>[\nodeA`\nodeB;\labela]%
 \morphism(\xpos,\ypos)|\xc|/{##3}/<0,-\deltay>[\nodeA`\nodeD;\labelc]%
    \advance \xpos\deltax
     \morphism(\xpos,\ypos)|\xb|/{##2}/<\deltax,0>[\nodeB`\nodeC;\labelb]%
     \morphism(\xpos,\ypos)|\xd|/{##4}/<0,-\deltay>[\nodeB`\nodeE;\labeld]%
     \ifx\zb\empty \morphism(\xpos,\ypos)/<-/<0,\deltaY>[\nodeB`0;]\fi
     \advance\xpos\deltax
 \morphism(\xpos,\ypos)|\xd|/{##5}/<0,-\deltay>[\nodeC`\nodeF;\labele]%
     \ifx\zc\empty \morphism(\xpos,\ypos)/<-/<0,\deltaY>[\nodeC`0;]\fi
     \ifx\ze\empty \morphism(\xpos,\ypos)<\deltaX,0>[\nodeC`0;]\fi
   \nextt/##8/}%
 \def\nextt/##1`##2`##3`##4`##5/{%
 \xpos#1\ypos#2\relax%
   \ifx\zh\empty \morphism(\xpos,\ypos)/<-/<-\deltaX,0>[\nodeG`0;]\fi
   \ifx\zj\empty \morphism(\xpos,\ypos)<0,-\deltaY>[\nodeG`0;]\fi
   \morphism(\xpos,\ypos)|\xk|/{##4}/<\deltax,0>[\nodeG`\nodeH;\labelk]%
   \advance\xpos\deltax
   \morphism(\xpos,\ypos)|\xl|/{##5}/<\deltax,0>[\nodeH`\nodeI;\labell]%
   \ifx\zk\empty \morphism(\xpos,\ypos)<0,-\deltaY>[\nodeH`0;]\fi
   \advance\xpos\deltax
   \ifx\zi\empty \morphism(\xpos,\ypos)<\deltaX,0>[\nodeI`0;]\fi
   \ifx\zl\empty \morphism(\xpos,\ypos)<0,-\deltaY>[\nodeI`0;]\fi
   \xpos#1 \advance\ypos\deltay
    \morphism(\xpos,\ypos)|\xh|/{##1}/<0,-\deltay>[\nodeD`\nodeG;\labelh]%
    \advance \xpos\deltax
    \morphism(\xpos,\ypos)|\xi|/{##2}/<0,-\deltay>[\nodeE`\nodeH;\labeli]%
    \advance \xpos\deltax
 \morphism(\xpos,\ypos)|\xj|/{##3}/<0,-\deltay>[\nodeF`\nodeI;\labelj]}%
 \next/#4/\ignorespaces}%
\def\iiixiiip(#1){\ifnextchar|{\iiixiiipp(#1)}%
  {\iiixiiipp(#1)|aalmrmmlmrbb|}}%
\def\iiixiiipp(#1)|#2|{\ifnextchar/{\iiixiiippp(#1)|#2|}%
    {\iiixiiippp(#1)|#2|/>`>`>`>`>`>`>`>`>`>`>`>/}}%
\def\iiixiiippp(#1)|#2|/#3/{%
    \ifnextchar<{\iiixiiipppp(#1)|#2|/#3/}%
    {\iiixiiipppp(#1)|#2|/#3/<\default,\default>}}%
\def\iiixiiipppp(#1)|#2|/#3/<#4>{\ifnextchar[{\iiixiiippppp(#1)|#2|/#3/%
   <#4>0<0,0>}{\iiixiiippppp(#1)|#2|/#3/<#4>}}%
\def\iiixiiippppp(#1)|#2|/#3/<#4>#5{\ifnextchar<%
   {\iiixiiipppppp(#1)|#2|/#3/<#4>{#5}}%
   {\iiixiiipppppp(#1)|#2|/#3/<#4>{#5}<400,400>}}%
\def\iiixiipppppp(#1,#2)|#3|/#4/<#5>#6<#7>[#8;#9]{%
 \xpos#1\ypos#2\relax%
 \def\next|##1##2##3##4##5##6##7|{\def\xa{##1}\def\xb{##2}%
 \def\xc{##3}\def\xd{##4}\def\xe{##5}\def\xf{##6}\def\xg{##7}}%
 \next|#3|%
 \def\next<##1,##2>{\deltax##1\deltay##2}%
 \next<#5>%
 \deltaX#7
 \topw#6
 \def\next{%
 \ifodd\topw \def\za{}\else\def\za{\relax}\fi \divide\topw by 2
 \ifodd\topw \def\zb{}\else\def\zb{\relax}\fi \divide\topw by 2
 \ifodd\topw \def\zc{}\else\def\zc{\relax}\fi \divide\topw by 2
 \ifodd\topw \def\zd{}\else\def\zd{\relax}\fi}%
 \next%
 \def\next[##1`##2`##3`##4`##5`##6]{%
 \def\nodea{##1}\def\nodeb{##2}\def\nodec{##3}%
 \def\noded{##4}\def\nodee{##5}\def\nodef{##6}}%
 \next[#8]%
 \def\next[##1`##2`##3`##4`##5`##6`##7]{%
 \def\labela{##1}\def\labelb{##2}\def\labelc{##3}%
 \def\labeld{##4}\def\labele{##5}\def\labelf{##6}\def\labelg{##7}}%
 \next[#9]%
 \def\next/##1`##2`##3`##4`##5`##6`##7/{%
 {\ifx\zc\empty\advance\xpos -\deltaX
\relax\morphism(\xpos,\ypos)<\deltaX,0>[0`\noded;]\fi}%
 \morphism(\xpos,\ypos)|\xf|/##6/<\deltax,0>[\noded`\nodee;\labelf]%
 \advance\xpos by \deltax%
 \morphism(\xpos,\ypos)|\xg|/##7/<\deltax,0>[\nodee`\nodef;\labelg]%
 {\ifx\zd\empty \advance\xpos by \deltax
\relax  \morphism(\xpos,\ypos)<\deltaX,0>[\nodef`0;]\fi}%
 \advance\xpos by -\deltax  \advance\ypos by \deltay
 {\ifx\za\empty\advance \xpos by -\deltaX
\relax\morphism(\xpos,\ypos)<\deltaX,0>[0`\nodea;]\fi}%
 \morphism(\xpos,\ypos)|\xa|/##1/<\deltax,0>[\nodea`\nodeb;\labela]%
 \morphism(\xpos,\ypos)|\xc|/##3/<0,-\deltay>[\nodea`\noded;\labelc]%
 \advance\xpos by \deltax%
 \morphism(\xpos,\ypos)|\xb|/##2/<\deltax,0>[\nodeb`\nodec;\labelb]%
 \morphism(\xpos,\ypos)|\xd|/##4/<0,-\deltay>[\nodeb`\nodee;\labeld]%
 \advance\xpos by \deltax%
 \morphism(\xpos,\ypos)|\xe|/##5/<0,-\deltay>[\nodec`\nodef;\labele]%
 \ifx\zb\empty\relax \morphism(\xpos,\ypos)<\deltaX,0>[\nodec`0;]\fi}%
 \next/#4/\ignorespaces}%
\def\iiixii{\ifnextchar({\iiixiip}{\iiixiip(0,0)}}%
\def\iiixiip(#1){\ifnextchar|{\iiixiipp(#1)}%
  {\iiixiipp(#1)|aalmrbb|}}%
\def\iiixiipp(#1)|#2|{\ifnextchar/{\iiixiippp(#1)|#2|}%
    {\iiixiippp(#1)|#2|/>`>`>`>`>`>`>/}}%
\def\iiixiippp(#1)|#2|/#3/{%
    \ifnextchar<{\iiixiipppp(#1)|#2|/#3/}%
    {\iiixiipppp(#1)|#2|/#3/<\default,\default>}}%
\def\iiixiipppp(#1)|#2|/#3/<#4>{\ifnextchar[{\iiixiippppp(#1)|#2|/#3/%
   <#4>{0}<0>}{\iiixiippppp(#1)|#2|/#3/<#4>}}%
\def\iiixiippppp(#1)|#2|/#3/<#4>#5{\ifnextchar<%
   {\iiixiipppppp(#1)|#2|/#3/<#4>{#5}}%
   {\iiixiipppppp(#1)|#2|/#3/<#4>{#5}<400>}}%
\def\node#1(#2,#3)[#4]{%
\expandafter\gdef\csname x@#1\endcsname{#2}%
\expandafter\gdef\csname y@#1\endcsname{#3}%
\expandafter\gdef\csname ob@#1\endcsname{#4}%
\ignorespaces}%
\def\arrow{\ifnextchar|{\arrowp}{\arrowp|a|}}%
\def\arrowp|#1|{\ifnextchar/{\arrowpp|#1|}{\arrowpp|#1|/>/}}%
\def\arrowpp|#1|/#2/[#3`#4;#5]{%
\xfinish=\csname x@#4\endcsname%
\yfinish=\csname y@#4\endcsname%
\advance\xfinish by -\csname x@#3\endcsname%
\advance\yfinish by -\csname y@#3\endcsname%
\morphism(\csname x@#3\endcsname,\csname y@#3\endcsname)|#1|/{#2}/%
<\xfinish,\yfinish>[\csname ob@#3\endcsname`\csname ob@#4\endcsname;#5]%
}%
\def\Loop(#1,#2)#3(#4,#5){\POS(#1,#2)*+!!<0ex,\axis>{#3}\ar@(#4,#5)}%
\def\iloop#1(#2,#3){\xy\Loop(0,0)#1(#2,#3)\endxy}%
     \let \PATHafterPOS\PATHafterPOS@default%
     \let \arsavedPATHafterPOS@@\relax%
     \let\afterar@@\relax%
\xydef@\endxyobj{\if\inxy@\else\xyerror@{Unexpected \string\endxy}{}\fi%
>  \relax%
>   \dimen@=\Y@max \advance\dimen@-\Y@min%
>   \ifdim\dimen@<\z@ \dimen@=\z@ \Y@min=\z@ \Y@max=\z@ \fi%
>   \dimen@=\X@max \advance\dimen@-\X@min%
>   \ifdim\dimen@<\z@ \dimen@=\z@ \X@min=\z@ \X@max=\z@ \fi%
>   \edef\tmp@{\egroup%
>     \setboxz@h{\kern-\the\X@min \boxz@}%
>     \ht\z@=\the\Y@max \dp\z@=-\the\Y@min \wdz@=\the\dimen@%
>     \noexpand\maybeunraise@ \raise\dimen@\boxz@%
>     \noexpand\recoverXyStyle@ \egroup \noexpand\xy@end%
>     \U@c=\the\Y@max \advance\U@c-\the\Y@c%
>     \D@c=-\the\Y@min \advance\D@c\the\Y@c%
>     \L@c=-\the\X@min  \advance\L@c\the\X@c%
>     \R@c=\the\X@max  \advance\R@c-\the\X@c%
>    }\tmp@}%
\gdef\xymerge@MinMax{}%
\xydef@\twocell{\hbox\bgroup\xysave@MinMax\@twocell}%
\xydef@\uppertwocell{\hbox\bgroup\xysave@MinMax\@uppertwocell}%
\xydef@\lowertwocell{\hbox\bgroup\xysave@MinMax\@lowertwocell}%
\xydef@\compositemap{\hbox\bgroup\xysave@MinMax\@compositemap}%
\xydef@\xysave@MinMax{\xdef\xymerge@MinMax{%
   \noexpand\ifdim\X@max<\the\X@max \X@max=\the\X@max\noexpand\fi%
   \noexpand\ifdim\X@min>\the\X@min \X@min=\the\X@min\noexpand\fi%
   \noexpand\ifdim\Y@max<\the\Y@max \Y@max=\the\Y@max\noexpand\fi%
   \noexpand\ifdim\Y@min>\the\Y@min \Y@min=\the\Y@min\noexpand\fi%
  }}%
\xydef@\drop@Twocell{\boxz@ \xymerge@MinMax}%
\xydef@\twocell@DONE{%
  \edef\tmp@{\egroup%
   \X@min=\the\X@min \X@max=\the\X@max%
   \Y@min=\the\Y@min \Y@max=\the\Y@max}\tmp@%
  \L@c=\X@c \advance\L@c-\X@min \R@c=\X@max \advance\R@c-\X@c%
  \D@c=\Y@c \advance\D@c-\Y@min \U@c=\Y@max \advance\U@c-\Y@c%
  \ht\z@=\U@c \dp\z@=\D@c \dimen@=\L@c \advance\dimen@\R@c \wdz@=\dimen@%
  \computeLeftUpness@%
  \setboxz@h{\kern-\X@p \raise-\Y@c\boxz@ }%
  \dimen@=\L@c \advance\dimen@\R@c \wdz@=\dimen@ \ht\z@=\U@c \dp\z@=\D@c%
  \Edge@c={\rectangleEdge}\Invisible@false \Hidden@false%
  \edef\Drop@@{\noexpand\drop@Twocell%
   \noexpand\def\noexpand\Leftness@{\Leftness@}%
   \noexpand\def\noexpand\Upness@{\Upness@}}%
  \edef\Connect@@{\noexpand\connect@Twocell%
   \noexpand\ifdim\X@max<\the\X@max \X@max=\the\X@max\noexpand\fi%
   \noexpand\ifdim\X@min>\the\X@min \X@min=\the\X@min\noexpand\fi%
   \noexpand\ifdim\Y@max<\the\Y@max \Y@max=\the\Y@max\noexpand\fi%
   \noexpand\ifdim\Y@min>\the\Y@min \Y@min=\the\Y@min\noexpand\fi }%
  \xymerge@MinMax%
}%
\begin{document}

\bibliographystyle{utphys}

\newtheorem{theorem}{Theorem}[section]
\newtheorem{conjecture}{Conjecture}
\newtheorem{lemma}[theorem]{Lemma}
\newtheorem{proposition}[theorem]{Proposition}
\newtheorem{corollary}[theorem]{Corollary}
\newtheorem{notation}[theorem]{Notation}
\theoremstyle{definition}
\newtheorem{remark}[theorem]{Remark}
\newtheorem{definition}[theorem]{Definition}

\newcommand{\func}[3]{{#1}\co{#2}\to{#3}}

\newcommand{\ip}[2]{\langle{#1},{#2}\rangle }
\newcommand{\dd}[2]{\frac{\partial {#1}}{\partial {#2}}}
\newcommand{\ddsq}[2]{\frac{\partial^2 {#1}}{\partial {#2}^2}}

\newcommand{\id}[1]{\operatorname{id}_{#1}}
\newcommand{\grmap}[1]{\operatorname{gr}_{#1}}
\newcommand{\crit}[1]{\operatorname{Crit}({#1})}
\newcommand{\critmbl}[1]{\operatorname{MBL}({#1})}
\newcommand{\proj}[1]{\operatorname{proj}_{#1}}
\newcommand{\norm}[1]{\left|{#1}\right|}
\newcommand{\vecnorm}[1]{\left|\left|{#1}\right|\right|}

\newcommand{\cskh}{\operatorname{CKH}_{\rm symp}}
\newcommand{\skh}{\operatorname{KH}_{\rm symp}}
\newcommand{\kh}{\operatorname{KH}}
\newcommand{\sympc}{\operatorname{Symp}}
\newcommand{\proja}{\operatorname{proj}_a}
\newcommand{\projX}{\operatorname{proj}_X}
\newcommand{\projbc}{\operatorname{proj}_{bc}}
\newcommand{\nonreg}{{\text{non-reg}}}
\newcommand{\tame}{{\text{tame}}}

\newcommand{\sfibre}[2]{\EuScript{Y}_{{#1},{#2}}}
\newcommand{\match}[2]{\mathfrak{M}^{#1}_{#2}}
\newcommand{\bra}[1]{\left<{#1}\right|}
\newcommand{\ket}[1]{\left|{#1}\right>}
\newcommand{\braket}[2]{\left<{#1}\vert{#2}\right>}
\newcommand{\interval}[0]{\left[-1,1\right]}
\newcommand{\im}[0]{\operatorname{im} }
\newcommand{\nbd}[0]{\mathbf{N}}
\newcommand{\co}[0]{\colon\thinspace }
\newcommand{\lcan}[0]{\lambda_{\mbox{can}}}
\newcommand{\trans}[0]{ \pitchfork }
\newcommand{\ntrans}[0]{ \!{\not{\!\pitchfork}} }
\newcommand{\ham}[0]{ \operatorname{Ham} }
\newcommand{\conf}[1]{\operatorname{Conf}^0_{#1}(\mathbb{C})}
\newcommand{\confbar}[1]{\overline{\operatorname{Conf}}^0_{#1}(\mathbb{C})}
\newcommand{\reg}{\operatorname{Reg}}

\newcommand{\minushorizresolution}{\;\;\includegraphics{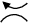}\;}
\newcommand{\plushorizresolution}{\;\;\includegraphics{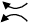}\;}
\newcommand{\minusvertresolution}{\;\;\includegraphics{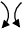}\;}
\newcommand{\plusvertresolution}{\;\;\includegraphics{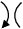}\;}
\newcommand{\minuscrossing}{\;\;\includegraphics{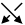}\;}
\newcommand{\pluscrossing}{\;\;\includegraphics{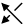}\;}

\title{An invariant of link cobordisms from\\ symplectic Khovanov homology}
\author{Jack W. Waldron}

\maketitle

\begin{abstract}
Symplectic Khovanov homology is an invariant of oriented links defined by
Seidel and Smith \cite{seidelsmith:khsymp} and conjectured to be isomorphic to
Khovanov homology. We define morphisms (up to a global sign ambiguity) between
symplectic Khovanov homology groups, corresponding to isotopy classes of smooth
link cobordisms in $\mathbb{R}^3\!\times\![0,1]$ between a fixed pair of links.
These morphisms define a functor from the category of links and such cobordisms
to the category of
abelian groups and group homomorphisms up to a sign ambiguity.
This provides an extra structure for symplectic Khovanov homology and more generally
an isotopy invariant of smooth surfaces in $\mathbb{R}^4$; a
first step in proving the conjectured isomorphism of
symplectic Khovanov homology and Khovanov homology. The maps themselves are
defined using a generalisation of Seidel's relative invariant of exact Lefschetz
fibrations \cite{seidel:les} to exact Morse-Bott-Lefschetz fibrations with
non-compact singular loci.
\end{abstract}

\tableofcontents

\section{Introduction}

The Jones polynomial $V_\mathfrak{L}$, introduced in \cite{jones:poly}, is an invariant of oriented
links $\mathfrak{L}\subset S^3$, motivated originally by a connection
between the Yang-Baxter equation in statistical mechanics and the braid group.
Importantly, $V_\mathfrak{L}$ satisfies skein relations:
\[t^{-1/2} V_{\!\scalebox{0.6}{\plushorizresolution}}
  +t^{3v/2} V_{\!\scalebox{0.6}{\plusvertresolution}}
  +t^{-1} V_{\!\scalebox{0.6}{\pluscrossing}} = 0
\]
\[
 t^{3v/2} V_{\!\scalebox{0.6}{\minushorizresolution}}
  +t^{1/2} V_{\!\scalebox{0.6}{\minusvertresolution}}
  +t V_{\!\scalebox{0.6}{\minuscrossing}} = 0
\]
The pictures in the skein relations above depict a region of a crossing diagram
of three oriented links, which are identical elsewhere. $v$ is a term, arising as a signed
count of certain crossings, which compensates for the fact that there is no canonical
choice of orientation on one resolution of any crossing.

Together with the normalisation $V_{\rm unknot}=1$, this allows the Jones polynomial to be algorithmically
computed in a straightforward, though computationally intensive, manner from a crossing diagram of a link.
However, the geometric meaning of the Jones polynomial is still poorly understood.
One hopes that an intrinsically geometric definition of the Jones polynomial would
advance geometric applications to knot theory.

Khovanov \cite{khovanov:catjones,khovanov:ftangle} defines a bigraded abelian group
$\kh ^{*,*}(\mathfrak{L})$,
which categorifies the Jones polynomial. It is widely known as the \emph{Khovanov homology}
of $\mathfrak{L}$, although technically a cohomology theory.
The skein relations for the Jones polynomial are replaced by long exact sequences for
Khovanov homology (bi-degrees are indicated on the arrows):
\[
\bfig
 \node a(600,500)[\kh ^{*,*}\!\left(\raisebox{-0.5mm}{\!\!\scalebox{0.8}{\pluscrossing}}\!\right)]
 \node b(1200,0)[\kh ^{*,*}\!\left(\raisebox{-0.5mm}{\!\!\scalebox{0.8}{\plushorizresolution}}\!\right)]
 \node c(0,0)[\kh ^{*,*}\!\left(\raisebox{-1mm}{\!\!\scalebox{0.8}{\plusvertresolution}}\!\right)]
 \arrow[a`b;(0,-1)]
 \arrow[b`c;(-v,-3v-1)]
 \arrow[c`a;(v+1,3v+2)]
\efig
\]
\[
\bfig
 \node a(600,500)[\kh ^{*,*}\!\left(\raisebox{-0.5mm}{\!\!\scalebox{0.8}{\minuscrossing}}\!\right)]
 \node b(1200,0)[\kh ^{*,*}\!\left(\raisebox{-0.5mm}{\!\!\scalebox{0.8}{\minushorizresolution}}\!\right)]
 \node c(0,0)[\kh ^{*,*}\!\left(\raisebox{-1mm}{\!\!\scalebox{0.8}{\minusvertresolution}}\!\right)]
 \arrow[a`b;(-v+1,-3v+2)]
 \arrow[b`c;(v,3v+1)]
 \arrow[c`a;(0,1)]
\efig
\]

A consequence of this is that the Jones polynomial is recovered from a change of
variables on the bigraded Poincar\'e polynomial of $\kh ^{*,*}(\mathfrak{L})$.
\[V_\mathfrak{L} =
	\left[
		\frac
		{\sum_{i,j}(-1)^iq^j\operatorname{dim}(\kh ^{i,j}(\mathfrak{L})\otimes\mathbb{Q})}
		{q+q^{-1}}
	\right]_{q=t^{1/2}}
\]

Khovanov homology is also defined in terms of algorithmically computable algebra
related to crossing diagrams. Its other strengths include, in particular, that
it fits into a topological quantum field theory for knotted-surfaces in $\mathbb{R}^4$.
This property was used by Rasmussen \cite{rasmussen:s} to give a purely combinatorial
proof of Milnor's conjecture on the smooth slice genus of torus knots. The same work
can also be used to construct an exotic smooth structure on $\mathbb{R}^4$.

The topological field theory is described as follows.
Let $\operatorname{\mathbf{Cob}}$ be the category whose objects are oriented
links in $S^3$ and whose morphisms $\mathfrak{L}_1\to\mathfrak{L}_1$
are isotopy classes of smooth link cobordisms
from $\mathfrak{L}_1$ to $\mathfrak{L}_2$ in $S^3\times [0,1]$.
Let $\operatorname{\mathbf{Ab}_2^\pm}$ be the category of finitely generated, bigraded abelian
groups and bigraded homomorphisms defined only up to an overall sign ambiguity.
It is shown \cite{jacobsson:cob} that there is a non-trivial
functor from $\operatorname{\mathbf{Cob}}$
to $\operatorname{\mathbf{Ab}_2^\pm}$, which maps a link $\mathfrak{L}$ to
the Khovanov homology $\kh ^{*,*}(\mathfrak{L})$. To be more precise,
it takes $\mathfrak{L}$ to the Khovanov homology of the crossing
diagram created by projecting a $C^1$--small perturbation of $\mathfrak{L}$
orthogonally to $\{(0,0,z)\in\mathbb{R}^3\}$. One often phrases this result
as ``\emph{Khovanov homology is functorial with respect to smooth link cobordisms}''.

\emph{Symplectic Khovanov homology} $\skh ^*(\mathfrak{L})$ is another
invariant of oriented links, due to
Seidel and Smith \cite{seidelsmith:khsymp}. More recently, it has also been extended
to an invariant of tangles by Rezazadegan \cite{reza}.
For links, it is a singly-graded abelian group, defined using
Lagrangian Floer cohomology in an auxiliary symplectic manifold
$(\sfibre{m}{P},\Omega)$, and Lagrangian submanifolds in $\sfibre{m}{P}$ derived from a presentation of
$\mathfrak{L}$ as a braid closure. Section~\ref{section:khsymp} introduces
symplectic Khovanov homology in more detail, as well as extending the definition
to work with bridge diagrams of links. In doing so, we ignore orientations of the links, which amounts
to replacing the absolute grading on symplectic Khovanov homology by a relative grading.
For the purposes of this paper, we do not require a more precise understanding of the absolute grading.
It suffices to know that the absolute grading always exists, given a choice of orientation.

Seidel and Smith conjecture that symplectic Khovanov homology
is in fact the same as Khovanov homology. Since $\skh ^*(\mathfrak{L})$
has only one grading, the conjecture is usually phrased as:

\begin{conjecture}
\label{conjecture:khiskhsymp}
There is a canonical isomorphism
$\skh ^k(\mathfrak{L})\cong\bigoplus_{i-j=k}\kh ^{i,j}(\mathfrak{L})$
\end{conjecture}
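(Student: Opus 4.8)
The plan is to use the cobordism functor constructed in this paper as the rigidifying structure that pins symplectic Khovanov homology to Khovanov homology, exploiting the fact that Khovanov homology is itself determined by its value on the unknot together with the maps assigned to elementary cobordisms --- the Frobenius-algebra/TQFT structure in the sense of Khovanov and Bar-Natan. The first task is to endow $\skh$ with the corresponding structure: show that the Floer invariant assigns to the one-component unknot the ring $A = \mathbb{Z}[X]/(X^2)$ (essentially the Seidel--Smith unknot computation together with a Floer-product calculation recovering the multiplication); show that the cobordism maps this paper associates to births, deaths and saddles realise the unit, counit, multiplication and comultiplication of $A$; and establish a skein long exact sequence for $\skh$ whose connecting homomorphism is, up to sign, the elementary saddle cobordism map supplied by this paper. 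Here it helps that the global sign ambiguity in the cobordism maps built here is of the same type as the ambiguity in Jacobsson's functoriality for $\kh$, so the two $\mathbf{Ab}_2^{\pm}$-valued functors are compared on equal footing.

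With these ingredients I would then construct the isomorphism by induction on the number of crossings of a bridge or braid diagram for $\mathfrak{L}$. The base case is the unlink, where both sides are $A^{\otimes k}$ in matching relative gradings. For the inductive step, resolving a chosen crossing yields compatible skein triangles on the two sides; the inductive hypothesis identifies the two resolution terms, the saddle cobordism map identifies the two connecting homomorphisms, and the five lemma produces the isomorphism for $\mathfrak{L}$, automatically natural with respect to cobordisms by the way it is assembled. Put differently, one is showing that Khovanov's cube-of-resolutions complex is quasi-isomorphic to the complex built from the Floer complexes of the resolution Lagrangians with the saddle maps as differentials.

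I expect the main obstacles to be two. The first is proving the symplectic skein exact triangle at all: the Lagrangian cycle of a diagram with a crossing and those of its two resolutions must sit in an exact triangle in the relevant Fukaya-type category, which is a genuine piece of symplectic geometry in the lineage of Seidel's exact triangle for Dehn twists and the surgery triangles of Heegaard Floer theory, here complicated by exactly the non-compact singular loci that forced the Morse-Bott-Lefschetz formalism of this paper. The second, and I think the deeper one, is recovering the bigrading: the right-hand side is bigraded, while $\skh$ carries only the single grading $k = i - j$, so the statement as written requires manufacturing an extra grading on the symplectic side --- for instance from a circle action, or an equivariant or filtered refinement of the Floer complex whose associated graded reproduces $\kh^{*,*}$ --- and matching it with Khovanov's $q$-grading. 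A singly-graded version of the conjecture would follow from the first two steps modulo the exact triangle; it is the bigraded refinement that is the real content, and it is the step least likely to admit a routine treatment. An alternative route would bypass the cobordism functor and instead prove a quasi-isomorphism between the symplectic ``arc algebra'' and Khovanov's arc algebra $H^n$, realising both homologies as derived tensor products, but the grading-matching difficulty just described would reappear there as well.
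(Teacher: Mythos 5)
You have written a strategy sketch for a statement that the paper itself does not prove: Conjecture~\ref{conjecture:khiskhsymp} is attributed to Seidel and Smith and remains open in this paper. The paper only positions Theorem~\ref{theorem:main} as ``a first step,'' and the only concrete route it gestures at is essentially yours: combine the cobordism functor with a generalisation of Wehrheim--Woodward's exact triangle to obtain skein exact triangles for $\skh$, and thence (the paper says) a spectral sequence with $\mathbb{Z}/2$--coefficients from $\kh$ to $\skh$. So there is no paper proof to compare against, and as a strategy outline your proposal is broadly aligned with the author's own stated expectation.

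Two substantive corrections, though. First, you have misread the content of the conjecture. As stated, $\skh^k(\mathfrak{L})\cong\bigoplus_{i-j=k}\kh^{i,j}(\mathfrak{L})$ is an isomorphism of \emph{singly} graded groups: the right-hand side collapses Khovanov's bigrading along the diagonals $i-j=k$. No ``extra grading on the symplectic side'' is required by this statement; the bigraded refinement you describe (an additional grading on $\skh$ recovering the $q$--grading, e.g.\ from a circle action) is a strictly stronger assertion than what is conjectured here. Identifying it as ``the real content'' of Conjecture~\ref{conjecture:khiskhsymp} overstates what the conjecture claims.

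Second, your inductive five-lemma step is too quick. You need (a) the connecting homomorphism of the symplectic skein triangle to be identified, up to sign, with the saddle cobordism map of Section~\ref{subsection:saddles}, and (b) the two long exact sequences to have matching degree shifts and to interlock compatibly across all crossings so that the induction closes without extension problems. Point (a) is itself a nontrivial theorem in the lineage of Seidel's Dehn-twist triangle, complicated here by precisely the non-compact singular loci that motivated the MBL machinery, and the paper does not claim it. Point (b) is where the the argument typically degrades from a quasi-isomorphism to a spectral sequence: the paper itself predicts only a spectral sequence over $\mathbb{Z}/2$ from $\kh$ to $\skh$, not the $E_1$-degeneration your five-lemma argument would require. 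You would still need to rule out higher differentials, and nothing in the proposal addresses this.
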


Here, one has also to be careful with the meaning of \emph{canonical}. Section~\ref{section:khsympcross}
defines the symplectic Khovanov homology of a crossing
diagram up to canonical isomorphism, thus allowing one to be precise about the meaning
of the above conjecture.

The main aim of this paper is to exhibit the same functoriality with respect to smooth
link cobordisms for symplectic Khovanov homology as exists in the setting of Khovanov
homology. Recent work of Rezazadegan \cite{reza:quilts} has independently exhibited similar homomorphisms between symplectic Khovanov
homology groups (for tangles) corresponding to elementary cap, cup and saddle cobordisms.
He also exhibits some important structure, relevant to Conjecture~\ref{conjecture:khiskhsymp}, but
does not show that these homomorphisms give an invariant of link cobordisms.

\begin{theorem}
\label{theorem:main}
Let $\operatorname{\mathbf{Ab}_1^\pm}$ be the category of finitely generated, singly graded abelian
groups and graded homomorphisms, defined only up to an overall sign ambiguity.
There is a non-trivial
functor from $\operatorname{\mathbf{Cob}}$
to $\operatorname{\mathbf{Ab}_1^\pm}$, which maps a link $\mathfrak{L}$ to
the symplectic Khovanov homology $\skh ^*(\mathfrak{L})$.
\end{theorem}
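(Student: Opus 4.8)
The plan is to follow the strategy used to establish functoriality of ordinary Khovanov homology (Jacobsson \cite{jacobsson:cob}), adapting each step to the symplectic setting. The central reduction is the \emph{movie move theorem} of Carter and Saito: after a $C^1$--small perturbation, every smooth link cobordism in $\mathbb{R}^3\times[0,1]$ projects to a one-parameter family of bridge diagrams --- a \emph{movie} --- whose consecutive frames differ by a Reidemeister move or by an elementary Morse modification (a birth, a death, or a saddle); moreover two movies presenting isotopic cobordisms are related by a finite sequence of \emph{movie moves}, drawn from an explicit finite list. So it suffices to: (i) assign a graded homomorphism $\skh^*(\mathfrak{L}_1)\to\skh^*(\mathfrak{L}_2)$, well-defined up to sign, to each elementary string of frames, compatibly with vertical composition of movies; and (ii) verify that the two movies appearing in each movie move induce the same homomorphism up to sign.

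For (i), the homomorphisms attached to Reidemeister moves are the canonical isomorphisms furnished by the bridge-diagram invariance of $\skh^*$ set up in Section~\ref{section:khsympcross}; the homomorphisms attached to births, deaths and saddles are defined via the relative invariant of an associated exact Morse-Bott-Lefschetz fibration, the generalisation of Seidel's construction \cite{seidel:les} announced in the abstract. Concretely, each elementary cobordism is modelled by such a fibration over a strip $\mathbb{R}\times[0,1]$ (or over a disk with two boundary marked points), with Lagrangian boundary conditions the Floer-theoretic Lagrangians associated with $\mathfrak{L}_1$ and $\mathfrak{L}_2$; counting pseudoholomorphic sections asymptotic to generators of the two Floer complexes yields a chain map, hence a map on $\skh^*$. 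The sign ambiguity enters because the coherent orientations of these section moduli spaces depend on auxiliary choices; fixing such choices along a given movie pins the induced map down to a single global sign, which is exactly the information $\operatorname{\mathbf{Ab}_1^\pm}$ retains. Compatibility with vertical composition is a gluing statement --- concatenating base strips corresponds, after a neck-stretching and broken-section analysis, to composing the relative invariants --- and the identity movie visibly induces the identity.

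The main work, and the main obstacle, is (ii): invariance under every movie move. Each move is handled by an explicit local model: one builds a single Morse-Bott-Lefschetz fibration over an enlarged base, restricts it in the two ways that reproduce the two sides of the move, and invokes the gluing theorem together with the uniqueness (up to sign) of the relative invariant to equate the resulting homomorphisms. Several moves --- notably those creating or destroying a closed component, and the framing-change moves --- commute only up to sign, which is precisely why the target category must carry the overall sign ambiguity. Underneath all of this lies the analytic package for the relative invariant in the Morse-Bott-Lefschetz setting with \emph{non-compact} singular locus: transversality for sections, a compactness theorem controlling escape towards the non-compact ends of the singular locus (using exactness and the geometry of $\sfibre{m}{P}$), and the associated gluing theorems. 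Once that package is established, the movie-move verifications reduce to a finite, if lengthy, case analysis.

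Finally, non-triviality: the identity cobordism of a link is sent to the identity of $\skh^*(\mathfrak{L})$, which is nonzero for, say, the unknot, so the functor is not the zero functor; more substantively, the saddle cobordism joining the unknot to the two-component unlink induces a nonzero, non-invertible map, so the functor genuinely records cobordism data.
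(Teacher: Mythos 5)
Your overall plan --- reduce to movies, assign maps to Reidemeister moves and to elementary Morse events, and then check Carter--Saito movie moves --- matches the architecture of Section~\ref{section:cobordisms}. But three load-bearing pieces of the argument are off.

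First, the treatment of births and deaths. You propose to model them by an exact MBL--fibration over a strip and count pseudoholomorphic sections. This cannot work as stated: a creation or annihilation cobordism changes the number of bridge points from $2n$ to $2n\pm 2$, so the fibres at the two ends of the strip would lie in different spaces $\sfibre{n}{P}$ and $\sfibre{n\pm1}{P'}$, of different dimension, and no single symplectic fibration over $\mathbb{R}\times[0,1]$ has that. Only the saddle cobordisms preserve $n$, which is why the paper's admissible map $u_\delta$ into $\confbar{2n}$ and the relative invariant apply to saddles alone. Births and deaths are defined \emph{algebraically}: Theorem~\ref{theorem:splitskh} (the K\"unneth splitting for unlinked components, itself a substantial localisation argument in Section~\ref{section:splitting}) gives $\skh(D')\cong\skh(D)\otimes H^*(S^2)$, and the cup and cap maps are then written down by hand on that tensor product. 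Your proposal has no mechanism to produce these maps.

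Second, the sign ambiguity. You attribute it to coherent orientations of section moduli spaces, but the paper's relative invariants are canonically signed once the usual spin/orientation data is fixed. The sign ambiguity actually originates in the splitting: the isomorphism of Theorem~\ref{theorem:splitskh} is canonical only on the $\skh(P_1,A_1,B_1)$ factor and is defined only up to a grading-preserving automorphism of the $H^*(S^2)$ factor, which is precisely a sign. That is why the creation and annihilation maps are defined only up to sign, and that ambiguity propagates to everything else.

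Third, the movie-move case analysis. You propose to handle each move by constructing an explicit MBL--fibration over an enlarged base and invoking gluing. In the paper, most movie moves are dispatched for free because both sides are isotopic isotopies of admissible links, hence induce the same map by Lemma~\ref{lemma:isotopyoffibrations}. The residual work concerns moves 9, 10 and 12, and these are not handled by a geometric fibration over a bigger base: move 9 is the definition of the stabilisation map; move 10 uses the commuting relation $f_\theta\circ f_\delta=f_\delta\circ f_{\theta_\delta}$ (Lemma~\ref{lemma:commuting}); and move 12 further requires the ``switching move'' (Proposition~\ref{prop:switchcob}), whose proof is an indirect argument combining Lemma~\ref{lemma:commuting}, the splitting theorem, and the explicit $S^1$-equivariant computation of Lemma~\ref{lemma:stabmap}. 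Without the splitting theorem, the stabilisation/destabilisation calculus of Section~\ref{section:stab}, and the switching move, the verification step in your outline has no content to draw on.
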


In particular, some care is needed to define the symplectic Khovanov homology
up to canonical isomorphism for a general link in $\mathbb{R}^3$. In fact, the
same trick of defining symplectic Khovanov homology for crossing diagrams together
with a small perturbation of the link works here too.

One can consider Theorem~\ref{theorem:main} as a first step in proving
Conjecture~\ref{conjecture:khiskhsymp}, since Khovanov homology
is defined in terms of elementary cobordisms between unlinked unions of unknots
(and the homomorphisms corresponding to general cobordisms arise directly from these).

Combining the theorem with a simple generalisation of Wehrheim and Woodward's exact sequence \cite{wewo:les}
(one has to deal with non-compactness issues similar to those in Section~\ref{section:sfib})
one should already be able to exhibit skein exact triangles for
symplectic Khovanov homology. In particular, Theorem~\ref{theorem:main} should imply the existence of a spectral sequence
with $\mathbb{Z}/2$--coefficients from Khovanov homology to symplectic Khovanov homology
(cf. \cite{ozsz:ss}).

We also prove the less surprising, though previously unproven, result that the symplectic
Khovanov homology of an unlinked union of links splits as a tensor product at the level
of chain complexes, so is described by the K\"unneth isomorphism on the level of homology.
This
result was previously only shown in the case that one of the two links was an
unknot \cite[Proposition 56]{seidelsmith:khsymp}.

\begin{theorem}
\label{theorem:splitintroversion}
Let $\mathfrak{L}$ be an unlinked union of links $\mathfrak{L_1}$ and $\mathfrak{L_2}$.
Then $\skh ^*(\mathfrak{L})$ is the K\"unneth product of
$\skh ^*(\mathfrak{L_1})$ and $\skh^*(\mathfrak{L_2})$ (i.e. tensor product at the level of
chain complexes) up to an overall
grading shift.
\end{theorem}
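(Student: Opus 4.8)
The plan is to reduce the statement to a geometric decomposition of the auxiliary symplectic manifold $\sfibre{m}{P}$ and its Lagrangians associated to a bridge/braid presentation of the split union $\mathfrak{L}=\mathfrak{L}_1\sqcup\mathfrak{L}_2$. Choose a bridge diagram for $\mathfrak{L}$ of the form obtained by placing a diagram for $\mathfrak{L}_1$ (on $m_1$ strands, say) next to a disjoint diagram for $\mathfrak{L}_2$ (on $m_2$ strands), so that $m=m_1+m_2$ and the branch points $P$ split as $P = P_1\sqcup P_2$ with $P_1$ and $P_2$ confined to disjoint discs in $\mathbb{C}$. The key geometric claim, which I would prove first, is that for such a split configuration the Seidel--Smith space $\sfibre{m}{P}$ is symplectically deformation equivalent to a product $\sfibre{m_1}{P_1}\times\sfibre{m_2}{P_2}$, compatibly with the vanishing-cycle Lagrangians: the Lagrangian $L_{\mathfrak{L}}$ attached to the split bridge diagram decomposes as a product $L_{\mathfrak{L}_1}\times L_{\mathfrak{L}_2}$ of the corresponding Lagrangians in the factors. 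This is essentially a statement about the nilpotent slice / Hilbert scheme model: when the spectral data splits into two blocks with disjoint spectra, the relative Hilbert scheme of the family splits as a product, and the matching paths defining the Lagrangians in each block stay within their block.

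Granting that, the second step is purely Floer-theoretic: once we have Lagrangians $L_{\mathfrak{L}} = L_{\mathfrak{L}_1}\times L_{\mathfrak{L}_2}$ and $L'_{\mathfrak{L}} = L'_{\mathfrak{L}_1}\times L'_{\mathfrak{L}_2}$ inside a product symplectic manifold $(M_1\times M_2,\,\omega_1\oplus\omega_2)$, with a split almost complex structure $J_1\times J_2$, every Floer trajectory for the product data projects to a pair of Floer trajectories in the factors, and conversely; the relevant transversality and compactness (the Seidel--Smith setting provides the needed exactness/monotonicity and $c_1=0$ conditions, and the non-compactness is controlled exactly as in the single-factor case) can be arranged with product data. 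Hence the Floer chain complex $CF^*(L'_{\mathfrak{L}},L_{\mathfrak{L}})$ is isomorphic, as a complex, to the tensor product $CF^*(L'_{\mathfrak{L}_1},L_{\mathfrak{L}_1})\otimes CF^*(L'_{\mathfrak{L}_2},L_{\mathfrak{L}_2})$, with differential the graded Leibniz sum. Passing to homology and invoking the algebraic K\"unneth theorem over $\mathbb{Z}$ (there is no Tor ambiguity issue at the level of chain complexes, which is what the theorem asserts) yields the claim, the overall grading shift accounting for the normalisation conventions relating the product grading to the intrinsic relative grading on $\skh^*$.

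The main obstacle I anticipate is the first step: verifying that the product decomposition of $\sfibre{m}{P}$ and of the Lagrangians holds honestly, and not merely up to an uncontrolled homotopy. One has to check that the specific construction of $\sfibre{m}{P}$ (as an affine variety cut out inside a slice of $\mathfrak{sl}_{2m}$, or via the relevant Hilbert scheme) is compatible with the block-diagonal decomposition, that the symplectic form and the Liouville structure respect the splitting up to deformation within the class of allowed exact structures, and that the vanishing cycles built from the chosen matching paths in the split diagram are genuinely of product type — including checking that no ``cross terms'' appear when strands of $\mathfrak{L}_1$ and $\mathfrak{L}_2$ are braided past one another in an intermediate isotopy. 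A secondary, more technical point is ensuring the regularity of the split almost complex structure $J_1\times J_2$ for the product moduli spaces; this is handled by a standard argument showing that generic $J_i$ in each factor suffice, because the product moduli space fibres over the factor moduli spaces. Finally, one must track the grading shift carefully, using the fact (granted in Section~\ref{section:khsymp}) that the absolute grading exists given orientations, so that the relative gradings multiply correctly.
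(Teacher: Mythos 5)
Your overall shape is right --- choose a split bridge diagram, get product Lagrangians, and run a Floer K\"unneth argument with split almost complex structures. But the key geometric claim you rest on, that for split $P=P_1\sqcup P_2$ the whole fibre $\sfibre{m}{P}$ is symplectically deformation equivalent to $\sfibre{m_1}{P_1}\times\sfibre{m_2}{P_2}$, is much stronger than what the paper establishes, and I do not believe it is true (or at any rate it is not available). The paper's Proposition~\ref{prop:semisplit}/Corollary~\ref{corollary:semican} give only a \emph{local} biholomorphic model: a tubular neighbourhood of the singular locus $X^l\sfibre{k}{P_1}$ inside $S_{k+l}$ is identified with a neighbourhood of $\sfibre{k}{P_1}\times\{X^l\}$ in $\sfibre{k}{P_1}\times S_l$, compatibly with the fibration. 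There is no claim, and no reason to expect, that this local model extends to all of $\sfibre{m}{P}$. The Hilbert-scheme heuristic you invoke (``when the spectral data splits into two blocks the relative Hilbert scheme splits'') is also wrong as stated: the Milnor fibre $M_m(P)$ in Manolescu's picture is a single connected surface even for split $P$, and its Hilbert scheme does not factor.

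What actually does the work in the paper, and what is missing from your proposal, is the \emph{localisation}: one must show that the iterated vanishing cycle Lagrangians can be deformed to lie entirely inside this local product chart, and that the exact K\"ahler structure can be deformed, within a convexly enclosed region, to one that honestly splits there. Concretely the paper (a) chooses a one-parameter family of plurisubharmonic functions $\rho_k+s\rho_V$ whose sublevel sets pinch down onto the singular locus as $s\to\infty$, (b) uses a $\mathbb{C}^*$-action on $S_l$ to rescale the $\mathfrak{L}_2$-factor bridge diagram and its Lagrangians until they fit inside the model chart, and (c) runs through a chain of five $\Theta$-forms, interpolating through a flattened form $\Theta_{\rm flat}$, all K\"ahler where needed, so that the induced continuation maps are isotopies. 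None of this is automatic, and the difficulty you flag (``verifying that the product decomposition$\ldots$holds honestly'') is exactly where the proof's content lies; your proposal names the obstacle but does not supply the mechanism that overcomes it. So while your Floer-theoretic step (2) is essentially the same as the paper's (Lemma~\ref{lemma:twistedproductfh}-style bijection of moduli spaces with split $J$), the entry point is wrong: you should be proving a local tubular-neighbourhood splitting plus a shrinking/convexity argument, not a global deformation equivalence of the whole fibre.
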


The underlying motivation for the proofs of both results is in the geometry of
configuration spaces $\conf{2n}$ and $\confbar{2n}$. In the construction of
symplectic Khovanov homology, one uses the following correspondences to present
links in terms of these configuration spaces:
\begin{itemize}
 \item A braid joining a configuration $P\in\conf{2n}$ of points in the plane
to another configuration $Q$ corresponds to a path $P$ to $Q$ in the configuration space.
 \item Let $\gamma:[0,1]\to\confbar{2n}$ be a \emph{vanishing path}; that is, a
smooth path hitting $\confbar{2n}\setminus\conf{2n}$ transversely, only at
$\gamma(0)=(0,0,\mu_1,\ldots,\mu_{2n-2})$ such that the $\mu_i$ are pairwise distinct. Then
$\gamma$ corresponds to a $(2n-2,2n)$ tangle between configurations $\gamma(0)$ and $\gamma(1)$.
\end{itemize}

Suppose we have a map $u:\overline{\mathbb{D}}\to \confbar{2n}$
intersecting $\confbar{2n}\setminus\conf{2n}$ transversely only on $\mathbb{D}$ and only
in configurations $(0,0,\mu_1,\ldots,\mu_{2n-2})$, such that the $\mu_i$ are pairwise distinct.
Then from this one can construct a smooth braid cobordism from the trivial braid at $u(1)$ to
the braid described by $u(\partial{\overline{\mathbb{D}}})$. In fact, braid cobordisms
up to isotopy, correspond to isotopy classes of such maps. For details on this correspondence
see Section~\ref{section:cobordisms}.

With the further assumption that $u$ is a holomorphic embedding near where it intersects
$\confbar{2n}\setminus\conf{2n}$, there is a natural construction of a singular symplectic fibration over
$\overline{\mathbb{D}}$. These have fibres over $\pm 1$ which are the
the auxiliary symplectic manifolds $\sfibre{n}{u(\pm 1)}$, in which $\skh^*$ is defined as a Floer cohomology group.
In a manner motivated by Seidel's relative invariant of exact Lefschetz fibrations \cite{seidel:les},
one would then like to define morphisms between symplectic Khovanov homology groups by
counting holomorphic sections of these fibrations.

In fact, the above describes essentially the method of this paper. The fibrations are
all of a particularly nice form (exact Morse-Bott-Lefschetz fibrations) and relative
invariants can be defined in an analogous manner. However, essential non-compactness issues
(arising from non-compactness of the singular loci)
cause problems for convexity, gluing and even defining symplectic parallel transport in
these fibrations.

Section~\ref{section:sfib} develops general tools for
studying relative invariants in these fibrations. In particular, it
is also shown in general how to construct exact Morse-Bott-Lefschetz fibrations
together with relative invariants from maps of surfaces into singular holomorphic fibrations
of Stein manifolds.
Section~\ref{section:twistprod} then develops some specific tools for the calculation of Floer cohomology
and relative invariants of symplectic associated bundles
necessary for proof of Theorem~\ref{theorem:main} given in the later sections.

\subsection*{Acknowledgments}
The author would like to thank the anonymous referee who pointed out an error in an earlier version of the paper,
and also Ivan Smith and Dominic Joyce for important discussions relating to the correction.

\section {Floer cohomology and singular symplectic fibrations}
\label{section:sfib}

Given a pair $L$, $L^\prime$ of closed, connected Lagrangian submanifolds
of a symplectic manifold $M$ satisfying certain conditions, one can define
a relatively graded abelian group $HF(L,L^\prime)$. This is the Floer
cohomology, or in our case, the ``Lagrangian
intersection Floer cohomology''. The cohomology is obtained from a chain
complex made of formal sums (over $\mathbb{Z}$) of transverse intersections
of the Lagrangians and a differential defined by counting holomorphic
strips with boundary on the Lagrangians which interpolate between
intersections.

\begin{remark}
\label{remark:fh}
An example of conditions in which the Floer cohomology is defined is given
by the following:
\begin{itemize}
\item $M$ is a K\"ahler manifold on which the symplectic form is exact
and the underlying complex structure makes $M$ a Stein manifold
\item $c_1(M)=0$ and $H^1(M)=0$
\item $H_1(L)=H_1(L^\prime)=0$ 
\item $w_2(L)=w_2(L^\prime)=0$ (equivalently $L$, $L^\prime$ are spin)
\end{itemize}
These conditions are satisfied where Floer cohomology is used for the
definition of symplectic Khovanov homology in \cite{seidelsmith:khsymp}.
Unless otherwise mentioned these are the conditions under which we will 
use Floer cohomology in this paper.
\end{remark}

Floer cohomology is defined up to canonical isomorphism, even
when each Lagrangian is specified only up to compactly supported
Hamiltonian isotopy. These isotopies and canonical isomorphisms
are used to define the cohomology even when
the Lagrangians do not intersect transversely.

\begin{remark}
The condition that $L$, $L^\prime$ be spin is necessary only to use $\mathbb{Z}$
coefficients for $HF(L,L^\prime)$. Without it Floer cohomology is still defined with
$\mathbb{Z}/2$ coefficients (provided $L,L^\prime$ are orientable.

$L$, $L^\prime$ being spin implies the orientability of the moduli spaces of holomorphic
strips (see Lemma~22.11 of \cite{fooo}), the counting of which defines the differential.
The orientation gives a consistent choice of signs for this counting process.
\end{remark}

A symplectic vector bundle (cf. \cite{mcduff:sfib}) is a vector bundle
$E\to B$ with a
smooth choice of skew symmetric bilinear form on each fibre (i.e.\ a
section $\Omega$ of $\Lambda^2 E$), which is non-degenerate on each
fibre.
This is the local (first order) model for a symplectic fibration (where
$\Omega$ is instead a closed 2--form on the total space).

To be more precise, an \emph{exact symplectic fibration} is a manifold E
with corners and a smooth fibration $\func{\pi}{E}{B}$ equipped with an
exact 2--form
$\Omega = d\Theta$ on E whose restriction to fibres of E is a
symplectic form. We shall also require that the corners of E are
precisely the boundary points of the fibres over $\partial B$.

Non-degeneracy of $\Omega$ on the vertical tangent spaces
$TE^v=\ker{D\pi}$ means that we can define horizontal tangents to be
$TE^h=(\ker{D\pi})^{\perp_\Omega}$. This defines the \emph{symplectic
connection} and \emph{symplectic parallel transport} over any path $\gamma$ in the base.
As long as points do not flow under symplectic parallel transport off of
the boundary of $E$, the symplectic parallel transport defines maps between
the fibres over the start and end points of $\gamma$.
These maps are symplectomorphisms between the fibres.
Isotopic paths in the base yield parallel transport maps which
differ by exact symplectomorphisms.

In \cite{seidel:les} these fibrations are generalised to
\emph{exact Lefschetz fibrations} (over surfaces),
by allowing complex non-degenerate singularities of $\pi$.
The monodromy by parallel transport once
anticlockwise around such a singular value in the base is then a Dehn
twist $\sigma$ in the Lagrangian vanishing cycle associated to the
singular point. Take an exact Lefschetz fibration over the infinite
strip $\mathbb{R}\times[0,1]$ which has trivialised symplectic
parallel transport over the ends (giving well-defined fibres at
$\pm\infty$). One assigns to the fibre at $+\infty$ a pair of exact
Lagrangian submanifolds $L_{+\infty}^0$, $L_{+\infty}^1$.
Extending these by symplectic parallel transport over the boundaries
$\{0\}\times\mathbb{R}$, $\{1\}\times\mathbb{R}$ respectively, Seidel
defines a map from the Floer cohomology in the fibre at $-\infty$ to
that in the fibre at $+\infty$.

\[HF(L_{-\infty}^{top}=\sigma L_{+\infty}^{top},L_{-\infty}^{bottom})
\to HF(L_{+\infty}^{top},L_{+\infty}^{bottom})\]

\begin{figure}[h]
\centering\scalebox{0.6}{\input{Fibration1.pstex_t}}
\caption{A basic Lefschetz fibration over the infinite strip with one
singular fibre. The orientation indicated on the boundary is such as to
make the total monodromy a single positive Dehn twist $\sigma$
in the vanishing cycle. The fibres at $\pm\infty$ have been
identified by symplectic parallel transport along the lower boundary,
so the monodromy occurs entirely on the upper boundary.}
\label{fig:fibration1}
\end{figure}

\subsection{Exact Morse-Bott-Lefschetz fibrations}
\label{section:mblfibrations}

Working with $\skh $, we have a natural construction of singular
symplectic fibrations from braid cobordisms. The singularities that arise
in this construction have a slightly more general form than those of the
exact Lefschetz fibrations, considered in \cite{seidel:les}. In this section, I
describe the corresponding construction for exact
Morse-Bott-Lefschetz (MBL) fibrations over surfaces.

\begin{definition}
\label{definition:exactmbl}
An exact \emph{MBL--fibration} is
a collection $(E,\pi,\Omega,\Theta,J_0,B,j)$ such that:

\begin{enumerate}[(1)]
\item $E$ is a smooth, not necessarily compact, manifold with boundary $\partial E$.
\item $B$ is a Riemann surface with complex structure $j$, homeomorphic to
$\overline{\mathbb{D}}$ with finitely many boundary points removed.
\item $\func{\pi}{E}{B}$ is a smooth map with
$\partial E=\pi^{-1}(\partial B)$ and such that
$\func{\pi|_{\partial E}}{\partial E}{\partial B}$
is a smooth fibre bundle.
\item $\Omega=d\Theta$ is an exact 2--form on $E$, non-degenerate
on $TE^v:=\ker{D\pi}$ at every point in $E$.
\item $\pi$ has finitely many critical values, all in the interior of $B$.
\item $J_0$ is an almost complex structure defined on some subset of $E$ which contains
a neighbourhood of the set $\crit\pi$ of critical points and the complement $U$ of some fibrewise compact subset of $E$.
\item 
\label{condition:omegajcompat} $\pi$ is $(J_0,j)$--holomorphic and $\Omega(.,J_0.)|_{(TE^v)^{\otimes 2}}$
is everywhere symmetric and positive definite (where $J_0$ is defined).
\item $J_0$ preserves $TE^h$ on $U$.
\item $\Omega$ is a K\"ahler form for $J_0$ on some open neighbourhood of
$\crit{\pi}$.
\item $\crit\pi$ is smooth and the complex Hessian of $\pi$ is
non-degenerate on complex complements of $T\crit\pi$ in $TE$.
\end{enumerate}
\end{definition}

Seidel's exact Lefschetz fibrations have boundary in the fibre direction near which
there is a trivialisation of $E$ compatible with $\Omega$ and $\Theta$ (c.f. \cite{seidel:les}).
One cannot expect such trivialisations at boundaries to exist for exact MBL-fibrations, since
the singular locus can escape to infinity in a fibre. For this reason, we don't require
there to be trivialisations. We compensate for this by taking significantly more
care with convexity and gluing of exact MBL-fibrations. This is the main difficult content of
Section~\ref{section:flattening}.

We will consider exact MBL-fibrations with bases $B$ which are of a particular form.
Namely $B$ should be a Riemann surface with finite sets $I^\pm$ of \emph{ends} (see below),
not both empty.
The ends may be of two forms:

\begin{definition}[Striplike ends (cf. \cite{seidel:les})]
A \emph{striplike end} $e\in I^\pm$ of a surface $B$ is a
proper holomorphic embedding
\[\func{\gamma_e}{[0,\infty)\times[0,1]}{B}\]
(with the standard complex structure on $[0,\infty)\times[0,1]\subset\mathbb{C}$)
such that $\gamma_e^{-1}(\partial B)=[0,\infty)\times\{0,1\}$.

An exact MBL--fibration is \emph{trivial over the striplike end $e$}
if over the image of $\gamma_e$ it is non singular and isomorphic as an exact
symplectic fibration to $[0,\infty)\times[0;1]\times E_z$ for
some fibre $E_z$. Here one takes $\Omega$ and $\Theta$ pulled back
by the projection to $E_z$, and $J_0$ split as the sum of an almost complex structure
on the $E_z$--factor and the standard almost complex structure on the
$[0,\infty)\times[0,1]$--factor.
\end{definition}

\begin{definition}
A \emph{boundary marked point} $z\in \partial B$ together with a proper holomorphic embedding
\[\func{\gamma_e}{[0,\infty)\times[0,1]}{B\setminus z}\]
such that $\gamma_e^{-1}(\partial B)=[0,\infty)\times\{0,1\}$
and $\gamma_e(x,t)\to z$ as $x\to\infty$ may also be considered
an end. Exact MBL-fibrations are not required to be trivial over these ends.
\end{definition}

Ends given by boundary marked points can be viewed as striplike ends without
the trivialisation and striplike ends can be completed, by adding a single
\emph{fibre at infinity}, to give boundary marked points. Switching between these 
two settings will be important later on.

\begin{definition}
By the \emph{fibre at an end} $e$ of $B$ we mean:
\begin{itemize}
 \item the fibre over the boundary marked point
 \item the \emph{fibre at infinity} of a striplike end
\end{itemize}
\end{definition}

We require also, that the ends of $B$ be pairwise disjoint and that
the complement of the ends (i.e.\ of the images of the $\gamma_e$ and any boundary
marked points) be compact. This means in particular that
the boundary of $B$ with boundary marked points removed decomposes into as many
open intervals as there are ends.

The benefit of striplike ends (with accompanying trivialisations)
is that it is easy to \emph{compose} exact MBL--fibrations at striplike ends.
Namely, one forms the composite by gluing oppositely oriented, but
otherwise identical trivialisations of two separate exact
MBL--fibrations together.

In contrast, the benefit of boundary marked points is twofold. They arise more naturally (see
Section~\ref{section:flattening}) and holomorphic convexity in the fibre direction is easier to attain.

We now define what we mean by \emph{exact Lagrangian boundary conditions} for
an exact MBL-fibration $(E,\pi)$ over a surface $B$ with ends.

\begin{definition}
\label{def:lagboundary}

Let $P$ be the set of boundary marked points of $B$.
An \emph{exact Lagrangian boundary condition} on $(E,\pi)$
is a subbundle $Q$ of $E$ over $\partial B\setminus P$
together with a function $\func{K_Q}{Q}{\mathbb{R}}$ such that:

\begin{enumerate}
 \item $\Omega|_Q=0$
 \item for any $z\in\partial B$ the restriction $(Q_z,K_Q|_{Q_z})$
is a closed, connected exact Lagrangian submanifold of $E_z$ (i.e.\ a Lagrangian
submanifold such that also $d(K_Q|_{Q_z})=\Theta|_{Q_z}$).
 \item $(Q_z,K_Q|_{Q_z})$ extends smoothly, along each component of $\partial B\setminus P$,
to the fibres over boundary marked points. This extension is allowed to depend on the
side from which one approaches a boundary marked point.
 \item $(Q_z,K_Q|_{Q_z})$ is constant w.r.t. trivialisations over the
striplike ends
\end{enumerate}

\end{definition}

Given an exact MBL--fibration over a surface with striplike ends, one can construct an exact MBL--fibration
over a surface with boundary marked points. Namely, the base can be compactified by
adding a single point \emph{at infinity} at each end. One then adds to the total space the fibres at infinity.

Condition (1) implies that $Q$ is preserved by symplectic parallel transport
over $\partial B$ and that $\Theta|_Q=dK_Q+\pi^*\kappa_Q$ for some
$\kappa_Q\in\Omega^1(\partial B)$ (cf. \cite{seidel:les} Lemma~1.3). Condition (2) and
triviality of the striplike ends gives $\kappa_Q=0$ there. In fact $\kappa_Q=0$
whenever symplectic parallel transport preserves $K_Q$.

$Q$ specifies a pair of Lagrangian submanifolds in the fibre over each marked
point and in each fibre at infinity (i.e. `in the fibre at each end').
We will refer to $Q$ as \emph{transverse}, if these pairs of Lagrangians are each transverse.

\begin{remark}
Assume we are given a choice of exact Lagrangian submanifold $(L,K_L)$
in the fibre at infinity or fibre over a boundary marked point at one
end of each edge of $\partial B$.
Then either symplectic parallel transport maps restricted to one of these Lagrangians fail to be
defined over the entire edge on which it lies, or else they are defined
and the condition $\kappa_Q=0$ uniquely specifies a Lagrangian boundary condition.
This is the manner in which all exact Lagrangian boundary conditions in this paper are constructed.
\end{remark}

We will be interested in counting compact moduli spaces of holomorphic sections with boundary
in $Q$, so it makes sense to require holomorphic convexity of a neighbourhood in
$E$ containing $Q$ as follows:

\begin{definition}
\label{def:enclosedlag}
An \emph{enclosed exact Lagrangian boundary condition} $(Q,\rho,R)$ is an exact Lagrangian boundary
condition $Q$, together with a smooth map $\func{\rho}{E}{\mathbb{R}^{\geq 0}}$ and $R>0$,
such that:
\begin{enumerate}[(i)]
 \item $\rho$ splits w.r.t. the trivialisations over all striplike ends
 \item $\rho^{-1}\left[0,R\right]$ is fibrewise compact and $\rho^{-1}[0,R)$ contains $Q$
 \item $\exists\varepsilon>0$ such that on $\rho^{-1}\left(R-\varepsilon,R\right)$
  \begin{itemize}
  \item $J_0$ is defined

  \item $\rho$ is subharmonic w.r.t. $J_0$ and plurisubharmonic on fibres
  \end{itemize}
\end{enumerate}
We refer to the pair $(\rho,R)$ as an \emph{enclosure}.
\end{definition}

\begin{remark}
We do not require $J_0$ to be integrable. $\func{\rho}{E}{\mathbb{R}^{\geq 0}}$ is
defined to be subharmonic or plurisubharmonic when $-d(d\rho\circ J_0)(.,J_0.)$ is
$\geq 0$ or $>0$, respectively. Given any $(i,J)_0$--holomorphic map from a subset of $\mathbb{C}$
into $E$, the composite $\rho\circ u$ is subharmonic or plurisubharmonic in the 
conventional sense.
\end{remark}

Suppose we extend $J_0$ to an almost complex structure on $E$ which 
makes the projection $\pi$ holomorphic.
Then subharmonicity of $\rho$ gives a maximum principle for holomorphic sections
of $E$. Namely, they cannot have a maximum of $\rho$ in the range $(R-\epsilon,R)$.
This ensures that families of holomorphic sections
which are confined to $\rho^{-1}\left[0,R-\epsilon\right)$
cannot degenerate to sections which escape this region.
The choice of enclosure is important, since in general there will exist holomorphic
sections which leave the region $\rho^{-1}\left[0,R\right)$.

It will often be necessary to change enclosures other than just by isotopy
through enclosures. For this, we will need a notion of equivalence of enclosures.

\begin{definition}
\label{def:equivenc}
 We say that two enclosures $(\rho,R)$ and $(\sigma,S)$, for a given exact Lagrangian
boundary condition $Q$, are \emph{equivalent} if for some $R^\prime < R$ we have:
\begin{itemize}
 \item $\rho^{-1}[0,R^\prime)\subset\sigma^{-1}[0,S)\subset\rho^{-1}[0,R)$
 \item $\rho$ is subharmonic on $\rho^{-1}[R^\prime,R]$
\end{itemize}
or also if they can be related by a sequence of such comparisons, either way round.
For simplicity, we require that $J_0$ is fixed throughout these comparisons.
\end{definition}

In particular this makes enclosures related by isotopy of enclosures equivalent.

As defined earlier, an exact MBL--fibration carries a two form which is not necessarily
symplectic on the total space. It is important to observe that this is
more flexible than requiring $\Omega$ to be an exact symplectic structure
on the total space (within a particular enclosure),
but it is no weaker for our purposes.

\begin{lemma}
\label{lemma:nondeg}
Let $(E,\pi,\Omega,\Theta,J_0,B,j)$ be an exact MBL--fibration.
Given any enclosure $(\rho,R)$,
there is a canonical choice of isotopy class
of exact symplectic form $\Omega^\prime$ on the total space of $\rho^{-1}[0,R]$ with:
\begin{itemize}
 \item the same restriction to $TE^v$ as $\Omega$
 \item the same symplectic connection as $\Omega$
\end{itemize}
\end{lemma}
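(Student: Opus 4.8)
\emph{Proof plan.} The failure of $\Omega$ to be symplectic on $\rho^{-1}[0,R]$ is confined to the horizontal directions: on $E\setminus\crit{\pi}$, where $D\pi$ is surjective, one has the splitting $TE=TE^v\oplus TE^h$ with $TE^h=(TE^v)^{\perp_\Omega}$, in which $\Omega$ is block diagonal with non-degenerate vertical block $\Omega|_{TE^v}$ (condition~(4)), vanishing mixed block, and horizontal block equal to the symplectic curvature $F$ on the plane $TE^h$ --- and $F$ is allowed to vanish; whereas along $\crit{\pi}$ one has $D\pi=0$, $TE^v=TE$, and $\Omega$ is K\"ahler, hence symplectic, by conditions~(7) and~(9). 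The remedy, already foreshadowed in the discussion above of adding $\pi^*(f\,dx\wedge dy)$ and used by Seidel \cite{seidel:les}, is to add to $\Omega$ the pullback of a positive two-form from the base. Since $B$ is homeomorphic to a closed disc with finitely many boundary points removed it is contractible, so $H^2(B;\mathbb{R})=0$ and every two-form on $B$ is exact; the plan is to fix a two-form $\beta$ on $B$ which is positive with respect to the orientation determined by $j$ and sufficiently large (in a sense made precise below), write $\beta=d\alpha$, and set
\[
\Omega':=\Omega+\pi^*\beta=d(\Theta+\pi^*\alpha).
\]

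First I would observe that the two structural requirements hold for \emph{every} $\beta$. If $v\in TE^v$ then $D\pi\,v=0$, whence $\pi^*\beta(v,\cdot)=0$; therefore $\Omega'|_{TE^v}=\Omega|_{TE^v}$, and $(TE^v)^{\perp_{\Omega'}}=(TE^v)^{\perp_\Omega}=TE^h$, so $\Omega'$ induces the same symplectic connection as $\Omega$ --- this is exactly the observation about pullbacks of base two-forms recorded in the text. In particular $\Omega'$ is again block diagonal in the splitting $TE^v\oplus TE^h$, now with horizontal block $F+\beta$.

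The real work is to make $\Omega'$ non-degenerate on $\rho^{-1}[0,R]$, which by the above amounts to $F+\beta>0$ on $TE^h$ at every point of $\rho^{-1}[0,R]\setminus\crit{\pi}$ (on $\crit{\pi}$ one has $\pi^*\beta=0$ and $\Omega'=\Omega$ is K\"ahler). On the K\"ahler neighbourhood of $\crit{\pi}$ supplied by condition~(9), holomorphicity of $\pi$ makes $TE^h$ a complex line on which $F=\Omega|_{TE^h}$ is strictly positive --- indeed $F\to+\infty$ on approach to $\crit{\pi}$, as one checks in the standard local model for an MBL singularity --- so on this neighbourhood any positive $\beta$ already works. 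It then remains to bound $-F$ from above on the rest of $\rho^{-1}[0,R]$: over the striplike ends $F\equiv 0$ (triviality of the fibration), over the compact core of $B$ the region is compact and $F$ continuous hence bounded, and near the boundary marked point ends a bound must be produced by hand, allowing $\beta$ to be taken large there. I expect this last estimate to be the main obstacle: it is precisely where the non-compactness of the singular locus and of the fibration enters, so no compactness argument is available and one must exploit the specific structure of $\Omega$ and of the enclosure $\rho$ near such ends. Granting it, any $\beta$ exceeding the resulting fibrewise supremum of $-F$ yields $F+\beta>0$ throughout $\rho^{-1}[0,R]$, so $\Omega'$ is symplectic and exact there.

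Finally, for canonicity, I would note that the set of admissible $\beta$ --- those positive with respect to $j$ and fibrewise dominating $-F$ over $\rho^{-1}[0,R]$ --- is convex, hence path connected: any two admissible choices are joined by the straight-line path $\beta_t=(1-t)\beta_0+t\beta_1$, and the corresponding path $\Omega'_t=\Omega+\pi^*\beta_t$ consists of exact symplectic forms on $\rho^{-1}[0,R]$ with the prescribed restriction to $TE^v$ and the prescribed symplectic connection, all three conditions being affine in $\Omega'$ and so preserved along the path. Thus the $\Omega'_t$ form an isotopy through forms of the required type, so the isotopy class of $\Omega'$ is independent of the choice of $\beta$ --- and likewise of the auxiliary K\"ahler neighbourhood --- which is the claimed canonical isotopy class.
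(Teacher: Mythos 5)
Your strategy coincides with the paper's: set $\Omega'=\Omega+\pi^*\beta$ for a positive exact two-form $\beta$ on $B$, observe that adding a pullback from the base changes neither $\Omega|_{TE^v}$ nor the $\Omega$-orthogonal complement $TE^h$ (hence leaves the symplectic connection alone), reduce non-degeneracy of $\Omega'$ to positivity of the symplectic curvature $f+\beta$ on horizontal planes, use the K\"ahler condition (9) to get $f>0$ near $\crit{\pi}$, and argue canonicity from convexity of the set of admissible $\beta$. All of that matches the paper.

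The gap is where you write that near the boundary-marked-point ends "a bound must be produced by hand" and "no compactness argument is available," and then proceed only "granting it." In fact the very compactness argument you already used over the core works there too, and this is exactly how the paper closes the step. Boundary marked points lie in $\partial B$, so the fibres over them are already part of $E$; condition (5) of Definition~\ref{definition:exactmbl} puts every critical value of $\pi$ in the interior of $B$, so $D\pi$ is surjective over the marked points and $f$ is smooth there; and the closure in $B$ of a marked-point end $\gamma_e(\mathbb{R}^\pm\times[0,1])$ is that strip together with the marked point itself, which is compact. Combining this with fibrewise compactness of $\rho^{-1}[0,R]$, and with triviality (hence $f\equiv 0$) over the striplike ends, the set $(E\setminus V)\cap\rho^{-1}[0,R]$ becomes compact once one adjoins the fibres at infinity, and $f$, smooth there, is bounded below --- no ad hoc estimate is needed. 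Once you see this, the step you grant collapses into the compactness you already invoked, and your proof is the paper's.
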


\begin{proof}
Let $\omega$ be any exact volume form on $B$ compatible with $j$ (which in
2 dimensions means only that it induces the correct orientation).
$\Omega$ restricts to horizontal vectors $T_zE^h$ for $z\in E\setminus\crit{\pi}$
as multiple of $\pi^*\omega$ by a value $f(z)$. Non-degeneracy of $\Omega$ at $z$ is
equivalent to $f(z)$ being non-zero. Furthermore,
$\Omega$ is non-degenerate at all $z$ in a neighbourhood of $\crit{\pi}$ by definition.

$\Omega$ is K\"ahler on some open neighbourhood $V$ of $\crit{\pi}$, so
$f$ is strictly positive on $V\setminus\crit{\pi}$. The region
$(E\setminus V)\cap \rho^{-1}[0,R)$ is fibrewise compact and becomes compact when one
extends to the fibres at infinity. $f$ is smooth away from $V$, so
is bounded below. $\Omega^\prime:=k\pi^*\omega+\Omega$
satisfies the necessary axioms for an exact MBL--fibration,
and for large enough $k\in\mathbb{R}$ it is non-degenerate on $\rho^{-1}[0,R)$, as required in the lemma.
\end{proof}

\begin{remark}
Lemma~\ref{lemma:nondeg} allows one to perform standard holomorphic disc counting
constructions within the enclosure $(\rho,R)$ to get invariants of exact Lagrangian
boundary conditions up to Lagrangian isotopy in a similar manner to the way one
defines Floer cohomology in exact symplectic manifolds. See Section~\ref{section:holomorphic}
for more details.
\end{remark}

\subsection{Constructing MBL--fibrations from Stein manifolds}
\label{section:flattening}

Let $E$ be a Stein manifold with plurisubharmonic function $\rho\geq 0$ and
exact K\"ahler form $\Omega:=d\Theta:=-d(d\rho\circ i)$. Suppose we have a singular
holomorphic fibration $\func{\pi}{E}{N}$ over a complex manifold $N$ and a
smooth map $\func{u}{B}{N}$, for some simply connected Riemann surface $B$ with
striplike ends or marked points on the boundary. When the singularities
of $u^*E$ are of Morse-Bott-Lefschetz type and $u$ is holomorphic near singular points,
one can view $u^*E$ naturally as an exact MBL--fibration.

This section gives a more detailed construction of exact MBL--fibrations in the manner described above.
Most of the content deals with the problems of convexity (for
defining enclosed Lagrangian boundary conditions consistently) and composition of fibrations
(by \emph{gluing} trivialisations over striplike ends).
Without these trivialisations, the gluing construction for composing the
relative invariants (cf. Section~\ref{section:holomorphic}) would be difficult.

\begin{definition}
\label{def:mblmodel}
We say a singular value $p\in N$ of $\pi$ is MBL if
there is a neighbourhood of any point in the singular locus $\crit{\pi,p}$ fitting into the following
commutative diagram

\[
\bfig
 \node a(0,500)[E]
 \node b(2000,500)[\crit{\pi,p}\times\mathbb{C}^k\times\mathbb{C}^n]
 \node c(0,0)[N]
 \node d(2000,0)[\mathbb{C}\times\mathbb{C}^n]
 \arrow[b`a;\text{local near }\crit{\pi,p}]
 \arrow[a`c;\pi]
 \arrow|r|[b`d;(\mathbf{x},\mathbf{y},\mathbf{z})\mapsto(\sum y_i^2,\mathbf{z})]
 \arrow[d`c;\text{local biholomorphism near }(0,\mathbf{0})]
\efig
\]

Here the top map is a local embedding of exact K\"ahler manifolds defined on a neighbourhood
of $\crit{\pi,p}$. However, if we only require it to be holomorphic, we can always deform the exact K\"ahler
structure on $E$ to make it an embedding of exact K\"ahler manifolds.

We shall denote the set of such critical values by $\critmbl{\pi}$. It is a submanifold of
codimension 2. The open set of regular values we denote by $N^{reg}$.
\end{definition}

This definition can equivalently be expressed in terms of smoothness of
critical loci in $E$ and $N$ and non-degeneracy of the Hessian of $\pi$
on complements to $T\crit{\pi}\cap\ker(D\pi)$ within $\ker(D\pi)$.

\begin{definition}
\label{def:admismap}
Let $(B,j)$ be a simply connected Riemann surface $(B,j)$
with striplike ends or marked points on its boundary. Let
$\func{u}{B}{N^{reg} \cup \critmbl{\pi}}$
be a smooth map with the properties that:
\begin{itemize}
\item $u$ is a constant map on each of the striplike ends
\item $u(\partial B)\subset N^{reg}$
\item $u$ is transverse to $\critmbl{\pi}$ and holomorphic near it.
\end{itemize}
We call such a map \emph{admissible}.
We shall refer to $u^{-1}(\critmbl{\pi})\subset(B,j)$ as the \emph{singular values},
since these are the singular values of the pullback fibration ${u^*E}$.
\end{definition}

Given an admissible map $u$, we consider the pullback fibration $\func{\pi}{u^*E}{B}$
equipped with $u^*\Omega$, $u^*\Theta$, $u^*\rho$. There is also a natural choice of almost complex
structure $\tilde{J}$\label{compstr} which agrees with $u^*J$ where $u$ is a holomorphic immersion.
Namely, one takes $u^*J$ on vertical tangents and $i$ on horizontal tangents with respect to
the symplectic connection induced by $u^*\Omega$ (and $u^*J$ where the
fibration is singular). These choices make
$(u^*M,\pi,u^*\Omega,u^*\Theta,\tilde{J},B,j)$ an exact MBL--fibration.

Furthermore, the isotopy class of $u$ through admissible maps defines an isotopy class of exact
MBL--fibrations.

\begin{lemma}
\label{lemma:pullback}
Given $u$, $B$ and $j$ as above up to smooth homotopy of $u$ and deformation of $j$
we have an exact MBL--fibration defined up to smooth deformation of the parameters.
\end{lemma}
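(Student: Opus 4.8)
\emph{Proof idea.}
The plan is to show that a smooth homotopy $\{u_s\}_{s\in[0,1]}$ through admissible maps, together with a smooth deformation $\{j_s\}$ of the complex structure on $B$ (carrying along the striplike end parametrisations), induces a smooth $1$--parameter family of exact MBL--fibrations joining the fibration built from $(u_0,j_0)$ to the one built from $(u_1,j_1)$; this is precisely the assertion that the preceding construction descends to homotopy and deformation classes. First I would assemble everything into a single object: set $U\co B\times[0,1]\to N$, $U(b,s)=u_s(b)$, and form the pullback $\func{\pi}{U^*E}{B\times[0,1]}$ with the pulled back data $U^*\Omega$, $U^*\Theta$, $U^*\rho$ and the almost complex structure $\tilde J$ whose restriction to the slice over $s$ is the one specified just before the lemma ($u_s^*J$ on vertical tangents and $j_s$ transported to the $u_s^*\Omega$--horizontal distribution). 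The restriction of all this data to $B\times\{s\}$ recovers precisely the exact MBL--fibration associated to $(u_s,j_s)$.

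The first point to check is that $U^*E$ is genuinely a smooth manifold with corners fibred over $B\times[0,1]$, i.e.\ that $U$ is transverse to $\critmbl{\pi}$. This may fail for an arbitrary homotopy whose individual slices are transverse, but it is easily arranged. Since each $u_s$ is transverse to the closed codimension--$2$ submanifold $\critmbl{\pi}$, the non-end part of $B$ is compact, and $u_s$ is constant with value in $N^{reg}$ over the ends, a standard compactness and implicit function argument shows that the finite set of singular values $u_s^{-1}(\critmbl{\pi})\subset B$ varies smoothly in $s$, with constant cardinality and no collisions; hence $U^{-1}(\critmbl{\pi})$ is a disjoint union of graphs $s\mapsto b_i(s)$, and precomposing the $u_s$ with a suitable isotopy of $B$ (holomorphic near the $b_i(s)$, the identity near the ends) makes these graphs $s$--independent, so that $U$ becomes transverse. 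Granting this, $U^*\Omega=d(U^*\Theta)$ is exact and non-degenerate on every vertical tangent space by the same computation as for a single slice, the symplectic connection and hence $\tilde J$ depend smoothly on $s$, and the K\"ahler condition near the critical set, the smoothness of the critical set, and the non-degeneracy of the complex Hessian are all inherited slicewise from $E$ and vary smoothly. So $(U^*E,\pi,U^*\Omega,U^*\Theta,\tilde J,B\times[0,1],j_s)$ is a smooth family of exact MBL--fibrations over $[0,1]$.

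It remains to read this family off as a \emph{deformation of the parameters} of a single $7$--tuple, i.e.\ to produce a smooth family of diffeomorphisms of $u_0^*E$ onto $u_s^*E$ covering $\id{B}$ under which the structures become a smooth path. Because the fibres are non-compact, Ehresmann's theorem does not apply directly; instead I would construct the trivialisation by hand, in the same spirit as elsewhere in Section~\ref{section:flattening}. Over each striplike end $u_s$ is constant, so the fibration is the trivial product and one simply pulls back its fixed product trivialisation; over the complement of the ends, which is fibrewise compact and becomes compact after adjoining the fibres at infinity, one integrates a lift to $U^*E$ of the vector field $\partial_s$ on $B\times[0,1]$, chosen to agree with $\partial_s$ in the product trivialisations over the ends and patched by a partition of unity in between. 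Its time--$s$ flow maps $u_0^*E$ diffeomorphically onto $u_s^*E$ over $\id{B}$, and transporting $(U^*\Omega,U^*\Theta,\tilde J)$ along this isotopy exhibits the desired smooth deformation of the parameters. Running the identical argument over deformations of $j$ (hence of $B$) finishes the proof.

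The genuine obstacle is this last step: reconciling the non-compactness of the fibres (the singular locus may run off to infinity within a fibre) with the existence of a global trivialisation of the family that is moreover compatible with the product trivialisations over the striplike ends on which the later composition constructions rely. By contrast, the transversality bookkeeping of the second paragraph is routine once one observes that admissibility forbids tangencies to $\critmbl{\pi}$, and the smoothness of every pulled back or canonically determined structure is immediate; it is the careful, non-compact version of Ehresmann's argument — the same circle of issues flagged in the introduction and developed in Section~\ref{section:flattening} — that carries the weight.
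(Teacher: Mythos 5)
The paper offers no proof of this lemma; it reads it off directly from the preceding paragraph, where the construction of the pullback fibration and of $\tilde J$ was seen to be smooth (and canonical) in the input data $(u,j)$. Your overall strategy — form the total family $U\co B\times[0,1]\to N$, pull back, and observe that the pulled-back structures vary smoothly — is the right way to make this precise, and your first and third paragraphs capture what the paper leaves implicit.

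Two corrections, one minor and one more substantive. First, your worry in the second paragraph is spurious: transversality of each slice $u_s$ to $\critmbl{\pi}$ automatically forces $U$ to be transverse to $\critmbl{\pi}$, since $D u_s(T_bB)\subset DU\bigl(T_{(b,s)}(B\times[0,1])\bigr)$ and the former already spans the normal directions. With $\critmbl{\pi}$ of codimension $2$ and $B\times[0,1]$ three-dimensional, $U^{-1}(\critmbl{\pi})$ is then a smooth $1$--manifold, transverse to every slice (because each $Du_s$ restricted to the normal bundle is already an isomorphism), and proper over $[0,1]$ because admissibility keeps singular values inside the compact complement of the ends. So the ``disjoint union of graphs $s\mapsto b_i(s)$'' conclusion is correct, but the preliminary precomposition with an isotopy is not needed to achieve it.

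Second, and more importantly, the final trivialisation step is very likely not required, and you have flagged it as the ``genuine obstacle'' precisely because you are trying to prove more than the lemma asserts. ``Smooth deformation of the parameters'' here should be read in the weak sense of a smooth one-parameter family of exact MBL-fibrations, not a fixed total space with varying structures; this is consistent with how isotopies of the combined data $((E,\pi),(Q,\rho,R),J)$ are used in Lemma~\ref{lemma:isotopyoffibrations}, where what is integrated against is a family, and with the fact that at this stage of the paper no boundary condition, enclosure, or almost complex structure extending $J_0$ has been fixed — so there is no natural fibrewise-compact region within which to run an Ehresmann-type argument, and the flow you propose integrating has no reason to exist for all time. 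Once you accept the weak reading, your construction of $U^*E$ over $B\times[0,1]$ already finishes the proof, and the non-compactness issue you identify evaporates rather than needing to be overcome.
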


The rest of this section deals with the deformations needed to construct
Lagrangian boundary conditions and then ensure holomorphic convexity of a
surrounding region (thus making an enclosed Lagrangian boundary condition).
The approach is to approximate $B$ by a tree of embedded
holomorphic discs connected at marked points and solve the same problem for
embedded holomorphic discs.

First we consider the model case, where $u$ is already a holomorphic embedding and
$B$ is the closed unit disc $\overline{\mathbb{D}}$
with finitely many (but at least one) marked points on the boundary. In this case
the exhausting, plurisubharmonic function $\func{\rho}{E}{[0,\infty)}$
pulls back to a fibrewise-exhausting, plurisubharmonic function on $u^*E$.

\begin{lemma}
\label{lemma:sptdef}
 Let $(E,\pi,\Omega,\Theta,J_0,\overline{\mathbb{D}},j)$ be an exact MBL--fibration
over the closed unit disc.
Assume further that we have $\func{\rho}{E}{\left[0,\infty\right)}$ exhausting (fibrewise), and
plurisubharmonic where $J_0$ is defined, such that $d\rho=\Theta\circ J_0$.

For any $l\in\mathbb{R}$, one can deform $\Theta$ (without changing the restriction of $\Omega$ to fibres)
inside some level set $\rho_{max}$ of $\rho$ such that symplectic parallel transport flow lines
over paths of length at most $l$ in $\partial \overline{\mathbb{D}}$ do not leave
$\rho^{-1}\left[0,\rho_{max}\right]$.

The deformation occurs only over a small open neighbourhood of
$\partial\overline{\mathbb{D}}$ and is well-defined up to isotopy through such
deformations of $\Theta$. Furthermore, the deformation may be chosen to have support
disjoint from any particular compact set.
\end{lemma}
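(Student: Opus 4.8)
The plan is to work entirely in local coordinates near $\partial\overline{\mathbb{D}}$ and exploit the product-like structure that the symplectic connection has there. First I would fix a collar neighbourhood $\mathbf{N}$ of $\partial\overline{\mathbb{D}}$ biholomorphic to an annular strip, coordinatised so that one coordinate $s$ runs along $\partial\overline{\mathbb{D}}$ and the other, $r\geq 0$, measures distance into the interior, with $\{r=0\}=\partial\overline{\mathbb{D}}$. Since there are no critical values on $\partial B$, over $\mathbf{N}$ the fibration is non-singular and I may apply the normal form for the symplectic connection recalled in the discussion of symplectic curvature above: after applying Moser's lemma to a smooth trivialisation, the connection is $d + A$ with $A = -X_{H_s}\otimes ds - X_{H_r}\otimes dr$ for fibrewise Hamiltonians $H_s, H_r$, and parallel transport along $\partial\overline{\mathbb{D}}$ (where $r$ is constant) is generated by the time-dependent Hamiltonian flow of $H_s$.

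The key geometric input is that $\rho$ is plurisubharmonic and fibrewise exhausting, and $d\rho = \Theta\circ J_0$; in particular on each fibre $\rho$ is a plurisubharmonic exhaustion, so its level sets $\{\rho = c\}$ are, for $c$ large, strictly pseudoconvex hypersurfaces and the gradient-like flow of $\rho$ points outward there. The obstruction to parallel transport confinement is that the Hamiltonian vector field $X_{H_s}$ may have a nonzero component transverse to the level sets of $\rho$, i.e. $dH_s$ restricted to the fibre need not be "tangent enough" to the boundary to prevent flow lines escaping past a fixed level $\rho_{\max}$. The strategy is to kill this transverse component near a chosen high level set by modifying $\Theta$. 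Concretely, choose $\rho_{\max}$ large enough that $\{\rho \geq \rho_{\max} - 1\}$ is strictly pseudoconvex in every fibre over $\mathbf{N}$ and, using fibrewise-compactness of the relevant region and the length bound $l$, so large that the $H_s$ restricted to $\{\rho \le \rho_{\max}\}$ have $C^0$-norm controlled; then pick a cutoff $\beta(\rho)$ supported in a neighbourhood of $\rho_{\max}$ and equal to $1$ past it, and replace $H_s$ by $\beta(\rho)\cdot(\text{const}\cdot\rho) + (1-\beta(\rho)) H_s$ — more precisely, interpolate so that near and above $\rho_{\max}$ the fibrewise Hamiltonian generating $s$-transport becomes a function of $\rho$ alone. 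Then $X_{H_s}$ there is a multiple of $X_\rho$, whose flow preserves level sets of $\rho$ by construction, so no flow line of length $\leq l$ starting in $\rho^{-1}[0,\rho_{\max} - 1]$ can reach $\rho^{-1}(\rho_{\max})$, and level sets above $\rho_{\max}$ act as barriers. Translating this change of $H_s$ back through the Moser trivialisation gives the required deformation of $\Theta$; since we only altered the $ds$-component of $A$ and only as a function of $\rho$, the fibrewise restriction of $\Omega=d\Theta$ is unchanged, the deformation is supported in a neighbourhood of $\partial\overline{\mathbb{D}}$ inside a fixed level set of $\rho$, and (by linearity of the interpolation parameter) it is canonical up to isotopy through such deformations. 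Finally, by choosing $\rho_{\max}$ larger still I can arrange the support to avoid any prescribed compact set, and by pushing the cutoff region closer to $\partial\overline{\mathbb{D}}$ the whole modification can be confined to an arbitrarily small collar.

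The main obstacle I anticipate is bookkeeping the transition between the abstract data $(\Theta, \Omega)$ and the concrete Hamiltonian picture: the Moser trivialisation over $\mathbf{N}$ is only local in the base, so I must patch these local modifications together compatibly (this is straightforward since the modification is expressed intrinsically in terms of $\rho$ and the connection, both globally defined), and I must verify that the modified $\Theta$ still satisfies $d\rho = \Theta\circ J_0$ to the extent needed — in fact it need not, but all that is used downstream is the maximum principle for holomorphic sections, which only requires $\rho$ subharmonic near $\rho_{\max}$, a property I have not disturbed since I did not touch $J_0$ or the fibrewise geometry there. The secondary technical point is making the length bound $l$ enter quantitatively: I must choose the interpolation region wide enough in $\rho$ that the "speed" of the old flow cannot traverse it in time $l$, which uses only a uniform bound on $|X_{H_s}|$ over the compact region $\rho^{-1}[0,\rho_{\max}]\cap\pi^{-1}(\mathbf{N})$ (compact after adding fibres at infinity, by the enclosure hypothesis), and this bound is available before the deformation is performed.
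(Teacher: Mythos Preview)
Your barrier argument has a real gap. In the Moser trivialisation $E|_{\mathbf{N}}\cong M\times\mathbf{N}$ the parallel transport along $\partial\overline{\mathbb{D}}$ is indeed the time-dependent Hamiltonian flow of $H_s$ on $M$, but $\rho$ is a function on the total space $E$, not on a fixed fibre: in these coordinates it reads $\rho=\rho(m,s,r)$. The rate of change of $\rho$ along a lifted path is therefore
\[
\frac{d}{dt}\rho\bigl(m(t),s(t),0\bigr)=\{\tilde H_s,\rho\}_{\mathrm{fibre}}+\partial_s\rho.
\]
Replacing $H_s$ by a function of $\rho$ kills the Poisson-bracket term but leaves $\partial_s\rho$ untouched. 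There is no reason for $\partial_s\rho$ to vanish---the Moser isotopy does not respect level sets of $\rho$---nor to be bounded uniformly as $\rho_{\max}\to\infty$; in a model such as $\rho(m,s)=|m-s|^2$ on $\mathbb{C}\times U$ it grows like $|m|$. So $\{\rho=\rho_{\max}\}$ is \emph{not} invariant for the modified transport, the barrier fails, and the velocity bound you invoke at the end (``uniform bound on $|X_{H_s}|$ over the compact region'') depends on $\rho_{\max}$ in a way that may outpace the width of your interpolation shell. Your description of $\beta$ (``supported in a neighbourhood of $\rho_{\max}$ and equal to $1$ past it'') is also internally inconsistent, though this is a minor issue by comparison.

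The paper's proof avoids this by \emph{not} using a Moser trivialisation. Instead it trivialises the fibrewise end as $C\times[0,\infty)\times U$, where $C=\{\rho=\rho_0\}$ in a reference fibre is compact and contact-type, the $[0,\infty)$-factor is the fibrewise Liouville flow, and the identification of the $C$'s over different base points comes from a flat connection on the $C$-bundle. In this trivialisation one replaces $\Theta$ by the form $\Theta_C$ which equals $e^y\Theta_{0,z}$ fibrewise and vanishes on $TU$. A direct computation then shows that the $y$-component $w_y$ of the parallel-transport vector for $d\Theta_C$ equals $-H(\Theta_{0,z})(R)$, where $R$ is the Reeb field on $C$; this depends only on how the contact form on the \emph{compact} manifold $C$ varies over the \emph{compact} annulus, hence is bounded independently of $y$. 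So the argument is a uniform velocity bound in the Liouville-time coordinate $y$, not a barrier in $\rho$, and the deformation $\tilde\Theta=g\Theta+(1-g)\Theta_C$ is cut off in a shell $\rho\in[\rho_0,K]$ with $K$ chosen large relative to $l$ times this fixed velocity bound.
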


\begin{proof}
 Let $A$ be a small annular neighbourhood of $\partial\overline{\mathbb{D}}$ not
containing any critical values of $\pi$. Let $\rho_0$ be large enough such that for all
$z\in A$ all critical values of $\rho|_A$ are less than $\rho$. This implies that
$\rho|_{\pi^{-1}(A)}^{-1}(\rho_0)$ is a smooth fibration over $A$ with compact fibre $C$
and hence carries a flat
connection, well-defined up to isotopy. Choose one. It gives a trivialisation over any
small open neighbourhood $U\subset A$ in the base of the form $\func{\proj{U}}{C\times U}{U}$.
Extending this in the positive time direction by the fibrewise Liouville flow we have
a trivialisation $\func{\proj{U}}{C\times\left[0,\infty\right)\times U}{U}$.

On any fibre $E_z$ the form $\Theta$ restricts to $C\times \{0\}\times{z}$ as
a contact form $\Theta_{0,z}$ on $C$ and restricts to the whole fibre as $e^y\Theta_{0,z}$
(here $y$ is the coordinate on $\left[0,\infty\right)$). We define
$\Theta_C$ on the trivialisation $C\times\left[0,\infty\right)\times U$ to have
this same restriction to fibres and to vanish on $TU$.
In particular $\Theta_C|_{E_z}=\Theta|_{E_z}$.

Let $R$ be the Reeb vector field on $C$ for the contact form $\Theta_{0,z}$.
We can split $TE$ in the trivialised region
as $\mathbb{R}R\oplus\ker(\Theta_{0,z}) \oplus\mathbb{R}\dd{}{y}\oplus TU$.
Given a vector $H$ in $TU$ the symplectic parallel transport vector w.r.t. the 2--form
$d(e^y\Theta_C)$ over it is of the form $w_RR+W_{con}+w_y\dd{}{y}+H$ in that splitting.
It has the defining property that for any $v_RR+V_{con}+v_y\dd{}{y}$ we have:

\begin{eqnarray*}
 0&=&d(e^y\Theta_C) \left(w_RR+W_{con}+w_y\dd{}{y}+H,v_RR+V_{con}+v_y\dd{}{y}\right)\\
  &=&\left(e^yd\Theta_C+e^ydy\wedge\Theta_C\right)\left(w_RR+W_{con}+w_y\dd{}{y}+H,v_RR+V_{con}+v_y\dd{}{y}\right) \\
  &=&e^y\left[-H(\Theta_C)(v_RR+V_{con})+d(\Theta_{0,z})(V_{con},W_{con})+v_yw_R-v_Rw_y\right]
\end{eqnarray*}

Setting $v_R=1$, $V_{con},v_y=0$ gives $w_y=-H(\Theta_C)(R)$ which by compactness has
a finite maximum over $A$. i.e.\ the velocity of this symplectic parallel transport in
the $y$--direction is bounded over compact subsets of the base. This controls the
symplectic parallel transport flow lines in large enough regions, so in particular proves the
lemma for any deformed $\Theta$ which equals
$e^y\Theta_C$ on $\rho|_{\pi^{-1}(A)}^{-1} \left[ \rho_0+1,K-1 \right)$ for large enough $K$.

Now we define a deformation $\tilde{\Theta}=g\Theta+(1-g)\Theta_C$ with a bump function
$g$ identically equal to 1 on $\rho|_{\pi^{-1}(A)}^{-1} \left[\rho_0+1,K-1\right)$
and zero on the complement of $\rho|_{\pi^{-1}(A)}^{-1} \left[\rho_0,K\right)$.
This is the required deformation of $\Theta$ to prove the lemma.

\end{proof}

\begin{remark}
\label{remark:sptdef}
 Suppose that $\crit{\pi}$ is compact. For example this is the case when the singularities are
actually of Lefschetz type. Then, by the same technique, one can contain the symplectic 
parallel transport over paths of length at most $l$ in $\overline{\mathbb{D}}$, not just in the boundary.
\end{remark}

\begin{remark}
\label{remark:pathspt}
 Suppose we are really just interested in defining symplectic parallel transport maps over a path
$\func{\gamma}{[0,1]}{N^{reg}}$, then one can run the same argument in the pullback fibration
$\gamma^*E$. This gives symplectomorphisms $E_{\gamma(0)}\to E_{\gamma(1)}$ defined on any compact
subset of $E_{\gamma(0)}$ and well-defined up to isotopy within the class of symplectic embeddings
(or also inclusion, should one enlarge the choice of compact subset).
\end{remark}

The lemma above allows us to define Lagrangian boundary conditions on such a fibration simply
by specifying a Lagrangian in a single fibre on each interval of $\partial\overline{\mathbb{D}}$
and extending to the rest of the interval by symplectic parallel transport. Call this Lagrangian
boundary condition $Q$. Furthermore, the region
of deformation is contained within a finite level set of $\rho$, so for all large enough
$R\in\mathbb{R}^{\geq 0}$ the collection $(Q,\rho,R)$ is an enclosed Lagrangian boundary condition.
If the Lagrangians in the construction are chosen to be exact, then $Q$ is also exact.

Given a surface with striplike ends mapping to $\overline{\mathbb{D}}$, with edges mapping
monotonically to $\partial\overline{\mathbb{D}}$, symplectic parallel transport respects the
pullback. Hence, we can similarly control symplectic parallel transport of compact Lagrangian boundary conditions
specified in the fibre at infinity over one end of each edge. However, it is not so easy to show
these are enclosed.

We will now construct enclosures containing these Lagrangian boundary conditions
in a model case where we have deformed the previous fibration over $\overline{\mathbb{D}}$ to
have a base with striplike ends. The enclosures constructed will be defined as
deformations of an enclosure $(\rho,R)$ performed together with the deformation which
forms the striplike ends. Furthermore, the enclosures will be compatible with the trivialisations
of $E$ over the striplike ends, so will be compatible with the construction of gluing
fibrations over striplike ends.

Label the marked points $\{z_1,\ldots,z_n\}\subset\partial\overline{\mathbb{D}}$.
A neighbourhood of each
of these marked points becomes a striplike end under the appropriate coordinate change. Namely,
we view such a neighbourhood (holomorphically) as a neighbourhood of $0$ in the upper half plane $\overline{\mathbb{H}}$
(well-defined up to rescaling of $\overline{\mathbb{H}}$) which corresponds to a strip by the map $z\mapsto \log{z}$.
Without loss of generality this model is valid on $\{z\in\overline{\mathbb{H}} : \norm{z}\leq 3\}$ and furthermore
this region contains only regular values of $\pi$.

Locally near each $z_i$ we take $\widetilde{\mathbb{D}}\to\overline{\mathbb{D}}$ to be the identity away from the
$z_i$ and near them to be given by:

\begin{eqnarray*}
 u:& \widetilde{\mathbb{H}}\to &  \overline{\mathbb{H}} \\
& re^{i\theta}\longmapsto & h(r)e^{i\theta} \\
\end{eqnarray*}

Here $\widetilde{\mathbb{H}}$ is $\overline{\mathbb{H}}\setminus{0}$ with the
standard complex structure and
$\func{h}{\mathbb{R}^{\geq 0}}{\mathbb{R}^{\geq 0}}$ is a smooth function
such that:

\begin{itemize}
 \item $h(r)=0$ for $r\leq 1$ only
 \item $h(r)=r$ for $r\geq 2$
 \item $h$ is increasing
\end{itemize}

\begin{figure}[h]
\centering\scalebox{1}{\input{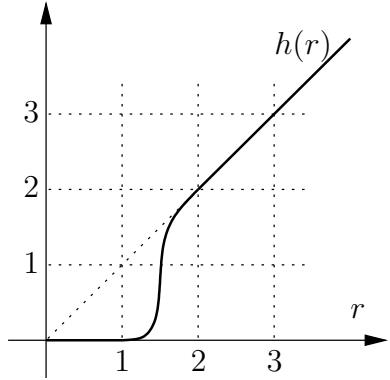}}
\caption{The function $h$.}
\label{fig:theamazingh}
\end{figure}

We now pullback $(E,\pi)$ to $(\widetilde{E},\tilde{\pi})$ which is an exact MBL--fibration over
a surface with striplike ends.

\[
\bfig
 \node a(0,500)[\widetilde{E}]
 \node b(1000,500)[E]
 \node c(0,0)[\widetilde{\mathbb{D}}]
 \node d(1000,0)[\overline{\mathbb{D}}]
 \arrow[a`b;\tilde{u}]
 \arrow[a`c;\tilde{\pi}]
 \arrow|r|[b`d;\pi]
 \arrow[c`d;u]
\efig
\]

\begin{proposition}
\label{proposition:enclosure}
Consider $(\widetilde{E},\tilde{\pi})$ as above. Suppose we have a compact connected Lagrangian
submanifold in the fibre at infinity over one end of each edge of the base. Then we can construct
some $\tilde{\rho}$ and deformation
of the exact MBL--fibration, such that:
\begin{itemize}
 \item $L$ extends by symplectic parallel transport to a Lagrangian boundary condition $Q$
 \item $(Q,\tilde{\rho})$ is a well-defined enclosed Lagrangian boundary condition.
\end{itemize}
Furthermore the resulting $(\widetilde{E},\tilde{\pi})$ and $(Q,\tilde{\rho})$ are well-defined up
to deformation through such choices.
\end{proposition}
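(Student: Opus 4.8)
The plan is to produce, in turn, the Lagrangian boundary condition $Q$, the enclosing function $\tilde\rho$, and the small deformation of the exact MBL--fibration reconciling them, and then to verify well-definedness by a parametrised version of the same argument. First I would build $Q$. Working over $\overline{\mathbb{D}}$, apply Lemma~\ref{lemma:sptdef} with $l$ the total length of $\partial\overline{\mathbb{D}}$: this deforms $\Theta$ inside a fixed sublevel set $\rho^{-1}[0,\rho_{max}]$ near $\partial\overline{\mathbb{D}}$ (hence away from $\crit{\pi}$, since $\pi$ has no critical values there) so that symplectic parallel transport along the boundary arcs is everywhere defined and confined to $\rho^{-1}[0,\rho_{max}]$. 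Carrying out the $h$--reparametrisation afterwards, and using that symplectic parallel transport respects the pullback by $\tilde u$, each given Lagrangian --- in the fibre at one end of an edge --- extends along that edge by parallel transport to a subbundle $Q$ of $\widetilde E$ over $\partial\widetilde{\mathbb{D}}\setminus P$. Over each striplike end $u$ is \emph{constant} (it is $re^{i\theta}\mapsto h(r)e^{i\theta}$ with $h\equiv 0$ for $r\le 1$), so $du=0$; since the deformation of Lemma~\ref{lemma:sptdef} leaves $\Theta|_{TE^v}$ unchanged, $\tilde u^*\Omega$, $\tilde u^*\Theta$ and $\tilde u^*\rho$ all split there as products $E_z\times(\text{strip})$. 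Hence parallel transport is the identity over the striplike ends, $Q$ is constant there (condition (4) of Definition~\ref{def:lagboundary}), and its remaining properties --- and exactness, if the given Lagrangians are chosen exact --- are inherited from the single Lagrangians.

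For the enclosure the natural candidate is $\tilde\rho:=\tilde u^*\rho$: it is fibrewise exhausting, so for any $R>\rho_{max}$ the set $\tilde\rho^{-1}[0,R]$ is fibrewise compact with $\tilde\rho^{-1}[0,R)\supset Q$, and it splits over the striplike ends by the product structure just noted. On the complement of the striplike ends and the transition annuli, where $\tilde u$ is the identity, $\tilde J=J_0$ and $\tilde\rho=\rho$ is plurisubharmonic; over the striplike ends $\tilde\rho$ is pulled back from a single fibre, hence plurisubharmonic. The only obstruction to $(Q,\tilde\rho,R)$ being enclosed is subharmonicity of $\tilde\rho$ with respect to $\tilde J$ on the compact half-annular transition region $\{1<|z|<2\}$ at each marked point. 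There $u$ is a diffeomorphism but is \emph{not} holomorphic: on the horizontal distribution $\tilde J$ is the $j$--horizontal complex structure, which differs from $\tilde u^*J_0$ by the fibrewise-constant conformal factor $h(r)/(rh'(r))$ on the radial direction, so plurisubharmonicity of $\tilde u^*\rho$ for $\tilde u^*J_0$ need not descend to $\tilde J$. This is precisely the point flagged in the text as ``not so easy'', and it is the crux of the proposition.

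The resolution I would take is to absorb the conformal change into a preliminary flattening: since $\{|z|<2\}$ is a half-disc, hence simply connected, a Moser-type argument deforms $\Omega$, $\Theta$, $J_0$ (and hence $\rho$) over it --- without changing $\Omega|_{TE^v}$, and interpolating to the untouched fibration across $\{|z|=2\}$ --- into a product $E_{z_i}\times\{|z|<2\}$ with split data and flat connection. After this deformation the $h$--reparametrisation acts only on the base factor of a \emph{trivial} piece of the fibration, so $\tilde\rho=\tilde u^*\rho$ stays a pullback from $E_{z_i}$ and is plurisubharmonic for $\tilde J=J_0|_{E_{z_i}}\oplus j$ on all of $\{|z|<2\}$, matching the required splitting over the striplike end $\{|z|\le 1\}$. (One can also try to avoid deforming the fibration by replacing $\tilde\rho$ with $\phi\circ\tilde\rho$ for a convex increasing $\phi$ equal to the identity on $[0,R-2\varepsilon]$, using that $i\partial\bar\partial_{\tilde J}\tilde\rho$ is strictly positive on vertical tangents and bounded below over the compact transition base on $\tilde\rho\in[R-2\varepsilon,R]$ while the remaining terms are bounded, so that the $\phi''$ term can be made to dominate on $\{\tilde\rho>R-\varepsilon\}$; controlling the directions complex-tangent to the level set, where that term degenerates, is then the delicate part.) In either case the modification is supported in the transition annuli --- away from $\crit{\tilde\pi}$, from $Q$ and from the striplike ends --- so $Q$ remains an exact Lagrangian boundary condition and $(Q,\tilde\rho,R)$ is enclosed. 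The main obstacle is exactly this step: producing a deformation concentrated near the marked points which restores subharmonicity of $\tilde\rho$ for $\tilde J$ near level $R$ without disturbing the product structure over the striplike ends (needed both for the enclosure axiom and for later gluing of fibrations) or the positions of $\crit{\tilde\pi}$ and $Q$.

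For well-definedness up to deformation, each auxiliary choice --- the profile $h$ (admissible profiles form a connected family), the $\Theta$--deformation of Lemma~\ref{lemma:sptdef} (unique up to isotopy through such deformations), the flattening (unique up to Moser isotopy), and the pair $(R,\phi)$ --- varies in a connected family, and enlarging $R$ or $\phi''$ passes to an equivalent enclosure in the sense of Definition~\ref{def:equivenc}. Since every construction above is natural in these parameters, $(\widetilde E,\tilde\pi)$ and $(Q,\tilde\rho)$ are well-defined up to deformation through such choices, as asserted.
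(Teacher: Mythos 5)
You correctly identify the crux: $\tilde u^*\rho$ fails to be $\tilde J$--subharmonic on the transition region $\{1\le r\le 2\}$ of $\widetilde{\mathbb{H}}$, where $u$ is neither holomorphic nor constant, and your description of the discrepancy between $\tilde J$ and $\tilde u^*J_0$ there as a conformal correction on the horizontal distribution matches the paper's own computation. However, your proposed resolution does not close the gap. A Moser-type flattening of $(\Omega,\Theta,J_0)$ over $\{|z|<2\}$ to a trivial product, interpolated back to the original data across $\{|z|=2\}$, merely displaces the problem: after the interpolation, the deformed horizontal distribution over $\{2\le|z|\le3\}$ again fails to be the one compatible with $J_0$, and nothing in the flattening controls whether $\rho$ remains subharmonic for the new $\tilde J$ there --- this is the same type of claim you began with, one level-set further out. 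You flag this yourself as ``the main obstacle'' but offer no solution. Your parenthetical $\phi\circ\tilde\rho$ alternative also fails for the reason you note: the positive term $\phi''(\tilde\rho)\,d\tilde\rho\wedge(d\tilde\rho\circ\tilde J)$ degenerates along directions complex-tangent to the level set, precisely where the defect must be killed.

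The paper's argument is genuinely different and quantitative. It does not deform $E$ further, nor does it compose $\tilde\rho$ with anything. Instead it replaces $h$ by the family $h_t(r)=h(tr)/t$, so that the transition region shrinks to $\{1/t\le r\le 2/t\}$, and Lemma~\ref{lemma:flatenclosures} shows the obstruction $-d(d(\tilde u^*\rho)\circ\tilde\pi^*(\tilde j - u^*j))$ has coefficients of size $O(1/t)$ there. One then \emph{adds} to $\tilde u^*\rho$ a function $\tilde g_T\circ\tilde\pi$ pulled back from the base. Because $-d(d(\tilde g_T\circ\tilde\pi)\circ\tilde J)$ is a multiple of $\tilde\pi^*\!\left(\frac{dr}{r}\wedge d\theta\right)$ with no vertical--horizontal cross terms, the discriminant estimate of Corollary~\ref{corollary:correction} shows that a horizontal term of size $C>(K^2+K)/t$ dominates the failure. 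Building $g_T$ so that its base Laplacian is bounded below by $S\delta/T$ on the critical range and setting $T=\log t$ gives $S\delta/\log t>(K^2+K)/t$ for large $t$, making $\tilde\rho:=\tilde u^*\rho+\tilde g_{\log t}\circ\tilde\pi$ subharmonic near its top level. The essential device --- scaling $u$ to make the failure small and adding a purely horizontal positive correction pulled back from the base, which cannot disturb the vertical structure or the triviality over the striplike ends --- is exactly the missing idea in your proposal.
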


 It should be noted here that we may, without loss of generality, use Lemma~\ref{lemma:sptdef}
to deform $E$ within some finite level set $\rho_{min}$ of $\rho$ to ensure that
$Q$ is defined on $\widetilde{E}$ and contained within $u^{-1}\left(\rho^{-1}\left[0,\rho_{min}\right)\right)$.
For the rest of the proof of the proposition we will deform only $u$ and we shall
define a $\tilde{\rho}$ with a convex level set contained in
$u^{-1}\left(\rho^{-1}\left[\rho_{min},\rho_{max}\right]\right)$ for any $\rho_{max}>\rho_{min}$.

We will see first how far $u^*\rho$ is from making $Q$ an enclosed Lagrangian boundary condition.

Let $\tilde{J}$ be the complex structure on $\widetilde{E}$ (see page~\pageref{compstr}) and 
$\tilde{u}^*J$ be the pullback of the complex structure from $E$ where this is defined.
Similarly we have the standard complex structure $\tilde{j}$ on $\widetilde{\mathbb{D}}$
and also the pullback $u^*j$ from $\overline{\mathbb{D}}$ where $u$ is an immersion.
Let $\tilde{\pi}^*\tilde{j}$, $\tilde{\pi}^*(u^*j)$ be the horizontal lifts
$D\tilde{\pi}|_{T\widetilde{E}^h}^{-1}\circ \tilde{j}\circ D\tilde{\pi}$ and
$D\tilde{\pi}|_{T\widetilde{E}^h}^{-1}\circ u^*j\circ D\tilde{\pi}$ respectively.
It should be noted here that $\tilde{J}$ and $J$ agree on $T\widetilde{E}^v$ and that
$\tilde{J}-u^*J=\tilde{\pi}^*\tilde{j} - \tilde{\pi}^*(u^*j)$.

Where $u$ is holomorphic these complex structures agree so $u^*\rho$ is plurisubharmonic. Also
where $u$ is locally constant $-d(d(u^*\rho)\circ\tilde{J})$ splits as $u^*\Omega$ on
$T\widetilde{E}^v$ and zero horizontally, so $u^*\rho$ is subharmonic. So far so good. The difficulty arises
in dealing with the region
$\frac{1}{t}\leq r\leq \frac{2}{t}$ in $\widetilde{\mathbb{H}}$. Here we have:

\begin{eqnarray*}
 -d(d(u^*\rho)\circ\tilde{J}) &= &-d(d(u^*\rho)\circ(\tilde{u}^*J+\tilde{\pi}^*(\tilde{j}-u^*j))) \\
  &= &\tilde{u}^*\Omega|_{(T\widetilde{E}^v)^{\otimes 2}}
      +\tilde{u}^*\Omega|_{(T\widetilde{E}^h)^{\otimes 2}}
      -d(d(u^*\rho)\circ(\tilde{\pi}^*(\tilde{j}-u^*j))
\end{eqnarray*}

Applied to pairs of vectors $(V,\tilde{J}V)$ the first two terms are positive semi-definite
(for the second we used that $u$ is nowhere orientation reversing). The third term may not be,
but it evaluates to zero on $(T\widetilde{E}^v)^{\otimes 2}$ and we will show how to adjust
$-d(d(u^*\rho)\circ\tilde{J})$ by adding a pull back by $\pi$ of a certain functional
$\widetilde{\mathbb{D}}\to\mathbb{R}$ to $\rho$ to
achieve positive semi-definiteness everywhere.

Let $\tilde{R},\tilde{\Theta},R,\Theta$ be horizontal lifts of the vector fields
$\dd{}{r},\dd{}{\theta}$ to $\widetilde{E},E$ respectively. Then at any point $p$ we have:
\begin{eqnarray*}
 \tilde{\pi}^*(\tilde{j}-u^*j): &r\tilde{R}\longmapsto \frac{h(r)-rh^\prime(r)}{h(r)}\tilde{\Theta} \\
    &\tilde{\Theta}\longmapsto\frac{h(r)-rh^\prime(r)}{rh^\prime(r)}r\tilde{R}
\end{eqnarray*}
and also:
\begin{eqnarray*}
 d(\tilde{u}^*\rho)|_{T_p\widetilde{E}^h} &=
        &\left(R_{\tilde{u}(p)}(\rho)(\pi^*dr)_{\tilde{u}(p)}
        +\Theta_{\tilde{u}(p)}(\rho)(\pi^*d\theta)_{\tilde{u}(p)}\right)
        \circ D\tilde{u}_p|_{T_p\widetilde{E}^h} \\
        &=
        &R_{\tilde{u}(p)}(\rho)rh^\prime(r)\left(\tilde{\pi}^*\frac{dr}{r}\right)_p
        +\Theta_{\tilde{u}(p)}(\rho)\left(\tilde{\pi}^*d\theta\right)_p
\end{eqnarray*}

Composing the functions defined above yields:
\begin{eqnarray*}
d(\tilde{u}^*\rho)\circ\tilde{\pi}^*(\tilde{j}-u^*j)_p
	&= &R_{\tilde{u}(p)}(\rho)(h(r)-rh^\prime(r))(\tilde{\pi}^*d\theta)_p \\
	&&+\Theta_{\tilde{u}(p)}(\rho)\frac{h(r)-rh^\prime(r)}{h(r)}\left(\tilde{\pi}^*\frac{dr}{r}\right)_p
\end{eqnarray*}

This gives us the following expression for the term of the expansion
of $-d(d(u^*\rho)\circ\tilde{J})$ which was potentially not positive-semidefinite (see above).
\begin{eqnarray*}
 -d(d(\tilde{u}^*\rho)\circ\tilde{\pi}^*(\tilde{j}-u^*j))_p
	&=
	&R_{\tilde{u}(p)}(\rho)r^2h^{\prime\prime}(r)\tilde{\pi}^*\left(\frac{dr}{r}\wedge d\theta\right)_p \\
	&&+R_{\tilde{u}(p)}(R(\rho))rh^\prime(r)(rh^\prime(r)-h(r))
		\tilde{\pi}^*\left(\frac{dr}{r}\wedge d\theta\right)_p \\
	&&-\Theta_{\tilde{u}(p)}(\Theta(\rho)\frac{rh^\prime(r)-h(r)}{h(r)}
			\tilde{\pi}^*\left(\frac{dr}{r}\wedge d\theta\right)_p \\
	&&+d_v(R_{\tilde{u}(p)}(\rho))\wedge(rh^\prime(r)-h(r))\tilde{\pi}^*\left(d\theta\right)_p \\
	&&+d_v(\Theta_{\tilde{u}(p)}(\rho))\wedge\frac{rh^\prime(r)-h(r)}{h(r)}\tilde{\pi}^*\left(\frac{dr}{r}\right)_p \\
\end{eqnarray*}

Here $d_v$ is the differential $d$ evaluated only in the fibre directions.

We do not yet have enough control over these summands, so we consider a one parameter family
of maps $u$ converging to the identity map on $\widetilde{H}$. These are defined by replacing
the function $h$ with the family of functions
\[h_t(r)=\frac{h(tr)}{t}\] for $t\in\left[1,\infty\right)$.

Considering the dependence on $t$ one now has:
\begin{eqnarray*}
 -d(d(\tilde{u}^*\rho)\circ\tilde{\pi}^*(\tilde{j}-u^*j))_p
	&=
	&\frac{1}{t}R_{\tilde{u}(p)}(\rho)(rt)^2h^{\prime\prime}(rt)\tilde{\pi}^*\left(\frac{dr}{r}\wedge d\theta\right)_p \\
	&&+\frac{1}{t}R_{\tilde{u}(p)}(R(\rho))(rt)h^\prime(rt)((rt)h^\prime(rt)-h(rt))
		\tilde{\pi}^*\left(\frac{dr}{r}\wedge d\theta\right)_p \\
	&&-\Theta_{\tilde{u}(p)}(\Theta(\rho)\frac{(rt)h^\prime(rt)-h(rt)}{h(rt)}
			\tilde{\pi}^*\left(\frac{dr}{r}\wedge d\theta\right)_p \\
	&&+\frac{1}{t}d_v(R_{\tilde{u}(p)}(\rho))\wedge((rt)h^\prime(rt)-h(rt))\tilde{\pi}^*\left(d\theta\right)_p \\
	&&+d_v(\Theta_{\tilde{u}(p)}(\rho))\wedge\frac{(rt)h^\prime(rt)-h(rt)}{h(rt)}\tilde{\pi}^*\left(\frac{dr}{r}\right)_p
\end{eqnarray*}

With this we can now describe how \emph{small} $-d(d(u^*\rho)\circ\tilde{\pi}^*(\tilde{j}-u^*j))$ is
in terms of $t$. To do this we define $\vecnorm{V}$ for $V\in T\widetilde{E}^v$ to be $\tilde{u}^*\Omega(V,\tilde{J}V)$,
i.e.\ the pullback of the metric on fibres of $E$.

\begin{lemma}
\label{lemma:flatenclosures}
 Given any $\rho_{max}>\rho_{min}$ there is some constant $K>0$
together with a smooth functional $\alpha\in C^\infty(\widetilde{E})$
and one-forms $\beta,\gamma$ on $\widetilde{E}$ supported over the model neighbourhood
$\{z\in \widetilde{\mathbb{H}}:\norm{z}\leq3\}$ such that:
\[-d(d(\tilde{u}^*\rho)\circ\tilde{\pi}^*(\tilde{j}-u^*j))=\alpha\tilde{\pi}^*\left(\frac{dr}{r}\wedge d\theta\right)
	+\beta\wedge\tilde{\pi}^*\left(d\theta\right)
	+\gamma\wedge\tilde{\pi}^*\left(\frac{dr}{r}\right)
\]
and
\begin{itemize}
 \item $\beta, \gamma$ evaluate to zero horizontally
 \item $\alpha,\beta,\gamma\equiv 0$ where $r\not\in [\frac{1}{t},\frac{2}{t}]$
 \item $\norm{\alpha}\leq \frac{K}{t}$ on $\rho^{-1}[0,\rho_{max})$
 \item For any vertical tangent $V\in T\widetilde{E}^v$ we have
$\norm{\beta(V)},\norm{\gamma(V)}\leq\frac{K}{2t}\vecnorm{V}$ on $\rho^{-1}[0,\rho_{max})$
\end{itemize}
\end{lemma}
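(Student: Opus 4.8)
The plan is to read off the three ``error'' quantities $\alpha$, $\beta$, $\gamma$ directly from the $t$--dependent expansion of $-d(d(\tilde{u}^*\rho)\circ\tilde{\pi}^*(\tilde{j}-u^*j))$ displayed just before the Lemma, and then to estimate them. Comparing the two formulas, one sets
\[
\alpha = \tfrac1t R_{\tilde u(p)}(\rho)(rt)^2 h''(rt)
  + \tfrac1t R_{\tilde u(p)}(R(\rho))\,(rt)h'(rt)\big((rt)h'(rt)-h(rt)\big)
  - \Theta_{\tilde u(p)}\!\big(\Theta(\rho)\big)\tfrac{(rt)h'(rt)-h(rt)}{h(rt)},
\]
\[
\beta = \tfrac1t d_v\big(R_{\tilde u(p)}(\rho)\big)\,\big((rt)h'(rt)-h(rt)\big),
\qquad
\gamma = d_v\big(\Theta_{\tilde u(p)}(\rho)\big)\,\tfrac{(rt)h'(rt)-h(rt)}{h(rt)} .
\]
The first two bullet points are then essentially definitional: $\beta$ and $\gamma$ are multiples of $d_v$ of a function, hence vanish on $T\widetilde E^h$; and since $h_t(r)=h(tr)/t$ equals $r$ for $tr\ge 2$ and vanishes for $tr\le 1$, the quantities $h''(rt)$ and $(rt)h'(rt)-h(rt)$ are all supported in $\tfrac1t\le r\le\tfrac2t$, so all three vanish outside that annulus.

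The content is in the last two bullets, and the key observation is that, away from the critical values (which the model neighbourhood $\{|z|\le 3\}$ avoids by construction), the horizontal lifts $R,\Theta$ of $\partial_r,\partial_\theta$ and the function $\rho$ are all defined on the relevant compact piece of $E$. First I would fix $\rho_{max}$ and restrict attention to the fibrewise-compact set $\rho^{-1}[0,\rho_{max}]$ lying over the compact base piece $\{|z|\le 3\}$; on this compact set the smooth functions $R(\rho)$, $R(R(\rho))$, $\Theta(\Theta(\rho))$ and the fibrewise one-forms $d_v(R(\rho))$, $d_v(\Theta(\rho))$ (measured in the fibre metric $\|\cdot\|$) are all bounded, say by some $M$. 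Next I would bound the $h$--factors uniformly in $t$: because $h$ is a fixed smooth function with $h(s)=0$ for $s\le1$, $h(s)=s$ for $s\ge2$, $h$ increasing, the expressions $s^2h''(s)$, $s h'(s)\big(sh'(s)-h(s)\big)$, $\big(sh'(s)-h(s)\big)/h(s)$ and $sh'(s)-h(s)$ (with $s=rt\in[1,2]$) are bounded by some absolute constant $c$ — the only mild point being that the denominator $h(s)$ is bounded below by a positive constant on the subinterval $s\in[1+\delta,2]$ where the relevant numerators are nonzero, since $sh'(s)-h(s)$ vanishes for $s$ near $1$ (where $h\equiv0$). Assembling: $\alpha$ is a sum of a $\tfrac1t$ term, another $\tfrac1t$ term, and a term with no $\tfrac1t$ but whose $h$--factor $(sh'(s)-h(s))/h(s)$ is in fact $O(s-2)$ near $s=2$ and vanishes near $s=1$; one has to be slightly more careful here and note that $sh'(s)-h(s)\to 0$ at both ends of $[1,2]$, so after possibly shrinking the transition window of $h$ this term too is uniformly small — but cleanest is simply to reparametrise, replacing $h$ by $h_t$ throughout as the paper already does, so that \emph{every} surviving summand of $\alpha$ carries an explicit factor of $\tfrac1t$. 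Then $|\alpha|\le \tfrac{Mc}{t}+\tfrac{Mc}{t}+\tfrac{Mc}{t}$, and choosing $K$ to dominate $3Mc$ (and also $2Mc$ for $\beta,\gamma$) gives $|\alpha|\le K/t$ and $|\beta(V)|,|\gamma(V)|\le \tfrac{K}{2t}\|V\|$ on $\rho^{-1}[0,\rho_{max})$, as claimed. Finally I would remark that shrinking the window of $h$, or equivalently the interval $[1/t,2/t]$, amounts to nothing more than composing with the family $h_t$, so no generality is lost.

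The main obstacle I expect is the third summand of $\alpha$, the one with no prefactor $1/t$: controlling $\dfrac{(rt)h'(rt)-h(rt)}{h(rt)}$ requires knowing both that the numerator vanishes to first order at the left endpoint of the transition interval (so nothing blows up against $h\to0$) and that the whole expression is small — which is \emph{not} automatic from the fixed shape of $h$ alone, but becomes so once one uses the rescaled family $h_t$ and the fact that the annulus $\{1/t\le r\le 2/t\}$ shrinks as $t\to\infty$ while $\rho$, $R(\rho)$, $\Theta(\rho)$ vary continuously and hence their oscillation over that shrinking annulus tends to $0$. Making this last point precise (uniform continuity of the bounded data over the compact set, applied to a shrinking neighbourhood of the circle $\{r=0\}$, i.e. of the marked point) is the one genuinely analytic step; everything else is bookkeeping with the explicit formulas above. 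I would therefore organise the proof as: (i) identify $\alpha,\beta,\gamma$; (ii) verify support and horizontal-vanishing; (iii) bound the fixed geometric data on the compact region; (iv) bound the $h$--factors, reducing to the rescaled family $h_t$; (v) combine and choose $K$; (vi) remark on the freedom to shrink the window.
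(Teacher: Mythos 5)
Your decomposition into $\alpha,\beta,\gamma$, the support statements, the horizontal-vanishing of $\beta,\gamma$, and the compactness bounds on the $R$--type data and on the pure $h$--factors are all correct and match the paper. But your treatment of the two terms that do not carry an explicit factor of $\tfrac1t$ — the third summand of $\alpha$, namely $-\Theta_{\tilde u(p)}(\Theta(\rho))\tfrac{(rt)h'(rt)-h(rt)}{h(rt)}$, and the entirety of $\gamma$ — has a genuine gap. Your claim that ``replacing $h$ by $h_t$ throughout \ldots\ every surviving summand of $\alpha$ carries an explicit factor of $\tfrac1t$'' is false: the displayed expansion preceding the Lemma is already written with $h_t$, and those two terms have no such prefactor. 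Your fallback — uniform continuity / vanishing oscillation of the geometric data over the shrinking annulus $\{1/t\le r\le 2/t\}$ — would give only $o(1)$ as $t\to\infty$, not the quantitative bound $\le K/t$ that the Lemma asserts, and in any case you never establish what the limiting value \emph{is}.

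The missing idea is structural, not analytic: $\Theta$ is the horizontal lift of $\partial_\theta$, and $\partial_\theta$ vanishes at $r=0$; hence $\Theta$, and therefore $\Theta(\rho)$, $\Theta_{\tilde u(p)}(\Theta(\rho))$ and $d_v(\Theta_{\tilde u(p)}(\rho))$, all vanish identically in the fibre over the marked point. Since these are smooth (restricted to the fibrewise-compact set $\rho^{-1}[0,\rho_{max}]$ over the compact base region $\{\norm z\le3\}$), they vanish to \emph{first order} in $r$, i.e.\ $|\Theta_{\tilde u(p)}(\Theta(\rho))|\le Cr$ and $\norm{d_v(\Theta_{\tilde u(p)}(\rho))(V)}\le Cr\vecnorm V$ there. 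On the support $r\le\tfrac2t$ this yields the factor of $\tfrac1t$: the offending $h$--factor $\tfrac{(rt)h'(rt)-h(rt)}{h(rt)}$ is merely bounded uniformly (as a function of $s=rt\in[1,2]$), and it is the $\Theta$--data, not the $h$--factor, that supplies the smallness. This is exactly the step the paper's proof records in the sentence about the $\Theta$--quantities ``tak[ing] value $0$ in the fibre over $0$, so vanish to first order as $r\to0$.'' Your proof does not contain this observation, and without it the bounds $\norm\alpha\le K/t$ and $\norm{\gamma(V)}\le \tfrac{K}{2t}\vecnorm V$ are not established.
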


\begin{proof}
 We examine the various summands of
\[-d(d(\tilde{u}^*\rho)\circ\tilde{\pi}^*(\tilde{j}-u^*j))_p\]
as described above.
The expressions in terms of $h$ and $rt$ are all smooth as functions of $rt$ and vanish
for $1\leq rt\leq2$, so must be bounded independently of $t$. By compactness $R_{\tilde{u}(p)}(\rho)$ and
$R_{\tilde{u}(p)}(R(\rho))$ have bounds independent of $t$. Similarly for $d_v(R_{\tilde{u}(p)}(\rho))$.

We are interested only in the region $r\in [\frac{1}{t},\frac{2}{t}]$ and
$u(r,\theta)=(h_t(r),\theta)$, so we restrict attention to $\tilde{u}^*\rho$ in fibres
where $r\leq\frac{2}{t}$.

By compactness $\Theta_{\tilde{u}(p)}(\Theta(\rho))$ and
$d_v(\Theta_{\tilde{u}(p)}(\rho))$ are bounded on $\rho^{-1}[0,\rho_{max})$
where $r\leq\frac{2}{t}$ and take value $0$ in the fibre over $0$,
so vanish to first order as $r\rightarrow 0$. Each summand of
$-d(d(\tilde{u}^*\rho)\circ\tilde{\pi}^*(\tilde{j}-u^*j))_p$
is a product of either of these two terms, or $\frac{1}{t}$ with bounded terms, hence the result.
\end{proof}

\begin{corollary}
\label{corollary:correction}
Let $C>\frac{K^2+K}{t}$, then over the region $\frac{1}{t}\leq r\leq\frac{2}{t}$ in $\widetilde{\mathbb{H}}$
\[\omega:=-d(d(\tilde{u}^*\rho)\circ\tilde{J})+\tilde{\pi}^*\left(C~\frac{dr}{r}\!\wedge\! d\theta\right)\]
gives a positive semi-definite quadratic form on $T\widetilde{E}$ when applied to pairs of vectors
$(X,\tilde{J}X)$.
\end{corollary}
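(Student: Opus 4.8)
The plan is to test the quadratic form $\omega(X,\tilde JX)$ on an arbitrary $X\in T\widetilde E$, after decomposing $X=X^v+X^h$ into its vertical and horizontal parts for the symplectic connection of $\tilde u^*\Omega$. Since $\tilde J$ preserves both $T\widetilde E^v$ and $T\widetilde E^h$, and $\tilde u^*\Omega$ vanishes on mixed pairs, the three summands of the decomposition established just before Lemma~\ref{lemma:flatenclosures},
\[-d(d(\tilde u^*\rho)\circ\tilde J)=\tilde u^*\Omega|_{(T\widetilde E^v)^{\otimes 2}}+\tilde u^*\Omega|_{(T\widetilde E^h)^{\otimes 2}}-d(d(\tilde u^*\rho)\circ\tilde\pi^*(\tilde j-u^*j)),\]
can be evaluated on $(X,\tilde JX)$ separately. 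The first summand gives $\tilde u^*\Omega(X^v,\tilde JX^v)=\vecnorm{X^v}^2\ge 0$. For the second, write $X^h=a\,r\tilde R+b\,\tilde\Theta$; then $D\tilde\pi(\tilde JX)=\tilde j(a\,r\partial_r+b\,\partial_\theta)=a\,\partial_\theta-b\,r\partial_r$, so $\tilde u^*\Omega(X^h,\tilde JX^h)=(a^2+b^2)\,\tilde u^*\Omega(r\tilde R,\tilde\Theta)$, and this is $\ge 0$ on the region $\frac1t\le r\le\frac2t$ because there $h_t$ is increasing, so $u$ is a nowhere orientation-reversing immersion, the frame $r\partial_r,\partial_\theta$ is positively oriented, and the pulled-back symplectic curvature is non-negative.

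For the third summand I would substitute the normal form of Lemma~\ref{lemma:flatenclosures}, and in parallel evaluate the correction term $\tilde\pi^*(C\,\frac{dr}{r}\wedge d\theta)$. From the computation of $D\tilde\pi(\tilde JX)$ above, $\tilde\pi^*(\frac{dr}{r}\wedge d\theta)(X,\tilde JX)=a^2+b^2$, so the correction contributes $C(a^2+b^2)$ and the $\alpha$-term contributes $\alpha\,(a^2+b^2)\ge-\frac Kt(a^2+b^2)$ by the bound $\norm\alpha\le\frac Kt$. Since $\beta,\gamma$ vanish on horizontal vectors, one finds $(\beta\wedge\tilde\pi^*d\theta)(X,\tilde JX)=a\,\beta(X^v)-b\,\beta(\tilde JX^v)$ and $(\gamma\wedge\tilde\pi^*\frac{dr}{r})(X,\tilde JX)=-b\,\gamma(X^v)-a\,\gamma(\tilde JX^v)$; using $\vecnorm{\tilde JV}=\vecnorm V$ together with $\norm{\beta(V)},\norm{\gamma(V)}\le\frac K{2t}\vecnorm V$ from Lemma~\ref{lemma:flatenclosures}, these two terms together are bounded below by $-\frac Kt\,\vecnorm{X^v}\,(\norm a+\norm b)$.

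Putting the estimates together gives
\[\omega(X,\tilde JX)\ \ge\ \vecnorm{X^v}^2+\Bigl(C-\frac Kt\Bigr)(a^2+b^2)-\frac Kt\,\vecnorm{X^v}\,(\norm a+\norm b),\]
and it remains to check that the right-hand side is non-negative whenever $C>\frac{K^2+K}{t}$. Regarding it as a quadratic in $\vecnorm{X^v}$ with positive leading coefficient and the other variables frozen, its discriminant is at most $(a^2+b^2)\bigl(\frac{2K^2}{t^2}+\frac{4K}t-4C\bigr)$ after using $(\norm a+\norm b)^2\le 2(a^2+b^2)$; this is $\le 0$ as soon as $C\ge\frac{K^2}{2t^2}+\frac Kt$, which holds under our hypothesis since $t\ge 1$ (the same hypothesis also forces $C-\frac Kt>0$). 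Hence the form is positive semi-definite and $\omega(X,\tilde JX)\ge 0$, as required. The only place that requires care is the middle step: keeping the $\tilde J$-rotation of the horizontal frame straight and tracking which contractions in the wedge products land on vertical versus horizontal inputs, so that the curvature term and the $C$-correction both emerge as positive multiples of $a^2+b^2$. Once Lemma~\ref{lemma:flatenclosures} is in hand, everything else is a routine completion of the square.
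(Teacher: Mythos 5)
Your proof is correct and follows essentially the same route as the paper's: split $X$ into vertical and horizontal parts, expand $\omega(X,\tilde JX)$ using the decomposition from Lemma~\ref{lemma:flatenclosures} together with the bounds on $\alpha,\beta,\gamma$, and conclude by examining a discriminant. Your version is a touch more explicit — writing $X^h = a\,r\tilde R + b\,\tilde\Theta$ and bounding the cross term by $\frac{K}{t}\vecnorm{X^v}(\lvert a\rvert+\lvert b\rvert)$ rather than the paper's $\frac{2K}{t}\vecnorm{V}\vecnorm{H}$ — but this yields the same conclusion under the same hypothesis on $C$.
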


\begin{proof}
 Split $X=V+H$ into vertical and horizontal components respectively. We will write $\vecnorm{H}$ for the
metric $\tilde{\pi}^*\left(\frac{dr}{r}\wedge d\theta\right)(\_,\tilde{J}\_)$ on $T\widetilde{E}^h$.

\begin{eqnarray*}
\omega(X,\tilde{J}X)
	&= &\Omega(V,\tilde{J}V)+\Omega(H,\tilde{J}H) \\
	&&+(C+\alpha)\tilde{\pi}^*\left(\frac{dr}{r}\wedge d\theta\right)(H,\tilde{J}H) \\
	&&+\beta\wedge\tilde{\pi}^*\left(d\theta\right)(V,\tilde{J}H)
		+\beta\wedge\tilde{\pi}^*\left(d\theta\right)(H,\tilde{J}V) \\
	&&+\gamma\wedge\tilde{\pi}^*\left(\frac{dr}{r}\right)(V,\tilde{J}H)
		+\gamma\wedge\tilde{\pi}^*\left(\frac{dr}{r}\right)(H,\tilde{J}V) \\
	&\geq &\vecnorm{V}^2 -\frac{2K}{t}\vecnorm{V}\vecnorm{H}+(C-\frac{K}{t})\vecnorm{H}^2
\end{eqnarray*}

Examination of the discriminant shows this is $\geq 0$ for all $X$ if
$C>\frac{K^2+K}{t}>\frac{K^2}{t^2}+\frac{K}{t}$.
\end{proof}

We are now ready to define $\tilde{\rho}$ for large enough $t$ and proceed with the proof of
Proposition \ref{proposition:enclosure}.

\begin{proof}[Proof of Proposition \ref{proposition:enclosure}]
We will construct a family of functions $\tilde{g}_t\in C^\infty(\widetilde{\mathbb{H}})$ and define
$\tilde{\rho}:=\tilde{g}_t\circ\tilde{\pi}+\tilde{u}^*\rho$. For large enough values
of $t$ this will have the necessary properties to achieve convexity on a level set of
$\tilde{\rho}$ contained in $\tilde{u}^{-1}\rho^{-1}[\rho_{min},\rho_{max}]$.

It is now convenient to change coordinates by the exponential map
\[\widetilde{\mathbb{H}}\longleftarrow\{z\in\mathbb{C}:\im{z}\in[0,\pi]\}\]
Using coordinates $z=x+iy$ on the strip, $r=1,2,3$ corresponds to $x=0,\log 2,\log 3$.
We define $g$ to be a scalar function on the strip, such that for some
positive $\epsilon<\frac{\log 3-\log 2}{2}$:

\begin{enumerate}[(i)]
 \item $g$ depends only on $x$ and is increasing in $x$
 \item $g(x,y)=0$ if $x\geq\log 3$
 \item $g(x,y)=\rho_{min}-\rho_{max}$ if $x\leq -\epsilon$
 \item $\exists \delta>0$ with $\ddsq{}{x}g\geq\delta$ for $x\in [0,\log 2]$
 \item $\ddsq{}{x}g=0$ for $x\in [\log 2+\epsilon, \log 3-\epsilon]$
 \item $\ddsq{}{x}g$ is non negative away from $x\in [\log 3-\epsilon,\log 3]$
\end{enumerate}
\begin{figure}[h]
\centering\scalebox{0.55}{\input{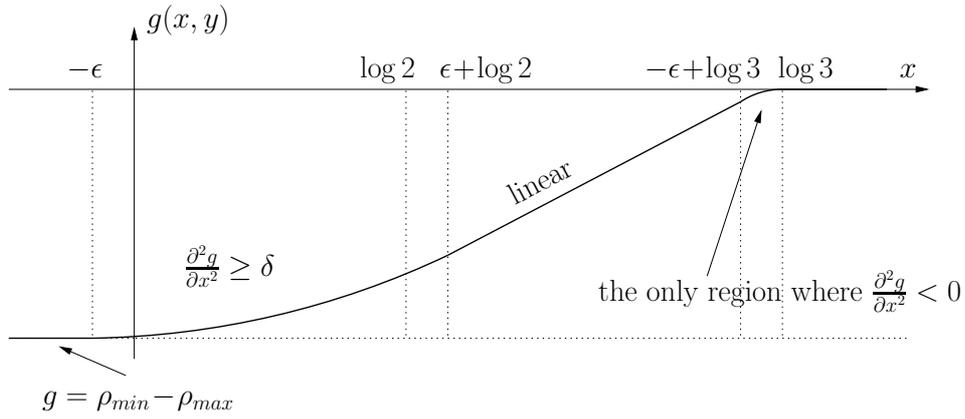}}
\caption{The function $g$ in terms of $x$.}
\label{fig:theamazingg}
\end{figure}

These conditions are illustrated in Figure~\ref{fig:theamazingg}. Such a
function is easily constructed.

Now we extend $g$ to $g_T$, a one-parameter family of functions parametrised by
$T\in [0,\infty]$.

Let $L:=\max \left(-\ddsq{}{x}g\right)$ and
\[
F:= 1+
	\frac{(g(\log 3+\epsilon)-g(\log 2+\epsilon))
	\left(\frac{T}{\log 3-\log 2-2\epsilon}+2\right)}
	{\rho_{max}-\rho_{min}}
\]

Then we can define smooth $g_T$ by:
\begin{itemize}
 \item $g_T(x,y)=\frac{g(x+T,y)+\rho_{max}-\rho_{min}}{F}-\rho_{max}+\rho_{min}$
for $x\leq \log 2 + \epsilon -T$
 \item $g_T(x,y)=\frac{g(x+T,y)}{F}$ for $x\geq \log 3 -\epsilon$
 \item $g_T$ interpolates linearly in the range $x\in [\log 2 + \epsilon -T, \log 3-\epsilon]$
\end{itemize}

For some rather messy constant $S>0$ and large enough $T$:
\begin{itemize}
 \item $\ddsq{}{x}g_T$ is non negative away from $x\in [\log 3-\epsilon,\log 3]$
 \item $\ddsq{}{x}g_T\geq
	\frac{-L}{F}\geq-\frac{SL}{T}$
 \item $\ddsq{}{x}g_T > \frac{\delta}{F}\geq \frac{S\delta}{T}$ for
	$x\in [-T,\log 2-T]$
\end{itemize}

Let $\tilde{g}_T$ be $g_T$ pushed forward to $\widetilde{\mathbb{H}}$ by the exponential map,
then we have:
\[-d(d\tilde{g}_T\circ \tilde{j})_{(r,\theta)}=
	\left(\ddsq{}{x}g_T(\log r,\theta)\right)\frac{dr}{r}\wedge\theta
\]

Setting $T=\log t$, we now consider
$\tilde{u}^*\rho+\tilde{\pi}^*\tilde{g}_{\log t}$ on the set
$\tilde{u}^{-1}\rho^{-1}[\rho_{min},\rho_{max}]$.
For large enough $t$ we have $\frac{S\delta}{\log t}>\frac{K^2+K}{t}$,
so by Corollary~\ref{corollary:correction} it is
plurisubharmonic over the region $r\in \left[\frac{1}{t},\frac{2}{t}\right]$. Also
if $t$ is small enough, then by compactness it is plurisubharmonic over the region
$r\in \left[\frac{2}{t},\frac{3}{t}\right]$. Elsewhere it is subharmonic.

The $\rho_{max}$ level set of $\tilde{u}^*\rho+\tilde{\pi}^*\tilde{g}_{\log t}$ is
contained in $\tilde{u}^{-1}\rho^{-1}[\rho_{min},\rho_{max}]$. It agrees with the 
$\rho_{min}$ level set of $\tilde{u}^*\rho$ near $r=0$ and the $\rho_{min}$ level set
away from the neighbourhood modelled by $\widetilde{\mathbb{H}}$. Hence
it makes our Lagrangian boundary condition enclosed.

A careful examination of this procedure shows that the choices involved are all
canonical up to isotopy through such choices.
\end{proof}

\begin{remark}
As $t\to\infty$ the above argument gives a deformation of MBL--fibrations from the fibration
with striplike ends to the original one over $\overline{\mathbb{D}}$ with marked points on
the boundary. For each value of $t$ we also have the same Lagrangian boundary condition.
With a little care, we also get a deformation of \emph{enclosed} Lagrangian boundary conditions as
$t\to\infty$. This will be important later as it shows the relative invariants from these
fibrations are all the same.
\end{remark}

Now we have shown how to deform our original model fibration over $\overline{\mathbb{D}}$
to have striplike ends, the trivialisations over which are specified entirely by the image in $N$
of the end and the fibre of $E\to N$ over that point. Suppose we now have a pair of holomorphic
maps $\overline{\mathbb{D}}\to N$ which agree on one point $z\in\partial \mathbb{D}$, then 
by deforming these as above to admissible maps from surfaces with striplike ends we can glue
at the ends corresponding to $z$. Furthermore, using the previous lemmas we can still define
enclosed Lagrangian boundary conditions on the resulting fibration simply by specifying some
Lagrangians in the fibres at infinity. The same works for larger composites.

In order to define an exact MBL--fibration from any admissible map $\func{u}{B}{N}$
we work with composites of our model maps (as illustrated in Figure~\ref{fig:bubblegumtree}).
Namely, any admissible $u$ is isotopic
through admissible maps to a map obtained as follows.
Take an acyclic collection of holomorphically embedded discs in $N$ joined (without any condition on tangencies)
at certain marked points
on their boundaries. By this, we mean that the graph whose vertices are given by the discs and whose edges correspond
to marked points at which the discs are joined, is acyclic. We shall call this a \emph{tree construction}. Now:
\begin{itemize}
 \item pullback the fibration $E\to N$ over each of these discs
 \item deform them all to have striplike ends instead of each marked point
 \item glue the fibrations at the striplike ends corresponding to the joined marked points
\end{itemize}

\begin{figure}[h]
\centering\scalebox{0.4}{\input{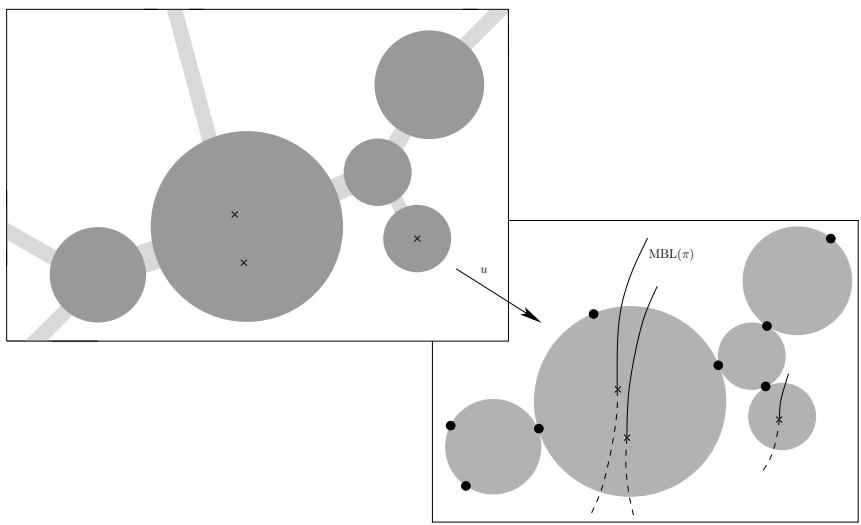}}
\caption{An example of a map of a surface $B$ into $N$ given by a tree construction.
Trivialisations of the exact MBL--fibration are over
lighter shaded regions.}
\label{fig:bubblegumtree}
\end{figure}

Let $\func{u}{B}{N}$ be such a construction. This specifies for each striplike end $e\in I$ a
regular point $z_e\in N$ to which it maps and an exact MBL--fibration $(E,\pi)$ over $B$.
The choices involved in the construction mean that $(E,\pi)$ is well-defined up to isotopy.
The same is true if we choose sufficiently many exact Lagrangian submanifolds in the fibres
over the points $z_e$ and a suitable deformation (by Lemma~\ref{lemma:sptdef})
to define an enclosed exact Lagrangian boundary condition $Q$. One can always enlarge the support
of this deformation such that $Q$ remains defined through the
relevant isotopies of exact MBL--fibrations
(and all subsequent arguments in this section).

Much of the tree structure and the positioning of the singular values of $\pi$ in $B$ by the
construction does not affect the exact MBL--fibration except by isotopy. Suppose we have
a smooth embedding of $\func{f}{B}{B}$ with following properties (e.g. Figure~\ref{fig:baserestriction}):

\begin{enumerate}[(i)]
 \item $u\circ f$ should be admissible and define the same set $\{z_e\}$ of ends.
\label{item:restriction}
In particular $f(\partial B)$ should not contain any critical value of $\pi$.
 \item Consider any of the regions of $B$ identified with parts of $\widetilde{\mathbb{H}}$
in order to construct the striplike ends (including those glued together). In each of these
regions $f(B)$ should be a union of disjoint wedges. This corresponds in the trivialisation
of the striplike ends to $f(B)$ being a union of `substrips'.
 \item $f$ should be isotopic to the identity through smooth embeddings maintaining condition
\label{item:isotopy}
(\ref{item:restriction}) above.
\end{enumerate}

\begin{figure}[h]
\centering\scalebox{0.6}{\input{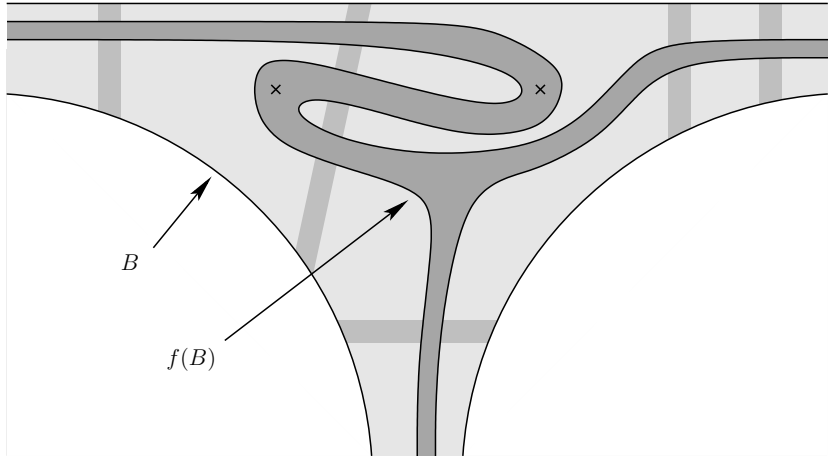}}
\caption{An example restriction of the base of a tree construction used to
change the tree structure (it redistributes the singular values). The regions
where striplike ends were glued in the construction of $B$ are
indicated by shading.}
\label{fig:baserestriction}
\end{figure}

\begin{lemma}
\label{lemma:baserestriction}
In the above construction, the isotopy of (\ref{item:isotopy}) induces an isotopy of exact MBL--fibrations
(and also of any enclosed exact Lagrangian boundary conditions, provided sufficient care is taken
with symplectic parallel transport using Lemma~\ref{lemma:sptdef}).
\end{lemma}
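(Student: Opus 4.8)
The plan is to exhibit the required isotopy as the family of pullbacks of $(E,\pi)$ along the isotopy of embeddings, and then to carry the enclosed exact Lagrangian boundary condition along this family using the flattening and enclosure constructions of this section; the only genuine work is to see that all of the non-canonical choices in those constructions can be made uniformly in the isotopy parameter.

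First I would extend the isotopy of~(\ref{item:isotopy}) to a smooth family of smooth embeddings $f_s\co B\to B$, $s\in[0,1]$, with $f_0=\id{B}$ and $f_1=f$, each $f_s$ satisfying~(\ref{item:restriction}) (and~(ii)). Put $u_s:=u\circ f_s\co B\to N$. Because~(\ref{item:restriction}) is maintained, every $u_s$ is admissible in the sense of Definition~\ref{def:admismap}, with the \emph{same} set of ends $\{z_e\}$: the $u_s$ are constant on the striplike ends because the $f_s$ respect the end structure, $u_s(\partial B)\subset N^{reg}$ because $f_s(\partial B)$ contains no critical value of $\pi$, and transversality to $\critmbl{\pi}$ together with holomorphicity near $u_s^{-1}(\critmbl{\pi})=f_s^{-1}(u^{-1}(\critmbl{\pi}))$ are part of admissibility of $u\circ f_s$. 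Condition~(ii) ensures that in each region modelled on $\widetilde{\mathbb{H}}$ (including the glued ones) $f_s(B)$ is a union of substrips, so $u_s$ is again a tree construction with the singular values merely redistributed, and $f_s^*(E,\pi)$ stays trivial over its striplike ends (its vertical form and $J_0$ are pulled back by the projection to the fibre, while the horizontal complex structure is the standard one of the domain).

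By Lemma~\ref{lemma:pullback} and the discussion preceding it, each $u_s$ produces an exact MBL--fibration $f_s^*(E,\pi)=(u_s^*E,\pi,u_s^*\Omega,u_s^*\Theta,\tilde{J}_s,B,j)$, and the smooth dependence on $s$ makes $\{f_s^*(E,\pi)\}_{s\in[0,1]}$ a smooth deformation, i.e. an isotopy of exact MBL--fibrations from $(E,\pi)$ at $s=0$ to $f^*(E,\pi)$ at $s=1$, the latter being the restriction of $(E,\pi)$ to $f(B)$ identified with $B$ via $f$. This is precisely the isotopy the lemma asserts. For the enclosed exact Lagrangian boundary condition I would keep the same Lagrangian data $(L_e,K_{L_e})$ in the fibres over the fixed points $z_e$ for all $s$ and extend it by symplectic parallel transport in $f_s^*(E,\pi)$. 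Following Remark~\ref{remark:pathspt}, Lemma~\ref{lemma:sptdef} lets one deform $\Theta$ inside a finite level set of $\rho$ so that the relevant parallel transport is defined; as $s$ ranges over the compact interval $[0,1]$ one takes the support of this deformation large enough, but bounded, that it works simultaneously for every $s$ --- exactly the enlargement ``such that $Q$ remains defined through the relevant isotopies'' mentioned above. One then reruns the enclosure construction of Proposition~\ref{proposition:enclosure} in the family: the bounds of Lemma~\ref{lemma:flatenclosures} and Corollary~\ref{corollary:correction} hold with constants $K$, $C$ that are uniform over the compact parameter set, so a single $t$ (equivalently $T=\log t$) and a single choice of the functions $\tilde{g}_t$ give a continuous family of enclosures $\tilde{\rho}_s:=\tilde{g}_t\circ\tilde{\pi}+u_s^*\rho$ making each $Q_s$ enclosed. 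This yields the claimed isotopy of enclosed exact Lagrangian boundary conditions.

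The step I expect to be the main obstacle is this last uniformity. Since $f_s$ is not holomorphic, one cannot simply pull back the enclosure of $(E,\pi)$: subharmonicity of $f_s^*\tilde{\rho}$ has to be re-established as in Corollary~\ref{corollary:correction}, and one must check that the flat connections on level sets in Lemma~\ref{lemma:sptdef}, the supports of the bump functions, and the constants $K$, $C$, $t$ can all be chosen independently of $s$, so that the resulting fibrations and enclosures vary smoothly rather than merely being well defined for each fixed parameter value. Compactness of $[0,1]$ together with the fact that all of these bounds originally came from compactness arguments on fibrewise-compact regions makes this routine, but it is the place where the flexibility with which the earlier lemmas were stated --- enlargeable supports, canonicity up to isotopy, uniform estimates --- is actually used.
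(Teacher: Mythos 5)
Your argument matches the paper's: you realise the isotopy as the family of pullbacks $f_s^*(E,\pi)$, invoke Lemma~\ref{lemma:pullback}, and then obtain the accompanying isotopy of enclosed Lagrangian boundary conditions by fixing the Lagrangian data at the ends and extending by symplectic parallel transport at each stage, using Lemma~\ref{lemma:sptdef} with support chosen uniformly over the compact parameter interval. This is exactly the paper's proof, written out in somewhat more detail.
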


The resulting fibration can be viewed as another potentially very different tree construction.

\begin{proof}
 This proof of this is immediate from the definitions above. To construct the accompanying
isotopy of enclosed exact Lagrangian boundary conditions, one simply fixes an exact Lagrangian
in the fibre over one end of each edge of the base and extends by symplectic parallel transport,
at each stage of the isotopy, to define $Q$.
This works with a sufficiently strong application of Lemma~\ref{lemma:sptdef}.
\end{proof}

\begin{lemma}
Suppose $\func{\tilde{u}}{\tilde{B}}{N}$ is another such construction such that $u,\tilde{u}$ are
isotopic through admissible maps fixing the endpoints $z_e$.
Then $u,\tilde{u}$ are related by a finite sequence of the following `moves':
\begin{enumerate}[(a)]
 \item isotopy of the tree construction of embedded admissible holomorphic discs through
such constructions
 \item decomposition of any of the embedded holomorphic discs into two joined at a new marked point
(see Figure~\ref{fig:bubbledecomp})
 \item changing the tree structure by restriction of the base to a surface embedded in it as in
\label{item:treebaserestriction} 
Lemma~\ref{lemma:baserestriction}
\end{enumerate}
Suppose furthermore we have enclosed exact Lagrangian boundary conditions $Q,\tilde{Q}$
defined on the exact MBL--fibrations over $u,\tilde{u}$ by the same set Lagrangians in
the fibres of $N$ over endpoints $z_e$ which are not `internal' to the tree constructions.
Then the two exact MBL--fibrations, together with 
enclosed exact Lagrangian boundary conditions, are isotopic.
\end{lemma}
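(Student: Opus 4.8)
The plan is to reduce the statement to a combinatorial fact about trees decorated with the positions of singular values, and then invoke the structural lemmas already established. First I would observe that an isotopy $u_s\co B_s\to N$ through admissible maps fixing the endpoints $z_e$ can be taken to be generic, so that the singular values $u_s^{-1}(\critmbl{\pi})$ trace out a one-dimensional submanifold of $\bigcup_s (\{s\}\times B_s)$ whose only non-generic moments are finitely many: (i) two singular values colliding and separating again in the interior of $B_s$, (ii) a singular value crossing one of the arcs into which a striplike-end region was glued, or (iii) a singular value passing through a gluing neck. Between such moments the isotopy is, by Lemma~\ref{lemma:pullback}, already an isotopy of exact MBL--fibrations, and by Lemma~\ref{lemma:sptdef} (applied with support enlarged as in the remark following it) it carries along the enclosed exact Lagrangian boundary condition $Q$, since $Q$ is determined by the chosen Lagrangians in the fibres over the external endpoints $z_e$ together with symplectic parallel transport.

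Next I would handle each non-generic moment as one of the three listed moves. For moment (i): near a collision the map $u_s$ is holomorphic and transverse to $\critmbl{\pi}$ for $s$ away from the critical parameter, so on a small disc in $B_s$ avoiding all other singular values we have precisely the model situation of Definition~\ref{def:mblmodel}; the collision is then absorbed by move (b), decomposing (or recomposing) an embedded holomorphic disc into two joined at a new marked point, so that the two colliding singular values live on opposite sides of the new neck. For moment (ii), a singular value crossing a glued-in arc: this changes which disc of the tree construction contains that singular value, and by Lemma~\ref{lemma:baserestriction} such a redistribution is realised, up to isotopy of exact MBL--fibrations (and of $Q$), by a base restriction $f\co B\to B$ of type (\ref{item:treebaserestriction}), i.e.\ move (c). For moment (iii), a singular value passing through a gluing neck, one first applies move (b) in reverse to shrink the neck away from the singular value, or equivalently applies a base restriction (move (c)) that pushes the neck past it; either way this is again a composite of moves (b) and (c). Having done this at each of the finitely many non-generic parameters, concatenating the pieces expresses the passage from $u$ to $\tilde u$ as a finite sequence of moves (a), (b), (c).

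Finally I would check that the enclosed exact Lagrangian boundary conditions survive. Each of the three moves has already been shown (Lemma~\ref{lemma:baserestriction} and the construction following Proposition~\ref{proposition:enclosure}) to induce an isotopy not just of exact MBL--fibrations but of enclosed exact Lagrangian boundary conditions, \emph{provided} the defining external Lagrangians in the fibres over the $z_e$ are kept fixed and symplectic parallel transport is controlled using Lemma~\ref{lemma:sptdef} with sufficiently large support. Since $Q$ and $\tilde Q$ are by hypothesis built from the \emph{same} such external Lagrangians, the induced isotopies of boundary conditions chain together with the isotopies of fibrations, yielding the desired isotopy of the pairs $(E,Q)$ and $(\tilde E,\tilde Q)$.

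The main obstacle I expect is moment (iii) together with the bookkeeping in moment (i): making precise that after a generic perturbation the \emph{only} codimension-one degenerations are collisions of singular values and crossings of the glued arcs, and then verifying that near a gluing neck a singular value can be pushed across by a legitimate base restriction (condition (ii) of Lemma~\ref{lemma:baserestriction}, that $f(B)$ restricts to a union of substrips in each striplike-end region, is exactly what must be respected here). Everything else is a routine concatenation of the structural lemmas already in hand.
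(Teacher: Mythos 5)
Your approach is genuinely different from the paper's, and worth comparing. The paper's proof is a two-step global reconciliation: first embed the base of one tree construction into the base of the other (a single application of move (c) via Lemma~\ref{lemma:baserestriction}), and then observe that after this the two tree constructions differ only by moves (a) and (b), a claim made plausible by normalising all discs to be complex linear embeddings in fixed coordinate charts. Your proof instead traces the given isotopy of admissible maps parametrically, classifies the codimension-one degenerations, and handles each locally by one of the three moves. This is more explicit and makes visible exactly which move is used where; the paper's route is shorter but relies on an unexamined "remaining difference is moves (a) and (b)" step. Both are viable, but they are not the same argument.

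One detail in your proof is off: your non-generic moment (i), two singular values of $u_s$ colliding in the interior and separating again, cannot occur within the hypotheses of the lemma. The isotopy is assumed to be \emph{through admissible maps}, and admissibility (Definition~\ref{def:admismap}) includes transversality of $u_s$ to $\critmbl{\pi}$ for every $s$. Since $\critmbl\pi$ has real codimension two and $B_s$ has real dimension two, transversality forces $u_s^{-1}(\critmbl\pi)$ to be a finite set of points at which $u_s$ is a local immersion, and these preimage points cannot merge without $u_{s_0}$ failing transversality at the merging instant. So no perturbation to "genericity" is needed to rule this out, and the appeal to move (b) for event (i) is vacuous. This is harmless — you are handling a case that never arises — but it signals a small misreading of what "isotopic through admissible maps" already buys you. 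The genuine codimension-one events are your (ii) and (iii), namely a singular value crossing into a different disc's region or passing near a gluing neck, and your treatment of those via Lemma~\ref{lemma:baserestriction} (and checking condition (ii) of that lemma, that $f(B)$ restricts to substrips in each striplike-end region) is the right thing to verify.
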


\begin{figure}[h]
\centering\scalebox{0.35}{\includegraphics{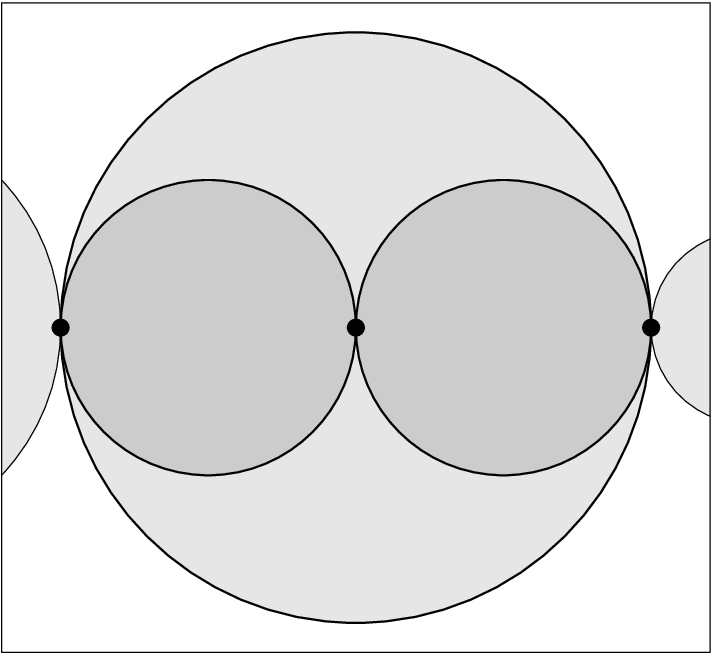}}
\caption{An example decomposition of a holomorphic disc with boundary marked points
into two joined at a new marked point.}
\label{fig:bubbledecomp}
\end{figure}

\begin{proof}[Sketch of proof]
We take the base of one of the fibrations and smoothly embed it in the base of the other
as for Lemma~\ref{lemma:baserestriction}. This can be done such that after application of move
(\ref{item:treebaserestriction}) the tree constructions are now related only by the remaining
two moves. It may help to think of decomposing and isotoping the tree structures such that
all the discs are complex linear embeddings in our favourite coordinate
neighbourhoods of $N$. 
\end{proof}

\subsection{Holomorphic sections}
\label{section:holomorphic}

In this section, we define the relative invariant associated to an exact
MBL--fibration with enclosed exact Lagrangian boundary conditions. The invariant's
definition will be largely identical to Seidel's construction of
relative invariants of exact Lefschetz fibrations (cf. \cite{seidel:les}
sections 2.1 and 2.4). We begin by defining a large class of almost
complex structures on an exact MBL--fibration $(E,\pi)$ which extend $J_0$.

\begin{definition}
An \emph{almost complex structure} on an exact MBL-fibration $(E,\pi)$
over a surface $B$ with striplike ends and boundary marked points $P\subset\partial B$
is an almost complex structure $J$ on $\pi^{-1}(B\setminus P)$ such that:
\begin{itemize}
 \item $J=J_0$ in a neighbourhood of $\crit\pi$ and on the complement of some
compact subset of $E$
 \item $D\pi\circ J=j\circ D\pi$, i.e $\pi$ is $(J,j)$--holomorphic,
 \item Over striplike ends $J$ is invariant w.r.t. translation in the $[0,\infty)$--direction,
 \item Over embedded curves $\gamma$ in $B$ ending, with non-zero derivative, at a boundary,
marked point $e$, $J$ extends smoothly to an almost complex structure $J_{e,t}$,
in the fibre over $e$ depending smoothly on the angle of approach $\pi t\in[0,\pi]$.
\end{itemize}
\end{definition}

The last two conditions ensure that, at each end, $J$ limits to some time dependent
almost complex structure $J_t$ in the fibre over the end.

\begin{definition}
An almost complex structure $J$ on $E$ is \emph{compatible relative to
$j$} if the 2--form
$\Omega(.,J.)|_{(TE^v)^{\otimes 2}}$ is everywhere symmetric and positive definite.
We will denote the set of such almost complex structures by
$\mathcal{J}(E,\pi)$.

Given a \emph{time-dependent} almost complex structure $J_{e,t}$ on the
fibre at infinity or fibre over the marked point at each end $e$ of $B$,
the set of those $J$ which agree in the limit with each $J_{e,t}$ will be denoted
$\mathcal{J}(E,\pi,\{J_{e,t}\}_{e\in I})$ or simply $\mathcal{J}(E,\pi,\{J_e\})$.
\end{definition}

For Floer cohomology, compatibility is in general an unnecessarily strong condition.
A more detailed
explanation of this is given by Salamon in $\cite{salamon:qcoh}$, in particular the discussion surrounding Remark 5.1.1.
It suffices instead for a time dependent almost complex
structure $J_t$ to tame the symplectic form.
This is often an easier condition to use, as it is preserved by small
compactly supported deformations of $J_t$.

Let $\func{u}{\mathbb{R}\times[0,1]}{M}$, where $M$ is a K\"ahler manifold. Compatibility in a neighbourhood of
the Lagrangian intersections is still convenient, though not strictly necessary. It ensures that the Cauchy-Riemann operator

\[\dd{}{s}+J_t\dd{}{t}\]

\noindent thought of as (the completion of) an operator on the
tangent space to the space of paths ($[0,1]\rightarrow M$)
with ends on the Lagrangian submanifolds, can be written as

\[\dd{}{s}+A\]

\noindent with $A$ symmetric near the ends. However, even compatibility near the intersections
is not strictly necessary as $A$ is always asymptotically symmetric at the ends.
To avoid discussing this any further, we will keep the requirement of compatibility near the
Lagrangian intersections for the purposes of this paper.

\begin{definition}
An almost complex structure $J$ on $E$ with Lagrangian boundary condition $Q$
is \emph{tame relative to
$j$} if the 2--form
$\Omega(.,J.)|_{(TE^v)^{\otimes 2}}$ is everywhere positive definite
and compatible near the Lagrangian intersections at the ends.
We will denote the set of such almost complex structures by
$\mathcal{J}_\tame(E,\pi,Q)$.

Given $J_{e,t}$ ends we define
$\mathcal{J}_\tame(E,\pi,Q,\{J_{e,t}\}_{e\in I})$ in the same way as
$\mathcal{J}(E,\pi,\{J_{e,t}\}_{e\in I})$, but with relative compatibility replaced by
relative tameness.
\end{definition}

For a relatively compatible or tame almost complex structure $J$ on an exact MBL--fibration $E$,
we denote the set of smooth sections of $E$ with boundary on $Q$, which are $(j,J)$--holomorphic
(except at boundary marked points, where $J$ is not defined) by $\mathcal{M}_J(Q)$. We refer to these
simply as \emph{holomorphic sections}. If $Q$ is enclosed we will write $\mathcal{M}_J(Q)$
for the subspace of those sections contained within the particular enclosure. It will
be clear from the context which enclosure is being used.

\begin{lemma}[cf. \cite{seidel:les}, Lemma~2.2]
\label{lemma:convexity}
Let $(E,\pi,\Omega,\Theta,J_0,B,j)$ be an exact MBL--fibration over a surface with ends
and an enclosed, exact Lagrangian boundary condition $(Q,\rho,R)$.
Thet for any almost complex structure $J$ on $(E,\pi,\Omega,\Theta,J_0,B,j)$,
the space of $(j,J)$--holomorphic sections $u$ with the given Lagrangian boundary
condition decomposes into two components:
\begin{itemize}
 \item those with image contained in the fibrewise compact set $\rho^{-1}[0,R-\epsilon]$
 \item those whose image leaves $\rho^{-1}[0,R]$
\end{itemize}
\end{lemma}

\begin{proof}
$J=J_0$ in a neighbourhood of $\rho^{-1}(R)$, so $\rho$ is subharmonic w.r.t. $J$ in that region.
This implies that $\rho\circ u$ is subharmonic where it takes values near $R$,
so cannot have a local maximum in the region $\rho^{-1}[R-\epsilon,R]$.
\end{proof}

In the construction of Floer cohomology, it is necessary to use a \emph{regular}
almost complex structure. This ensures that the moduli spaces
of sections are smooth finite dimensional manifolds and, in particular, that
the Floer differential squares to zero. Here we have the same requirement.

We say that a relatively tame almost complex structure $J$ on an exact MBL--fibration
is \emph{regular} at $u\in\mathcal{M}_J(Q)$ if $D_{u,J}$ is onto. Here $D_{u,J}$ is
the extension of the linearised $\bar\partial_J$ operator at the $J$--holomorphic section $u$
to relevant Sobolev completions. For a more detailed description of regularity
which applies in this setting, see \cite[Section 2]{seidel:les}. This
condition ensures that $\mathcal{M}_J(Q)$ is a smooth finite dimensional 
manifold of the \emph{correct} dimension (given by a Maslov index of $u$)
near $u\in\mathcal{M}_J(Q)$. $J$ is regular with respect to the enclosed Lagrangian boundary
condition $Q$ if it is regular for all $u\in\mathcal{M}_J(Q)$. We denote the space of
such almost complex structures by
\[\mathcal{J}_\tame^{reg}(E,\pi,Q,\{J_{e,t}\})\subset \mathcal{J}_\tame(E,\pi,Q,\{J_{e,t}\})\].

Corresponding to Lemma~2.20 of \cite{seidel:les} and with essentially the same proof, we
have generic regularity for complex structures in $\mathcal{J}_\tame(E,\pi,\{J_e\})$ given
any transverse enclosed exact Lagrangian boundary condition $(Q,\rho,R)$.

\begin{lemma}
 $\mathcal{J}_\tame^{reg}(E,\pi,Q,\{J_e\})$ is $C^\infty$--dense in $\mathcal{J}_\tame(E,\pi,Q,\{J_e\})$.
More precisely, given a non-empty open subset $U\subset B$ which is disjoint from the ends
and any $J\in \mathcal{J}_\tame(E,\pi,Q,\{J_e\})$, there are
$J^\prime\in \mathcal{J}_\tame^{reg}(E,\pi,Q,\{J_e\})$ arbitrarily $C^\infty$-close to $J$, such that $J=J^\prime$
outside $\pi^{-1}(U)$.
\end{lemma}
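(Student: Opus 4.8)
The plan is to run the standard Sard--Smale transversality argument for $J$--holomorphic sections, following \cite[Lemma~2.20]{seidel:les} essentially verbatim; the only genuinely new features are the non-compactness of $E$ and of $\crit{\pi}$, and both are absorbed by the enclosure by way of Lemma~\ref{lemma:convexity}. First I would fix the given $J\in\mathcal{J}(E,\pi,\{J_e\})$ and, for each $l$, introduce the Banach manifold $\mathcal{J}^l_U$ of compatible almost complex structures which agree with $J$ outside $\pi^{-1}(U)$, with $J_0$ near $\crit{\pi}$ and outside a fixed fibrewise-compact set, and which have the prescribed limits over the ends; to keep everything Banach one works with $C^l$--perturbations (or Floer's $C^\varepsilon$--space). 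Its tangent spaces consist of sections $Y$ supported in $\pi^{-1}(U)$, valued in $TE^v$ and anticommuting with $J$. For the sections I would use the weighted Sobolev completions familiar from Lagrangian Floer theory: over the striplike ends the fibration is trivial and $J$ is translation-invariant, so the ends really are the Floer model of Remark~\ref{remark:floerhom}, and transversality of $Q$ gives the usual exponential convergence to intersection points. One then forms the universal moduli space $\mathcal{M}^{\mathrm{univ}}=\{(u,J')\;:\;J'\in\mathcal{J}^l_U,\ u\in\mathcal{M}_{J'}(Q)\text{ inside the enclosure}\}$, a legitimate object since Lemma~\ref{lemma:convexity} makes ``inside the enclosure'' both open and closed.

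The main step is to show $\mathcal{M}^{\mathrm{univ}}$ is a Banach manifold. Its vertical differential at $(u,J')$ is $(\xi,Y)\mapsto D_{u,J'}\xi+\tfrac12 Y(u)\circ du\circ j$. The operator $D_{u,J'}$ is Fredholm of index the expected dimension of $\mathcal{M}_{J'}(Q)$ near $u$: it linearises a first-order elliptic boundary value problem, and the confinement of $u$ to the fibrewise-compact region $\rho^{-1}[0,R-\varepsilon]$ from Lemma~\ref{lemma:convexity} makes this analysis identical to the compact case. Picking $0\neq\eta$ in its cokernel — a smooth solution of the formal adjoint equation, by elliptic regularity — if $\eta$ were $L^2$--orthogonal to $Y(u)\circ du\circ j$ for all admissible $Y$, I claim $\eta$ vanishes on $\pi^{-1}(U)$; this is the point where being a genuine \emph{section} plays the role usually played by somewhere-injectivity. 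Indeed $D\pi\circ du=\operatorname{id}_{TB}$, so $du$ is injective, hence nowhere zero, at every point of $B$; over the compact set $\pi^{-1}(U)$, which is disjoint from $\crit{\pi}$ and from the ends, the admissible $Y$'s form a space large enough that $\{Y(u)\circ du\circ j\}$ spans the vertical tangent space at each point, forcing $\eta=0$ there. Unique continuation for the adjoint elliptic system then propagates $\eta\equiv0$ across $E$, a contradiction. Hence the vertical differential is surjective, and its kernel is complemented since it differs from the Fredholm operator $D_{u,J'}$ by a surjection onto a finite-dimensional complement of $\operatorname{im}D_{u,J'}$; so $\mathcal{M}^{\mathrm{univ}}$ is a Banach manifold and the projection $\mathcal{M}^{\mathrm{univ}}\to\mathcal{J}^l_U$ is Fredholm of index $\operatorname{ind}D_{u,J'}$.

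After that, the Sard--Smale theorem supplies a residual, hence dense, set of regular values of this projection, and a regular value is precisely a $J'\in\mathcal{J}^l_U$ lying in $\mathcal{J}^{reg}$; since the whole construction is supported over $\pi^{-1}(U)$, this already yields $C^l$--density together with the localisation and closeness claims. To upgrade to $C^\infty$ I would use Taubes' trick: inside $\mathcal{J}(E,\pi,\{J_e\})$ with its $C^\infty$--topology, write $\mathcal{J}^{reg}(E,\pi,Q,\{J_e\})$ as the countable intersection, over pairs $(c,k)$, of the sets of $J'$ regular for every $u\in\mathcal{M}_{J'}(Q)$ of index $\le k$ and action $\le c$; each such set is open by Gromov--Floer compactness within the enclosure — again supplied by Lemma~\ref{lemma:convexity} — and dense by the $C^l$ case, so Baire concludes.

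I expect the single delicate point to be the surjectivity of the universal linearisation: verifying that the constrained perturbation space (compatible, $TE^v$--valued, anticommuting with $J'$, frozen near $\crit{\pi}$ and near fibrewise infinity, prescribed at the ends) still contains enough $Y$ supported over $\pi^{-1}(U)$ to annihilate any cokernel element. This is exactly where one uses that holomorphic sections of $\pi$ have nowhere-vanishing derivative, so no injective-point analysis is needed, and where a little care is required to keep the non-compact locus $\crit{\pi}$ away from $\pi^{-1}(U)$ so the perturbations are unobstructed. Everything else — Fredholm theory, elliptic regularity, unique continuation, Sard--Smale and Taubes' trick — is insulated from the non-compactness of $E$ by Lemma~\ref{lemma:convexity} and proceeds as in \cite[Section~2]{seidel:les}.
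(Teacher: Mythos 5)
Your proposal is correct and follows the same route the paper itself takes, namely deferring to \cite[Lemma~2.20]{seidel:les} with the observation that the enclosure (Lemma~\ref{lemma:convexity}) insulates the Sard--Smale argument from the non-compactness of $E$ and $\crit{\pi}$, and that injectivity of $du$ for sections (since $D\pi\circ du=\id{}$) replaces somewhere-injectivity. One small caveat worth keeping in mind: the statement does not require $U$ to avoid critical values, but this costs nothing since sections never meet $\crit{\pi}$ and perturbations are frozen only on a small neighbourhood of it, so your unique-continuation step still applies.
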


By bounding the symplectic action, as in \cite{seidel:les}
one also achieves a compactification $\overline{\mathcal{M}}_J(Q)$ of $\mathcal{M}_J(Q)$
in the \emph{Gromov-Floer topology} by adding broken sections.

Counting isolated holomorphic sections in $\mathcal{M}_J(Q)$ which limit to
given sets $\{x_e\}_{e\in I}$ of intersection points in the fibres at the ends
(denote these subsets $\mathcal{M}_J(Q,\{x_e\})$) one defines a linear map
on the level of Floer cochain complexes. In fact, by a standard argument, considering
degenerations of 1--dimensional families of sections, one shows that it is a chain map.

\[
\bfig

\node 1a(0,400)[\displaystyle\bigotimes_{e\in I^-}CF(Q_e^1,Q_e^0,J_{e,t})]
\node 2a(1900,400)[\displaystyle\bigotimes_{e\in I^+}CF(Q_e^1,Q_e^0,J_{e,t})]
\node 1b(200,0)[{\displaystyle\otimes_{e\in I^-} \langle x_e\rangle }]
\node 2b(2100,0)[{\displaystyle\sum_{\{y_e\}_{e\in I^+}}\#\mathcal{M}_J(Q,\{x_e\}\cup\{y_e\})(\displaystyle\otimes_{e\in I^+} \langle y_e\rangle )}]

\arrow[1a`2a;C\Phi_0^{rel}((E,\pi),(Q,\rho,R),J)]
\arrow/@{|->}/[1b`2b;]
\efig
\]

The above is written out in full detail, since, in particular, the differentials in the Floer cochain complexes
$CF(L,L^\prime,J_t)$ depend
on the choice of almost complex structure. When one changes this, an argument using continuation
maps gives a chain homotopy equivalence to the new cochain complex which is canonical up to
chain homotopy. Similarly, compactly supported Hamiltonian symplectomorphisms of either
$L$ or $L^\prime$ induce canonical chain homotopy classes of chain homotopy equivalences.
Lemma~\ref{lemma:isotopyoffibrations} (below) can be viewed as a generalisation of both of these
results in the setting of exact Lagrangian submanifolds of exact symplectic manifolds.

\begin{definition}
The \emph{relative invariant} $\Phi_0^{rel}((E,\pi),(Q,\rho,R),J)$ is defined to be the map induced on Floer
cohomology by $C\Phi_0^{rel}((E,\pi),(Q,\rho,R),J)$. Under the assumption that the $Q$ is spin in any fibre,
one can also orient the moduli spaces and perform the count of sections with signs (see \cite{seidel:book}).
This allows
the use of $\mathbb{Z}$ coefficients. Alternatively, one can instead use
$\frac{\mathbb{Z}}{2\mathbb{Z}}$--coefficients.
\end{definition}

\begin{remark}
The convention of signs and arrangement of $Q_e^0,Q_e^1$ differs by a $180^\circ$ rotation
from that in \cite{seidel:les}.
\end{remark}

\begin{remark}
 There is a Poincar\'e duality arising from trivial fibrations over an infinite strip with both
ends in $I^-$ (or both in $I^+$). Composition with this (see gluing below) allows us to
swap ends back and forth between $I^+$ and $I^-$ or also to view the relative invariant as
the element induced in cohomology by:
\[C\Phi_0^{rel}\in\displaystyle\bigotimes_{e\in I} CF(Q_e^1,Q_e^0)\]
\end{remark}

Composing two exact MBL--fibrations with enclosed Lagrangian boundary conditions
can be done by gluing over a single striplike end (if necessary one uses Poincar\'e
duality to move other ends out of the way). Corresponding to Proposition~2.2 of
\cite{seidel:les} we have:

\begin{lemma}
Gluing two exact MBL--fibrations with enclosed Lagrangian boundary conditions
together along an oppositely oriented, but otherwise identical striplike end
gives the composition of the $\Phi^{rel}_0((E,\pi),(Q,\rho,R),J)$ maps.

(It is important that all the data of $E,\pi,Q,J\ldots$etc agrees where the gluing occurs.)
\end{lemma}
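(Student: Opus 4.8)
The plan is to follow the standard gluing argument for relative invariants of exact Lefschetz fibrations (\cite{seidel:les}, Proposition~2.2) essentially verbatim, the only genuinely new point being that one must carry the enclosures through the neck-stretching so that the $C^0$--confinement of Lemma~\ref{lemma:convexity} stays uniform in the gluing parameter. So first I would record that, by hypothesis, the two fibrations $(E_-,\pi_-)$, $(E_+,\pi_+)$ over bases $B_-$, $B_+$ with enclosed exact Lagrangian boundary conditions $(Q_-,\rho_-,R_-)$, $(Q_+,\rho_+,R_+)$ agree — as fibrations, as Lagrangian boundary conditions, and in their chosen almost complex structures — over an end $e_-\in I^+$ of $B_-$ and an end $e_+\in I^-$ of $B_+$, via the product trivialisations $\mathbb{R}^\pm\times[0,1]\times E_z$.

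For each neck length $\ell>0$ I would cut the two half-strips at $\pm\ell$ and identify, producing a base $B^\ell$ with a finite neck $[-\ell,\ell]\times[0,1]\times E_z$, an exact MBL--fibration $(E^\ell,\pi^\ell)$, a glued almost complex structure $J^\ell$ that is translation invariant on the neck, and a glued enclosed Lagrangian boundary condition $(Q^\ell,\rho^\ell,R^\ell)$ with $R^\ell=\max(R_-,R_+)$. Here the definitions do the work: since $\rho_\pm$, $J_{0,\pm}$ and $J_\pm$ all split over the striplike ends, the glued $\rho^\ell$ is still subharmonic near its top level set with a modulus that does \emph{not} degenerate as $\ell\to\infty$, and $Q^\ell$ still lies inside $(\rho^\ell)^{-1}[0,R^\ell)$; by Lemma~\ref{lemma:nondeg} the relevant region carries a genuine exact symplectic form, so $\Phi_0^{rel}((E^\ell,\pi^\ell),(Q^\ell,\rho^\ell,R^\ell),J^\ell)$ is defined, and by isotopy invariance of the relative invariant it is independent of $\ell$ and equals $\Phi_0^{rel}((E,\pi),(Q,\rho,R),J)$ for the $\ell=\infty$ glued data (sitting over $B$ with a striplike end, or equivalently a boundary marked point, where the neck was).

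Next I would identify the $\ell\to\infty$ picture with the composition of the two pieces. The two ingredients are (a) a \emph{pre-gluing plus implicit function theorem} step: given isolated $J_\mp$--holomorphic sections $u_\mp$ of $E_\mp$ with matching asymptotics at a common intersection point $x$ of $(Q^1_{e_\pm},Q^0_{e_\pm})$, one forms an approximately holomorphic section $u_-\#_\ell u_+$ of $E^\ell$ and corrects it using surjectivity of the glued linearised operator for $\ell$ large (this rests on regularity of $J_\mp$ at $u_\mp$, the linear gluing estimate, and exponential decay along the neck); and (b) \emph{surjectivity}: for $\ell$ large every isolated section of $E^\ell$ with the given asymptotics arises this way, proved by a contradiction/compactness argument — a sequence would, by the uniform $C^0$--bound from Lemma~\ref{lemma:convexity} (uniform because the enclosure is built from split data) together with a uniform energy bound from the action/symplectic-curvature estimates, Gromov--Floer converge to a broken configuration whose only nontrivial components can be one section in each piece plus possibly Floer strips, and isolatedness forces it to be exactly a pair $(u_-,u_+)$. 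Matching the orientation of the glued moduli space with the product orientation (as in \cite{seidel:book}) then turns the count of sections into the matrix product defining the composition of the $C\Phi_0^{rel}$ chain maps, hence of the $\Phi_0^{rel}$ maps on cohomology.

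The main obstacle is part (b), and precisely the subtlety that does not occur in Seidel's compact-locus setting: ruling out that a sequence of sections of $E^\ell$ \emph{escapes to infinity in the fibre} as the neck lengthens. This is exactly where the product structure of the neck is essential — because $[-\ell,\ell]\times[0,1]\times E_z$ carries the product $\rho$ and product $J^\ell$, the subharmonicity that drives Lemma~\ref{lemma:convexity} holds with an $\ell$-independent modulus and with the single constant $R^\ell=\max(R_-,R_+)$, so the $C^0$--confinement is genuinely uniform; combined with an $\ell$-independent energy bound this gives the Gromov--Floer compactness needed. Once uniform confinement and uniform transversality near the gluing locus are in hand, the remaining analytic inputs (the linear gluing estimate, exponential decay, the quantitative implicit function theorem, and a gluing-theory argument that the pre-glued solutions exhaust the isolated sections) are formally identical to \cite{seidel:les} and I would import them with only cosmetic changes.
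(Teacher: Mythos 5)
Your proposal is correct and follows exactly the route the paper intends: the text preceding the Lemma explicitly says the statement corresponds to Proposition~2.2 of \cite{seidel:les} and gives no further proof, and your version transports Seidel's neck-stretching argument while correctly isolating the one genuinely new point — that the splitting of $\rho$ over the striplike ends makes the subharmonicity, hence the $C^0$--confinement of Lemma~\ref{lemma:convexity}, uniform in the gluing parameter, so Gromov--Floer compactness still closes the surjectivity step despite the non-compact fibres and singular loci. One small slip: the parenthetical hypothesis that \emph{all} data agree over the glued end should be read as including the enclosure, forcing $R_-=R_+$, so writing $R^\ell=\max(R_-,R_+)$ suggests more freedom than is actually available (and if $R_-\neq R_+$ the subharmonicity of $\rho^\ell$ near the $\max(R_-,R_+)$ level on the side with the smaller $R$ would not be guaranteed); also, the remark about ``$\ell=\infty$ glued data sitting over $B$ with a striplike end'' conflates the broken limit with an honest fibration, but neither affects the substance of the argument.
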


We shall now show that a variety of changes can be made to $((E,\pi),(Q,\rho,R),J)$
without changing the relative invariant (beyond composing on either side with the canonical
isomorphisms on Floer cohomology).

\begin{lemma}
\label{lemma:switchenc}
$C\Phi_0^{rel}((E,\pi),(Q,\rho,R),J)$ is unchanged when one switches the enclosure
$(Q,\rho,R)$ for an equivalent enclosure.
\end{lemma}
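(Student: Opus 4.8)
The plan is to reduce to the case of a single elementary equivalence of enclosures, since the general case follows by composing finitely many such comparisons. So suppose $(\sigma,S)$ is obtained from $(\rho,R)$ by the relation of Definition~\ref{def:equivenc}: there is $R'<R$ with $\rho^{-1}[0,R')\subset\sigma^{-1}[0,S)\subset\rho^{-1}[0,R)$ and $\rho$ subharmonic on $\rho^{-1}[R',R]$. The key observation is that $C\Phi_0^{rel}$ is defined by counting isolated holomorphic sections \emph{contained in the relevant enclosure}, so the only thing that can change when we pass between equivalent enclosures is the \emph{set of sections being counted}. The heart of the argument is therefore to show that the two moduli spaces $\mathcal{M}_J(Q,\{x_e\})$ — one computed inside $\rho^{-1}[0,R]$, the other inside $\sigma^{-1}[0,S]$ — actually coincide (as sets, with the same signs), for a generic $J$ that is simultaneously regular for both enclosures.

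First I would fix a $J\in\mathcal{J}(E,\pi,\{J_e\})$ lying in $\mathcal{J}^{reg}$ for both enclosures; such $J$ exist by the genericity lemma applied twice (intersecting two $C^\infty$-dense subsets, each of which is the complement of a meagre set in the relevant sense, via a Baire argument) while keeping $J$ fixed near the ends. Next, the containment $\sigma^{-1}[0,S)\subset\rho^{-1}[0,R)$ together with Lemma~\ref{lemma:convexity} applied to the enclosure $(\rho,R)$ shows that every holomorphic section landing in $\sigma^{-1}[0,S)$ in particular stays inside $\rho^{-1}[0,R-\epsilon]$, so it is one of the sections counted by the $(\rho,R)$ construction; thus $\mathcal{M}^{\sigma}_J(Q,\{x_e\})\subseteq\mathcal{M}^{\rho}_J(Q,\{x_e\})$. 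For the reverse inclusion I would use the subharmonicity of $\rho$ on $\rho^{-1}[R',R]$: a section counted by $(\rho,R)$ lies in $\rho^{-1}[0,R-\epsilon]$, and I must push it further into $\rho^{-1}[0,R')$, hence into $\sigma^{-1}[0,S)$. The maximum principle for $\rho\circ u$ on the region $\rho^{-1}[R',R]$ forbids an interior maximum of $\rho\circ u$ there; combined with the fact that the section limits over the ends to intersection points lying (by hypothesis, since the enclosures agree near the ends after the splitting condition (i) in Definition~\ref{def:enclosedlag}) inside $\rho^{-1}[0,R')$, one concludes $\rho\circ u<R'$ everywhere, so $u$ lands in $\sigma^{-1}[0,S)$ and is counted by the $(\sigma,S)$ construction. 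This gives the two moduli spaces are equal, and since the orientations of the moduli spaces are intrinsic (they depend only on the spin structures on the $Q_z$ and on $J$, not on the auxiliary function used to cut out the enclosure), the signed counts agree. Finally, the Gromov–Floer compactification is the same for both — a broken section all of whose pieces satisfy the maximum principle has every piece in $\rho^{-1}[0,R')$ — so no boundary contributions are gained or lost, and $C\Phi_0^{rel}$ is literally the same chain map.

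The main obstacle I anticipate is the ``pushing further in'' step: ensuring that a section merely known to avoid a local maximum of $\rho$ in the collar $\rho^{-1}[R',R]$ actually satisfies $\rho\circ u<R'$, rather than just $\rho\circ u\le R-\epsilon$. This is exactly where one needs the subharmonicity hypothesis on the \emph{full} collar $[R',R]$ (not just on a tiny sliver near $R$) together with the behaviour at the ends; the cleanest way to package it is probably a connectedness argument on the source surface, showing that the open set where $\rho\circ u<R'$ is also closed because any boundary point would be an interior maximum of $\rho\circ u$ in the subharmonic region. One must also handle the boundary-on-$Q$ part of the source carefully, using that $Q\subset\rho^{-1}[0,R')$ by Definition~\ref{def:enclosedlag}(ii) applied to $(\rho,R)$; along $\partial B$ the section already lies in $Q$ and hence in $\rho^{-1}[0,R')$, so the maximum principle argument only needs to be run on the interior and at the ends. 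Everything else is a routine transport of Seidel's arguments \cite{seidel:les} to the MBL setting.
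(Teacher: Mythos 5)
Your overall strategy — to show the two moduli spaces $\mathcal{M}_J(Q,\{x_e\})$ coincide so that $C\Phi_0^{rel}$ is literally the same chain map — is exactly what the paper's one-sentence proof (``Definition~\ref{def:equivenc} ensures that the moduli spaces\ldots are unaffected'') has in mind, and the easy containment $\mathcal{M}^\sigma_J\subseteq\mathcal{M}^\rho_J$ is argued correctly. Two things are slightly off, however.

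First, the genericity step is superfluous: once you have shown the two moduli spaces are the same set, regularity of $J$ for one enclosure is automatically regularity for the other, so there is no need to intersect two dense sets of regular $J$; the lemma is asserted for any fixed $J$ already chosen for $(\rho,R)$, and a Baire argument would change the data rather than prove the equality for given $J$.

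Second and more substantively, the reverse inclusion hinges on a claim you cite incorrectly. You write that ``$Q\subset\rho^{-1}[0,R')$ by Definition~\ref{def:enclosedlag}(ii) applied to $(\rho,R)$,'' but condition (ii) only gives $Q\subset\rho^{-1}[0,R)$. The inclusion $Q\subset\rho^{-1}[0,R')$ is \emph{not} what is stated there, and it is also not a formal consequence of Definition~\ref{def:equivenc}: that definition only gives $Q\subset\sigma^{-1}[0,S)\subset\rho^{-1}[0,R)$, and nothing forces $\sigma^{-1}[0,S)\subset\rho^{-1}[0,R')$ (the containment runs the other way). Without $Q\subset\rho^{-1}[0,R')$, the maximum-principle argument can place the maximum of $\rho\circ u$ on $\partial B$ at a value in $[R',R)$, and then you cannot conclude that the interior of $u$ avoids the region $\rho^{-1}[R',R)\setminus\sigma^{-1}[0,S)$: the ``connectedness argument'' you sketch does not repair this, because a boundary point of $\{\rho\circ u<R'\}$ is not automatically an interior local maximum. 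The correct packaging is a compactness argument: set $W=u^{-1}(\rho^{-1}(R',R])$, note $W$ has compact closure in the interior of $B$ (this is where $Q\subset\rho^{-1}[0,R')$ and the condition on the ends are used), and apply the maximum principle on $W$ to conclude $W=\emptyset$. So the hypothesis $Q\subset\rho^{-1}[0,R')$ is genuinely needed; it is evidently intended to be read into Definition~\ref{def:equivenc} (for the definition to be useful at all one must be able to choose $R'$ large enough that $Q\subset\rho^{-1}[0,R')$ while still satisfying $\rho^{-1}[0,R')\subset\sigma^{-1}[0,S)$), but you should flag that you are invoking this implicit assumption rather than citing Definition~\ref{def:enclosedlag}(ii), which does not supply it.
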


\begin{proof}
Definition~\ref{def:equivenc} ensures that the moduli spaces $\mathcal{M}_J(Q,\{x_e\})$
are unaffected by this.
\end{proof}

\begin{lemma}
\label{lemma:isotopyoffibrations}
The map $\Phi_0^{rel}((E,\pi),(Q,\rho,R),J)$ depends only on the enclosed region $\rho^{-1}[0,R]$.
It is invariant (up to composition on either side with the canonical
isomorphisms on Floer cohomology) under isotopy of the combined data $((E,\pi),(Q,\rho,R),J)$
such that:
\begin{enumerate}[(a)]
 \item the data remains valid at all stages for the definition of
	$\Phi_0^{rel}((E,\pi),(Q,\rho,R),J)$,
 \item the induced isotopies at each end $E_e$ fix the symplectic form
	and vary the Lagrangian submanifolds only by compactly supported
	Hamiltonian isotopy.
\end{enumerate}
\end{lemma}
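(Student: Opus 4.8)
The plan is to follow Seidel's argument for exact Lefschetz fibrations (\cite[Section~2]{seidel:les}), with all of the extra non-compactness absorbed into the enclosure. The first assertion is the easy one: if two collections $((E,\pi),(Q,\rho,R),J)$ and $((E',\pi'),(Q',\rho',R'),J')$ restrict to the same fibrewise compact piece $\rho^{-1}[0,R]$ (so in particular they have the same enclosure and the same $J$ there), then by Lemma~\ref{lemma:convexity} the confined holomorphic sections of both are exactly those with image in $\rho^{-1}[0,R-\epsilon]$, and these are literally the same sets of sections; hence the chain maps $C\Phi_0^{rel}$ coincide, not merely the induced maps on cohomology. Together with Lemma~\ref{lemma:switchenc} this also lets me pass freely to an equivalent enclosure at any point in the argument.

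For the second assertion, write the deformation as a smooth family $((E^\tau,\pi^\tau),(Q^\tau,\rho^\tau,R^\tau),J^\tau)$, $\tau\in[0,1]$, each member valid for the relative invariant. First I would reduce to the case of an isotopy that is stationary in a neighbourhood of every end. Near an end $e$, condition (b) says the pair of Lagrangians $(Q_e^1,Q_e^0)$ moves by a compactly supported Hamiltonian isotopy in the fixed fibre $E_e$, and the limiting almost complex structure $J_{e,t}$ may vary. For each end I would glue onto $(E^1,\pi^1)$ a trivial exact fibration over a strip (one $I^-$ end, one $I^+$ end) whose vertical almost complex structure and exact Lagrangian boundary condition interpolate from the $\tau=1$ end data to the $\tau=0$ end data; by the relative-invariant description of the Floer continuation and Hamiltonian-isotopy maps (cf.\ \cite{seidel:les}), such a piece carries relative invariant equal to the canonical isomorphism at that end. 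By the gluing lemma above, the glued fibration has relative invariant equal to $\Phi_0^{rel}$ at $\tau=1$ post-composed with these canonical isomorphisms, and its end data now agree with those of $(E^0,\pi^0)$; moreover it is joined to $(E^0,\pi^0)$ by an isotopy stationary near the ends. So it remains to show: an isotopy stationary near the ends induces the identity on $\Phi_0^{rel}$.

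For that I would use a parametrized moduli space. By a parametrized version of the density statement for $\mathcal{J}^{reg}$ above (cf.\ \cite[Section~2]{seidel:les}), choose the path $\{J^\tau\}$ generic so that the universal space $\widetilde{\mathcal{M}}=\{(\tau,u):u\in\mathcal{M}_{J^\tau}(Q^\tau)\}$, decomposed by limits $\{x_e\}$ at the ends, is cut out transversely and is a smooth manifold with boundary $\mathcal{M}_{J^0}(Q^0)\sqcup\mathcal{M}_{J^1}(Q^1)$. The estimates for its Gromov--Floer compactification --- confinement to $\rho^{-1}[0,R-\epsilon]$ (Lemma~\ref{lemma:convexity}), a bound on $\int u^*\Omega$ coming from a fixed exact symplectic representative on the enclosures (Lemma~\ref{lemma:nondeg}) in terms of the prescribed end points, and absence of disc and sphere bubbling from exactness --- all hold uniformly in $\tau$, since $[0,1]$ is compact. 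Counting the index-$0$ sections at $\tau=0$ and $\tau=1$ recovers $C\Phi_0^{rel}$ at those values, and counting the boundary of the $1$-dimensional parametrized moduli space of index-$0$ sections (whose ends are precisely these, together with once-broken configurations consisting of an index-$(-1)$ parametrized section and a single Floer strip at one end) yields the identity $C\Phi^{rel}_{\tau=0}-C\Phi^{rel}_{\tau=1}=\pm(dH+Hd)$ for the chain homotopy $H$ defined by the index-$(-1)$ parametrized sections. Passing to cohomology completes the proof.

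The main obstacle will be the uniformity of the non-compactness bookkeeping along the path: arranging a single equivalent enclosure and a single constant $\epsilon$ that work for all $\tau$ simultaneously (so that Lemma~\ref{lemma:convexity} can be applied fibrewise over $[0,1]$), checking that the singular loci $\crit{\pi^\tau}$ stay away from the enclosing subharmonic hypersurfaces throughout the family, and confirming that no new breaking occurs off the Morse--Bott--Lefschetz singularities in $\overline{\widetilde{\mathcal{M}}}$ --- this is where the K\"ahler hypothesis near $\crit{\pi}$ and the construction of Section~\ref{section:flattening} are needed, exactly as for the unparametrized invariant. If a single global choice cannot be made, I would subdivide $[0,1]$ into short subintervals, put the family into a standard form over each one by a Moser-type deformation, and concatenate the resulting chain homotopies.
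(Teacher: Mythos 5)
Your proof is correct and follows essentially the same route as the paper's: a parametrized moduli space over the isotopy parameter to produce a chain homotopy (the paper's ``standard argument counting sections at all stages''), with continuation maps at the ends supplying the canonical isomorphisms. Your write-up is considerably more detailed than the paper's two-sentence sketch, but the underlying idea is identical.
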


\begin{proof}
When one fixes the data over the ends, a standard argument counting sections at
all stages of (some perturbation) of this
isotopy gives a homotopy of the relative invariant at the level of chain complexes.
More generally, it gives a homotopy composed with continuation maps (yielding the
canonical isomorphisms) on
Floer cohomology groups over the ends.
\end{proof}

\begin{remark}
\label{remark:isotopyoffibrations}
In fact, the above argument works to describe general deformations of the data
$(E,\pi)$, $(Q,\rho,R)$ and $J$. Suppose for example, we do not require the symplectic
forms on fibres over the ends to remain fixed (then we most likely also have to vary $Q$
over the ends to ensure that it remains a Lagrangian boundary condition).
The relative invariant then varies by left- and right-composition
with the continuation maps from these changes. In some cases (such as deforming $\Omega$
through exact K\"ahler forms with the added condition that components of $Q$ have vanishing
first cohomology over $\mathbb{R}$) these continuation maps are still canonical isomorphisms,
but in general one cannot expect that to be the case.
\end{remark}

Suppose the geometric data defining the exact MBL--fibration and boundary conditions
splits as a some sort of product, then one can often correspondingly split the relative invariant. A simple
example of this is demonstrated in the following lemma and a more involved version comes
in Section~\ref{section:twistprod}.

\begin{lemma}
\label{lemma:fibreproduct}
Suppose:
\begin{itemize}
 \item $(E,\pi)$ splits
as a smooth fibre product of MBL--fibrations $(E_1,\pi_1)$ and
$(E_2,\pi_2)$ over the same base $(B,j)$,
 \item the exact Lagrangian boundary condition $Q$ splits as a fibre-product
of exact Lagrangian boundary conditions $Q_1,Q_2$ in the two factors,
 \item $(\rho,R),(\rho_1,R),(\rho_2,R)$ are enclosures for $Q,Q_1,Q_2$ respectively
with the property that $\rho$ is $C_0$--close to $\max\{\rho_1,\rho_2\}$ in some neighbourhood
of $\rho^{-1}(R)$.
\end{itemize}
Then the relative invariant $C\Phi^{rel}_0((E,\pi),(Q,\rho,R),J)$ splits as the product of the
relative invariants on the two factors
\[C\Phi^{rel}_0((E_1,\pi_1),(Q_1,\rho_1,R),J)\otimes C\Phi^{rel}_0((E_2,\pi_2),(Q_2,\rho_2,R),J)\]
\end{lemma}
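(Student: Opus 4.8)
The plan is to reduce the statement to a canonical identification of moduli spaces of holomorphic sections, so that the product structure on chain complexes is realised literally on the nose by a diffeomorphism of section spaces. First I would set up the correspondence between sections of $(E,\pi)$ and pairs of sections of $(E_1,\pi_1)$, $(E_2,\pi_2)$: since $E=E_1\times_B E_2$ as a smooth fibre bundle, a smooth section $u\colon B\to E$ is the same thing as a pair $(u_1,u_2)$ of smooth sections of the factors. The almost complex structure $J$ should be chosen (using the genericity Lemma, and shrinking to a neighbourhood of a point away from the ends as in that Lemma's statement) so that it splits as $J_1\oplus J_2$ with respect to the product decomposition of $TE^v=TE_1^v\oplus TE_2^v$ and the common horizontal distribution; this is possible because the symplectic connection of a fibre product splits. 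Under such a $J$, the equation $\bar\partial_J u=0$ is equivalent to the simultaneous pair $\bar\partial_{J_1}u_1=0$, $\bar\partial_{J_2}u_2=0$. The Lagrangian boundary condition $Q=Q_1\times_{\partial B} Q_2$ likewise forces $u(\partial B)\subset Q$ iff $u_i(\partial B)\subset Q_i$ for $i=1,2$.

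Next I would handle the enclosures. The hypothesis that $\rho$ is $C^0$--close to $\max\{\rho_1,\rho_2\}$ near $\rho^{-1}(R)$ is exactly what is needed to apply Lemma~\ref{lemma:switchenc}: I would argue that $(Q,\rho,R)$ is equivalent (in the sense of Definition~\ref{def:equivenc}) to the enclosure built from $\max\{\rho_1,\rho_2\}$, so the moduli space $\mathcal{M}_J(Q,\{x_e\})$ computed with $\rho$ agrees with the one computed using the product enclosure. A section $u=(u_1,u_2)$ lies inside $\rho^{-1}[0,R-\epsilon]$ iff both $u_1\in\rho_1^{-1}[0,R-\epsilon]$ and $u_2\in\rho_2^{-1}[0,R-\epsilon]$ (using subharmonicity of $\max\{\rho_1,\rho_2\}$, which is subharmonic wherever each $\rho_i$ is, by the standard fact that a max of subharmonic functions is subharmonic — here one must be slightly careful that subharmonicity holds near the level set, which is precisely what the $C^0$--closeness buys). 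Combining, $\mathcal{M}_J(Q,\{x_e\})$ is canonically diffeomorphic to a disjoint union over factorisations $x_e=(x_e^1,x_e^2)$ of the products $\mathcal{M}_{J_1}(Q_1,\{x_e^1\})\times\mathcal{M}_{J_2}(Q_2,\{x_e^2\})$. The intersection points at each end split as $Q_e^i\pitchfork Q_e^{\prime i}$ because the fibre at an end is a product, so the generators of $CF(Q_e^1,Q_e^0)$ are tensor products of generators in the two factors — this is the chain-level statement $CF(Q_e^1,Q_e^0,J_e)\cong CF((Q_1)_e^1,(Q_1)_e^0,J_{1,e})\otimes CF((Q_2)_e^1,(Q_2)_e^0,J_{2,e})$.

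Finally I would assemble: a zero-dimensional component of $\mathcal{M}_J(Q,\{x_e\}\cup\{y_e\})$ is a product of zero-dimensional components of the two factor moduli spaces (dimensions add under the product, so both factors must be zero-dimensional), and the signed count factors as the product of signed counts provided the orientations are compatible — which they are, since the spin structures and the determinant-line orientation conventions are multiplicative under the direct sum $D_{u,J}=D_{u_1,J_1}\oplus D_{u_2,J_2}$ of linearised operators. Reading this off on the defining formula for $C\Phi_0^{rel}$ gives exactly the tensor-product map $C\Phi^{rel}_0((E_1,\pi_1),(Q_1,\rho_1,R),J)\otimes C\Phi^{rel}_0((E_2,\pi_2),(Q_2,\rho_2,R),J)$.

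I expect the main obstacle to be the genericity/transversality bookkeeping: one needs a single $J$ that is simultaneously regular for $(E,\pi,Q)$ \emph{and} splits as $J_1\oplus J_2$ with $J_i$ regular for $(E_i,\pi_i,Q_i)$. Regularity for the product fibration does not automatically follow from regularity of the factors unless one knows the cokernel of $D_{u,J}$ vanishes, and for split data the relevant surjectivity must be checked factorwise; I would argue that the set of split $J$'s that are regular for both factors is $C^\infty$--dense (applying the genericity Lemma within each factor over a common open set $U\subset B$ disjoint from the ends, using that the two perturbation problems are independent), and that a split regular $J$ is then automatically regular for the product because $\operatorname{coker}D_{u,J}=\operatorname{coker}D_{u_1,J_1}\oplus\operatorname{coker}D_{u_2,J_2}=0$. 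The secondary subtlety — making the $\max\{\rho_1,\rho_2\}$ argument rigorous given that the max is only Lipschitz, not smooth — I would dispatch by smoothing the max away from the diagonal $\{\rho_1=\rho_2\}$ or by invoking the chain of equivalences in Definition~\ref{def:equivenc} directly, which only requires subharmonicity on an annular region and tolerates the corner.
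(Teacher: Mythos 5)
Your proposal is correct and follows essentially the same strategy as the paper's proof: sections of $E$ correspond to pairs of sections of the factors, and with a split $J=J_1\oplus J_2$ (each $J_i$ regular for its factor) the linearised operator decomposes as $D_{u_1,J_1}\oplus D_{u_2,J_2}$, so regularity of $J$ for the product is automatic. The paper's version is more compact, leaving the enclosure and orientation bookkeeping implicit, and dispenses with your hedging about density of split regular $J$'s, since choosing $J_1\in\mathcal{J}^{reg}(E_1,\pi_1,Q_1)$ and $J_2\in\mathcal{J}^{reg}(E_2,\pi_2,Q_2)$ already guarantees the product is regular --- which is exactly where you end up anyway.
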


\begin{proof}
This is a simple generalisation of the corresponding product formula for Floer cohomology
(which one has over the ends in the setting of the lemma).
$(E,\pi)$ is smooth whenever $(E_1,\pi_1)$ and $(E_2,\pi_2)$
share no singular values in the base.

Let $J_1\in\mathcal{J}_\tame^{reg}(E_1,\pi_1,Q_1)$, $J_2\in\mathcal{J}_\tame^{reg}(E_2,\pi_2,Q_2)$
be any regular almost complex structures on $E_1$, $E_2$ for
boundary conditions $Q_1$, $Q_2$ respectively.
We define the almost complex structure $J$ for the fibre product
$(E,\pi)$ by pullback from the embedding $E=E_1\times_B E_2\to E_1\times E_2$.
Relative tameness of $J_1$ and $J_2$ relative to $j$ implies that $J$
on the product restricts well to $E$ and is tame relative to $j$.

The $J$--holomorphic sections of $E$
are in bijection with the pairs of holomorphic sections of $E_1$ and
$E_2$. The regularity of $J$ for Lagrangian boundary condition $Q_1\times_{\partial B}Q_2$
is an immediate consequence of the regularity of $J_1$ and $J_2$
since the linearisation at any holomorphic section splits as that of $J_1$ and $J_2$, both of
which (when extended to the relevant Sobolev spaces) are surjective.
This bijection of moduli spaces identifies the deformation theories of the sections.
\end{proof}

\begin{remark}
In the case where $\rho_1,\rho_2$ are plurisubharmonic near their respective $R$-level sets
and the complex structures near there are integrable, there is always a plurisubharmonic
$C_0$--approximation to $\max\{\rho_1,\rho_2\}$ near its $R$-level set (see \cite[Lemma 3.8]{ciel:sgsm}).
\end{remark}

\begin{lemma}
Trivial MBL--fibrations over the infinite strip (with trivial boundary
conditions) induce the identity map on Floer cohomology.
\end{lemma}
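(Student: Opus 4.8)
The plan is to compute the chain-level map $C\Phi_0^{rel}$ directly from its definition for a convenient almost complex structure, using that for the trivial fibration the holomorphic sections are nothing but Floer strips in the fibre.

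First I would fix notation. The trivial MBL--fibration over $\mathbb{R}\times[0,1]$ is $E=M\times\mathbb{R}\times[0,1]$ with $\pi$ the projection, with $\Omega$, $\Theta$ and $\rho$ pulled back from the fibre $M$ and $J_0$ split as in the definition of triviality over a striplike end, one end lying in $I^-$ and one in $I^+$. Here $M$ is the ambient exact K\"ahler manifold and $(L_1,L_0)$ the pair of exact Lagrangians; the trivial boundary condition is $Q=(L_1\times\{1\}\times\mathbb{R})\sqcup(L_0\times\{0\}\times\mathbb{R})$, and the fibre at each infinity is $M$ with Lagrangian pair $(L_1,L_0)$. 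Holomorphic convexity of $M$ makes $(Q,\rho,R)$ an enclosed exact Lagrangian boundary condition for $R$ large: $\rho^{-1}[0,R]$ is fibrewise compact, it contains $Q$, and $\rho$ is plurisubharmonic for $J_0$ near its $R$--level set. By Lemma~\ref{lemma:isotopyoffibrations} it is enough to produce one regular datum for which $C\Phi_0^{rel}$ is the identity chain map, so that $\Phi_0^{rel}$ is the identity on $HF(L_1,L_0)$.

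Next I would take $J$ to be the split, $\mathbb{R}$--translation-invariant structure $J_0$ itself, arranged so that the induced $1$--parameter family $(J_t)_{t\in[0,1]}$ is regular for the Floer theory of $(L_1,L_0)$ in $M$ (a generic such family works, $\mathbb{R}$--invariance being compatible with regularity of the $\mathbb{R}$--quotiented strip moduli spaces). Then, exactly as in the discussion around Remark~\ref{remark:floerhom}, the $(j,J)$--holomorphic sections of $E$ are precisely the $J_t$--holomorphic strips $u\co\mathbb{R}\times[0,1]\to M$ with $u(\,\cdot\,,1)\in L_1$ and $u(\,\cdot\,,0)\in L_0$. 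Regularity of $(J_t)$ forces every non-constant such strip to have Fredholm index at least $1$, since the unquotiented moduli space contains its free $\mathbb{R}$--orbit; and the constant strips at the points of $L_1\cap L_0$, which are the only index--$0$ solutions, are automatically regular points of $\mathcal{M}_J(Q)$ because the intersection $L_1\cap L_0$ is transverse. Hence $J\in\mathcal{J}^{reg}(E,\pi,Q,\{J_e\})$, and the $0$--dimensional part of $\mathcal{M}_J(Q,\{x\}\cup\{y\})$ is a single point --- the constant section at $x$ --- when $x=y$, and is empty when $x\neq y$.

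Finally, with the sign conventions fixed in the definition of $C\Phi_0^{rel}$, the constant section at $x$ contributes $+1$, so $C\Phi_0^{rel}$ sends $\langle x\rangle\mapsto\langle x\rangle$: it is literally the identity chain map of $CF(L_1,L_0;J_t)$, hence induces the identity on $HF(L_1,L_0)$; independence of the choice of regular almost complex structure, up to the canonical isomorphisms on Floer cohomology (Lemma~\ref{lemma:isotopyoffibrations}), then finishes the proof. I expect the one point deserving care to be the regularity of the $\mathbb{R}$--invariant $J$ for the relative-invariant moduli problem and the resulting absence of rigid sections other than the constants; this is handled identically to the exact Lefschetz case in \cite{seidel:les}, the fibration here being non-singular, and everything else is formal once the moduli spaces have been identified with spaces of Floer strips.
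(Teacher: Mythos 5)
Your proposal is correct and takes essentially the same approach as the paper's own (terse) proof: choose the split, translation-invariant almost complex structure used for Floer cohomology at the ends, identify holomorphic sections with Floer strips, observe that the only isolated sections are the constants (since one does not quotient by the base $\mathbb{R}$-action, non-constant solutions sweep out a one-parameter family), and conclude the chain map is the identity. Your write-up merely fills in the regularity details — the index bound from the free $\mathbb{R}$-orbit and the automatic regularity of constants at transverse intersections — that the paper leaves implicit.
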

\begin{proof}
One may take the complex structure for the fibration from that used in
calculating the Floer homology at the striplike ends. The only isolated
holomorphic sections (since we do not quotient by the $\mathbb{R}$--action
on the base)
are the constant sections. These give us the identity map.
\end{proof}

\section{A non-standard splitting of the relative invariant}
\label{section:twistprod}

In this section, we study the relative invariant on a particular family of \emph{twisted} products.
These calculations are necessary in the setting of symplectic Khovanov homology to describe maps corresponding
to presentations of trivial cobordisms
(cf. stabilisation and destabilisation maps of Section~\ref{section:stab}).

The setting will be as follows. Let $(X,J_X^{std})$ be a Stein manifold with exhausting plurisubharmonic
function $\rho_X$ and exact K\"ahler form $\Omega_X=-d(d\rho_X\circ J_X^{std})$. Take a pair of compact exact Lagrangian
submanifolds $K,K^\prime\subset \rho_X^{-1}[0,R)$ which intersect transversely. These conditions suffice
to define $HF^*(K,K^\prime)$ using an almost complex structure which is a small perturbation of $J_X^{std}$.

Now let $(E,\pi,\Omega_E,\Theta_E,J_E,D,j)$ be an exact Lefschetz fibration,
or more generally an exact MBL--fibration, together with two boundary marked points
and enclosed, exact Lagrangian boundary condition $Q$. Then $E\times X$ is an exact MBL--fibration and can be given the
Lagrangian boundary condition $Q\times K$ on one half of the boundary and $Q\times K^\prime$ on the other.
Lemma~\ref{lemma:fibreproduct} shows that, at the chain level, the Floer cohomologies at each boundary marked point,
and also the relative invariant, split as tensor products.

This aim of this section is to construct the same splitting in a case where the product $E\times X$ is replaced by
$E\times_{S^1} P$ (see below for more detail on this as a symplectic associated bundle)
for some fibre preserving Hamiltonian $S^1$--action on $E$ and some circle-bundle $P$. The main
trick is to make the regular almost complex structure and the transverse Lagrangian boundary conditions $S^1$--equivariant. This
has the consequence that the only isolated holomorphic strips/sections are those confined to the fixed locus of the action.
In general such a trick is not possible and we will indicate the important points which make it work in the specific case
considered in this section.

\subsection{The Lefschetz fibration $(E,\pi,\Omega_E,\Theta_E,J_E,D,j)$}
\label{section:E}

We start by considering the following exact Lefschetz fibration
(for some fixed $d=3x^2\in\mathbb{C}^*$):

\begin{equation*}
\xymatrix{
\mathbb{C}^3 \ar[d]^{\pi} & (a,b,c) \ar@{|->}[d]\\
\mathbb{C} & a^3-ax^2+bc}
\end{equation*}

\noindent with the standard exact K\"ahler form on $\mathbb{C}^3$.

Take a closed disc $D$ with two boundary marked points,
mapping holomorphically into a small neighbourhood of the singular value $-2x^3$ in $\mathbb{C}$, such that:
\begin{itemize}
 \item the map is an embedding of $D$ with one with the exception that the two boundary marked points map to the same point,
 \item the singular value $-2x^3$ is hit by an interior point of $D$,
\end{itemize}

\begin{figure}[h]
\centering\scalebox{0.4}{\input{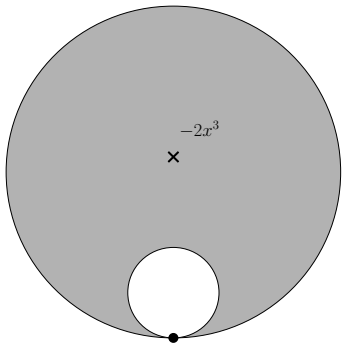}}
\caption{An illustration of the disc $D$.}
\label{fig:crescent}
\end{figure}

We now define the exact Lefschetz fibration $E$ to be the pullback by this smooth map of the above Lefschetz fibration
$\mathbb{C}^3$. This has the advantage of canonically identifying the fibres over the boundary marked points with each
other.

Define $\rho_E(a,b,c):= \norm{a}^2+\norm{b}^2+\norm{c}^2$ using coordinates on $\mathbb{C}^3$, and to start with
take the exact symplectic form $\Omega_E=d\Theta_E=-d(d\rho_E\circ J_E)$, with the standard complex structure $J_E$.
We will adjust $\Theta_E$ and $\Omega_E$, below, to give some control over symplectic parallel transport and the
Lagrangian boundary conditions.

The Hamiltonian $S^1$--action is the unit circle part of the holomorphic $\mathbb{C}^*$--action
$(a,b,c)\mapsto (a,\zeta b, \zeta^{-1}c)$. It is generated by the Hamiltonian
$\mu (a,b,c)=|b|^2-|c|^2$ and has the following important properties:
\begin{itemize}
 \item It preserves fibres of $E$ and of the projection $\proja$ to the $a$--coordinate.
 \item $\rho_E,\Theta_E,\Omega_E$ and $J_E$ are all $S^1$--equivariant.
\end{itemize}

The map $\proja$, restricts to a Lefschetz fibration on non-singular fibres $E_z$ of $E$
with three singular values. As one approaches a singular fibre of $E$, two of these singular
values collide. Understanding $\proja$ is
important as it gives some control over $S^1$--equivariant Lagrangian spheres in the fibres:

\begin{lemma}
\label{lemma:s1lagrangians}
Any $S^1$--equivariant Lagrangian
sphere in a regular fibre of $E$ (as above) can be expressed as
$\mu|_{E_z}^{-1}(0)\cap \proja|_{E_z}^{-1}\im\gamma$ for some smooth
embedded path $\gamma$ in $\mathbb{C}$ between critical values of $\proja|_{E_z}$.
Furthermore, the vanishing cycle in for any path in $B$ into the critical value
of $\pi$ is of this form. In this case $\gamma$ is a smooth path between the two critical
values of $p|_{E_z}$ which collide in the critical fibre of $\pi$.
\end{lemma}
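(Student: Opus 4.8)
The plan is to exploit symplectic reduction by the fibrewise $S^1$--action, using crucially that this action is \emph{also} fibrewise for the auxiliary Lefschetz fibration $p$. Fix a regular fibre $E_z$ and let $V$ denote an $S^1$--equivariant Lagrangian sphere in it; abbreviate $\mu=\mu|_{E_z}$ and $p=p|_{E_z}$. The first observation is that $\operatorname{Fix}(S^1)\cap E_z=\{b=c=0\}\cap E_z$ is precisely the finite critical set $\crit{p}$, and that $\mu$ vanishes at each of these points. I would then show that any connected $S^1$--invariant isotropic submanifold of $E_z$ lies in a single level set of $\mu$: the generating vector field $X_\mu$ is tangent to such a submanifold $W$, so for $w\in TW$ one has $d\mu(w)=\Omega(X_\mu,w)=0$ by isotropy, and connectedness finishes it. Applying this to $V$: the $S^1$--action on $V$ is nontrivial (otherwise $V\subset\operatorname{Fix}(S^1)$, which is discrete), hence a rotation of $S^2$ with exactly two fixed points; these lie in $\operatorname{Fix}(S^1)\cap E_z=\crit{p}$, where $\mu=0$, so in fact $V\subset\mu^{-1}(0)$.

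Next I would describe $\mu^{-1}(0)$ through $p$. Over a regular value $a_0$ the fibre $p^{-1}(a_0)=\{bc=w\}$ with $w\neq0$ is a copy of $\mathbb{C}^*$ with $S^1$ acting by rotation, and $\mu^{-1}(0)\cap p^{-1}(a_0)=\{|b|=|c|,\,bc=w\}$ is a single $S^1$--orbit; over a critical value of $p$ it is the corresponding point of $\crit{p}$. Since $V\subset\mu^{-1}(0)$ is compact and $S^1$--invariant, its intersection with each regular fibre of $p$ is either empty or all of that orbit, so taking closures gives $V=\mu^{-1}(0)\cap p^{-1}(\gamma)$ with $\gamma:=p(V)\subset\mathbb{C}$. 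The quotient $V/S^1$ is an interval whose endpoints are the two $S^1$--fixed points of $V$, i.e.\ two points of $\crit{p}$, and $p$ descends to a continuous injection $V/S^1\hookrightarrow\mathbb{C}$ (injective because a regular fibre meets $\mu^{-1}(0)$ in one orbit, and because $V$ can meet $\crit{p}$ only at its fixed points, which are precisely the endpoints of $V/S^1$; on the complement of the poles the descended map is an immersion since its fibres are exactly the orbits). Hence $\gamma$ is an embedded arc whose endpoints are critical values of $p$ and whose interior consists of regular values.

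The one step requiring a genuine local computation — and the one I expect to cost the most care — is smoothness of $\gamma$ (and of $V$) at the points of $\crit{p}$. Near such a point one has the standard $A_1$--model: a holomorphic change of coordinates presents $E_z$ near the critical point $a=a_c$ as $\{(a,b,c):bc=a-a_c\}\cong\mathbb{C}^2_{(b,c)}$ with $p(b,c)=bc+a_c$, $S^1$ acting by $(b,c)\mapsto(\zeta b,\zeta^{-1}c)$ and $\mu=|b|^2-|c|^2$. A direct calculation shows that, for a smooth arc approaching $a_c$ with nonzero speed and limiting direction $e^{i\alpha}$, the set $\mu^{-1}(0)\cap p^{-1}(\gamma)$ is the smooth totally real disc $\{(b,e^{i\alpha}\bar b)\}$, and that these discs are the only $S^1$--invariant Lagrangian germs through the origin. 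This simultaneously shows $V$ is a smooth submanifold near $\crit{p}$ and forces $\gamma$ to be a smooth embedded arc there; combined with the global immersion picture above, this proves the first assertion.

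Finally, for the vanishing--cycle claim, recall that the critical point of $\pi$ in the fibre over $-2x^3$ is $(x,0,0)$, an $S^1$--fixed point, and that symplectic parallel transport for $\pi$ is $S^1$--equivariant (as already noted, $\mu$ is preserved by parallel transport over both $\pi$ and $p$). Hence the vanishing cycle associated to any vanishing path into $-2x^3$, being swept out by parallel--transporting the critical point, is an $S^1$--invariant Lagrangian sphere, so the previous part applies and it equals $\mu^{-1}(0)\cap p^{-1}(\gamma)$ for an embedded arc $\gamma$ between two critical values of $p$. It remains to identify which two: this sphere collapses to $(x,0,0)$ as $z\to-2x^3$, so its image under $p$ lies in an arbitrarily small neighbourhood of $a=x$; and as $z\to-2x^3$ exactly two of the three critical values of $p$ (the roots of $a^3-ad-z$) coalesce at $a=x$ while the third stays near $-2x$. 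Therefore the two endpoints of $\gamma$ are exactly those two colliding critical values, as claimed.
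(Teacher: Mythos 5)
Your proof is correct and reaches the same conclusion, but it is organized around a different key idea than the paper's argument. The paper's route is to work directly with the Lagrangian $L$: it observes that $L$ meets the $p$--fibres $E_{z,a}$ in unions of orbits, constructs a rank--one distribution on $L$ from $T_xL\cap(T_xE_{z,a})^{\perp_\Omega}$, and shows by equivariance that $Dp$ carries this distribution onto the tangent lines of a single curve $\gamma$; in other words $L$ is exhibited as being swept out by symplectic parallel transport over $\gamma$ (a ``matching cycle'' for the auxiliary fibration $p$), and the identification with $\mu^{-1}(0)\cap p^{-1}(\gamma)$ then follows because parallel transport for $p$ preserves $\mu$ and the $S^1$--action is transitive on $\mu^{-1}(0)\cap E_{z,a}$. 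Your route instead establishes $V\subset\mu^{-1}(0)$ at the outset (via the isotropic--invariant--implies--level--set observation, plus the fixed--point count), which lets you read off $V=\mu^{-1}(0)\cap p^{-1}(\gamma)$ with $\gamma=p(V)$ directly from compactness and the orbit structure, and you then establish smoothness of $\gamma$ at the endpoints by hand using the local $A_1$--model $\{bc=\tilde a\}$. This buys you a cleaner separation between the topological part (symplectic reduction by $S^1$, where the transversality/smoothness issues localize to the two fixed points) and the remaining local computation; the paper's vector--field argument handles both at once but is terser. You also spell out the last sentence of the lemma (identifying the endpoints of $\gamma$ for the vanishing cycle), which the paper leaves implicit.

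Two small points worth tightening. First, in the passage ``on the complement of the poles the descended map is an immersion since its fibres are exactly the orbits'': having orbit fibres gives injectivity of the descended map, not immersivity. The actual reason $\ker(Dp|_V)$ is exactly the orbit tangent (rather than all of $T_xV$) is that $T_xV$ is Lagrangian whereas $\ker Dp_x$ is a symplectic (complex) plane, so the two cannot coincide; since $T_xV$ already contains the orbit tangent inside $\ker Dp_x$, the image $Dp(T_xV)$ is exactly one--dimensional. Second, the classification of $S^1$--invariant Lagrangian germs through the $A_1$--singularity as $\{(b,e^{i\alpha}\bar b)\}$ type discs is correct, but you should be explicit that the Lagrangian condition is being checked for the restricted K\"ahler form on $E_z$ (not the flat form on the model $\mathbb{C}^2_{(b,c)}$); the conclusion is unaffected because the weight--$\pm1$ decomposition of the $S^1$--action on the tangent space already rules out all $S^1$--invariant planes except the totally real family, and the complex lines $\{b=0\},\{c=0\}$ are symplectic for any compatible form.
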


\begin{proof}
Denote the fibre $\proja|_{E_z}^{-1}(a)$ over $a\in\mathbb{C}$ by $E_{z,a}$.
Consider a Lagrangian sphere $L\subset E_z$. By $S^1$--equivariance, $L$
intersects fibres $E_{z,a}$ in, possibly empty, unions of circles, except at
critical points of $\mu$ (which are precisely the critical points of
$\proja|_{E_z}$). At any regular $x\in L$ the intersection
$T_xL\cap (T_xE_{z,a})^{\bot_\Omega}$ is one-dimensional so we can define
a non-vanishing vector field locally on $TL$ of these vectors. Projecting
these to $\mathbb{C}$ by $D\proja$ makes them all tangent, since otherwise we
reach a contradiction with $S^1$--equivariance and the dimension of $L$.

Hence, locally $L$ is preserved by symplectic parallel transport over
certain paths in $\mathbb{C}$. The only way for components of $L$ to be
closed, embedded spheres is for these paths to end at distinct critical points.

All the critical points satisfy $\mu=0$, and $\mu$ is preserved by
symplectic parallel transport, so it suffices to observe that the
$S^1$--action is transitive on any set of the form
$\mu|_{E_{z,a}}^{-1}(0)$ (these are circles or points).
\end{proof}

\begin{remark}
\label{remark:s1trans}
A consequence of the above lemma is that two equivariant Lagrangian spheres are transverse
if and only if they correspond to paths between the critical points of $\proja$
which intersect only at critical points and which enter the critical points at
different angles. The only obstruction to being able to make equivariant Lagrangian 
spheres intersect transversely by Hamiltonian isotopy through equivariant Lagrangian spheres
is topological (from isotopy of the paths), because all Lagrangian isotopies in this case are
Hamiltonian.
\end{remark}

We now construct the transverse, exact Lagrangian boundary condition $Q$ on $E$. In doing, so
we will perform a compactly supported deformation of $\Omega_E$ without changing its restriction
to fibres. To start the construction, we take a pair of $S^1$--equivariant exact Lagrangian spheres
$L_{out}, L_{out}^\prime$ in the fibre at one boundary marked point, say the `output' end.
These correspond to embedded paths $\gamma_{out}$ and $\gamma_{out}^\prime$ under projection to the $a$--coordinate, by
Lemma~\ref{lemma:s1lagrangians}. Figure~\ref{fig:infinitycurves} shows the choice we shall use in
this section, but the same construction can be done with any $S^1$--equivariant exact Lagrangian spheres.

\begin{figure}[h]
\centering\scalebox{0.5}{\input{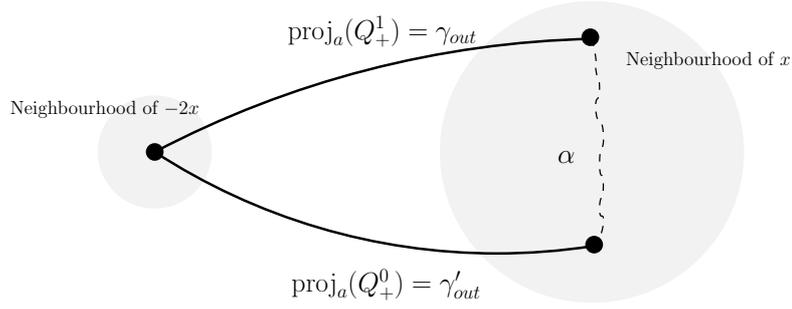}}
\caption{Projections of Lagrangians to curves $\gamma_{out}$, $\gamma_{out}^\prime$ with a single transverse
intersection at an endpoint.
This describes the Lagrangians at the output end of the fibration.}
\label{fig:infinitycurves}
\end{figure}

Extending by symplectic parallel transport over the two components of $\partial D$ defines
an exact Lagrangian boundary condition and a pair of exact $S^{-1}$--equivariant Lagrangian
spheres $L_{in},L_{in}^\prime$ at the input end. Identifying the input and output fibres by 
symplectic parallel transport over the component of $\partial D$ going anti-clockwise from the
input to the output end of the fibration, we find that $L_{in}'$ and $L_{out}^\prime$ are
Hamiltonian isotopic and that $L_{in}$ corresponds to $\gamma_{out}$ acted on by the positive half twist
$\tau_\alpha$, in the curve $\alpha$ illustrated in Figure~\ref{fig:minusinfinitycurves}. To see
that this half twist is the correct monodromy, we need only check that it is the monodromy experienced 
by the critical points of $\proja$.

\begin{figure}[h]
\centering\scalebox{0.5}{\input{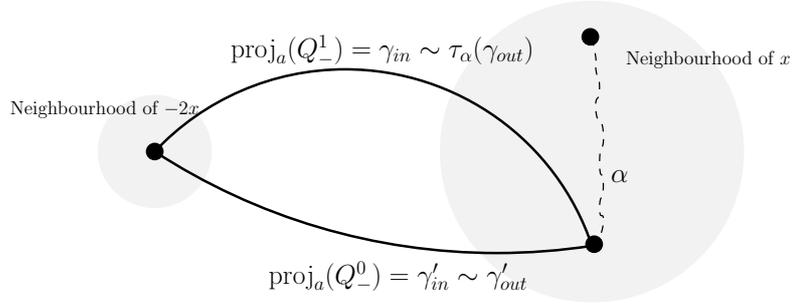}}
\caption{The curves of Figure~\ref{fig:infinitycurves} after a half twist $\tau_\alpha$ in
the curve $\alpha$ has been applied to $\gamma_{out}$. This describes the Lagrangians at the input end of the fibration.}
\label{fig:minusinfinitycurves}
\end{figure}

However,
this only identifies the paths $\gamma_{in}, \gamma_{in}^\prime$ up to isotopy, so we do not necessarily have
that $Q$ is transverse. To correct this, it suffices to perform isotopies of the
paths $\gamma_{in},\gamma_{in}^\prime$ (to get the curves in Figure~\ref{fig:minusinfinitycurves}.
These correspond to $S^1$--equivariant Lagrangian isotopies of
$L_{in},L_{in}^\prime$ and which are generated by $S^1$--equivariant, compactly supported Hamiltonian
isotopies since the fibres of $E$ are exact K\"ahler. Therefore it suffices to change $\Theta_E$ and
$\Omega_E$ on compact
neighbourhood in $E$ to ensure that $Q$ is transverse. In particular, this change is confined to fibres over
a small neighbourhood of $\partial D$, does not affect the restriction of $\Theta_E$, or $\Omega_E$ to fibres, and preserves 
$S^1$--equivariance of both.

Consider again the $S^1$--invariant points of $E$. These are precisely
the critical points of $\proja$. In every fibre $E_z$ there
are three such points, with the exception of the singular fibre, in which two of these points coincide
(in Figures~\ref{fig:infinitycurves} and \ref{fig:minusinfinitycurves}, these are the two points on the right-hand side).
The pairs of points which collide form a disc over $D$ which projects singularly to $D$,
so cannot contain any smooth sections of $E$.

Identify $\mathbb{R}\times [0,1]$ biholomorphically with $D$, punctured at both boundary marked points. This 
is canonical up to the obvious $\mathbb{R}$--action. Symplectic parallel transport maps, over the (closures of the)
paths $x\times[0,1]$ and $\mathbb{R}\times\{0\}$ are well-defined for a small enough neighbourhood of $s(D)$,
since symplectic parallel transport preserves the locus $s(D)$.
Hence, it smoothly trivialises a neighbourhood $M$ of $s(D)$, identifying it with
a neighbourhood of $\{s(z)\}\times D$ inside $E_z\times D$, where $z$ is the input boundary marked point.
Adjusting $\Theta$ in an $S^1$--equivariant way near $M$, without changing its restriction to fibres, we ensure
that symplectic parallel transport respects the trivialisation of $M$.

\begin{lemma}
\label{lemma:omegae}
$\Theta_E$ and consequently $\Omega_E$ can be deformed $S^1$--equivariantly on a compact subspace of $E$, avoiding
the singular point of $\pi$, such that:
\begin{itemize}
 \item their restrictions to fibres of $E$ are unchanged,
 \item the Lagrangian boundary condition $Q$ defined by extending $(L_{out},L_{out}^\prime)$ over $\partial D$
is transverse,
 \item symplectic parallel transport over all paths in $D$ respects a trivialisation of a neighbourhood
$M$ of the only $S^1$--equivariant section.
 \item symplectic parallel transport over the section of $\partial D$ passing anti-clockwise from the input
to the output boundary marked point defines a holomorphic map between neighbourhoods of the $S^1$--invariant points
in the input and output fibres.
\end{itemize}
Consequently $Q|_M$ is also trivial over each part of $\partial D$.
\end{lemma}

\begin{proof}
We have covered all the necessary details above. First we adjust $\Theta_E$ near the part of $\partial D$ corresponding
to $\mathbb{R}\times\{0\}$ to change the symplectic parallel transport over this path by a Hamiltonian symplectomorphism
and ensure that it identifies neighbourhoods of the $S^1$--invariant points in the fibres at either boundary marked point by
the identity map (these two fibres are already identified as the same fibre in $\mathbb{C}^3$). This is holomorphic.

Next, we perform the deformation to ensure
triviality of $M$, then (possibly shrinking $M$) we deform away from $M$ to ensure that $Q$ is transverse elsewhere.
\end{proof}

For the rest of Section~\ref{section:twistprod}, we shall
take $\Omega_E$ to have the properties described in Lemma~\ref{lemma:omegae}.

\begin{remark}
\label{remark:stabmap}

The relative invariant associated to $E$ with exact Lagrangian boundary condition $Q$ is described below.
However, the calculation is omitted here as it is covered in more generality in Section~\ref{section:s1calc}.

Identifying the input with $H^*(S^2)\simeq\mathbb{Z}[X]/(X^2)$ and the output with
$H^*(\{pt\})\simeq \mathbb{Z}$, the relative invariant for $E$ and $Q$ as given above is:

\[
\begin{array}{ccc}
	\mathbb{Z}[X]/(X^2)&\to&\mathbb{Z}\\
	X&\longmapsto&0\\
	1&\longmapsto&1\\
\end{array}
\]
\end{remark}

\subsection{The symplectic associated bundle $E\times_{S^1}P\simeq E\times_{\mathbb{C}^*}(F\setminus 0)$}
\label{section:sympassocbundle}

The construction that follows is a recap of the symplectic associated bundle construction in Section~4.3
of \cite{seidelsmith:khsymp}. 

We take $X,K,K^\prime$ as at the beginning of this section. Let $F\mapsto X$ be a holomorphic
line bundle with a Hermitian metric and compatible connection, which is a subbundle of the trivial
$\mathbb{C}^2$--bundle over $X$. Let $P\subset F$ be the unit-circle
bundle and $\alpha\in\Omega^1(P)$ be the connection one-form. The normalisation is such that, if $R$
is the rotational vector field on $P$ ($2\pi$ periodic), then $\alpha(R)\equiv 1$. Take $E$ to be 
the exact Lefschetz fibration defined in the previous subsection. The symplectic associated bundle is

$$E\times_{S^1}P\rightarrow X$$

One can check that the 2--form $\Omega_{ass}=\Omega_X+\Omega_E+d(\alpha\mu)$ on $E\times P$ descends to a well-defined
2-form on $E\times_{S^1}P\rightarrow X$. Furthermore, on a neighbourhood of $\mu^{-1}(0)$:
\begin{itemize}
 \item $\Omega_{ass}$ is symplectic where $\Omega_E$ is symplectic;
 \item $\Omega_{ass}$ is symplectic on restriction to fibres over $D$ where $\Omega_E$.
\end{itemize}

Also in a neighbourhood of $\mu^{-1}(0)$, symplectic parallel transport over $D$ fixes the projection
to $X$ and is identical to the symplectic parallel
transport in $E$ up to an $S^1$--ambiguity. Hence, $Q\times_{S^1} K$, $Q\times_{S^1} K^\prime$, over the two
parts of $\partial D$, form a transverse exact Lagrangian boundary condition $\tilde{Q}$ for
$E\times_{S^1}P\rightarrow X$.

Since the $S^1$ action on $E$ is part of a holomorphic $\mathbb{C}^*$--action, we can construct 
the holomorphic associated bundle $E\times_{\mathbb{C}^*}(F\setminus 0)$, which is
identified with $E\times_{S^1}P\rightarrow X$. This gives a natural candidate for a complex structure
$\tilde{J}^{std}$ on $E\times_{S^1}P$. Seidel and Smith \cite{seidelsmith:khsymp} show that, where it symplectic,
$\Omega_{ass}$ is K\"ahler with respect to $\tilde{J}^{std}$.

This means that a small neighbourhood of $\mu^{-1}(0)$ has the structure of an exact Lefschetz fibration with transverse,
exact Lagrangian boundary condition $\tilde{Q}$. However, enclosures for $Q$ are not yet defined. To define
them, it suffices to have an exact, $S^1$--equivariant K\"ahler form $\tilde{\Omega}$ which is equal to $\Omega_{ass}$ on 
some neighbourhood of $\tilde{Q}\subset\mu^{-1}(0)$ and which is equal to $-d(d\tilde{\rho}\circ \tilde{J}^{std})$
on the complement of some slightly larger neighbourhood. The construction of $\tilde{\Omega}$ relies upon
the triviality of $E\times_{\mathbb{C}^*}(F\setminus 0)$ as a holomorphic vector bundle over $X$ (see Remark~\ref{remark:bundletriv} below)
and is explained in detail in Section~\ref{section:stab}.

\begin{remark}[a simplified version of \cite{seidelsmith:khsymp}, Remark~42]
\label{remark:bundletriv}

$F\oplus F^{-1}$ is isomorphic to the trivial holomorphic bundle, because $X$ is Stein. As a holomorphic
bundle over $X$, the associated bundle $E\times_{\mathbb{C}^*}(F\setminus 0)$ splits by the $a,b$ and $c$ coordinates
on $E$ (from $\mathbb{C}^3$) respectively as $\mathbb{C}\oplus F\oplus F^{-1}$. This gives the following identification
of holomorphic bundles over $X$.

$$E\times_{\mathbb{C}^*}(F\setminus 0) = \mathbb{C}\oplus F\oplus F^{-1} = \mathbb{C}\oplus\mathbb{C}^2 $$

It should be noted that the projection to $D$ and $S^1$ action, do not necessarily fit nicely with this trivialisation.
\end{remark}

\subsection{Constructing a regular $S^1$--equivariant almost complex structure}
\label{section:s1equij}

In order to relate the relative invariant of $E\times_{\mathbb{C}^*}(F\setminus 0)$ to that of $E$
and the Floer cohomology of $K,K^\prime$ in $X$, we shall construct a complex structure $\tilde{J}$ on
$E\times_{\mathbb{C}^*}(F\setminus 0)$ with the following properties:
\begin{itemize}
 \item $\tilde{J}\in$ is regular (includes restricting to ends as a regular time-dependent almost complex
structure for the construction of Floer cohomology);
 \item $\tilde{J}$ is $S^1$-equivariant;
 \item $\projX$ is $(\tilde{J},J_X)$--holomorphic for some regular almost complex structure $J_X$ on $X$.
\end{itemize}

This will have the consequence that the
moduli of holomorphic sections comes with an $S^1$--action, and hence that the only
isolated sections are those which are fixed by the $S^1$ action. With the correct setup,
projection to $X$ then identifies these moduli spaces with the moduli spaces of holomorphic
strips in $X$ with boundary on $K,K^\prime$.

In fact, much of this section can be avoided by considering the $S^1$--action on the
non-regular moduli spaces (as Kuranishi spaces \cite{fooo},\cite{joyce:kcohom}) and identifying the $S^1$--fixed components
of the moduli space with the non-regular moduli space of holomorphic strips
in $X$ with boundary on $K,K^\prime$. In order not to rely on Kuranishi space technology, we
shall continue with the equivariant regularity argument.

There are two stages to the construction which proceed in a similar manner. Firstly,
we construct an almost complex structure $J_{e,t}$ at each end, which is regular for Floer
cohomology there (in fact, we may take the same one on both ends). Then we extend these
over the interior and perturb the result to get $\tilde{J}$ regular for the calculation of
the relative invariant.

\subsubsection{Regularity over the boundary marked points}
\label{section:ends}

Let $z\in D$ be a boundary marked point.
We start with $\tilde{J}^{std}$ restricted to the fibre $E_z\times_{\mathbb{C}^*}(F\setminus 0)$ and consider how close it is
to already being regular at $S^1$--invariant holomorphic strips $u$. We shall write $L, L^\prime$ for
$L_{in}, L_{in}^\prime$ or $L_{out}, L_{out}^\prime$ depending on which boundary marked point we are considering.

The set of $S^1$--fixed points of $E_z$ consists of just three points, the three critical points of $\proja|_{E_z}$.
Hence, the $S^1$--fixed points of $E_z\times_{\mathbb{C}^*}(F\setminus 0)$ form three copies of $X$, each projecting to one of the critical
values of $\proja$. Consequently, $S^1$--equivariant $u$ are specified uniquely by $\projX\circ u$
and by the critical point hit by $\proja\circ u$. Conversely, given a holomorphic strip in $X$ with boundaries on $K,K^\prime$
and a choice of critical point of $\proja|_{E_z}$, one gets an $S^1$--equivariant, holomorphic $u$ if and only if that critical point
is in $L_{in}\cap L_{in}^\prime$ or $L_{out}\cap L_{out}^\prime$ (depending in which end we are considering).
The former contains two of the critical points, the latter contains one as can be seen from Figures~\ref{fig:infinitycurves}
and \ref{fig:minusinfinitycurves}.

We would like to know if there are any deformations of $S^1$--equivariant holomorphic strips through non-$S^1$--equivariant
ones. More precisely, is this possible to first order? First some notation is needed.

Let $\mathcal{B}$ be the space of smooth maps $\func{u}{\mathbb{R}\times[0,1]}{E_z\times_{\mathbb{C}^*}(F\setminus 0)}$ satisfying the Lagrangian
boundary conditions. Let $\mathcal{E}_{u,\tilde{J}^{std}}\mapsto\mathcal{B}$ be the vector bundle of $(0,1)$--forms on $D$
with values in $u^*(TE_z,\tilde{J})$. Similarly, define $\mathcal{B}_X$ and $\mathcal{E}_{v,J_X^{std}}$ for
holomorphic strips $v$ in $X$.
The section $\func{\bar{\partial}_{\tilde{J}^{std}}}{\mathcal{B}}{\mathcal{E}_{u,\tilde{J}^{std}}}$
(extended to the relevant Sobolev and $L^p$--completions) describes the moduli space of holomorphic strips as its vanishing
locus. Its behaviour, to first order, at $S^1$--equivariant $u$ can be described by the section
$\func{\bar{\partial}_{J_X^{std}}}{\mathcal{B}_X}{\mathcal{E}_{u_X,J_X^{std}}}$ together with some `normal data'
(writing $u_X$ for $\projX\circ u$).

This normal data comes from considering deformations which fix $u_X$. These are deformations of the map
$\widehat{u}$ of the strip into $u_X^*(E_z\times_{\mathbb{C}^*}(F\setminus 0))$ induced by $u$
(note $\widehat{u}$ is not constant). Because $u_X$ is holomorphic, $u_X^*(E_z\times_{\mathbb{C}^*}(F\setminus 0))$
comes with a natural complex structure $\widehat{J}$. Let $\widehat{\mathcal{B}}$ be the space of holomorphic strips in
$u_X^*(E\times_{\mathbb{C}^*}(F\setminus 0))$ and define $\mathcal{E}_{v,\widehat{J}}\mapsto\widehat{\mathcal{B}}$ be the vector bundle
of $(0,1)$--forms on $D$ with values in $v^*(T(u_X^*(E_z\times_{\mathbb{C}^*}(F\setminus 0))),\widehat{J})$. As before,
we also get a section $\bar{\partial}_{\widehat{J}}$.

Let $D_{\widehat{u},\widehat{J}},D_{u,\tilde{J}^{std}}$ and $D_{u_X,J_X^{std}}$ be the linearisations of
$\bar{\partial}_{\widehat{J}},\bar{\partial}_{\tilde{J}^{std}}$ and $\bar{\partial}_{J_X^{std}}$ at
$\widehat{u},u$ and $u_X$, respectively.
Then we get the following map of short exact sequences:

$$\bfig
 \iiixii{15}<400>[T_{\widehat{u}}\widehat{\mathcal{B}}`
		  T_u\mathcal{B}`
		  T_{u_X}\mathcal{B}_X`
		  \mathcal{E}_{\widehat{u},\widehat{J}}`
		  \mathcal{E}_{u,\tilde{J}^{std}}`
		  \mathcal{E}_{u_X,J_X^{std}};
		  ``D_{\widehat{u},\widehat{J}}`D_{u,\tilde{J}^{std}}`D_{u_X,J_X^{std}}``]
 \efig$$

This has the consequence that $\tilde{J}^{std}$ is regular at $u$ (i.e. $D_{u,\tilde{J}^{std}}$ surjects)
whenever $\widehat{J}$ is regular at $\widehat{u}$ and $J_X^{std}$ is regular at $u_X$. Suppose now, that we
change $J_X^{std}$ by a compactly supported $C^\infty$--small perturbation avoiding $K\cap K^\prime$,
then we can replace $\tilde{J}^{std}$ in a neighbourhood of the $S^1$--fixed locus by the complex structure $\tilde{J}$
we get from $J_X$ and the Hermitian connection on $F$. With this done generically, for any $S^1$-invariant holomorphic $u$,
we have that $J_X$ is regular for $u_X$, so to prove regularity of $\tilde{J}$ at $u$, it suffices to show
that $\widehat{J}$ is always regular at $\widehat{u}$.

\begin{lemma}
 $\ker(D_{\widehat{u},\widehat{J}})=0$. Since the corresponding Maslov index is $0$, this implies regularity.
\end{lemma}

\begin{proof}
To do this we identify $u_X^*(E\times_{\mathbb{C}^*}(F\setminus 0))$ with $\mathbb{C}\oplus u_X^*(F)\oplus u_X^*(F^{-1})$ as in
Remark~\ref{remark:bundletriv}. We will take coordinates $a,b,c$ in the three summands respectively. With respect
to these coordinates, the section $\widehat{u}$ is the constant section at $(a_{crit},0,0)$. Hence,
any $v\in\ker(D_{\widehat{u},\widehat{J}})$ is a holomorphic section of $\mathbb{C}\oplus u_X^*(F)\oplus u_X^*(F^{-1})$
which is everywhere tangent to $\{a^3-ad+bc=z\}$ and over the boundary of the strip is tangent to the Lagrangian boundary conditions.
Note that $b$ and $c$ are valued in inverse line bundles, so the product $bc$ is a well-defined complex number.

At $(a_{crit},0,0)$ the tangents to $\{a^3-ad+bc=z\}$ are $u_X^*(F)\oplus u_X^*(F^{-1})$, since $\proja$ is critical
(so no tangent has a component in the $a$--direction). To first order, the Lagrangian boundary conditions
are of the form $\{(zp,e^{i\theta}\bar{z}p^{-1})|(p,p^{-1})\in u_X^*(P)\oplus u_X^*(P^{-1})\}$, where $\theta$ is fixed.
The values of $\theta$ are distinct for transverse Lagrangians.
Multiplying the $b$ and $c$ coordinates of $v$ gives a holomorphic map of the strip to $\mathbb{C}=u_X^*(F)\otimes u_X^*(F^{-1})$
with upper and lower boundary in $e^{i\theta}\mathbb{R}$ for distinct angles $\theta$. Such a map is necessarily zero.

This implies that at least one of the $b$ and $c$ coordinates of $v$ vanishes on a set large enough to apply unique continuation.
Hence, one coordinate is identically zero. However, from the expression for the tangents to the Lagrangian we see that, where
either of the $b$ and $c$ coordinates of $v$ vanish, both vanish, so $v\equiv 0$.
\end{proof}

To conclude the study of the moduli space
of $S^1$--invariant holomorphic strips, we have now proven:

\begin{lemma}
\label{lemma:fixreg}
 Let $J_X$ be any almost complex structure on $X$ which
\begin{itemize}
 \item makes the solutions to Floer's equation for $(K,K^\prime)$ regular,
 \item equals $J^{std}_X$ near $K\cap K^\prime$ and outside of a compact neighbourhood.
\end{itemize}
Define $\tilde{J}$ from $J_X$ and the Hermitian connection on $F$. Then $\tilde{J}$ is regular for all elements of the moduli space 
$\mathcal{M}_{\tilde{J}}^{S^1}(E\times_{\mathbb{C}^*}(F\setminus 0),L\times_{S^1}K,L^\prime\times_{S^1}K^\prime)$
of $S^1$--invariant holomorphic strips. Furthermore, this forms a component of the moduli space of all holomorphic strips (with
the same boundary conditions) which is canonically identified with (one or two copies of) the entire moduli space for $X,K,K^\prime$.
\end{lemma}

\begin{remark}
If we allow the use of non-regular $\tilde{J}$, we can instead conclude that the moduli of $S^1$--equivariant
holomorphic strips, as a Kuranishi space, is canonically identified with (one or two copies of) the entire moduli space for $X,K,K^\prime$.
Then a virtual perturbation preserving the $S^1$--action, shows that the only holomorphic strips which count for
Floer cohomology are those arising from the moduli space for $X,K,K^\prime$. Since we avoid going into detail about
Kuranishi spaces, and also relying on Kuranishi space technology, it is necessary to do the corresponding construction
with $S^1$--equivariant perturbations of almost complex structures.
\end{remark}

To complete the construction of a regular $\tilde{J}$ on $E_z\times_{\mathbb{C}^*}(F\setminus 0)$, we now show:

\begin{lemma}
\label{lemma:endreg}
 Let $\mathcal{J}^{S^1}_\tame(E_z\times_{\mathbb{C}^*}(F\setminus 0))$ be the space of time-dependent,
$S^1$--equivariant, almost complex structures $\tilde{J}_t$
on $E_z\times_{\mathbb{C}^*}(F\setminus 0)$, such that:
\begin{itemize}
 \item $\tilde{J}_t$ tames $\tilde{\Omega}$
 \item $\tilde{J}_t=\tilde{J}^{std}$ where $\proja$ is near a critical value
(in particular, near $L\times_{S^1}K\cap L^\prime\times_{S^1}K^\prime)$) and outside of a compact neighbourhood
 \item $\proja$ is $(\tilde{J}_t,i_t)$--holomorphic for some time dependent almost complex structure $i_t$ on $\mathbb{C}$
\end{itemize}
A generic choice of $\tilde{J}_t\in\mathcal{J}^{S^1}_\tame(E_z\times_{\mathbb{C}^*}(F\setminus 0))$ is regular for all
holomorphic strips with boundary in $L\times_{S^1}K,L^\prime\times_{S^1}K^\prime$.
\end{lemma}

\begin{proof}
By Lemma~\ref{lemma:fixreg}, $\tilde{J}_t\in \mathcal{J}^{S^1}_\tame(E_z\times_{\mathbb{C}^*}(F\setminus 0))$ is
automatically regular at any $S^1$--invariant holomorphic strip, so we now consider only non-invariant strips $u$.

The map $\proja \circ u$ is holomorphic (w.r.t. $i_t$) and maps the boundary of the strip to the curves $\gamma,\gamma^\prime$ to which
the Lagrangians $L,L^\prime$ project (see Figures~\ref{fig:infinitycurves} and \ref{fig:minusinfinitycurves}). $\proja \circ u$
must either be constant, or, for the input end, an embedding onto the region surrounded by $\gamma,\gamma^\prime$.
The singular fibres of $\proja$ are biholomorphic to the union of $F$ and $F^{-1}$ along the zero section and the Lagrangians
hit these fibres only at the zero section. This means that any holomorphic $u$ with $\proja \circ u$ constant and with
the Lagrangian boundary conditions is necessarily a constant map and so $S^1$--invariant.

We may now assume that $\proja \circ u$ embeds onto a neighbourhood in $\mathbb{C}$. Using this fact, we essentially follow
a well-known method of proof (see proof of \cite[Lemma 2.4]{seidel:les}, from which the underlying argument is copied).

Let $V\subset\mathbb{R}\times[0,1]$ be an open neighbourhood with $\bar{V}$ disjoint from the boundary of the strip,
which $\proja \circ u$ maps diffeomorphically onto its image in $\mathbb{C}$. We may assume that $V$ does not
contain any critical values of $\proja$, so $S^1$--acts freely on a neighbourhood of $u(V)$.

Let $\mathcal{T}$ be the subset of $T\mathcal{J}^{S^1}_\tame(E_z\times_{\mathbb{C}^*}(F\setminus 0))$ containing
those $Y$ which vanish except on the $S^1$--orbits of points in a neighbourhood of $u(V)$. Now consider the operator:

$$D_{u,\tilde{J}}^{univ}:\mathcal{W}^1_u\times\mathcal{T}\mapsto\mathcal{W}^0_{u,J}.$$

Here $\mathcal{W}^1_u$ is the $W^{1,p}$ completion of $T_u\mathcal{B}$ and $\mathcal{W}^0_{u,J}$ is the $L^p$ completion
of $\mathcal{E}_{u,J}$. $D_{u,\tilde{J}}^{univ}$ is the completion of the linearisation of the map
$\bar{\partial}^{univ}_{u,J}\co (u,J)\mapsto \bar{\partial}_J(u)$. Surjectivity of the operator $D_{u,\tilde{J}}^{univ}$
at all non-invariant, holomorphic $u$ implies the desired result.

Suppose that $D_{u,\tilde{J}}^{univ}$ does not surject. Then there exists $\eta\neq 0$, an $L^p$--section of the bundle dual to
$\mathcal{E}_{u,J}$, which is orthogonal to the image of $D_{u,\tilde{J}}^{univ}$. It satisfies

\[D^*_{u,J}\eta=0
\qquad \text{and}\qquad
\int_{\mathbb{R}\times[0,1]}\langle\eta,Y\circ Du\circ j\rangle=0
\quad \text{for}\quad
Y\in\mathcal{T}.\]

The first equation implies that $\eta$ is smooth away from the boundary. Suppose that $\eta\equiv 0$
on $V$, then unique continuation proves $\eta\equiv 0$ everywhere. Therefore, there must be some $x\in V$
with $\eta_x\neq0$. To derive a contradiction, and conclude the proof, we now construct $Y\in\mathcal{T}$
for which $Y\circ Du$ at $x$ is an arbitrary $(j,\tilde{J})$--antilinear map. Multiplying $Y$ by a bump
function supported near $x$, this then contradicts
$\int_{\mathbb{R}\times[0,1]}\langle\eta,Y\circ Du\circ j\rangle=0$.

\begin{remark}
\label{remark:complexpart}
For the following calculation, a quick remark on $\mathbb{R}$--linear maps between complex
vector spaces is helpful. Suppose $A$ is such a linear map, and $J_0, J_1$ are the linear
complex structures on the domain and range. Then $A^+=\frac{1}{2}(A-J_1AJ_0)$ is complex linear
and $A^-=\frac{1}{2}(A+J_1AJ_0)$ is complex anti-linear, with $A=A^++A^-$. We refer to these as the
complex and linear and anti-linear parts of $A$, respectively. This generalises straightforwardly to
the case where $A$ is an $\mathbb{R}$--linear map between different complex vector spaces with different
linear complex structures.
\end{remark}

Now, to construct $Y$, as above, it suffices to specify it only at the point $u(x)$. Splitting $TE_z$ as the sum
$\ker(D\proja)\oplus\ker(D\proja)^{\perp_{\tilde{\Omega}}}$, we can describe allowable values for $\tilde{J},Y,Du$ as follows:

\[\begin{array}{clll}
\tilde{J}=\left(\begin{array}{cc}
A&B\\
&i_t
\end{array}\right) && \text{where}\quad A^2=-I &\text{and}\quad B^+=0,\\
Y=\left(\begin{array}{cc}
R&S\\
&T
\end{array}\right) && \text{where}\quad S^+=\frac{1}{2}A(BT+RB) &\text{and}\quad R^+=T^+=0,\\
Du=\left(\begin{array}{c}
F\\
G
\end{array}\right) && \text{where}\quad F^-=\frac{1}{2}ABG &\text{and}\quad G^-=0.
\end{array}\]

Similarly a general complex anti-linear map $T(\mathbb{R}\times[0,1])\rightarrow T_{u(x)}(E_z\times_{\mathbb{C}^*}(F\setminus 0))$
is described by:

\[\begin{array}{clll}
\left(\begin{array}{c}
W\\
Z
\end{array}\right) && \text{where}\quad W^+=\frac{1}{2}ABZ &\text{and}\quad Z^+=0.\\
\end{array}\]

$G$ is an isomorphism, because $\proja\circ u$ is locally a diffeomorphism. This allows us to choose $Y$ as follows.
Take $R=0$, $T=ZG^{-1}$ and $S=\frac{1}{2}ABZG^{-1}=\frac{1}{2}ABT$, then

\[Y\circ Du = \left(\begin{array}{c}
W\\
Z
\end{array}\right)\]
Hence, as claimed, $Y\circ Du$ can be chosen to be any complex anti-linear map.
\end{proof}

\begin{corollary}
\[
\begin{array}{ccc}
HF^*(L_{out}\times_{S^1}P|_K,L_{out}^\prime\times_{S^1}P|_{K^\prime})&\simeq &\mathbb{Z}\otimes HF^*(K,K^\prime)\\
HF^*(L_{in}\times_{S^1}P|_K,L_{in}^\prime\times_{S^1}P|_{K\prime})&\simeq &(\mathbb{Z}\oplus\mathbb{Z})\otimes HF^*(K,K^\prime)
\end{array}
\]
\end{corollary}

\begin{proof}
Using the previous lemma, we choose a regular, $S^1$--equivariant, almost complex structure. The only isolated holomorphic strips
are those fixed by the $S^1$--action. However, these are in bijection with the isolated holomorphic strips used to calculate
$HF^*(K,K^\prime)$, hence the result. In fact, the same equations hold at the chain level.
\end{proof}

\subsubsection{Regularity over $D$}

We start with a choice of $\tilde{J}_t^{end}$ on the fibres over the boundary marked points, which is regular (for
the construction of Floer cohomology at either end). We may in fact take the same almost complex structure at both ends.
To produce a regular almost complex structure $\tilde{J}$ on the entire fibration $E\times_{\mathbb{C}^*}(F\setminus 0)$,
the construction is similar to that for the ends. First, we deal with regularity for $S^1$--invariant sections, then
we perturb the almost complex structure away from a neighbourhood of the $S^1$--invariant points to ensure regularity
for the remaining sections.

Recall that the Lagrangians $K,K^\prime$ lie in $\rho_X^{-1}[0,R)$, where $\rho_X$ is exhausting and plurisubharmonic.
We may assume that the almost complex structure $J_X$, chosen in the previous section, is standard (equal to $J_X^{std}$)
outside of $\rho_X^{-1}[0,R)$, so all holomorphic strips with boundary on $K,K^\prime$ are contained within
$\rho_X^{-1}[0,R)$.

\begin{definition}
\label{def:jontotalspace}
Let $j$ the standard complex structure on $D$.
We define the family $\mathcal{J}^{S^1}_\tame(E\times_{\mathbb{C}^*}(F\setminus 0))$ of almost complex structures
on $E\times_{\mathbb{C}^*}(F\setminus 0)$, within which we seek a regular almost complex structure as follows.
Again we introduce a time--coordinate $t$ on $D$ with boundary marked points removed, by identifying it with
$\mathbb{R}\times[0,1]$.
An almost complex structure $\tilde{J}$ is in this family if:
\begin{itemize}
 \item $\tilde{J}$ induces $\tilde{J}_t^{end}$ on the fibres at boundary marked points,
 \item $\tilde{J}$ is tame relative to $j$,
 \item $\tilde{J}=\tilde{J}^{std}$ outside of some compact set and in a neighbourhood of those Lagrangian intersection
points not in $M\times_{\mathbb{C}^*}(F\setminus 0)$,
 \item $\tilde{J}$ makes the $S^1$ invariant loci complex submanifolds,
 \item $\tilde{J}$ restricts to the part of the $S^1$--invariant locus which may contain sections (a copy of $D\times X$)
as the product of $j$ and $J_X$,
 \item $\tilde{J}$ is given by $\tilde{J}_t^{end}$ and $j$ on an open neighbourhood of set of $S^1$--invariant
points in $M\times_{\mathbb{C}^*}(F\setminus 0)|_{\rho_X^{-1}[0,R)}$
with respect to the trivialisation generated by the trivialisation of $M$.
\end{itemize}
\end{definition}

The penultimate point is important as it ensures that the moduli space of $S^1$--invariant holomorphic sections is
identical to the moduli space of holomorphic strips in $(X,J_X)$ with boundary on $(K,K^\prime)$. The final point
then ensures that this part of the moduli space of sections is also regular.
It remains to choose $\tilde{J}$ within this family, which is also regular for all non-invariant holomorphic sections
in order to show that these do not contribute to the relative invariant.

We follow the same argument as in the previous section, but it is easier here as the complex structure has more freedom
to vary.

\begin{lemma}
\label{lemma:totalreg}
Generic $\tilde{J}\in\mathcal{J}^{S^1}_\tame(E\times_{\mathbb{C}^*}(F\setminus 0))$ is regular.
\end{lemma}

\begin{proof}
Pick any $\tilde{J}$. Consider the moduli space $\mathcal{M}_\nonreg$ of holomorphic sections $u$ at which $\tilde{J}$ is not regular.
Such $u$ are necessarily non-invariant sections and form a closed subspace of the moduli space of all holomorphic sections. Using the
Gromov-Floer topology, and compactifying the moduli space by adding broken sections (i.e. a section plus holomorphic
strips at one or both ends), the closure $\overline{\mathcal{M}}_\nonreg$ of $\mathcal{M}_\nonreg$ is compact.
$\overline{\mathcal{M}}_\nonreg$ also does not contain any $S^1$--invariant holomorphic sections, even as broken sections.
This is because the
almost complex structure on the ends is regular and $\tilde{J}$ is regular at all $S^1$--invariant holomorphic
sections, so the gluing theorem shows that any $\tilde{J}$ is regular at any holomorphic section sufficiently
close (in the Gromov-Floer topology) to a broken section involving an $S^1$--invariant section.

By compactness of $\overline{\mathcal{M}}_\nonreg$, it is possible to choose
an open neighbourhood $W$ of the $S^1$--invariant points, such that for
any non-regular, holomorphic section $u$, there is an open neighbourhood $V_u$ in $D$ with
$\bar{V}_u\subset D\setminus\partial D$ and $u(V_u)$ disjoint from $W$.

The action of $S^1$ is free at all points which it does not fix and non-invariant sections $u$ are transverse
to $S^1$ orbits purely by merit of being sections. This means we can construct
first order deformations $Y$ of $\tilde{J}$ at a single point in $u(V_u)$, as in the proof of
Lemma~\ref{lemma:endreg}, and extend these to elements of
$T\mathcal{J}^{S^1}_\tame(E\times_{\mathbb{C}^*}(F\setminus 0))$ supported nearby.

At this point we split $TE\times_{\mathbb{C}^*}(F\setminus 0)$ as the sum of vertical vectors
$\ker(D\pi)$ and horizontal vectors $\ker(D\pi)^{\perp_{\tilde{\Omega}}}$, then:

\[\begin{array}{clll}
\tilde{J}=\left(\begin{array}{cc}
A&B\\
&j
\end{array}\right) && \text{where}\quad A^2=-I &\text{and}\quad B^+=0,\\
Y=\left(\begin{array}{cc}
R&S\\
&0
\end{array}\right) && \text{where}\quad S^+=\frac{1}{2}ARB &\text{and}\quad R^+=0,\\
Du=\left(\begin{array}{c}
F\\
I
\end{array}\right) && \text{where}\quad F^-=\frac{1}{2}AB.&
\end{array}\]

A generic complex anti-linear map $TD\rightarrow u^*(TE\times_{\mathbb{C}^*}(F\setminus 0))$
with purely vertical image is:

\[\left(\begin{array}{c}
Z\\
0
\end{array}\right) \quad \text{where}\quad Z^+=0,\]

\noindent and

\[Y\circ Du=\left(\begin{array}{c}
RF+S\\
0
\end{array}\right).\]

\noindent It suffices to choose $S^-=Z-R(F^+)$, because $S^+$ cancels with $R(F^-)=\frac{1}{2}RAB=-\frac{1}{2}ARB$.
\end{proof}

\subsection{Two calculations of the relative invariant}
\label{section:s1calc}

In this Section we calculate up to sign the relative invariant in
two cases using the fibration $E\times_{\mathbb{C}^*}(F\setminus 0)$.
These are the two cases needed in
Section~\ref{section:stab} to understand the stabilisation and destabilisation maps.

For the stabilisation map we use the Lagrangian boundary condition $\tilde{Q}$
(see Section~\ref{section:sympassocbundle}). For the destabilisation map, we modify the
Lagrangian boundary condition $Q$ on $E$ in Section~\ref{section:E} and proceed with the
same constructions as in Sections~\ref{section:sympassocbundle} and \ref{section:s1equij}.
The curves $\gamma$, $\gamma^\prime$ describing the Lagrangian boundary conditions for the
stabilisation and destabilisation maps are illustrated in Figure~\ref{fig:destabstab}.

\begin{figure}[h]
\centering\scalebox{0.55}{\input{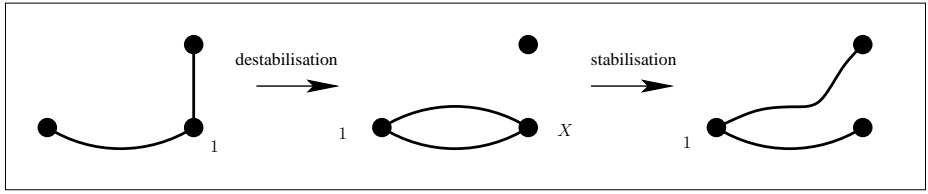}}
\caption{An illustration of the image under $\proja$ of the
$S^1$--equivariant Lagrangians for the two particular relative invariants. These
correspond, as labelled, to parts of the stabilisation and destabilisation maps
for symplectic Khovanov homology (see Section~\ref{section:stab}).
Generators of the chain complexes for Floer cohomology are labelled.
It should be noted, that the setting for the destabilisation map has been rotated, so
that the monodromy clockwise around the base of the corresponding fibration gives
a positive half twist in a line joining the top and left-hand critical points.}
\label{fig:destabstab}
\end{figure}

The Floer cochain complex at input and output ends of $E\times_{\mathbb{C}^*}(F\setminus 0)$
splits as either of

\[\mathbb{Z}\otimes CF^*(K,K^\prime),\]
\[(\mathbb{Z}\oplus\mathbb{Z})\otimes CF^*(K,K^\prime),\]

\noindent depending on whether the curves $\gamma,\gamma^\prime$ describing the Lagrangian boundary condition
intersect at one or both ends. To see this, we simply observe that the Lagrangian intersections form
one or two copies of $K\cap K^\prime$, and that isolated holomorphic strips are $S^1$--invariant and these
correspond to (one or two copies of) those counted for $CF^*(K,K^\prime)$.

To fit in with the notation used in the comparison to Khovanov homology, we prefer to write
$\mathbb{Z}[X]/(X^2)$ in place of $\mathbb{Z}\oplus\mathbb{Z}$, where $X$ has degree $2$. Then:

\begin{proposition}
\label{prop:stabmap}

The relative invariants for the stabilisation and destabilisation map split
at the chain-level as the identity map on $CF^*(K,K^\prime)$
and the following maps on the other factor:

\[
\begin{array}{ccccccc}
	&\text{destabilisation}&&&&\text{stabilisation}&\\
	\mathbb{Z}&\to&\mathbb{Z}[X]/(X^2)&\qquad\qquad\qquad&\mathbb{Z}[X]/(X^2)&\to&\mathbb{Z}\\
	1&\longmapsto&X&&X&\longmapsto&0\\
	&&&&1&\longmapsto&1\\
\end{array}
\]
\end{proposition}

\begin{proof}
Using an $S^1$--equivariant regular almost complex structure, we observe that the isolated holomorphic
sections are precisely those which are $S^1$--invariant and have constant projection to $X$. The
remaining holomorphic sections come in non-trivial $S^1$--orbits or $\mathbb{R}$--orbits. In $E$
there is only one smooth, $S^1$--equivariant section and this is holomorphic.
Correspondingly there is precisely one isolated holomorphic section of $E\times_{\mathbb{C}^*}(F\setminus 0)$
with boundary on $\tilde{Q}$ (in the case of stabilisation or destabilisation) for each
point in $K\cap K^\prime$.

It remains only to check the relative Maslov index of the Lagrangian intersections of $Q$ at
the ends of $E$ in order to work out which copy of $\mathbb{Z}$ maps to which. This can be done
using $\proja$, and agrees with the labelling in Figure~\ref{fig:destabstab}.
\end{proof}

\section{Symplectic Khovanov homology}
\label{section:khsymp}

\subsection{A short summary of symplectic Khovanov homology}
\label{section:introkhsymp}

This section summarises the construction of $\skh $ with
emphasis on those parts of most relevance later.
The construction is based on the observation that braids on $n$ strands can
be viewed as loops in the configuration space $\operatorname{Conf}_n(\mathbb{C})$
of $n$ points in the plane.

Suppose we have a symplectic fibration
$\chi:S\to\operatorname{Conf}_n(\mathbb{C})$ on which symplectic parallel transport
maps are well-defined. Then symplectic
parallel transport defines a map
\[\pi_1(\operatorname{Conf}_n(\mathbb{C}),P)\to\pi_0(\sympc(\chi^{-1}(P)))\] for any base point
$P\in\operatorname{Conf}_n(\mathbb{C})$. This is a representation of the braid
group. Ideally one would also require the symplectic parallel transport maps
to be compactly supported.

This sort of symplectic geometry is not new. Khovanov and Seidel
\cite{khovanovseidel} show that similar symplectic geometry can be used
to construct Khovanov's categorification of the Burau representation of
the braid group.

We use instead a fibration with subtly different properties.
Let $\conf{n}$ be the space of configurations with coordinates summing
to 0 and let $\confbar{n}\cong\mathbb{C}^{n-1}$ be its closure
in $\operatorname{Sym}_n(\mathbb{C})$.
Seidel and Smith \cite{seidelsmith:khsymp} define a singular holomorphic fibration of a
Stein manifold $S_n$ over $\confbar{2n}$.
$S_n$ has the property that it pulls back to an exact MBL--fibration over any holomorphic
disc in $\confbar{2n}$ passing transversely through the locus of
$\confbar{2n}\setminus \conf{2n}$ where precisely two coordinates meet
(cf.\ Lemma~27 of \cite{seidelsmith:khsymp}).

Let $\overline{\mathbb{D}}$ be such a disc with a single singular fibre and let
$C$ be a bounded subset of the singular locus of the pullback fibration over
$\overline{\mathbb{D}}$.
Then, for simple enough paths $\gamma$ (short and linear in $\overline{\mathbb{D}}$ suffices),
the vanishing locus $V_C$ to $C$, in nearby regular fibres over $\gamma$,
is well-defined (see \cite[Section 4.2]{seidelsmith:khsymp}).
We refer to $V_C$ as the relative vanishing cycle to $C$.
If $C$ is open, then $V_C$ is a bounded coisotropic, fibring over $C$ with fibres all
isotropic spheres.

The monodromy of the elementary
braid performing a single positive twist in the two chosen coordinates
is realised by a loop around the singular value of $\overline{\mathbb{D}}$.
If one is careful to make the loop small, then the monodromy map is
well-defined over an open neighbourhood of $V_C$.
By a deformation of the exact symplectic structure, the monodromy can be taken to 
restrict to a neighbourhood of $V_C$ as a fibred Dehn twist in the case that
$C$ is open. This is a
consequence of work of Perutz \cite{perutz:matchinv}. Non-compactness of the
singular locus and of $V_C$ are a problem here, so we shall not 
make direct use of this description, however, it is useful heuristically.

The path into the singular value of $\overline{\mathbb{D}}$ corresponds naturally
to a `singular braid' of the form of the $(n-2,n)$--tangle in
Figure~\ref{fig:captangle} (right hand side). Heuristically, we think of $V_C$, for large enough
$C$, as corresponding to this tangle. The loop around the singular value corresponds
to the elementary braid (as illustrated on the left hand side). Extending $V_C$ over this
loop by symplectic parallel transport, one sees already an important invariance result. Namely, $V_C$
is unchanged, up to an appropriate isotopy. This corresponds to the $(n-2,n)$--tangle
being unchanged, up to isotopy, by composition with the braid.

\begin{figure}[h]
\begin{center}
\scalebox{0.8}{\includegraphics{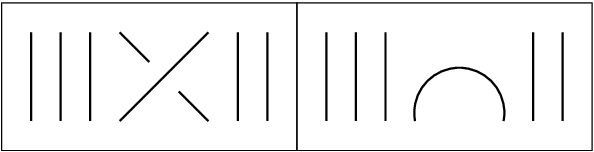}}
\end{center}
\caption{An elementary braid (left) and a basic ``cap'' tangle (right)
which is unchanged by the action of that braid.}
\label{fig:captangle}
\end{figure}

\begin{definition}
\label{def:polynomial}
Let $S_n$ be the space of degree $n$ monic polynomials with coefficients in the ring
of 2 by 2 matrices in $\mathbb{C}\left[X\right]$ such that
the coefficient of $X^{n-1}$ is trace free. The determinant map $\chi$
restricted to $S_n$ always gives monic polynomials of degree $2n$ with roots
summing to zero, so can be
thought of as a map to $\confbar{2n}$
(by identifying monic polynomials with the configuration of their roots counting
multiplicities). This is an example of the type of fibration just
discussed.
\end{definition}

The original definition of $S_n$ was as a nilpotent slice of the Lie
algebra $\mathfrak{sl}_{2n}(\mathbb{C})$, specifically a local transverse
slice to the orbit of the Adjoint action of $\operatorname{SL}_{2n}(\mathbb{C})$
at a nilpotent matrix with precisely 2 Jordan blocks of size $n$. This setting
is described more explicitly as follows.

\begin{definition}
\label{def:nilpslices}
One defines
$S_n$ to consist of the following $2n$ by $2n$ matrices with complex coefficients:

\[
 \left(
\begin{array}{cccc}
A_1&I&&\\
\vdots&&\ddots&\\
\vdots&&&I\\
A_n&&&0\\
\end{array}
\right)
\]

\noindent where each $A_i$ is a $2$ by $2$ matrix (i.e.\ in $\mathfrak{gl}_{2}(\mathbb{C})$)
and $A_1$ is trace free (in $\mathfrak{sl}_{2}(\mathbb{C})$). Then the map $\chi$ is
defined to give the characteristic polynomial of the matrix.
\end{definition}

The two definitions are related by the isomorphism:
\[
  \left(
\begin{array}{cccc}
A_1&I&&\\
\vdots&&\ddots&\\
\vdots&&&I\\
A_n&&&0\\
\end{array}
\right)
\longmapsto
X^n-\sum_{i=1}^{n} X^{n-i}A_i
\]
which commutes with the map $\chi$ on either side. In either setting we shall
denote the fibre of $S_n$ over a configuration $P$ by $\sfibre{n}{P}$.

Alternatively, it has been shown by Manolescu
\cite{manolescu:hilb} that, for $P\in\conf{2n}$, the fibre $\sfibre{n}{P}$ injects
holomorphically
into the Hilbert scheme $\operatorname{Hilb}^n(M_n(P))$. Here, $M_n(P)$ is the 2--dimensional
(over $\mathbb{C}$) Milnor fibre associated to the $A_{2n}$ singularity. It is
explicitly described as a smooth affine surface
by the equation $u^2+v^2+P(z)=0$ in $\mathbb{C}^3$.
The symplectic parallel transport maps (defined appropriately on compact subsets containing
the relevant Lagrangian submanifolds) are, in an appropriate sense,
lifts of Dehn twists on $M_n(P)$.
This reveals a close connection between the
symplectic geometry of $S_n$ with the braid group $Br_{2n}$ which acts naturally
by Dehn twists on the $A_{2n}$ Milnor fibre.

With an appropriate choice of K\"ahler metric \cite{seidelsmith:khsymp} uses
``rescaled'' symplectic parallel transport maps. For this paper however, it is more
convenient to use actual symplectic parallel transport. This has the
consequence that we often need to perform some deformation in order to ensure that the 
symplectic parallel transport is well-defined on relevant compact subsets. Given any
smooth path in $\confbar{2n}$ we define the symplectic parallel transport maps
on sufficiently large compact subsets of a given fibre by the deformation described in
Lemma~\ref{lemma:sptdef}. Specifically Remark~\ref{remark:pathspt} covers this case.

The singular locus of $S_n$ corresponding to two coordinates in
$\confbar{2n}$ meeting at zero is precisely $\critmbl{\chi}$
(see Section~\ref{section:flattening}).

Furthermore, it is shown that $S_{n-1}$ is canonically isomorphic to a singular
locus in $S_n$ over the points
$(0,0,\mu_3,\ldots,\mu_{2n})$  in a manner compatible with the
fibrations. Local neighbourhoods have a particularly nice form.

\begin{lemma}[cf.\ Lemma 27 of \cite{seidelsmith:khsymp}]
\label{lemma:a1}
Let $D$ be a disc in
$\overline{\operatorname{Conf}}^0_{2n}(\mathbb{C})$
given by the monocoordinates $(-\sqrt{\epsilon},\sqrt{\epsilon}, \mu_3, \ldots,\mu_{2n})$
with $\epsilon$ small. In terms of monomials, the elements of $D$ have the form
\[(X^2-\epsilon)\Pi_{k=3}^{2n}(X-\mu_k).\]

Then there is a neighbourhood of
$S_{n-1}\subset \chi^{-1}(D)\cap S_n$ on which the fibration $\chi$
has the local model
given by a neighbourhood of
$\sfibre{n-1}{(\mu_3,\ldots,\mu_{2n})}\times{0}$ below:

\[
\bfig
 \Square[
	\chi^{-1}(D)`
	\sfibre{n-1}{(\mu_3,\ldots,\mu_{2n})}\times\mathbb{C}^3`D`\mathbb{C};
	\text{local isomorphism near } S_{n-1}\subset\chi^{-1}(D)\cap S_n`
	\chi`a^2+b^2+c^2`
	(-\sqrt{\epsilon},\sqrt{\epsilon}, \mu_3, \ldots,\mu_{2n})
	\mapsto\epsilon
	]
 \efig
\]
\end{lemma}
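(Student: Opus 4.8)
The plan is to recognise this statement as a restatement, in the exact-MBL language of Section~\ref{section:mblfibrations}, of \cite[Lemma~27]{seidelsmith:khsymp}; its mathematical content is a holomorphic Morse--Bott normal form for $\chi$ transverse to its critical locus, together with the identification of that critical locus with a copy of $\sfibre{n-1}{(\mu_3,\ldots,\mu_{2n})}$. I would first fix coordinates: set $P':=(\mu_3,\ldots,\mu_{2n})$ and $Q(z):=\prod_{i=3}^{2n}(z-\mu_i)$, so that (the $\mu_i$ being pairwise distinct and nonzero) $Q$ has simple roots and $Q(0)\neq 0$; the disc $D$ then carries the one-parameter family of polynomials $P_\epsilon(z)=(z^2-\epsilon)Q(z)$, and the assignment $(-\sqrt\epsilon,\sqrt\epsilon,\mu_3,\ldots,\mu_{2n})\mapsto\epsilon$ identifies $D$ biholomorphically with a disc about $0$ in $\mathbb{C}$. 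This is the bottom arrow of the diagram, and $P_\epsilon$ has a multiple root --- equivalently $\chi^{-1}(P_\epsilon)$ is singular --- only at $\epsilon=0$.

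Next I would recall from \cite[Section~4]{seidelsmith:khsymp} that the singular fibre $\chi^{-1}(P_0)$ is singular exactly along a holomorphically embedded copy of $\sfibre{n-1}{P'}$, compatibly with the two fibrations, and that this copy is precisely $S_{n-1}\cap\chi^{-1}(D)$; hence it is the entire critical locus of the map $\chi\colon\chi^{-1}(D)\to D$, it is smooth, and it has complex codimension $3$ there because the fibres of $S_n$ have complex dimension two greater than those of $S_{n-1}$.

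The one remaining structural input is condition~(10) of Definition~\ref{definition:exactmbl}: the complex Hessian of $\chi$ on a complex complement to $T(S_{n-1})$ inside $T\chi^{-1}(D)$ is non-degenerate. In the matrix model of Definition~\ref{def:nilpslices} this is a direct second-order computation of the characteristic-polynomial map at a point of $S_{n-1}$; more conceptually, one can view the transverse structure as the semiuniversal deformation of the $A_1$ Kleinian surface singularity (whose total space is smooth and three-complex-dimensional, with deformation parameter $\chi$ and defining relation $a^2+b^2+c^2=\epsilon$), compatibly with Slodowy's description of the slices $S_m$. Either way the transverse Hessian is the non-degenerate quadratic form $a^2+b^2+c^2$.

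With non-degeneracy in hand I would invoke the holomorphic Morse--Bott (parametrised Morse) lemma: a holomorphic function whose critical set is a smooth submanifold along which the transverse Hessian is non-degenerate is, on a neighbourhood of that submanifold, holomorphically equivalent to the projection $\sfibre{n-1}{P'}\times\mathbb{C}^3\to\mathbb{C}$, $(x,a,b,c)\mapsto a^2+b^2+c^2$. Applied to $\chi\colon\chi^{-1}(D)\to D$ with critical locus $S_{n-1}\cong\sfibre{n-1}{P'}$, this yields exactly the commutative square of the statement, the identification of the critical locus being the one already named on the right-hand side. The main obstacle is the non-degeneracy check of the transverse Hessian: this is where the particular geometry of the Seidel--Smith slices genuinely enters, and the honest options are to reproduce the matrix computation or to cite the relevant parts of \cite{seidelsmith:khsymp}; everything else is soft.
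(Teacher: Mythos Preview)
The paper does not actually prove this lemma: it is stated with the parenthetical ``(cf.\ Lemma~27 of \cite{seidelsmith:khsymp})'' and treated as a quotation from Seidel--Smith. So your proposal is not so much a different route as an attempt to reconstruct what the cited proof must contain. The paper does, however, prove a strict generalisation later (Proposition~\ref{prop:semisplit} and its Corollary~\ref{corollary:semican}), and the method there is genuinely different from yours: rather than a Morse--Bott normal form, it uses the Lie-theoretic machinery of transverse slices to adjoint orbits---one picks, holomorphically in the base point $y$, affine linear transverse slices to the orbit and compares any two such slices via conjugation by elements of $SL_{2n}(\mathbb{C})$ (which preserves $\chi$). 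This buys you the global product structure directly from the representation theory, without ever invoking a normal-form lemma.

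Your approach is morally correct but has a gap at exactly the point where you write ``invoke the holomorphic Morse--Bott (parametrised Morse) lemma.'' The parametrised Morse lemma is a \emph{local} statement: near each point of the critical locus you get coordinates in which $\chi=a^2+b^2+c^2$. Passing from this to a product model $\sfibre{n-1}{P'}\times\mathbb{C}^3$ on a neighbourhood of the \emph{entire} critical locus requires either that the normal bundle (with its Hessian quadratic form) be globally trivial, or that the base be Stein so that the local models patch. Both are true here---indeed $S_{n-1}$ sits inside $S_n$ as an affine linear subspace in the matrix coordinates of Definition~\ref{def:nilpslices}, so the normal bundle is manifestly trivial---but you need to say this; a bare appeal to ``the Morse--Bott lemma'' does not supply it. Once you add that observation your argument goes through, and is arguably more elementary than the slice-theoretic one, at the cost of being less obviously compatible with the finer structure (the line bundle $\mathcal{F}$ in Lemma~\ref{lemma:a2}, the splitting over $\mathfrak{\xi}$ in Proposition~\ref{prop:semisplit}) that the paper needs later.
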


In fact, one can drop the requirement that the two
coordinates meet at zero (rather than an arbitrary point in $\mathbb{C}$)
in the case that $n\ge 2$.

Let $L$ be a simply connected, compact, exact Lagrangian submanifold of $\sfibre{n-1}{(\mu_3,\ldots,\mu_{2n})}$.
Then the relative vanishing locus to $L$ (over simple enough paths into the singular
value of the model fibration in the lemma) gives
an exact Lagrangian submanifold of a nearby regular fibre $\sfibre{n}{u(z)}$ of
$S_n$ (cf. \cite[Section 4.2]{seidelsmith:khsymp}). It is diffeomorphic to $L\times S^2$
and well-defined up to Lagrangian isotopy, which, since $\pi_1(L\times S^2)=0$, implies it is
well-defined up to compactly supported Hamiltonian isotopy.
Rezazadegan \cite{reza} has an approach to the non-compact case, but it
is not necessary here.

By starting with $L=\{0\}=\sfibre{0}{0}$ and repeating this construction
we can generate compactly supported Hamiltonian isotopy classes of Lagrangian submanifolds
corresponding to isotopy classes of certain $(0,2n)$--tangles.

\begin{definition}
Let $\gamma_i$ for $i=1,2,\ldots,n$ be a sequence of vanishing paths in
${\operatorname{Conf}}^0_{2i}(\mathbb{C})$ from non-singular values $z_i$
to singular values $w_i$. Suppose also that, for each $i=1,2,\ldots,n-1$
we have $X^2z_i=w_{i+1}$. Then we can consider the composition of the paths
$X^{2(n-i)}\gamma_i$ as a piecewise smooth path in
$\confbar{2n}$. We call such a path an
\emph{iterated vanishing path}. Viewed in terms of configurations of points in $\mathbb{C}$,
such a path describes a way of bringing together $2n$ points into pairs (fixed at 0).

Repeating the relative vanishing cycle construction along a short enough iterated
vanishing path, starting with a single point inside $\{0\}\cong \sfibre{0}{0}$,
allows one to construct a Lagrangian \emph{iterated vanishing cycle}. Given a
longer iterated vanishing path, this construction also works, though it becomes
necessary to perform a deformation (described in Lemma~\ref{lemma:sptdef}
and Remark~\ref{remark:pathspt}) to ensure that
symplectic parallel transport of the vanishing loci is always well-defined.
\end{definition}

It is shown in \cite{seidelsmith:khsymp} that iterated vanishing cycles
(up to isotopy) are independent of isotopy of the
iterated vanishing paths defining them.

\begin{definition}
A \emph{crossingless matching} of $P\in\operatorname{Conf}_{2n}(\mathbb{C})$
is an embedded set of $n$ curves in $\mathbb{C}$, such that each coordinate of
$P$ is an endpoint of precisely one curve. We denote the set of crossingless
matchings on $P$ by $\match{n}{P}$.
\end{definition}

\begin{figure}[h]
\begin{center}
\scalebox{0.7}{\includegraphics{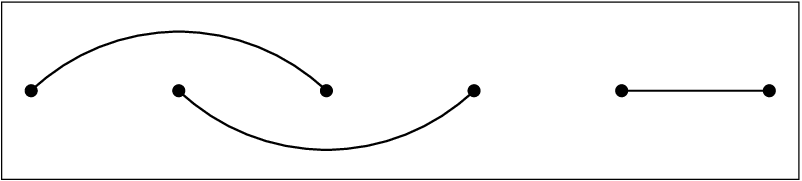}}
\end{center}
\caption{An example crossingless matching for $P\in\operatorname{Conf}_{6}(\mathbb{C})$.}
\label{fig:crossinglessmatching}
\end{figure}

It suffices to specify a crossingless
matching $A\in\match{n}{P}$ together with an ordering on the curves
in order to determine an iterated vanishing
cycle up to isotopy. To construct the iterated vanishing cycle,
one takes any
\(\func
	{\gamma}
	{\left[0,1\right]}
	{\overline{\operatorname{Conf}}^0_{2n}(\mathbb{C})}
\)
whose coordinates in $\confbar{2n}$ sweep out
the curves of $A$ in $\mathbb{C}$, bringing together endpoints of the curves
in the chosen order. Choices of such $\gamma$ are all isotopic
to each other and these isotopies induce exact isotopies of the iterated
vanishing cycles. In fact, changing the ordering also changes
the iterated vanishing cycle \cite{seidelsmith:khsymp} only by exact isotopy.

By \cite{alexander:braidpos} any link $\mathfrak{L}$ can be put into
\emph{braid position} by isotopy, i.e.\ a position of the form shown in
Figure~\ref{fig:braidpos} for some braid $\beta$ on $n$ strands. This position
can be thought of as splitting into two identical crossingless matchings
(on some $P\in\conf{2n}$)
and a braid on $2n$ strands consisting of $\beta$ on the left-hand $n$ strands
and the trivial braid on the others as in the picture.

\begin{figure}[h]
\begin{center}
\scalebox{0.4}{\input{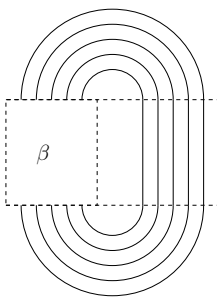}}
\end{center}
\caption{Braid position of a link, split as two identical crossingless
matchings and a braid, trivial on the strands on one side.}
\label{fig:braidpos}
\end{figure}

Let $L\subset\sfibre{n}{P}$ be the Lagrangian corresponding to the
crossingless matching and $L^\prime$ be its image under the parallel
transport corresponding to $\beta$. Then the \emph{symplectic Khovanov
homology} is defined as $\skh (\mathfrak{L}):=HF(L,L^\prime)$.

It follows from general results on Floer cohomology under symplectomorphisms
and Hamiltonian isotopy, that $\skh $ is independent of braid isotopy.
To prove that $\skh $ did not depend on the choice of braid representing
a given link, Seidel and Smith show that it is invariant under the two
\emph{Markov moves}. These are changes to the braid which suffice to pass
between any two braid positions of the same link.

Invariance under conjugation of the braid (\emph{Markov I}) works
straightforwardly by a trick involving isotopy of crossingless matchings.

The \emph{Markov II} move (Figure~\ref{fig:markov2original}) involves adding
an extra strand to the braid $\beta$ and ``twisting it in'' by either a positive
or a negative half twist. The proof of invariance under this move is more
involved and makes use of a local model for the fibration of $S_n$ over a
neighbourhood of a point in
$\confbar{2n}$ with three coordinates
coinciding.

\begin{figure}[h]
\begin{center}
\scalebox{0.4}{\input{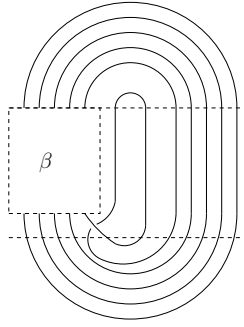}}
\end{center}
\caption{The braid position of Figure~\ref{fig:braidpos} after application
of the Markov II move.}
\label{fig:markov2original}
\end{figure}

Another way in which parts of $S_{m-1}$ are
locally nested in $S_m$ is as follows. Let $P\in\conf{2(m-1)}$ be a polynomial with
pairwise distinct roots, one of which is $0$.
Then it turns out that the singular locus of $S_m$ over $X^2P$ is precisely
$X^2\sfibre{m-1}{P}$.
There is also a simple holomorphic model for neighbourhoods of this locus inside $S_m$.
Namely one can describe it as a holomorphic associated bundle over $\sfibre{m-1}{P}$
in the following way.

\begin{lemma}[cf. Lemma 29 of \cite{seidelsmith:khsymp}]
\label{lemma:a2}
Let $XP\in\conf{2(m-1)}$ be a polynomial with pairwise distinct roots
one of which is $0$.
Let $\mathbb{D}^2$ be a small holomorphic bidisc in
$\confbar{2m}$ parametrised by
\[(d,z)\mapsto (X^3-Xd+z)P\]

\noindent There is a holomorphic line bundle $\mathcal{F}$ over
$\sfibre{m-1}{XP}$ such that the following is a local model for
$\chi^{-1}(\mathbb{D}^2)$.

\[
\bfig
 \Square[
	\chi^{-1}(\mathbb{D}^2)`
	(\mathcal{F}\setminus 0)\times_{\mathbb{C}^*}\mathbb{C}^4`
	\mathbb{D}^2`\mathbb{C}^2;
	\text{local isomorphism near } X^2\sfibre{m-1}{XP}\cap \chi^{-1}(0,0)`
	\chi`
	f`
	]
 \efig
\]

\noindent The map f is induced by the map $\mathbb{C}^4\to\mathbb{C}^2$
taking $(a,b,c,d)\mapsto(d,a^3-ad+bc)$. The $\mathbb{C}^*$--action
on $\mathbb{C}^4$ is given by $(a,b,c,d)\mapsto(a,\zeta^{-2}b,\zeta^2c,d)$ for $\zeta\in\mathbb{C}^*$.

\end{lemma}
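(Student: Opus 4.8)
The plan is to follow the same strategy as the proof of Lemma~27 (our Lemma~\ref{lemma:a1}) in \cite{seidelsmith:khsymp}, adapting it to the situation where three coordinates come together rather than two. First I would identify the singular locus: over the point $X^2P\in\confbar{2m}$ (with $XP\in\conf{2(m-1)}$ having distinct roots, one of which is $0$), a matrix in $S_m$ lying over this characteristic polynomial must have a generalized eigenspace structure forcing it, up to the adjoint action preserving the slice, into the nilpotent-slice form for $S_{m-1}$ with an extra $X^2$ factor; this identifies $\crit{\chi}$ over $X^2P$ with $X^2\sfibre{m-1}{XP}$. The bidisc $\mathbb{D}^2\subset\confbar{2m}$ is chosen so that $(d,z)\mapsto(X^3-Xd+z)P$ parametrizes a transverse slice: for $(d,z)$ near $(0,0)$ the three roots of $X^3-Xd+z$ near $0$ deform the triple root, while the remaining $2m-3$ roots stay near the distinct nonzero roots of $P$ and may be treated as spectators.

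The key step is to produce the holomorphic line bundle $\mathcal{F}\to\sfibre{m-1}{XP}$ and the local biholomorphism. I would argue as follows. Near the locus $X^2\sfibre{m-1}{XP}$, a point of $\chi^{-1}(\mathbb{D}^2)$ is determined by (i) its image in $\sfibre{m-1}{XP}$ under the ``forgetful'' projection coming from the nesting $S_{m-1}\hookrightarrow S_m$, together with (ii) transverse data measuring how the triple of coalescing roots is resolved. The transverse slice to the $\mathrm{SL}$-orbit at the relevant nilpotent of $\mathfrak{sl}_{2m}$ with two Jordan blocks of size $m$, localized near the sub-nilpotent with blocks of size $m-1$, is modeled by the $A_2$-type normal form $(a,b,c)\mapsto a^3-ad+bc$ together with the parameter $d$, exactly as in \cite[Lemma 29]{seidelsmith:khsymp}; the $\mathbb{C}^4$ here records $(a,b,c,d)$ and the map to $\mathbb{C}^2$ is $(a,b,c,d)\mapsto(d,a^3-ad+bc)$. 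The $\mathbb{C}^*$-action $(a,b,c,d)\mapsto(a,\zeta^{-2}b,\zeta^{2}c,d)$ is the residual symmetry of this normal form (it is the one that rescales the $bc$ term trivially while being nontrivial on $b,c$ separately with the indicated weights, matching the weights forced by the block structure), and $\mathcal{F}$ is the line bundle over $\sfibre{m-1}{XP}$ whose associated $\mathbb{C}^*$-bundle $\mathcal{F}\setminus 0$ trivializes this ambiguity. Concretely, $\mathcal{F}$ is built from the eigenline data of the generalized eigenspace at the coalescing root $0$ of $XP$; it is a subbundle of a trivial bundle exactly in the manner discussed at the start of Section~\ref{section:twistprod}, so that the associated-bundle formalism there applies.

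The main obstacle I expect is verifying the $\mathbb{C}^*$-weights and the precise form of the associated-bundle identification, i.e.\ checking that the residual gauge freedom in choosing coordinates on the transverse slice is exactly a $\mathbb{C}^*$ acting with weights $(0,-2,2,0)$ on $(a,b,c,d)$, and that this matches the transition functions of $\mathcal{F}$ (which come from the $\mathcal{F}\to\mathbb{C}^{n}\times X$ embedding and hence have the form $(z,x)\mapsto(e^{i\theta}z,x)$, with the weight-$2$ appearing because the resolution parameter $bc$ enters quadratically relative to the natural linear parameter on $\mathcal{F}$). Once the weights and the holomorphic triviality of the resolution data over simply-connected pieces of $\sfibre{m-1}{XP}$ are pinned down, the local-isomorphism claim follows by the holomorphic implicit function theorem applied fibrewise, exactly as in the proof of Lemma~27 of \cite{seidelsmith:khsymp}; the spectator roots contribute only a product factor that is absorbed into $\sfibre{m-1}{XP}$ and causes no difficulty. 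I would also remark, as in the statement, that the hypothesis that one root of $XP$ is $0$ can be dropped for $m\ge 2$ by translating in $\mathbb{C}$, since the construction is local near the coalescing triple and equivariant under such translations.
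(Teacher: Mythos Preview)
The paper does not actually prove this lemma: it is stated with the attribution ``(cf.\ Lemma~29 of \cite{seidelsmith:khsymp})'' and no proof environment follows. Immediately after the statement the paper simply remarks ``I have stated a simple version of the Lemma. In fact Lemma~29 of \cite{seidelsmith:khsymp} is more general\ldots'' and then moves on to explain how the local model is used. So there is nothing in this paper to compare your proposal against; the result is imported wholesale from Seidel--Smith.

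Your sketch is a plausible reconstruction of the Seidel--Smith argument (transverse slice to the adjoint orbit, $A_2$ normal form for the three-root collision, residual $\mathbb{C}^*$ gauge freedom giving the associated-bundle structure), and it is consistent with how the paper later uses the lemma in Section~\ref{section:stab} and with the associated-bundle discussion at the start of Section~\ref{section:twistprod}. But note that your proposal is really an outline rather than a proof: the identification of $\mathcal{F}$, the verification of the weights $(0,-2,2,0)$, and the claim that the gauge ambiguity is exactly a single $\mathbb{C}^*$ are all asserted rather than checked. If you want an actual proof you will need to carry out the explicit matrix computation in the nilpotent slice as in \cite{seidelsmith:khsymp}, not just invoke the implicit function theorem; the paper's own more detailed slice computations in Section~\ref{section:splitting} (Lemmas around Proposition~\ref{prop:semisplit}) show the flavour of what is required.
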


We have stated a simple version of the lemma.
In fact Lemma~29 of \cite{seidelsmith:khsymp} is more general in that it
gives a local model near the fibre over a polynomial $P^\prime\in\confbar{2m}$
with root $\mu$ of multiplicity 3 and the remaining roots pairwise distinct.
The difference is that $\mu$ is not required to be zero. This generality is
necessary (though only in some cases).

The use of this local model is as follows. One considers the fibration
over discs defined by a fixed value of $d\neq 0$.
$\mathbb{C}^3\times\{d\}\to \mathbb{C}$ is an exact Lefschetz fibration
with two singular values. These each correspond to moving two of the three
roots of $(X-\mu)^3-(X-\mu)d+z$
together in different ways. This makes $(\mathcal{F}\setminus 0)\times_{\mathbb{C}^*}\mathbb{C}^3$
an exact Morse Bott Lefschetz fibration over $\mathbb{C}$.

Let $K,K^\prime$ in $\sfibre{m-1}{XP}$ be the iterated vanishing cycles
constructed for some braid position of a link.
One performs, with each of $K,K^\prime$, the
relative vanishing cycle construction in
$(\mathcal{F}\setminus 0)\times_{\mathbb{C}^*}\mathbb{C}^3$
to create $L,L^\prime$. This
corresponds to adding a new unlinked component to the link. Then
$L$ is carried around the other singularity by parallel transport
to $\tau L$, corresponding to twisting the new component into the
braid as in Figure~\ref{fig:markov2original}. When this is done compatibly with
splitting of the fibration from
the lemma one finds a bijection of the moduli spaces used to
calculate $HF(K,K^\prime)$ and $HF(\tau L,L^\prime)$.

With an appropriate grading shift, this takes care of invariance under
the Markov $II$ move. The important part to notice is
that, by Lemma~\ref{lemma:a2}, the fibration splits in such a way as to
separate the local behaviour of the three coordinates involved from the
rest of the link. This is studied in more detail later.

\subsection{Symplectic Khovanov homology for links in bridge position}

Here we generalise the definition of $\skh $
such that it is defined directly using any bridge position of a link,
not necessarily just braid positions. This will be useful in defining
the maps on $\skh $
corresponding to smooth closed cobordisms in $\mathbb{R}^4$ between
links. These maps will be covered in the next section.

Any link in $\mathbb{R}^3=\{(x,y,z)\}$ is isotopic to one in which the height
function (mapping points on the link to the value of their
$z$--coordinate) has only non-degenerate critical points. Furthermore
we may require all local maxima of $z$ to occur where $z>0$ and
minima where $z<0$. We will call such a position of a link \emph{admissible}.
Each half of a link in an admissible position (i.e.\ the parts above and below
height $z=0$) specifies a path for the iterated vanishing cycle construction
and hence a Lagrangian submanifold of the same fibre $\sfibre{n}{P}$. One should
note that these paths need not be embedded or disjoint from each other.

An isotopy of a link through admissible positions fixing the point
$P\in\operatorname{Conf}^0_{2n}(\mathbb{C})$ causes Hamiltonian isotopies
of this pair of Lagrangians.
On Floer cohomology these give the identity up to canonical isomorphisms
(from continuation maps). Although the Floer cohomology
may arise from a different chain complex, before and after the isotopies,
it is well-defined up to canonical isomorphism.

Now we extend this definition to bridge diagrams of links and use it to
prove that the Floer cohomology one gets for any two bridge diagrams of the
same link is always isomorphic (though not with a canonical isomorphism unless
one is comparing bridge positions coming from isotopic admissible positions).

\begin{figure}[h]
\begin{center}
\scalebox{0.5}{\includegraphics{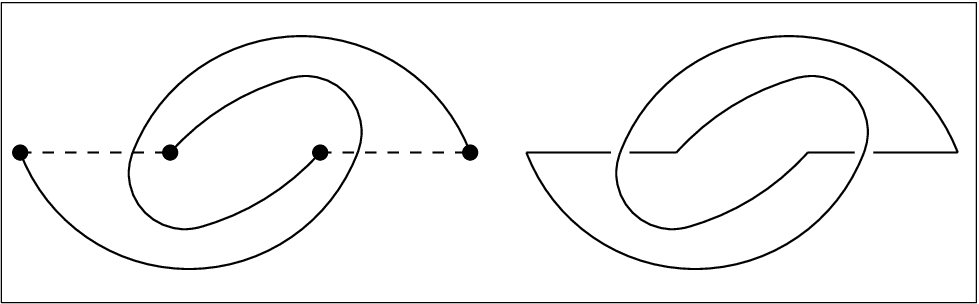}}
\end{center}
\caption{The Hopf link as a bridge diagram and a projection of the admissible
position the diagram represents ($\alpha$--curves are drawn smooth,
$\beta$--curves dashed).}
\label{fig:hopflink}
\end{figure}

\begin{definition}
An \emph{$n$--bridge diagram} $(P,A,B)$ of a link consists of a set
$P$ of $2n$ points in the plane together with a pair of
crossingless matchings $A,B$ (sets of $n$ disjoint curves in the plane
which join the points of $P$ in pairs) which intersect
transversely away from $P$. We refer to the
curves in these matchings as \emph{$\alpha$-- and $\beta$--curves}
respectively and to the points of $P$ as the \emph{vertices} of the diagram.
\end{definition}

Any bridge diagram is easily turned into an admissible link by pulling the
$\alpha$--curves up out of the plane in which the diagram is drawn and
pushing the $\beta$--curves down. Such an admissible position will project
back onto the bridge diagram by the orthogonal projection onto the plane.
In fact any two such admissible positions are isotopic by an isotopy
of admissible positions fixing $P$.
Therefore we have a canonical choice of isomorphisms between the
Floer homologies they induce.

\begin{definition}
Given an admissible position $\mathfrak{L}$ for a link or bridge diagram $(P,A,B)$,
we define its \emph{symplectic Khovanov homology} $\skh (\mathfrak{L})$ or
$\skh (P,A,B)$ to be the Lagrangian Floer cohomology $HF(L_A,L_B)$. Here $L_A$, $L_B$ are constructed
by the iterated vanishing cycle construction for a path specified by
$\mathfrak{L}$, $(P,A,B)$ respectively.
\end{definition}

From the above discussion, we see that this gives a well-defined invariant of
admissible positions, up to isotopy fixing the point
$P\in\operatorname{Conf}^0_{2n}(\mathbb{C})$, and also of bridge diagrams up
to isotopy fixing $P$.

A \emph{regular isotopy} of bridge diagrams is an isotopy $P(t)$ of $P$
together with isotopies of $A$ and of $B$ through crossingless matchings
on $P(t)$. The intermediate matchings need not intersect transversely.

A \emph{passing move} on a crossingless matching is where we take a smooth
loop $\gamma$ disjoint from the curves of the matching and enclosing precisely
one of the curves and replace another of the curves with its connect sum
with $\gamma$.

A \emph{stabilisation move} on an $n$--bridge diagram is as follows. We mark a
closed interval on an $\alpha$--curve $\gamma$ which is disjoint from
the $\beta$--curves and from $P$. We then add its two endpoints to $P$,
add the interval to $B$, replace $\gamma$ in $A$ with the two curves
we get by removing the interior of the interval from $\gamma$.
The resulting $(P,A,B)$ is an $(n+1)$--bridge diagram for the same link.

\begin{figure}[h]
\begin{center}
\scalebox{0.5}{\includegraphics{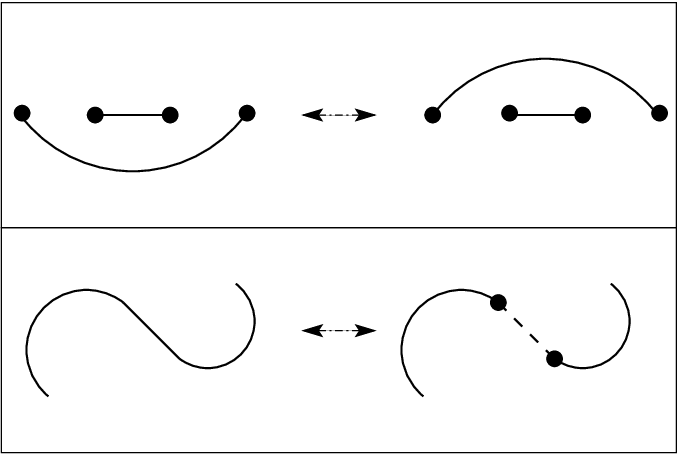}}
\end{center}
\caption{The \emph{passing move}
(above) and the \emph{stabilisation move} (below).}
\label{fig:bridgemoves}
\end{figure}

Any bridge diagram $D$ naturally yields a crossing diagram
$\operatorname{proj}(D)$ by projecting
the construction in $\mathbb{R}^2\times [-1,1]$ to $\mathbb{R}^2$. There
are, however, many different bridge diagrams that yield the same
crossing diagram by this method. All such diagrams are related by a
finite sequence of stabilisation moves (and destabilisation moves)
and regular isotopy fixing the projection.

\begin{lemma}
Finite sequences of the three moves described above applied to any
bridge diagram $D$ suffice to perform all Reidemeister moves on
$\operatorname{proj}(D)$.
\end{lemma}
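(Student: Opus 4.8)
The plan is to reduce the statement to the well-known fact that the three Reidemeister moves on a crossing diagram can be realized by a fixed, short list of local modifications of a bridge diagram, and to check each one by hand. First I would fix notation: given a bridge diagram $D=(P,A,B)$ with its associated crossing diagram $\operatorname{proj}(D)$, recall that the $\alpha$--curves lie above the plane and the $\beta$--curves below, so that a crossing of $\operatorname{proj}(D)$ is either an $\alpha$--over--$\beta$ crossing (the generic situation) or, after a stabilisation, can be arranged so that the relevant arcs sit in convenient position. The key structural observation I would use is that any two bridge diagrams projecting to the \emph{same} crossing diagram are related by regular isotopy fixing the projection together with stabilisation/destabilisation moves — this is stated just before the Lemma — so it suffices to exhibit, for each Reidemeister move $R1,R2,R3$, \emph{one} pair of bridge diagrams $D,D'$ with $\operatorname{proj}(D)$ and $\operatorname{proj}(D')$ related by that move, and with $D,D'$ connected by a finite sequence of passing moves, stabilisation moves, and regular isotopies.

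The core of the argument is therefore three explicit local pictures. For $R1$ (adding or removing a kink), I would start from a bridge diagram in which the kinked arc is realized by an $\alpha$--arc and a short $\beta$--arc crossing it; a stabilisation move introduces exactly such a configuration, and a regular isotopy slides the new bridge to produce the kink, so $R1$ is essentially a single stabilisation followed by regular isotopy. For $R2$ (pushing one strand across another to create/cancel two crossings), the two strands project from an $\alpha$--arc and a $\beta$--arc; here one first performs a stabilisation to put both crossings into a bridge picture where the two curves cross transversely at two points bounding a bigon, and then a regular isotopy of the matchings through (not necessarily transverse) intermediate matchings removes the bigon — exactly the kind of non-transverse intermediate allowed in the definition of regular isotopy. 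For $R3$ (sliding a strand past a crossing of two others), after suitable stabilisations all three strands become short $\alpha$-- and $\beta$--arcs in a disc, and the move is realized by a regular isotopy of one matching sweeping across the triple point; the \emph{passing move} is what is needed to handle the case where the arc to be slid must be routed around another arc of its own matching to reach the required position before the isotopy. In each case I would also note that destabilisation is used afterwards to return to a bridge diagram of minimal size, so that the net effect on $\operatorname{proj}$ is precisely the advertised Reidemeister move and nothing else.

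Concretely the key steps, in order, are: (1) set up the correspondence between local pictures in a bridge diagram and local pictures in $\operatorname{proj}(D)$, recording which arcs are $\alpha$ and which are $\beta$; (2) dispose of $R1$ via one stabilisation plus regular isotopy; (3) dispose of $R2$ via stabilisation(s) to create a transverse bigon, then a bigon-removing regular isotopy, then destabilisation; (4) dispose of $R3$ via stabilisations to localize the three arcs in a disc, a passing move if rerouting within a matching is required, then the triangle-sweeping regular isotopy, then destabilisations; (5) invoke the cited fact that stabilisation/regular-isotopy relate all bridge diagrams with a given projection, to conclude that the single instances handled above suffice for arbitrary instances of each Reidemeister move.

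I expect the main obstacle to be step (4): making the $R3$ picture genuinely work as a \emph{regular isotopy of crossingless matchings} rather than just an abstract isotopy of the link. The subtlety is that during the triangle sweep the moving arc must remain a single embedded curve of its matching that stays disjoint from the other $\alpha$--curves (resp. $\beta$--curves) throughout, and this may force an intermediate passing move and a careful choice of how many stabilisations to perform and where; checking that the intermediate matchings are legitimate (curves still pairwise disjoint within each matching, only $\alpha$--$\beta$ intersections allowed to degenerate) is where the real care lies. The $R1$ and $R2$ cases I expect to be routine once the dictionary of step (1) is in place, since bigon removal is visibly a regular isotopy in the permitted sense.
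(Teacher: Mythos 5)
Your proposal is correct and takes essentially the same approach as the paper: reduce the local picture around the Reidemeister move to a normal form using bridge moves, then observe that $R1$ and $R2$ are realized by regular isotopy of the matchings and that $R3$ additionally requires the passing move. The only cosmetic difference is that the paper first \emph{destabilises} to strip excess bridges away from the disc in which the move occurs (arriving at the fixed local pictures of its Figure~\ref{fig:reidemeister}), whereas you normalise by \emph{stabilising} first and destabilising afterwards; both routes rely on the same key observation about which moves suffice for which Reidemeister move.
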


\begin{figure}[h]
\begin{center}
\scalebox{0.45}{\includegraphics{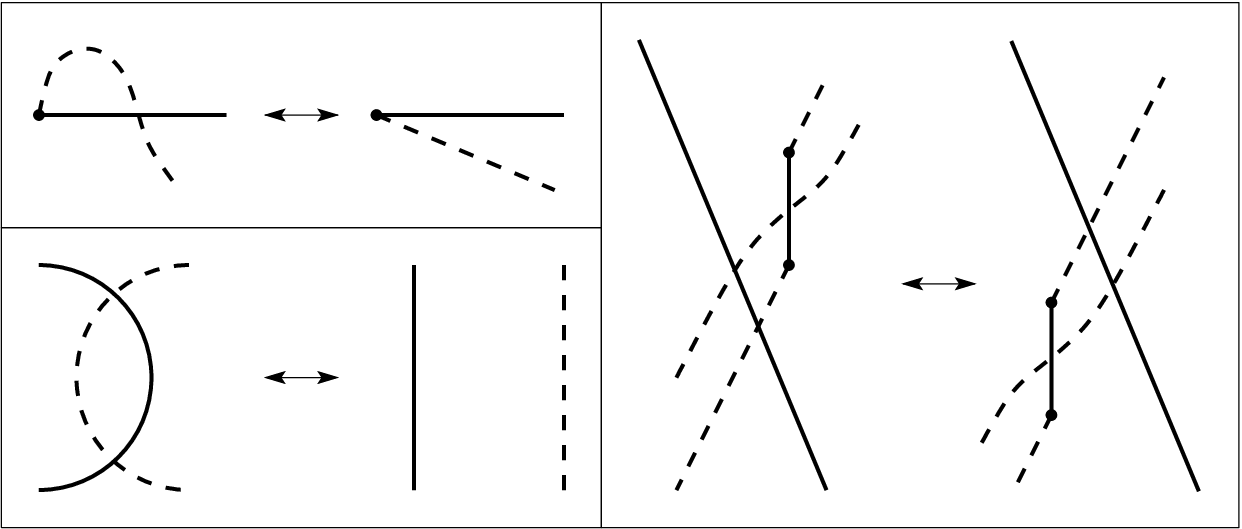}}
\end{center}
\caption{The Reidemeister moves (one way up) after some destabilisation
moves are the projections of the above moves.}
\label{fig:reidemeister}
\end{figure}

\begin{proof}
By repeated destabilisation, the neighbourhood in $\mathbb{R}^2$ in
which one performs a Reidemeister move can be made to be one of those
in Figure~\ref{fig:reidemeister} (up to a reflection and/or swapping the
$\alpha$-- and $\beta$--curves). The illustration makes it clear that
isotopy suffices to perform the Reidemeister I and II moves and the
passing move to perform the Reidemeister III move.
\end{proof}

\begin{lemma}
Finite sequences of these three moves suffice to go between any two bridge
diagrams of the same link.
\end{lemma}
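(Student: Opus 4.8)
The plan is to reduce the statement to Reidemeister's theorem, using the previous Lemma and the observation recorded just above it. Let $D_1$ and $D_2$ be two bridge diagrams of the same link $\mathfrak{L}$. First I would pass to the associated crossing diagrams $\operatorname{proj}(D_1)$ and $\operatorname{proj}(D_2)$; these are genuine link diagrams of $\mathfrak{L}$ (the crossings occurring precisely at the interior intersections of the $\alpha$-- and $\beta$--curves, with the $\alpha$-arc passing over the $\beta$-arc, and nothing else). Since both represent $\mathfrak{L}$, Reidemeister's theorem says $\operatorname{proj}(D_1)$ and $\operatorname{proj}(D_2)$ are related by a finite sequence of planar isotopies and Reidemeister I, II and III moves.

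Next I would realise this sequence one step at a time at the level of bridge diagrams, keeping track of projections. A planar isotopy of the crossing diagram lifts to a regular isotopy of the bridge diagram (isotope $P$ and carry the $\alpha$-- and $\beta$--curves along). For a single Reidemeister move I would invoke the previous Lemma: after finitely many destabilisation moves, which fix the projection, the disc in which the move takes place is brought into one of the standard local pictures of Figure~\ref{fig:reidemeister} (up to reflection and interchange of the $\alpha$-- and $\beta$--curves, exactly as in the proof of that Lemma); the move is then effected by a regular isotopy (Reidemeister I and II) or a passing move (Reidemeister III), after which one re-stabilises to recover a bridge diagram. By construction its projection is $\operatorname{proj}(D_1)$ with exactly that one Reidemeister move performed. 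Iterating through the whole sequence yields a bridge diagram $D_1'$, obtained from $D_1$ by a finite sequence of the three moves, with $\operatorname{proj}(D_1') = \operatorname{proj}(D_2)$.

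Finally, $D_1'$ and $D_2$ now have the same projection, so by the fact recorded earlier — that any two bridge diagrams with a common projection are related by a finite sequence of stabilisation and destabilisation moves together with regular isotopy fixing the projection — they too are related by our three moves; composing the two stages finishes the argument. The only step that is more than bookkeeping is the reduction inside the previous Lemma: one must be sure that, before each Reidemeister move, the destabilisations needed to reach a standard local picture can always be carried out, do not obstruct one another, and leave room for the subsequent re-stabilisation. Once that is granted (it is the content of the previous Lemma), the present statement follows by formal assembly on top of Reidemeister's theorem.
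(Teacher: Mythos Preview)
Your proposal is correct and follows essentially the same approach as the paper: reduce to Reidemeister's theorem on the projected crossing diagrams, lift the Reidemeister sequence to bridge diagrams via the previous Lemma to obtain a bridge diagram with the same projection as $D_2$, and then invoke the earlier observation that two bridge diagrams with a common projection are related by (de)stabilisation and regular isotopy. Your write-up is slightly more detailed (explicitly handling planar isotopies and the re-stabilisation step), but the structure is identical.
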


\begin{proof}
Given $D_1$, $D_2$, bridge diagrams of the same link, there is a sequence
of Reidemeister moves from $\operatorname{proj}(D_1)$ to
$\operatorname{proj}(D_2)$. By the previous lemma, there is a finite sequence of
moves taking $D_1$ to some $D_3$ with
$\operatorname{proj}(D_3)=\operatorname{proj}(D_2)$. Now a finite
sequence of stabilisation and destabilisation moves relates $D_3$ and
$D_2$.
\end{proof}

Isotopy of $P$ to $Q$ along a path in $\operatorname{Conf}^0_{2n}(\mathbb{C})$
gives an exact symplectomorphism from (any compact subset of) $\sfibre{n}{P}$ to $\sfibre{n}{Q}$
by symplectic parallel transport.
Given an isotopy of admissible links which induces this same isotopy $P$
to $Q$ on its intersection with the plane $z=0$,
this symplectomorphism carries the Lagrangians corresponding to the first
admissible link to those for the second (up to Hamiltonian isotopy). Hence,
it gives an isomorphism of symplectic Khovanov homology.
Using this, one sees that regular isotopy of bridge diagrams and also the
passing move do not change the isomorphism class of symplectic Khovanov
homology.

\begin{lemma}
Stabilisation of a bridge diagram gives a bridge diagram with isomorphic
symplectic Khovanov homology.
\end{lemma}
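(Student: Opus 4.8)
The plan is to localise the stabilisation move and reduce it, after a preliminary regular isotopy, to an $S^1$--equivariant K\"unneth splitting of Floer cohomology, closely parallel to Seidel and Smith's treatment of the Markov~II move. Regular isotopy of bridge diagrams and the passing move leave the isomorphism type of $\skh$ unchanged (by the discussion preceding this Lemma), so I am free to normalise: slide the marked interval $[q_1,q_2]$ along the $\alpha$--curve $\gamma$ until $q_1$, $q_2$ and the endpoint $p_1$ of $\gamma$ all lie in a small disc $U\subset\mathbb{C}$ that is disjoint from the other vertices of $P$ and from every $\beta$--curve except the one incident to $p_1$, and on which $\gamma$ is an embedded arc carrying $p_1,q_1,q_2$ in that order; translating $P$, I may also arrange $p_1=0$. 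After this normalisation the new diagram differs from the old only inside $U$, where three coordinates of the configuration $P'$ (namely $p_1,q_1,q_2$) are clustered near $0$, the remaining $2n-1$ coordinates being those of the old configuration $P_0\in\conf{2n}$ other than $p_1$.

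Over the bidisc in $\confbar{2(n+1)}$ moving $p_1,q_1,q_2$ as the roots of $X^3-Xd+z$, Lemma~\ref{lemma:a2} identifies $S_{n+1}$ with the $\mathbb{C}^*$--associated bundle $(\mathcal F\setminus 0)\times_{\mathbb{C}^*}\mathbb{C}^4$ over a line bundle $\mathcal F\to\sfibre{n}{P_0}$, via $(a,b,c,d)\mapsto(d,\,a^3-ad+bc)$. Fixing a generic $d=d_0\neq 0$ so that the fibre is the stabilised fibre $\sfibre{n+1}{P'}$ (near $U$) presents it as a twisted product $P\times_{S^1}Y$, where $P$ is the unit circle bundle of $\mathcal F$, $S^1\subset\mathbb{C}^*$ acts by $(a,b,c)\mapsto(a,\zeta^{-2}b,\zeta^2c)$, and $Y=\{a^3-ad_0+bc=z_0\}$ is a Stein surface with Hamiltonian $S^1$--action $\mu=|b|^2-|c|^2$ and an $S^1$--equivariant Lefschetz fibration $p\colon Y\to\mathbb{C}$, $(a,b,c)\mapsto a$, with three critical values near $p_1,q_1,q_2$. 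I then build the iterated vanishing cycles of $A'$ and $B'$ in carefully chosen orders. For $L_{B'}$: perform all curves of $B$ first, producing the old Lagrangian $L_B\subset\sfibre{n}{P_0}$, and then perform the new $\beta$--edge $[q_1,q_2]$ as a single relative vanishing cycle step inside the model. For $L_{A'}$: perform the curves of $A$ other than $\gamma$, then the long curve $\gamma_2$ (which, since $q_2\approx p_1=0$, plays the role of $\gamma$ and reproduces $L_A$ up to isotopy), and finally the short curve $\gamma_1$ as a single relative vanishing cycle step inside the model. Re--ordering a vanishing path changes the resulting Lagrangian only by exact, hence compactly supported Hamiltonian, isotopy, so this is legitimate.

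These constructions are compatible with the associated bundle structure and can be carried out $S^1$--equivariantly, so they present the Lagrangians of the stabilised diagram as $L_{A'}=P|_{L_A}\times_{S^1}\ell_A$ and $L_{B'}=P|_{L_B}\times_{S^1}\ell_B$, where $L_A,L_B\subset\sfibre{n}{P_0}$ are the Lagrangians of the old diagram and $\ell_A,\ell_B\subset Y$ are the $S^1$--equivariant Lagrangian spheres projecting under $p$ to an arc joining the critical values of $p_1$ and $q_1$ (resp.\ of $q_1$ and $q_2$). By Lemma~\ref{lemma:s1lagrangians} these spheres are determined by their projections, which are (isotopic to) the adjacent sub--arcs of $\gamma$ from $p_1$ to $q_1$ and from $q_1$ to $q_2$; these meet transversely in the single point $q_1$, so $HF(\ell_A,\ell_B)\cong\mathbb{Z}$ with vanishing differential. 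A regular $S^1$--equivariant almost complex structure on $Y$ adapted to $\ell_A,\ell_B$ exists by Proposition~\ref{proposition:floerreg}, so Lemma~\ref{lemma:twistedproductfh} gives a canonical isomorphism
\[
\skh(P',A',B')=HF(L_{A'},L_{B'})\;\cong\;HF(L_A,L_B)\otimes HF(\ell_A,\ell_B)\;\cong\;\skh(P,A,B),
\]
up to the overall grading shift supplied by the Maslov index of the unique local generator.

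The main obstacle is the compatibility step in the middle paragraph: one must verify that after re--ordering, the contribution of the curves outside $U$ really is the old Lagrangian $L_A$ (in a fibre canonically identified with $\sfibre{n}{P_0}$) with no residual twist, and that the final relative vanishing cycle step is genuinely a relative vanishing cycle in the $\mathbb{C}^*$--associated bundle model of Lemma~\ref{lemma:a2}, performed compatibly with the $S^1$--action. This requires the usual care: the parallel transport appearing in the vanishing cycle constructions is only defined after the deformations of Lemma~\ref{lemma:sptdef}, which must be chosen $S^1$--equivariantly and compatibly with the bundle structure, and the holomorphic local model must be matched to the symplectic vanishing cycle construction. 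This is precisely the technical heart of Seidel and Smith's proof of Markov~II invariance; here it is simpler in one respect, since there is no half--twist to twist a strand in, hence no monodromy and only a rank--one local factor, so the conclusion is invariance of $\skh$ rather than a non--invertible map.
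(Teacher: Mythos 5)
Your proposal is correct and takes essentially the same approach as the paper: the paper's own proof is a short remark reducing stabilisation to the Markov $II^+$ move and deferring the localisation technicalities to the later Lemma~\ref{lemma:a2splitskh}, which is precisely the argument you have reconstructed in detail (normalise by regular isotopy, localise to the model neighbourhood of Lemma~\ref{lemma:a2}, express the stabilised Lagrangians as $S^1$--twisted products, and apply the equivariant K\"unneth splitting). The technical caveat you flag at the end --- matching the vanishing--cycle construction, the deformations of Lemma~\ref{lemma:sptdef}, and the associated bundle model $S^1$--equivariantly --- is exactly what the paper identifies as the only non--routine point and handles in Lemma~\ref{lemma:a2splitskh}.
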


Modified by an isotopy of admissible links,
the Markov $II^+$ move is just the stabilisation move
(see Figure~\ref{fig:m2stabilisation}).
The proof of invariance under the Markov $II^+$ move in
\cite{seidelsmith:khsymp} can be carried out in exactly the same manner
in this setting since there is no part of it that requires the admissible
links defining the Lagrangians to be of the particular form used in
that paper.

The only difference is technical. The localisation to coordinates of the form given
in Lemma~\ref{lemma:a2} must be carried out differently if we insist on using
the construction of Lemma~\ref{lemma:sptdef} instead of rescaled symplectic parallel
transport in the iterated vanishing cycle construction.
A strictly stronger version of the above result, compatible with
the methods of this paper, is proven in Section~\ref{section:stab},
so we will not go into detail here.

\begin{figure}[h]
\begin{center}
\scalebox{0.5}{\includegraphics{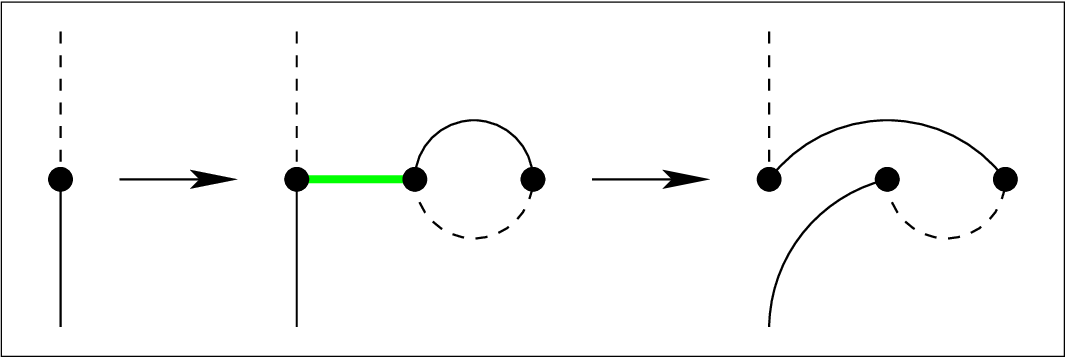}}
\end{center}
\caption{An illustration of the Markov $II^+$ move as seen in a bridge
diagram. In the first step a new link component is created,
then in the second step `twisted in' by a positive twist along the thicker line.}
\label{fig:m2stabilisation}
\end{figure}

In conclusion it has been shown in this section that:

\begin{theorem}
\label{theorem:linkinv}
The isomorphism class of $\skh $ of a bridge diagram as a relatively
graded group is an invariant of link isotopy.
\end{theorem}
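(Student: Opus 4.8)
The plan is to assemble the theorem from the three move-invariance results and the combinatorial reduction lemma already proved in this section. Recall that $\skh(P,A,B):=HF(L_A,L_B)$ was shown to be well-defined, as a relatively graded abelian group, up to isomorphism depending only on the bridge diagram $(P,A,B)$ up to isotopy fixing the vertex configuration $P$. Since the earlier lemmas tell us that any two bridge diagrams of a fixed link are connected by a finite sequence of regular isotopies, passing moves and (de)stabilisation moves, it suffices to check that $\skh$, viewed up to isomorphism of relatively graded groups, is unchanged by each of these three generating moves. Each such check has in fact already been carried out, so the remaining task is to package them together and keep track of gradings.

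First I would recall invariance under regular isotopy and under the passing move. An isotopy $P(t)$ of the vertex set, together with isotopies of the two matchings through crossingless matchings on $P(t)$, induces exact symplectomorphisms between compact subsets of the fibres $\sfibre{n}{P(t)}$ by symplectic parallel transport, well-defined on the relevant compact subsets after the deformation of Lemma~\ref{lemma:sptdef} (cf.\ Remark~\ref{remark:pathspt}). These symplectomorphisms carry the Lagrangians $L_A,L_B$ for the initial diagram onto those for the final one up to compactly supported Hamiltonian isotopy, so by the standard invariance properties of Floer cohomology under the conditions of Remark~\ref{remark:fh} they induce isomorphisms $HF(L_A,L_B)\cong HF(L_A',L_B')$; the passing move is, up to such an isotopy, realised this way as well. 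Here no grading shift is introduced.

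Next, invariance under the stabilisation move is exactly the adaptation of the Seidel--Smith proof of invariance under the positive Markov~II move, in the stronger form of Lemma~\ref{lemma:a2splitskh}: one uses the holomorphic associated-bundle local model of Lemma~\ref{lemma:a2} to separate the three coordinates involved in the new bridge from the rest of the diagram, produces $L,L'$ from $L_A,L_B$ by the relative vanishing cycle construction, carries one of them once around the second singularity by parallel transport, and obtains a bijection between the moduli spaces computing $HF(L_A,L_B)$ and $HF(\tau L,L')$, hence an isomorphism of Floer cohomology up to an overall grading shift. Because $\skh$ is being considered only as a \emph{relatively} graded group, this shift does not affect its isomorphism class.

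Finally I would conclude: given bridge diagrams $D_1,D_2$ of isotopic links, the reduction lemma yields a finite chain $D_1=E_0,E_1,\dots,E_k=D_2$ in which each $E_{i+1}$ arises from $E_i$ by one of the three moves or its inverse, and composing the isomorphisms produced above gives $\skh(D_1)\cong\skh(D_2)$ as relatively graded groups. The only step requiring genuine care — and the one I expect to be the real obstacle — is the stabilisation invariance, where one must verify that the Seidel--Smith localisation argument still applies when the Lagrangians come from arbitrary admissible positions rather than braid closures, and when actual symplectic parallel transport (made well-defined via Lemma~\ref{lemma:sptdef}) is used in place of the rescaled transport of \cite{seidelsmith:khsymp}; this is precisely what Lemma~\ref{lemma:a2splitskh} supplies. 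With that in hand, the assembly and the grading bookkeeping are routine.
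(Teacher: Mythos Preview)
Your proposal is correct and follows essentially the same route as the paper: reduce to the three generating moves on bridge diagrams via the combinatorial lemma, dispatch regular isotopy and the passing move by symplectic parallel transport, and handle (de)stabilisation by invoking the Markov~$II^+$ localisation argument in the form of Lemma~\ref{lemma:a2splitskh}, noting that only the relative grading is claimed so the grading shift is harmless. There is nothing substantively different from the paper's own assembly of these ingredients.
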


\begin{remark}
It is worth noting here that the Markov $II^-$ move can equally well be
used in the above. Namely the crossingless matchings one gets by replacing
the positive twist in Figure~\ref{fig:m2stabilisation} by a negative twist
differ by an isotopy. In the setting of \cite{seidelsmith:khsymp}
$HF(L_A,L_B)$ carries an absolute grading which the Markov $II^\pm$ moves
change by different, but fixed, amounts. An overall grading shift from
the writhe of the link counteracts this to give an absolute grading on
$\skh $. To define the same absolute grading for links in bridge
position will take a little more care as these observations show that
the absolute grading on $HF$ should not be preserved by all isotopies
of admissible links. In fact a correction needs to be made for the passing
moves to get the absolute grading on $\skh $ in this setting.
\end{remark}

\begin{remark}
By stretching all the local maxima in the upper half of an
admissible link upwards one can decompose it as a trivial crossingless
matching and a braid. Now considering the braid group action by
diffeomorphisms on the plane containing this trivial matching, one
finds that the upper tangle is isotopic to a tangle whose projection
to the $(x,y)$--plane has no crossings. Doing the same with the lower
half, the link can be taken by isotopy of the two halves
to one which projects to a bridge diagram.
Hence Theorem~\ref{theorem:linkinv} also holds for $\skh $ of any 
admissible link.
\end{remark}

\section{Maps on symplectic Khovanov homology from smooth cobordisms}
\label{section:cobordisms}

In the setting of Khovanov homology one has maps between the chain complexes
corresponding to elementary saddle cobordisms and creation/annihilation (`cap'/`cup')
cobordisms as portrayed in a given crossing diagram. In this section we construct analogous
maps on $\skh $ and prove that, up to an overall sign ambiguity, they
compose to give maps depending only on the isotopy class of the composite
cobordism. Unless otherwise indicated, isotopies of a cobordism must fix the
links at either end of the cobordism, since we generally want the symplectic
Khovanov homology of the domain and range to be well-defined up to canonical
isomorphism.

In the case of cobordisms which are themselves isotopies of links through
admissible positions there is little difficulty.
Namely, corresponding to isotopies of admissible links which fix the
intersection with the plane $z=0$ we have continuation maps between Floer
cohomology groups. Suppose we have an isotopy
of admissible positions which moves this intersection along a path
$\func{\gamma}{\mathbb{R}}{\conf{2n}}$ which is constant near $\pm\infty$.
Then we can construct an admissible map

\[u_\gamma:\mathbb{R}\times[0,1]\to\mathbb{R}\to\conf{2n}\]

This defines an exact MBL--fibration (cf. Section~\ref{section:flattening})
with two ends. Extending Lagrangians along the boundaries of this fibration
allows us to calculate a relative invariant mapping between the symplectic
Khovanov homologies of the admissible positions at either end of the isotopy.

In fact this map on $\skh $ is the same as the map induced by the symplectic parallel
transport along $\gamma$, but it is handy to have it
given in terms of a symplectic fibration.

\begin{remark}
Deforming the admissible map $u_\gamma$ gives, by Lemma~\ref{lemma:isotopyoffibrations},
the following result.
Isotopies which are isotopic through isotopies of admissible positions of links
(fixing the links at either end) give the same map on symplectic Khovanov
homology. One should note however, that there exist
isotopies from a link to itself which do not give the identity map on Khovanov
homology \cite[Theorem 1]{jacobsson:cob}.
\end{remark}

\subsection{Saddle cobordisms}
\label{subsection:saddles}

In this section, we construct maps which correspond to certain saddle cobordisms.
We begin by outlining
how saddle cobordisms naturally arise from admissible maps into $\confbar{2n}$.

Let $\mathfrak{L}$ be a link in an admissible position which intersects the $z=0$--plane in
the configuration $P$. Up to some choice, this defines (and is defined by)
iterated vanishing cycles $\gamma_A,\gamma_B:[0,1]\to\confbar{2n}$. We shall
assume both paths end in a segment of constant path at $P$.

For the moment, we shall restrict
attention to link cobordisms supported close to $\{z=0\}$.
The parts of the trivial cobordism from $\mathfrak{L}$ to itself away from $\{z=0\}$
can be realised as the following sets:
\[
\{(t,z,\lambda)\in[-1,1]\times[1,2]\times\mathbb{C}~:~\text{$\lambda$ is a root of $\gamma_A(z-1)$}\},
\]
\[
\{(t,z,\lambda)\in[-1,1]\times[-2,-1]\times\mathbb{C}~:~\text{$\lambda$ is a root of $\gamma_B(1-z)$}\}.
\]
We now exhibit some link cobordisms by joining these surfaces up
with a surface in the missing region $[-1,1]\times[-1,1]\times\mathbb{C}$.

Let $u:\overline{\mathbb{D}}\to\confbar{2n}$ be an admissible map (see
Definition~\ref{def:admismap})
with two boundary marked points $\pm 1$, such that $u(-1)=P$. To make the
cobordism we construct be smooth, we also require $u$ to be a constant
map near each of the boundary marked points. Now choose your
favourite smooth map $[-1,1]\times[-1,1]\to\overline{\mathbb{D}}$ which is a diffeomorphism
away from the boundaries and maps $\{\pm 1\}\times[-1,1]$ to $\pm 1$ respectively.
We shall call the composition of $u$ with this map $\tilde u$.

The following set defines a braid cobordism from the trivial braid
to some other braid fixing the configuration $P$ at the ends:
\[
\{(t,z,\lambda)\in[-1,1]\times[-1,1]\times\mathbb{C}~:~\text{$\lambda$ is a root of $\tilde u(t,z)$}\}.
\]
Hence, it can be inserted in the above construction to give a link
cobordism, starting at $\mathfrak{L}$, which is supported near $\{z=0\}$.

Now we describe how to construct an admissible map corresponding to certain
elementary saddle cobordisms starting at the link $\mathfrak{L}$. As input
to the construction, take an embedded path $\delta:[0,1]\to\mathbb{C}$ ending at
roots of $P$ and otherwise disjoint from them. The link cobordism
will be the elementary braid cobordism which adds a single negative half-twist
along the curve $\delta$. For reasons of orientation of holomorphic embeddings
in $\confbar{2n}$, the
sign of the half-twist one adds along $\delta$ will be necessarily negative. However, this
problem is actually an artefact of the viewpoint of braid cobordisms. There
is no sign associated to elementary saddle cobordisms of links.

By fattening the curve $\delta$ one can define a smooth map
$v:\overline{\mathbb{D}}\to\mathbb{C}$
which:
\begin{itemize}
 \item maps $\pm 1$ to the ends of $\delta$,
 \item otherwise misses all the roots of $P$,
 \item and is a holomorphic embedding near $0\in\overline{\mathbb{D}}$
\end{itemize}
The space of choices of such a smooth map is contractible.

Let $\tilde P$ be the product of $X-r$ over all roots $r$ of $P$ which are not ends of $\delta$.
Then we define the admissible map $u_\delta$ to be the following composite:
\[
\bfig

\node 1a(0,300)[\overline{\mathbb{D}}]
\node 2a(1200,300)[\overline{\operatorname{Conf}}^0_2(\overline{\mathbb{D}})]
\node 3a(2400,300)[\confbar{2n}]
\node 1b(0,150)[z]
\node 2b(1200,150)[X^2+z]
\node 3b(1200,0)[(X-a)(X-b)]
\node 4b(2400,0)[(X-v(a))(X-v(b))\tilde P]

\arrow[1a`2a;]
\arrow[2a`3a;]
\arrow/@{|->}/[1b`2b;]
\arrow/@{|->}/[3b`4b;]
\efig
\]

One can check that this is admissible. We fix an input boundary marked point
at $-1$ and output boundary marked point at $1$.
In particular $u(-1)$ is the configuration $P$. In order to have this admissible
map well-defined up to isotopy fixing the boundary marked points, we shall
generally require that the $v$ is constant on the segments of $\partial\overline{\mathbb{D}}$
between 1 and $i$ and between $-1$ and $-i$. This causes the output boundary marked point
to also be $P$ and the path in $\confbar{2n}$ given by $u$ applied to the
lower half of $\partial\overline{\mathbb{D}}$ to be contractible through $\conf{2n}$.
Hence, the lower half (where $z<0$) of the link one has at the output end of the cobordism
is the same as the lower half at the input. This is a convenient simplification
when one wishes to express these cobordisms in terms of bridge diagrams.

\begin{figure}[h]
\begin{center}
\scalebox{1}{\includegraphics{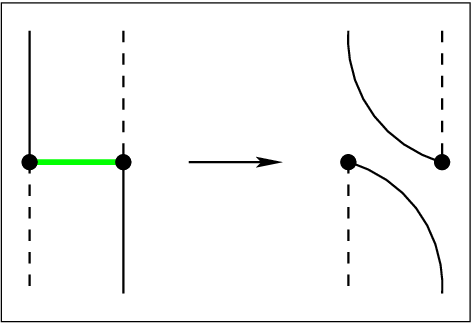}}
\end{center}
\caption{An elementary saddle cobordism specified by a curve in a bridge diagram.
When projected to a crossing diagram, this example is also the usual elementary
saddle cobordism between resolutions of a crossing.}
\label{fig:saddlecob}
\end{figure}

In a bridge diagram $(P,A,B)$ for $\mathfrak{L}$ we draw the curve $\delta$
in grey to indicate a cobordism about to be performed. Figure~\ref{fig:saddlecob} gives an example
of this with an arrow pointing to the bridge diagram for the output link.
The simplification mentioned above allows us to fix $P$ and apply the entire half-twist
to the curves from $A$, without changing $B$.
The cobordism illustrated in Figure~\ref{fig:saddlecob} is an elementary saddle cobordism
of the more familiar kind under projection of the bridge diagram to a crossing diagram.
From now on we shall view cobordisms ``from above'' in this manner.

It is now a simple matter to define the map $f_\delta$ that a cobordism specified by a curve
$\delta$ induces on symplectic Khovanov homology.

\begin{definition}
Given a bridge diagram $(P,A,B)$ together with a curve $\delta$, as above, specifying a cobordism
to the bridge diagram $(P^\prime,A^\prime,B^\prime)$ we define the map
\[f_\delta:\skh (P,A,B)\to \skh (P,A,B)\]
to be the relative invariant induced by the admissible map $u_\delta$ and 
the pair of Lagrangians used to define $\skh (P,A,B)$. This relative invariant
is well-defined by Lemma~\ref{lemma:isotopyoffibrations}.
\end{definition}

\begin{remark}
The relative invariant of this definition is well-defined up to composition
with the canonical isomorphisms of $\skh $ of the domain and range. To show
this one simply applies Lemma~\ref{lemma:isotopyoffibrations}.
It is independent of isotopy of $\delta$ and of changing of $(P,A,B)$ to any
other bridge diagram representing the same admissible link position (i.e.
isotopy fixing $P$ and the passing move).
\end{remark}

Composing two admissible maps given by elementary saddle cobordisms
(by gluing striplike ends of the admissible maps)
gives the composite
of the maps on symplectic Khovanov homology. The two singular values for
the composite admissible map can be moved past each other by an isotopy of
admissible maps (see Figure~\ref{fig:baserestriction}.
Decomposing the result into two admissible maps shows that the map
on symplectic Khovanov homology can be expressed in different ways as
a composite. This is a simple version of Lemma~\ref{lemma:baserestriction}.
It will be vital later to know which composites
of saddle cobordisms give the same maps in this way.

\begin{figure}[h]
\begin{center}
\scalebox{0.7}{\input{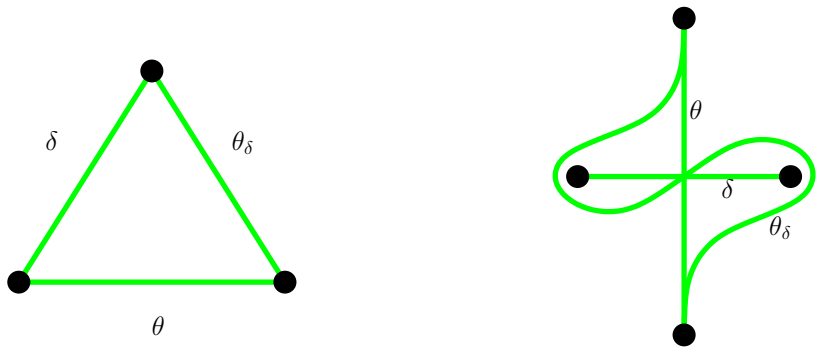}}
\end{center}
\caption{Two simple examples of curves $\delta$, $\theta$ and $\theta_\delta$ in the plane satisfying the
conditions of Lemma~\ref{lemma:commuting}.}
\label{fig:commuting2}
\end{figure}

\begin{lemma}
\label{lemma:commuting}
Let $\delta$, $\theta$ be embedded curves in
$\mathbb{C}\setminus{P}$, each joining two points of
$P\in\operatorname{Conf}^0_{2n}(\mathbb{C})$. Write $\theta_\delta$ for
the result of a positive half twist along $\delta$ applied to $\theta$. Then
the maps these curves specify on any $\skh (P,A,B)$ satisfy the following relation:
\[f_\theta\circ f_\delta = f_\delta \circ f_{\theta_\delta}\]
\end{lemma}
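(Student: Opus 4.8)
The plan is to realise both sides of the claimed identity as relative invariants of exact MBL--fibrations constructed from admissible maps into $\confbar{2n}$, and then exhibit an isotopy through admissible maps relating these two fibrations. First I would build the admissible map $u_\delta$ for the saddle along $\delta$ and the admissible map $u_{\theta}$ for the saddle along $\theta$, as in Section~\ref{section:cobordisms}, each over $\overline{\mathbb{D}}$ with a fixed input and output boundary marked point at $-1$ and $1$. Deforming each (using Proposition~\ref{proposition:enclosure}) to have striplike ends, one can glue the output end of the $u_\delta$--fibration to the input end of a fibration realising $f_{\theta_\delta}$, and symmetrically glue the output of the $u_\theta$--fibration to the input of one realising $f_\delta$; by the gluing lemma for relative invariants these composite fibrations compute $f_{\theta_\delta}\circ f_\delta$ and $f_\delta\circ f_\theta$ respectively. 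Wait --- the statement is $f_\theta\circ f_\delta = f_\delta\circ f_{\theta_\delta}$, so I must be careful about which curve the second saddle of each composite is drawn along. The point is that after performing the saddle along $\delta$, the crossingless matching $A$ has changed, and the curve $\theta$ must be transported through that change; a positive half twist along $\delta$ is precisely the braid-group element realising the monodromy inserted by the saddle fibration, so ``$\theta$ after the $\delta$--saddle'' is exactly $\theta_\delta$. Thus the left-hand composite (first $\delta$, then $\theta$ in the new diagram) is $f_\theta\circ f_\delta$ where $f_\theta$ is read in the post-$\delta$ diagram, and the right-hand composite records the same two saddles performed in the opposite temporal order, where now $\delta$ is unchanged because $\delta$ and $\theta$ are drawn in the \emph{same} plane and $\delta$ is the one being transported in one ordering but not the other.

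Concretely: the composite admissible map over the strip $\mathbb{R}\times[0,1]$ (obtained by gluing) has two singular values in its interior, one coming from each saddle, stacked vertically. I would then apply Lemma~\ref{lemma:baserestriction} (equivalently, the move (\ref{item:treebaserestriction}) of the lemma following it) to slide these two singular values past each other within the base --- this is the isotopy of the tree construction/base restriction illustrated in Figure~\ref{fig:baserestriction}. The key geometric input is that the two singular values can indeed be exchanged by an ambient isotopy of admissible maps: this follows because $\delta$ and $\theta$ are embedded, and $\theta_\delta$ is defined precisely as the image of $\theta$ under the half twist along $\delta$, so the two local models ``nest'' compatibly and there is no obstruction in $\confbar{2n}$ to the exchange. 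After the exchange, decomposing the base back into two glued half-discs yields the other composite. Since the accompanying enclosed exact Lagrangian boundary conditions are carried along by this isotopy (using Lemma~\ref{lemma:sptdef} to keep symplectic parallel transport defined on the relevant compact sets, as in Lemma~\ref{lemma:baserestriction}), Lemma~\ref{lemma:isotopyoffibrations} gives that the two composite relative invariants agree up to the canonical isomorphisms on the Floer cohomology at the ends --- and since the ends are unchanged (same $P$, same $A$, same $B$, same admissible link), those canonical isomorphisms are the identity. Hence $f_\theta\circ f_\delta=f_\delta\circ f_{\theta_\delta}$.

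The main obstacle I anticipate is the bookkeeping in the previous paragraph: precisely identifying, after the base isotopy exchanging the singular values, \emph{which} curve the relocated saddle is drawn along, and checking this is $\theta_\delta$ on one side and $\delta$ on the other. This is essentially a braid-combinatorial statement --- that commuting the two saddle operations past each other conjugates the curve of whichever saddle is moved ``down past'' the other by the half twist along that other curve --- and it must be matched carefully against the definition of $u_\delta$ via the fattening $v:\overline{\mathbb{D}}\to\mathbb{C}$ and the resulting sign conventions for half twists. A secondary technical point is ensuring the deformation to striplike ends and the choice of enclosures (Proposition~\ref{proposition:enclosure}) can be made compatibly for \emph{both} orderings simultaneously, so that the base isotopy of Lemma~\ref{lemma:baserestriction} genuinely connects the two glued fibrations through valid exact MBL--fibrations with enclosed Lagrangian boundary conditions; this is where condition (\ref{item:restriction}) of that construction --- that $f(\partial B)$ avoid all critical values of $\pi$ throughout --- needs to be verified, using that $\delta$, $\theta$, $\theta_\delta$ are mutually in ``generic'' position away from $P$.
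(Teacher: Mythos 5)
Your proof takes exactly the paper's approach: glue the two admissible maps for the composed saddles, move the two singular values past each other (the base-restriction isotopy of Lemma~\ref{lemma:baserestriction} and Figure~\ref{fig:baserestriction}), and decompose the other way, invoking Lemma~\ref{lemma:isotopyoffibrations} and the gluing lemma to conclude the relative invariants agree up to the canonical isomorphisms, which at fixed ends are identities. The bookkeeping you flag --- that the Hurwitz exchange conjugates the curve of the saddle that is slid past the other by the half twist along $\delta$, sending $\theta\mapsto\theta_\delta$ while leaving $\delta$ unchanged --- is indeed the precise content of the equation and is exactly what the paper's one-line proof is gesturing at; your instinct to verify it is sound, and once done the argument closes.
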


\begin{proof}
One glues the pairs of admissible maps for each composite together and observes that
the resulting admissible maps are isotopic by an isotopy which moves the singular
values past each other.
\end{proof}

This is a minor abuse of notation. For example, the maps $f_\delta$ have different domain
and range on either side of the equation. However, the composites map between the same
(up to canonical isomorphism) cohomology groups.

\begin{corollary}
Suppose $\delta$ and $\theta$ do not intersect in $\mathbb{C}$. Then the cobordisms
specified by $\delta$ and $\theta$
can be performed in either order with the same resulting map on $\skh (P,A,B)$.
\end{corollary}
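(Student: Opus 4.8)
The plan is to read this off Lemma~\ref{lemma:commuting} together with the isotopy–invariance of the maps $f_\delta$ recorded just after their definition. Lemma~\ref{lemma:commuting} gives
\[f_\theta\circ f_\delta = f_\delta\circ f_{\theta_\delta},\]
where $\theta_\delta$ is the image of $\theta$ under a positive half twist $\tau_\delta$ along $\delta$. So everything reduces to showing that, when $\delta$ and $\theta$ are disjoint in $\mathbb{C}$, the curve $\theta_\delta$ represents the same isotopy class (rel endpoints, in $\mathbb{C}\setminus P$) as $\theta$, whence $f_{\theta_\delta}=f_\theta$ and the right-hand side becomes $f_\delta\circ f_\theta$.

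To carry this out, first I would observe that $\tau_\delta$ can be realised by a diffeomorphism of $\mathbb{C}$ supported in an arbitrarily small neighbourhood $N$ of $\delta$. Since $\delta$ and $\theta$ are disjoint \emph{including their endpoints} (the endpoints lie in $P$), a preliminary isotopy of $\theta$ rel endpoints, keeping it disjoint from $\delta$ and from $P$, lets us assume $\theta\cap N=\emptyset$; then $\tau_\delta(\theta)=\theta$ on the nose, and in particular $\tau_\delta$ fixes the endpoints of $\theta$. Hence $\theta_\delta$ and $\theta$ are isotopic in $\mathbb{C}\setminus P$ with fixed endpoints, and the isotopy–invariance of $f$ (it depends only on the isotopy class of the defining curve, up to composition with the canonical isomorphisms on $\skh$ of the domain and range) gives $f_{\theta_\delta}=f_\theta$ as maps between the appropriately (canonically) identified symplectic Khovanov homology groups. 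Substituting into Lemma~\ref{lemma:commuting} yields $f_\theta\circ f_\delta=f_\delta\circ f_\theta$, i.e.\ the two saddle cobordisms commute.

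There is essentially no obstacle; the only point that needs a moment's care is the bookkeeping of domains and ranges — exactly the ``minor abuse of notation'' flagged after Lemma~\ref{lemma:commuting}. On the two sides of $f_\theta\circ f_\delta=f_\delta\circ f_\theta$ the individual factors have different domains and ranges, but both composites map $\skh(P,A,B)$ to $\skh$ of the bridge diagram obtained by performing \emph{both} saddles, and in both cases the target is identified by the canonical isomorphism; this is what makes the equality meaningful. If one prefers not to invoke Lemma~\ref{lemma:commuting}, one can argue directly: gluing the two pairs of saddle fibrations produces two composite admissible maps that are isotopic through admissible maps (slide the two disjoint singular values past one another), and Lemma~\ref{lemma:isotopyoffibrations} then gives equality of relative invariants — but this is merely the proof of Lemma~\ref{lemma:commuting} specialised to $\delta\cap\theta=\emptyset$.
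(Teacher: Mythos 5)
Your proof is correct and follows the route the paper clearly intends: the corollary is stated immediately after Lemma~\ref{lemma:commuting} with no separate proof, so the intended argument is precisely that a half twist along $\delta$ can be supported in a neighbourhood of $\delta$ disjoint from $\theta$, so $\theta_\delta$ is isotopic rel endpoints to $\theta$, and the invariance of $f$ under isotopy of the defining curve gives $f_{\theta_\delta}=f_\theta$, whence $f_\theta\circ f_\delta=f_\delta\circ f_\theta$. One small simplification: since $\delta$ and $\theta$ are disjoint compact subsets of $\mathbb{C}$, they have positive distance, so a sufficiently small tubular neighbourhood $N$ of $\delta$ is automatically disjoint from $\theta$ and no preliminary isotopy of $\theta$ is needed.
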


\subsection{A semi-canonical splitting of symplectic Khovanov homology}
\label{section:splitting}

In this section, we generalise the splitting of Lemma~\ref{lemma:a1} in
a way which allows one to split symplectic Khovanov homology of a link
consisting of multiple unlinked components as a product. This is done
in the case of 2 components, where one is an unknot, in Lemma~41 of
\cite{seidelsmith:khsymp}. We then strengthen the result by showing the splitting
is well-defined up to canonical isomorphism on one factor and
furthermore that one can split certain maps $f_\gamma$ induced by saddle
cobordisms in a similar manner.

In the following we will use both descriptions of the fibrations $S_k$ given
in Definitions~\ref{def:polynomial} and \ref{def:nilpslices} and freely switch
between them. It should be clear from context which is meant.

We start the construction by observing that there is a natural choice of
copy of $S_k$ in $S_{k+l}$, namely $X^lS_k$. Similarly
$X^{2l}\confbar{2k}\subset\confbar{2(k+l)}$ and these two inclusions are compatible
with the fibrations. That is, for $y$ in $S_k$ we have $X^{2l}\chi (y)=\chi (X^ly)$.

Choose any polynomial $P=\prod_i(X-\mu_i)\in\operatorname{Conf}^0_{2k}(\mathbb{C^*})$.
Let $\mathfrak{\xi}$ be the linear subspace of $\mathbb{C}^{2k+1}$ given by
the equation $2lz_0+\sum_{i=1}^{2k}z_i=0$.
There is a neighbourhood of $X^lP\in\confbar{2(k+l)}$ which is
biholomorphic to a neighbourhood of $(X^{2l},0,P)$ in
$\confbar{2l}\times\mathbb{C}\times\confbar{2k}$
and of $(X^{2l},(0,\mu_1,\ldots,\mu_k))$ in $\confbar{2l}\times\mathfrak{\xi}$.
Explicitly, the biholomorphisms are described (in the opposite order) by
\begin{eqnarray*}
	\left(\prod_{i=1}^{2l}(X-a_i),(z_0,z_1,\ldots,z_{2k})\right)
	&\longmapsto&
	\left(\prod_{i=1}^{2l}(X-a_i),z_0,\prod_{i=1}^{2k}(X-z_i-\frac{lz_0}{k})\right)\\
	&\longmapsto&
	\prod_{i=1}^{2l}(X-a_i-z_0)
	\prod_{i=1}^{2k}(X-z_i). \\
\end{eqnarray*}

We will now show that, for $y\in \sfibre{k}{P}$ we can locally model any
transverse slice to $X^ly\in S_{k+l}$ on $S_l\times\mathfrak{\xi}$
compatibly with the description of a neighbourhood of $X^lP$
as $\confbar{2l}\times\mathfrak{\xi}$. In fact these models can be chosen
in a consistent manner for all $y\in \sfibre{k}{P}$ simultaneously:

\begin{proposition}[cf.\ Lemma~\ref{lemma:a1}]
\label{prop:semisplit}
There is a tubular neighbourhood of $X^l\sfibre{k}{P}$ in $S_{k+l}$ which
is biholomorphic to a neighbourhood of $\sfibre{k}{P}$
in $\sfibre{k}{P}\times S_l\times\mathfrak{\xi}$. Furthermore the following
diagram of holomorphic maps commutes:

\[
\bfig

\node 1a(0,500)[S_{k+l}]
\node 2a(2000,500)[\sfibre{k}{P}\times S_l\times\mathfrak{\xi}]
\node 1b(0,0)[\confbar{2(k+l)}]
\node 2b(2000,0)[\confbar{2l}\times\mathfrak{\xi}]

\arrow[1a`1b;\chi]
\arrow[2a`2b;\chi\times\id{\mathfrak{\xi}}]
\arrow[2a`1a;\text{local $\cong$ defined near }X^l\sfibre{k}{P}]
\arrow[2b`1b;\text{local $\cong$ defined near }X^{2l}P]
\efig
\]
\end{proposition}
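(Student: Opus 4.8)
### Proof proposal

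The plan is to build the biholomorphism by the same strategy used for Lemma~\ref{lemma:a1}, localizing around the nilpotent $X^l\sfibre{k}{P}$ and exploiting the transverse-slice description of the $S_k$'s in $\mathfrak{sl}_{2k}(\mathbb{C})$. First I would pass to the matrix description of Definition~\ref{def:nilpslices}: $S_{k+l}$ sits inside $\mathfrak{sl}_{2(k+l)}(\mathbb{C})$ as a transverse slice at the nilpotent with two Jordan blocks of size $k+l$, and $X^l S_k$ corresponds to the locus where this slice meets the smaller orbit (two Jordan blocks of size $k$ plus a size-$l$ part in the appropriate form). The standard structure theory of Slodowy slices says that a neighbourhood of a point $y'$ in a smaller stratum inside a transverse slice is biholomorphic to the product of that smaller stratum with a transverse slice taken inside the normal directions — here a copy of $S_l$ together with the base-direction factor $\mathfrak{\xi}$ that records how the $2l$ extra roots and the overall trace-normalisation interact. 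Concretely, for each $y\in\sfibre{k}{P}$ one writes down the affine-linear transverse slice through $X^l y$ to the $\mathrm{SL}_{2(k+l)}(\mathbb{C})$-orbit of $X^l y$ inside $S_{k+l}$; the content is that these slices assemble holomorphically in $y$ and are isomorphic to $S_l\times\mathfrak{\xi}$ in a $y$-independent way. This last point is exactly the ``consistent for all $y$ simultaneously'' claim in the statement.

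Second I would check compatibility with the fibrations, i.e.\ commutativity of the square. On the base, the required local biholomorphism near $X^{2l}P$ is the one written out explicitly in the excerpt just before the Proposition, factoring $\confbar{2(k+l)}\supset(\text{nbhd of }X^lP)\;\cong\;\confbar{2l}\times\mathbb{C}\times\confbar{2k}\;\cong\;\confbar{2l}\times\mathfrak{\xi}$; since $P$ is fixed, the $\confbar{2k}$ factor is contracted to the point $P$ and one is left with $\confbar{2l}\times\mathfrak{\xi}$ as stated. That the characteristic-polynomial map $\chi$ on $S_{k+l}$ intertwines the slice decomposition with this base decomposition is the matrix identity $X^{2l}\chi(y)=\chi(X^l y)$ recalled at the start of the construction, together with the observation that on the transverse $S_l$-factor $\chi$ restricts to the characteristic-polynomial map of $S_l$ landing in the $\confbar{2l}$-coordinate, and on the $\mathfrak{\xi}$-factor it is the identity — the $\mathfrak{\xi}$ coordinates are precisely the linear data $(z_0,z_1,\dots,z_{2k})$ with $2lz_0+\sum z_i=0$ parametrising the shifts $X-z_i-\tfrac{lz_0}{k}$ of the roots of $P$ and the common translation by $z_0$ of the $2l$ new roots. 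Tracking these through the two explicit base biholomorphisms gives commutativity of the square on the nose.

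The main obstacle, as in Seidel--Smith's original Lemma~29, is the ``furthermore'' — that the local models can be chosen \emph{consistently over all $y\in\sfibre{k}{P}$}, producing an honest tubular neighbourhood of $X^l\sfibre{k}{P}$ rather than just a pointwise family of slices. Here I would argue as follows: the normal bundle of $X^l\sfibre{k}{P}$ in $S_{k+l}$ is, by the Slodowy-slice description, canonically the restriction to $X^l\sfibre{k}{P}$ of a bundle whose fibre is naturally $T_0 S_l\oplus\mathfrak{\xi}$; one first trivialises this bundle (it is the pullback of a bundle on $\sfibre{k}{P}$, and one may shrink if necessary, or invoke that the relevant obstruction vanishes because the fibre is an affine space acted on linearly) and then uses the holomorphic tubular-neighbourhood theorem for the Stein manifold $S_{k+l}$ to promote the linear identification on the normal bundle to a genuine biholomorphism of a neighbourhood, arranging simultaneously that it carries the fibrewise $S_l$-slice structure to the product $S_l$-factor. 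Verifying that this can be done while \emph{keeping} the square commutative requires choosing the tubular neighbourhood to respect $\chi$; this is possible because $\chi$ is a holomorphic submersion away from its critical locus and the base already splits compatibly, so one may build the tube inside the fibres of (the local model of) $\chi$ over $\confbar{2l}\times\mathfrak{\xi}$ and then spread out. I expect the only genuinely delicate point to be this last simultaneous control of slice structure and fibration, which is essentially the same analytic input that underlies Lemma~\ref{lemma:a1} and Lemma~\ref{lemma:a2} in \cite{seidelsmith:khsymp}, and I would cite those constructions for the technique.
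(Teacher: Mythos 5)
Your plan captures the right ingredients — the matrix picture, transverse slices at each $X^l y$, and the identification of the normal directions via the eigenspace decomposition of the semisimple part $(X^ly)_s$ — and you correctly flag holomorphic dependence on $y$ as the crux. However, the paper resolves that crux by a different and more algebraic mechanism, and your proposed substitute has a gap at exactly the step you call delicate.

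The paper never appeals to a holomorphic tubular-neighbourhood theorem. It works directly with \emph{affine-linear} transverse slices to the adjoint orbit of $X^ly$ inside $S_{k+l}$, which already furnish the required local biholomorphisms with no further analytic input (Lemma~5(i) of \cite{seidelsmith:khsymp}). Two holomorphically varying such families are exhibited: one built from a complement $W_y\subset T_y S_k$ (using that $\sfibre{k}{P}$ is an affine subvariety of $S_k$), and one from the eigenspace decomposition of $(X^ly)_s$, which has the manifestly $\chi$-compatible form $X^ly_s+S_l+\mathfrak{z}$. The passage between slices uses Lemma~5(iii) of \cite{seidelsmith:khsymp}: any two transverse slices at a point are locally isomorphic by a map moving points \emph{only within their adjoint orbits}, so compatibility with $\chi$ is automatic. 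The remaining issue — that this comparison isomorphism depend holomorphically on $y$ — is then settled by observing it is determined by a local submanifold $K_y\subset SL_{k+l}(\mathbb{C})$ near the identity transverse to the orbit of $y_s$, and taking $K_y=\exp[\mathfrak{sl}_{k+l}(\mathbb{C}),y_s]$, which visibly varies holomorphically. That explicit choice is the real content of the proof, and your proposal does not supply a substitute for it.

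The concrete gap in your write-up is the suggestion to enforce $\chi$-compatibility of the tubular neighbourhood by ``building the tube inside the fibres of $\chi$,'' on the grounds that $\chi$ is a submersion away from its critical locus. But $X^l\sfibre{k}{P}$ lies \emph{inside} the critical locus of $\chi$: it is precisely the singular locus of the singular fibre $\sfibre{k+l}{X^{2l}P}$. So $\chi$ is not a submersion at any point of the core of the tube you want to build, and the submersion argument never gets started where it matters. This is exactly the difficulty that the orbit-preserving isomorphisms of Lemma~5(iii) are designed to bypass: they build $\chi$-equivariance into the comparison of slices without ever requiring $\chi$ to be a submersion.
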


We shall in general actually use the following simpler version (restricting to $0\in\mathfrak{\xi}$)
which suffices to describe the behaviour of links with multiple unlinked components.

\begin{corollary}
\label{corollary:semican}
Given any $P\in\operatorname{Conf}^0_{2k}(\mathbb{C}^*)$ we have the following local holomorphic
model for $S_{k+l}$ defined:
\begin{itemize}
 \item on a neighbourhood of $X^l\sfibre{k}{P}$ in $\chi^{-1}(P\confbar{2l})\subset S_{k+l}$
 \item over a neighbourhood of $PX^{2l}$ in $P\confbar{2l}\subset\confbar{2(k+l)}$
\end{itemize}

\[
\bfig

\node 1a(2000,1000)[\sfibre{k}{P}\times S_l]
\node 2a(0,1000)[\chi^{-1}(P\confbar{2l})]
\node 1b(2000,500)[\confbar{2l}]
\node 2b(0,500)[P\confbar{2l}]

\arrow[1a`1b;\chi]
\arrow[2a`2b;\chi]
\arrow[1a`2a;\text{local $\cong$ defined near } X^l\sfibre{k}{P}]
\arrow[1b`2b;\text{local $\cong$ defined near } PX^2l]

\efig
\]

The map of bases takes $Q$ to $PQ$. The map of total spaces restricts to
$\sfibre{k}{P}\times\{X^l\}$ as multiplication $(y,X^l)\mapsto yX^l$.
\end{corollary}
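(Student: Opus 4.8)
The plan is to deduce Corollary~\ref{corollary:semican} from Proposition~\ref{prop:semisplit} simply by restricting the commuting square there to the distinguished slice of $\mathfrak{\xi}$ (the basepoint about which the local models of Proposition~\ref{prop:semisplit} are set up, which is what is here abbreviated ``$0\in\mathfrak{\xi}$''). First I would take the bottom row of the square in Proposition~\ref{prop:semisplit} and intersect with the submanifold $\confbar{2l}\times\{0\}$. Using the explicit biholomorphisms written out just before that Proposition --- the composite sending $(\prod_i(X-a_i),(z_0,z_1,\dots,z_{2k}))$ to $\prod_i(X-a_i-z_0)\prod_i(X-z_i)$ --- one checks by direct substitution that fixing $z_0=0$ and $z_i=\mu_i$ identifies a neighbourhood of $X^{2l}$ in $\confbar{2l}$ with a neighbourhood of $X^{2l}P$ in $P\confbar{2l}\subset\confbar{2(k+l)}$, and that under this identification the inclusion is literally $Q\mapsto QP$ (in particular $X^{2l}\mapsto PX^{2l}$, and centring is preserved since $2l\cdot 0+\sum\mu_i=0$). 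This is the bottom row of the Corollary's square.

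Next I would pull this slice back through $\chi$. Since the square of Proposition~\ref{prop:semisplit} commutes and the right vertical map is $\chi\times\id{\mathfrak{\xi}}$, the preimage of $\confbar{2l}\times\{0\}$ inside $\sfibre{k}{P}\times S_l\times\mathfrak{\xi}$ is exactly $\sfibre{k}{P}\times S_l\times\{0\}\cong\sfibre{k}{P}\times S_l$, on which $\chi$ is projection to $S_l$ followed by the characteristic-polynomial map $S_l\to\confbar{2l}$; correspondingly, on the $S_{k+l}$ side the local biholomorphism of Proposition~\ref{prop:semisplit} restricts to one between a neighbourhood of $X^l\sfibre{k}{P}$ in $\chi^{-1}(P\confbar{2l})$ and a neighbourhood of $\sfibre{k}{P}$ in $\sfibre{k}{P}\times S_l$, intertwining the two copies of $\chi$ and the base map $Q\mapsto QP$. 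This gives the full square claimed in the Corollary.

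Finally I would pin down the restriction of the total-space map to the distinguished copy $\sfibre{k}{P}\times\{X^l\}$. The biholomorphism of Proposition~\ref{prop:semisplit} is constructed as a tubular neighbourhood of $X^l\sfibre{k}{P}$ whose zero section is the natural inclusion $S_k\hookrightarrow S_{k+l}$, $s\mapsto X^ls$ (this being compatible with $X^{2l}\confbar{2k}\subset\confbar{2(k+l)}$), so restricting $S_l$ to its central fibre $\{X^l\}$ over $X^{2l}$ recovers $(y,X^l)\mapsto yX^l$, as required. The only point that genuinely needs care --- and the step I would spend most effort checking --- is that the $\mathfrak{\xi}$-slice used in the restriction is precisely the one for which the induced base inclusion is $Q\mapsto QP$ rather than some nearby transverse slice along which the $2k$ ``extra'' roots would move; but this is exactly what the explicit formula for the biholomorphism makes manifest, so no real obstacle remains and the argument is essentially bookkeeping on top of Proposition~\ref{prop:semisplit}.
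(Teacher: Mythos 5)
Your proposal is correct and matches the paper's intended argument: the paper offers no separate proof of the Corollary, only the parenthetical remark that it is the ``simpler version (restricting to $0\in\mathfrak{\xi}$)'' of Proposition~\ref{prop:semisplit}, and you have correctly unwound that restriction, including the two points that genuinely need checking --- that the distinguished $\mathfrak{\xi}$-slice is $(0,\mu_1,\ldots,\mu_{2k})$ rather than the origin, and that the explicit biholomorphism $\left(Q,(z_0,\ldots,z_{2k})\right)\mapsto Q(X-z_0)\prod_i(X-z_i)$ on that slice reduces to $Q\mapsto PQ$.
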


The proof of Proposition~\ref{prop:semisplit} comes from generalising
Lemmas~24 to 27 of \cite{seidelsmith:khsymp} and will be presented in
an analogous manner for ease of comparison. The slices $S_n$ are thought of here as
spaces of $2n\times 2n$ matrices.

\begin{lemma}
\label{lemma:kernel}
For $y\in S_{k+l}$, projection to the first $2l$ coordinates of
$\mathbb{C}^{2(k+l)}$ restricts to an injection
\[\ker (y^l) \to \mathbb{C}^{2l}\]
\end{lemma}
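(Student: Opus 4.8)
The plan is to unwind the definition of $S_{k+l}$ from Definition~\ref{def:nilpslices} and track what the nilpotent block structure forces on $\ker(y^l)$. Recall that any $y\in S_{k+l}$ has the form of a $(k+l)\times(k+l)$ block matrix whose superdiagonal blocks are the $2\times 2$ identity $I$, whose first block-column consists of arbitrary $2\times 2$ matrices $A_1,\ldots,A_{k+l}$ (with $A_1$ trace-free), and which is zero elsewhere. Writing a vector of $\mathbb{C}^{2(k+l)}$ as $(v_1,\ldots,v_{k+l})$ with each $v_i\in\mathbb{C}^2$, one computes directly that $(yv)_i = A_i v_1 + v_{i+1}$ for $i<k+l$ and $(yv)_{k+l} = A_{k+l} v_1$. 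The key elementary observation is that $y$ is ``upper triangular with identities above the diagonal'' relative to the block filtration, so applying $y$ repeatedly shifts information downward. First I would prove by induction on $m$ the formula for the block components of $y^m v$; in particular $(y^m v)_i$ depends only on $v_1,\ldots,v_{i+m}$ (indices capped at $k+l$), and $(y^m v)_{k+l-m+1},\ldots,(y^m v)_{k+l}$ are each expressions of the form $v_1$ acted on by polynomials in the $A_j$ — crucially, $(y^m v)_{j}$ for $j > k+l-m$ involves only $v_1$ together with lower blocks that have already been ``used up''.

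The heart of the argument: suppose $y^l v = 0$. I would extract from the vanishing of the \emph{last} $l$ block-components of $y^l v$ that $v_1, v_2, \ldots$ are successively determined. More precisely, the cleanest route is to observe that $\ker(N^l)$ for the ``shift part'' $N$ (the matrix with $I$'s on the superdiagonal and $0$'s elsewhere, so $y = N + (\text{first column }A)$) is exactly $\{v : v_{l+1} = \cdots = v_{k+l} = 0\}$, i.e. the first $2l$ coordinates. Then I would argue that the $A$-perturbation does not enlarge this: writing $y^l = (N+C)^l$ where $C$ has support only in the first block-column, one expands and checks that the ``bottom'' block rows of $y^l$ agree with those of $N^l$ up to terms landing in the first block-column, and a triangularity/degree count shows $y^l v = 0$ forces $v_{l+1}=\cdots=v_{k+l}=0$ exactly as in the unperturbed case — hence $v$ is determined by its image $(v_1,\ldots,v_l)$ under projection to $\mathbb{C}^{2l}$, giving injectivity. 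Equivalently and perhaps more transparently: if $v\in\ker(y^l)$ projects to $0$ in $\mathbb{C}^{2l}$, then $v_1=\cdots=v_l=0$, and then the block-recursion $(yv)_i = A_iv_1 + v_{i+1}$ together with $y^lv=0$ propagates to give $v_{l+1}=\cdots=v_{k+l}=0$ as well, so $v=0$.

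The step I expect to be the main obstacle is making the perturbation bookkeeping rigorous: one has to be careful that cross-terms in $(N+C)^l$ involving both $N$ and $C$ do not spoil the statement that $y^lv=0$ controls the top $l$ blocks of $v$. The safest way to handle this is to choose an ordering of the block-index set and show $y$ is, after conjugating by nothing at all, genuinely block-upper-triangular with invertible (indeed identity) entries immediately above the diagonal, so that its nilpotency and kernel behave formally like those of a single Jordan-type shift; then $\dim\ker(y^l)\le$ (number of blocks killed) $\times 2 = 2l$, and the projection to those $2l$ coordinates is injective on $\ker(y^l)$ by a dimension count combined with the explicit propagation above. I would present it via the direct propagation argument rather than the abstract one, since it also makes transparent the identification in Proposition~\ref{prop:semisplit} of the transverse slice with $\sfibre{k}{P}\times S_l\times\mathfrak{\xi}$ that this lemma is feeding into.
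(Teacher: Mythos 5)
Your final paragraph (``Equivalently and perhaps more transparently\ldots'') gives the correct argument, and it is essentially the paper's proof unwound: the paper asserts that $y^l$ has block form $\left(\begin{smallmatrix}A & I \\ B & 0\end{smallmatrix}\right)$ relative to the splitting of coordinates into the first $2l$ and last $2k$, and reads off $(v_{2l+1},\ldots,v_{2(k+l)}) = -A(v_1,\ldots,v_{2l})$ from $y^l v = 0$. Your propagation via $(yv)_i = A_i v_1 + v_{i+1}$ is precisely the computation that justifies this block form, the point being that each $A_j$ only ever reads $v_1$, which stays zero throughout the iteration once $v_1,\ldots,v_l$ all vanish.

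The paragraph before it, however, is \emph{not} equivalent and contains a real error. You claim that ``$y^l v = 0$ forces $v_{l+1}=\cdots=v_{k+l}=0$ exactly as in the unperturbed case.'' This is false: already for $k=l=1$, take $A_2=0$ and $A_1$ any nonzero trace-free $2\times 2$ matrix; then $\ker y = \{(v_1,-A_1 v_1):v_1\in\mathbb{C}^2\}$, whose elements have $v_2\neq 0$ whenever $A_1 v_1\neq 0$. The lemma does \emph{not} say $\ker(y^l)\subset\ker(N^l)$; it says only that the projection to the first $2l$ coordinates is injective on $\ker(y^l)$, i.e.\ the remaining coordinates are \emph{determined by}, but in general do not vanish with, $v_1,\ldots,v_l$. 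Your ``safest route'' slides from ``same dimension, same injective projection'' to ``same kernel,'' and the latter fails. Drop that paragraph and keep only the propagation argument, running the induction explicitly: if $v_1=\cdots=v_l=0$, then for $0\le m\le l$ one has $(y^m v)_1=\cdots=(y^m v)_{l-m}=0$ and $(y^m v)_i=v_{i+m}$ for all $i$, so $y^l v=0$ then forces $v_{l+1}=\cdots=v_{k+l}=0$.
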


\begin{proof}
For $y\in S_{k+l}$ we observe that $y^l$ is of the form

\[
\left(
\begin{array}{cc}
A & I \\
B & 0
\end{array}
\right)
\]

\noindent where $A$ is a $2l\times 2k$ matrix and
$B$ is a $2l\times 2l$ matrix. Hence, if
$(v_1,\ldots,v_{2(k+l)})\in\ker (y^l)$, then we have:

\[
-A\left(
\begin{array}{c}
v_1\\
\vdots\\
v_{2l}
\end{array}
\right)
=
\left(
\begin{array}{c}
 v_{2l+1}\\
\vdots\\
v_{2(k+l)}
\end{array}
\right)
\]

\noindent Therefore the first $2l$ coordinates determine the rest.
\end{proof}

\begin{lemma}
The subspace of $y\in S_{k+l}$ such that $\ker (y^l)$ is $2l$--dimensional
can be canonically identified with $S_k$. The identification is compatible
with the adjoint quotient map and the inclusion
$\confbar{2k}\subset\confbar{2(k+l)}$ previously mentioned.
\end{lemma}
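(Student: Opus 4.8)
The plan is to leverage Lemma~\ref{lemma:kernel} to identify the locus $\{y \in S_{k+l} : \dim\ker(y^l) = 2l\}$ with $S_k$. First I would observe that, by the structure of matrices in $S_{k+l}$ (Definition~\ref{def:nilpslices}), the power $y^l$ always has the block form $\left(\begin{smallmatrix} A & I \\ B & 0 \end{smallmatrix}\right)$ with $A$ of size $2l \times 2k$ and $B$ of size $2l \times 2l$; Lemma~\ref{lemma:kernel} shows the first $2l$ coordinates inject $\ker(y^l)$ into $\mathbb{C}^{2l}$, so $\dim\ker(y^l) = 2l$ exactly when this injection is an isomorphism. Unwinding the kernel equation $-A v' = v''$ (where $v', v''$ are the first and last $2k$ blocks of coordinates), the condition $\dim\ker(y^l) = 2l$ is equivalent to $A = 0$ together with $B = 0$ --- i.e.\ $y^l$ sends $\mathbb{C}^{2l}$ (the first coordinates) isomorphically onto the last $2l$ coordinates and kills everything else. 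I would then unpack what this says about the original matrix $y$: writing $y$ in the $(k+l)\times(k+l)$ block form with $2\times 2$ blocks $A_1, \ldots, A_{k+l}$ down the first column and identity blocks on the superdiagonal, the vanishing of the appropriate entries of $y^l$ forces $A_1 = \cdots = A_l = 0$, so that $y$ acquires exactly the shape of $X^l$ times a matrix in $S_k$ formed from $A_{l+1}, \ldots, A_{k+l}$.

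The second step is to make this identification canonical and check compatibility. I would define the map $S_k \to S_{k+l}$ by $z \mapsto X^l z$ using the monomial/polynomial description of Definition~\ref{def:polynomial} (equivalently, the block-matrix embedding placing a copy of $S_k$ in the bottom-right), note it is a holomorphic embedding onto the locus just described, and verify it is a bijection onto that locus by the computation above. Compatibility with the adjoint quotient map is immediate from the remark preceding the Lemma: $\chi(X^l z) = X^{2l}\chi(z)$, so the diagram with the inclusion $\confbar{2k} \hookrightarrow \confbar{2(k+l)}$, $Q \mapsto X^{2l}Q$, commutes. Canonicity follows because the locus $\{\dim\ker(y^l) = 2l\}$ is intrinsically defined (no choices), and the inverse map simply reads off the bottom-right $S_k$-block, which is forced by the kernel condition.

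The main obstacle --- though a mild one --- is the bookkeeping in passing between ``$\dim\ker(y^l)=2l$'' and the concrete vanishing of matrix blocks: one must be careful that the block decomposition of $y^l$ is exactly as in Lemma~\ref{lemma:kernel} and that the condition on the kernel dimension really does collapse to $A=B=0$ rather than some weaker rank condition, and then that this in turn pins down $A_1, \ldots, A_l$ uniquely. I expect this to follow by an induction on $l$ (or a direct computation of the relevant entries of $y^l$ in terms of the $A_i$), using that the superdiagonal identity blocks make $y$ close to a shift operator. Everything else --- holomorphicity, the commuting square, and canonicity --- is then formal, and this Lemma feeds directly into the tubular-neighbourhood statement of Proposition~\ref{prop:semisplit} in the same way Lemma~27 of \cite{seidelsmith:khsymp} feeds into Lemma~\ref{lemma:a1}.
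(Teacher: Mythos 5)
Your proposal contains a pair of related errors that undermine the argument, both stemming from a misreading of what the block decomposition of $y^l$ and Lemma~\ref{lemma:kernel} actually force.

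First, the condition $\dim\ker(y^l)=2l$ does \emph{not} force both blocks $A$ and $B$ to vanish --- only $B$ must vanish. Writing $v=(v',v'')$ with $v'$ the first $2l$ coordinates, the kernel equations read $Av'+v''=0$ (from the rows containing the identity block) and $Bv'=0$. The first equation merely determines $v''$ in terms of $v'$; it is a constraint on $v$, not on the matrix. The projection to the first $2l$ coordinates being an isomorphism onto $\mathbb{C}^{2l}$ therefore requires exactly $\ker(B)=\mathbb{C}^{2l}$, i.e.\ $B=0$, with $A$ arbitrary. Your phrase ``$y^l$ sends $\mathbb{C}^{2l}$ isomorphically onto the last $2l$ coordinates and kills everything else'' misdescribes even the $A=B=0$ case and suggests you may have the block structure transposed. (The paper establishes the sharper preliminary fact, missing from your plan, that $y^l$ has exactly two Jordan blocks for eigenvalue $0$, each of size $\geq l$, by applying Lemma~\ref{lemma:kernel} with $l=1$; this makes the injection of Lemma~\ref{lemma:kernel} onto $\mathbb{C}^{2l}$.)

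Second, and more seriously, your conclusion that the locus is $\{A_1=\cdots=A_l=0\}$ with the $S_k$ data sitting in positions $A_{l+1},\ldots,A_{k+l}$ (``the bottom-right block'') is backwards. The paper shows the locus is $\{A_{k+1}=\cdots=A_{k+l}=0\}$, via an induction that extracts $A_{k+l}=0$, then $A_{k+l-1}=0$, and so on, using that $(y^l v)_{k+j}$ reduces modulo the already-established vanishings to $A_{k+j}\bigl(v_l + (\text{terms in } v_1,\ldots,v_{l-1})\bigr)$, which must vanish for arbitrary free $v_l$. You can see your version cannot be right via the polynomial presentation: for $z=X^k - \sum_{i=1}^k X^{k-i}A_i \in S_k$ one has $X^l z = X^{k+l}-\sum_{i=1}^k X^{k+l-i}A_i$, which places the $S_k$ data in $A_1,\ldots,A_k$ and zeros in $A_{k+1},\ldots,A_{k+l}$, and then $\chi(X^l z)=X^{2l}\chi(z)$ falls out of $\det$ being multiplicative. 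With zeros in $A_1,\ldots,A_l$ instead, $\chi(y)=\det\bigl(X^l(X^k I_2) - \sum_{j=1}^k X^{k-j}A_{j+l}\bigr)$ has no such factorisation, so the compatibility with the adjoint quotient map you assert in your second step would fail. The overall shape of the argument you sketch --- use Lemma~\ref{lemma:kernel}, unwind the kernel equations, identify the vanishing locus and check $\chi$-compatibility --- is right, but both the ``which block vanishes'' and ``which $A_i$ vanish'' steps need to be corrected before the induction and the final identification can be carried out.
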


\begin{remark}
This lemma describes the inclusion, written earlier as $X^lS_k\subset S_{k+l}$.
In particular it shows that for $P\in\operatorname{Conf}^0_{2k}(\mathbb{C}^*)$
the set $X^lS_k$ can be described as the transverse intersection of
$S_{k+l}\subset\mathfrak{sl}_{k+l}(\mathbb{C})$ with an orbit of the adjoint
action of $SL_{k+l}(\mathbb{C})$. This orbit is furthermore given by the property
that its elements have 2 Jordan blocks, each of size $l$, for the eigenvalue
0. Hence transverse slices to elements of the orbit should contain \emph{$(l,l)$--type
nilpotent slices}, i.e.\ copies of $S_l$. The proof of Proposition~\ref{prop:semisplit}
will be based upon this observation.
\end{remark}

\begin{proof}
Let $y\in S_{k+l}$ have kernel of dimension $2l$ over $\mathbb{C}$.
We consider the Jordan blocks for $y^l$ for eigenvalue 0.
Lemma~\ref{lemma:kernel} in the case $l=1$ says that there are at most two.
Hence there must be precisely 2 and they must both have size at least $l$.
This means the injection of Lemma~\ref{lemma:kernel} is an isomorphism for
such $y$.

We will write vectors $v\in(\mathbb{C}^2)^{(k+l)}$ with coordinates
$v_1,\ldots,v_{(k+l)}\in\mathbb{C}^2$. Now, observe that $y^i(v)_1=M+v_{i+1}$
where $M$ depends only on $v_1,\ldots,v_i$. We prove, by induction on $i$, that
$A_{k+1-i}=0$.

In the case $i=1$, we have $y(v)_k=A_kv_1$. In order for there to be a $v\in\ker(y)$
for any choice of $v_1$, we must have $A_k=0$. For $i>1$ (by applying
$A_{k-i+2},\ldots,A_k=0$), we have:
\begin{eqnarray*}
y^i(v)_{k+1-i}&=&y\left(y^{i-1}(v)\right)_{k+1-i}\\
	&=&A_{k+1-i}(y^{i-1}(v)_1)+y^{i-1}(v)_{k+2-i}\\
	&=&A_{k+1-i}(M+v_i)+A_{k+2-i}(y^{i-2}(v)_1)+y^{i-2}(v)_{k+3-i}\\
	&\vdots&\\
	&=&A_{k+1-i}(M+v_i)
\end{eqnarray*}

Here again $M$ depends only on $v_1,\ldots,v_{i-1}$, so in order for there to
be a $v\in\ker(y^i)$ for any choice of $v_1,\ldots,v_i$ we must have $A_{k+1-i}=0$.

We have just shown that the space of $y\in S_{k+l}$ with $\dim\ker(y^l)=2l$ is
contained in the space of those $y$ with $A_{k+1},\ldots,A_{k+l}=0$. In fact one
can check they are the same. This space is canonically isomorphic to $S_k$ by the
map

\begin{equation}
\label{eqn:nestedfibres}
f:
\left(
\begin{array}{cccc}
A_1&I&&\\
\vdots&&\ddots&\\
\vdots&&&I\\
A_k&&&\\
\end{array}
\right)
\longmapsto
\left(
\begin{array}{cccccc}
A_1&I&&&&\\
\vdots&&\ddots&&&\\
A_k&&&\ddots&&\\
0&&&&\ddots&\\
\vdots&&&&&I\\
0&&&&&0\\
\end{array}
\right)_.
\end{equation}

\noindent This has the property that $X^{2l}\chi(y)=\chi(f(y))$, so it commutes with
the inclusion \[X^l\confbar{2k}\subset\confbar{2(k+l)}\]
Also note that this map
takes $y$ to $X^ly$ in the polynomial presentation of $S_{k+l}$.
\end{proof}

With these results we now prove Proposition~\ref{prop:semisplit}.

\begin{proof}
First we choose for every $y\in X^l\sfibre{k}{P}$ a local affine linear transverse
slice varying holomorphically with $y$.
It suffices (Lemma~5 (i) of \cite{seidelsmith:khsymp}) to pick
these slices transverse to the adjoint orbit of $y$ within $S_{k+l}$, i.e.\ from a complement
to $T_yX^l\sfibre{k}{P}$ in $T_yS_{k+l}$.
For example one can take the following.
First choose a complement $W_y$ to $T_y\sfibre{k}{P}$ in $T_yS_k$
varying holomorphically with y. This splitting problem has a positive solution
because $\sfibre{k}{P}$ is an affine subvariety of $S_k$. Then take the direct sum of
$V\oplus X^lW$ with the orthogonal complement to $X^lS_k$ as a vector subspace
of $S_{k+l}$.

Another way to choose a transverse slice to $X^ly$ is (by Lemma~8 of
\cite{seidelsmith:khsymp}) to decompose $\mathfrak{sl}_{2(k+l)}(\mathbb{C})$ into
eigenspaces of the semisimple part $(X^ly)_s$ of $X^ly$. All the eigenspaces
are 1--dimensional with the exception of the $0$--eigenspace which is canonically
identified with $\mathbb{C}^{2l}$ by Lemma~\ref{lemma:kernel}.
This gives a holomorphically varying local transverse slice.

\[
\bfig

\node 1a(0,1000)[X^ly_s+S_l+\mathfrak{z}]
\node 1b(0,500)[\confbar{2l}\times\mathfrak{\xi}]

\arrow[1a`1b;\chi\times f]

\efig
\]

\noindent where $f$ is a local biholomorphism
$\mathfrak{z}\to\mathbb{C}\times \operatorname{Conf}^0_{2k}(\mathbb{C}^*)$.

Any two local transverse slices at a point are locally isomorphic by an isomorphism that
moves points only inside their adjoint orbits (Lemma~5 (iii) of \cite{seidelsmith:khsymp})
and hence does not affect $\chi$. This
gives us the required result so long as this isomorphism can be chosen to vary holomorphically
with $y$. In fact the isomorphism depends only on a choice of local submanifold
$K_y\subset SL_{k+l}(\mathbb{C})$ near the identity element $e$ with tangent space at $e$
complementary to that of the adjoint orbit of $y_s$.
The choice $K_y=\exp [\mathfrak{sl}_{k+l}(\mathbb{C}),y_s]$ suffices
(cf. proof of Lemma~27 of \cite{seidelsmith:khsymp}).
\end{proof}

We now demonstrate how one can split symplectic Khovanov homology using appropriate
\emph{localisation} arguments. We begin with a useful, if obvious,
lemma related to shrinking components of a link.

\begin{lemma}
\label{lemma:shrinklink}
Let $D$ be a disc centred at $0\in\mathbb{C}$ and let
\[P=P_1P_0\in\overline{\operatorname{Conf}}^0_{2k}(\mathbb{C}\setminus D)\times\operatorname{Conf^0_{2l}}(D)\subset\confbar{2(k+l)}\]
Suppose we have a bridge diagram $(P,A,B)$ which splits as the union of
bridge diagrams $(P_0,A_0,B_0)$ supported on $D$ and $(P_1,A_1,B_1)$ supported
outside $D$. See Figure~\ref{fig:splitbridge} for an example.
Then there is a canonical isotopy of bridge diagrams starting at this bridge diagram and
ending at one which splits as the union of $(P_1,A_1,B_1)$ with $(P_0,A_0,B_0)$,
the latter scaled in $\mathbb{C}$ by any $\lambda\in(0,1)$. Hence, rescaling
defines canonical isomorphisms on $\skh $ (or canonical chain homotopy classes
of chain maps at the cochain level).

One can scale down any admissible map of a surface with boundary marked points into
\[P_1\operatorname{Conf^0_{2l}}(D)\subset\confbar{2(k+l)}\] to obtain another admissible map,
such that the relative invariant of the two surfaces are related by composing on either side
by the canonical isomorphisms mentioned above.
\end{lemma}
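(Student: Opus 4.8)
# Proof Proposal for Lemma~\ref{lemma:shrinklink}

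The plan is to build the required isotopy of bridge diagrams purely in the configuration space $\confbar{2(k+l)}$ and then transport it upstairs using the structural results already established. First I would observe that the space of admissible ambient isotopies of $\mathbb{C}$ supported in a small neighbourhood of $\overline{D}$, fixing $P_1$ and shrinking $D$ to $\lambda D$ by the linear scaling $z\mapsto\lambda z$, is nonempty and contractible: one simply takes a compactly supported radial diffeotopy that equals $z\mapsto\lambda z$ on $\overline{D}$ and the identity outside a slightly larger disc. Since $(P_0,A_0,B_0)$ is supported in $D$ and $(P_1,A_1,B_1)$ outside $D$, applying this ambient isotopy to the curves gives an isotopy of bridge diagrams (the $\alpha$-- and $\beta$--curves stay crossingless and their intersection pattern is unchanged) from $(P,A,B)$ to the rescaled split diagram, and contractibility of the space of such isotopies makes it canonical up to the relevant notion of canonical isomorphism on $\skh$. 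This uses only the discussion already in Section~\ref{section:khsymp}: an isotopy of admissible positions fixing the intersection $P$ with $\{z=0\}$ induces Hamiltonian isotopies of the Lagrangians $L_A,L_B$, hence canonical (continuation) isomorphisms on $HF(L_A,L_B)$, and regular isotopy of bridge diagrams does not change the isomorphism class.

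Next I would upgrade this to the statement about admissible maps. Given an admissible map $u\co B\to P_1\operatorname{Conf^0_{2l}}(D)\subset\confbar{2(k+l)}$ of a surface with boundary marked points, composing $u$ with the scaling map $z\mapsto\lambda z$ applied fibrewise to the $D$--coordinates produces another admissible map $u_\lambda$: transversality to $\critmbl{\chi}$ is preserved because scaling is a biholomorphism near the relevant part of the discriminant, and holomorphicity near the singular values is preserved since scaling is holomorphic. The family $u_t$, $t\in[\lambda,1]$, is a smooth homotopy through admissible maps, so by Lemma~\ref{lemma:pullback} the exact MBL--fibrations $u_t^*S_{k+l}$ form a smooth deformation, and by Lemma~\ref{lemma:isotopyoffibrations} the relative invariant is unchanged up to left- and right-composition with the canonical isomorphisms on Floer cohomology at the ends. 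One must check that the enclosed exact Lagrangian boundary conditions deform along with this family; but the Lagrangians at the ends are exactly those constructed from the (rescaled) split admissible positions, and the previous paragraph's ambient isotopy supplies the compactly supported Hamiltonian isotopies between them, so Lemma~\ref{lemma:sptdef} (and the care with symplectic parallel transport it affords) lets one carry $Q$ along the deformation.

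The main obstacle, I expect, is verifying compatibility of the \emph{enclosures}: as one scales $D$ down, the relevant fibrewise-compact region of $u_t^*S_{k+l}$ moves, and one needs the enclosure to deform continuously so that no holomorphic sections escape and the moduli spaces remain unchanged. The way around this is to use the localisation model of Corollary~\ref{corollary:semican}: near $X^l\sfibre{k}{P_1}$ the fibration $\chi^{-1}(P_1\confbar{2l})$ is biholomorphic to $\sfibre{k}{P_1}\times S_l$, and the scaling of the $D$--coordinates corresponds, in this model, to a scaling only in the $S_l$--factor that fixes the $\sfibre{k}{P_1}$--factor. In that product picture the enclosure can be taken of the split form $\max\{\rho_{\sfibre{k}{P_1}},\rho_{S_l}\}$ (cf. Lemma~\ref{lemma:fibreproduct} and the plurisubharmonic $C^0$--approximation remark following it), and scaling in the second factor deforms $\rho_{S_l}$ through plurisubharmonic functions with nested sublevel sets, hence through \emph{equivalent} enclosures in the sense of Definition~\ref{def:equivenc}. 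Lemma~\ref{lemma:switchenc} then guarantees the relative invariant is literally unchanged on the overlap, and Lemma~\ref{lemma:isotopyoffibrations} handles the rest of the deformation; patching these gives the claimed relation of relative invariants via the canonical isomorphisms.
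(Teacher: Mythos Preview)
Your first two paragraphs are correct and are essentially what the paper does, only you have spelled out in detail what the paper dismisses in two sentences (``The first part is obvious. For the second part one just needs to check that the $\mathbb{C}^*$--action on $\confbar{2n}$, rescaling all roots by a complex number, preserves the property of a surface being admissible.'').

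Your third paragraph, however, is unnecessary and introduces a forward reference that should be avoided. The enclosure compatibility you worry about is already absorbed into the general machinery of Section~\ref{section:flattening}: for a tree construction of an admissible map, Proposition~\ref{proposition:enclosure} builds the enclosure from the data in a way that is well defined up to deformation, and Lemma~\ref{lemma:isotopyoffibrations} then handles the entire isotopy of $((E,\pi),(Q,\rho,R),J)$. There is no separate obstacle to overcome. By invoking Corollary~\ref{corollary:semican} you are reaching forward to material that logically comes after this lemma in the paper (and whose main application, Theorem~\ref{theorem:splitskh}, actually \emph{uses} Lemma~\ref{lemma:shrinklink}); even though the corollary itself is proved independently, the localisation argument you sketch is far heavier than what the situation requires. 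Drop the third paragraph and you have the paper's proof.
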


\begin{figure}[h]
\begin{center}
\scalebox{0.45}{\input{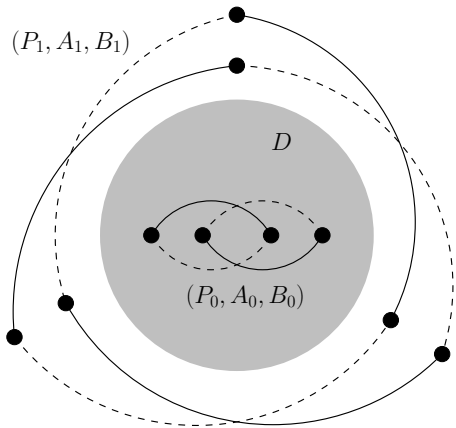}}
\end{center}
\caption{A diagram of a link with multiple unlinked parts arranged as specified in
Lemma~\ref{lemma:shrinklink} and Theorem~\ref{theorem:splitskh}.}
\label{fig:splitbridge}
\end{figure}

\begin{proof}
The first part is obvious. For the second part one just needs to check that
the $\mathbb{C}^*$--action on $\confbar{2n}$, rescaling all roots by a complex number,
preserves the property of a surface being admissible. The result comes from the isotopy
of admissible curves one gets by continuously performing the scaling and
composing on either end by the isotopies from the first part.
\end{proof}

\begin{theorem}
\label{theorem:splitskh}
Let $D$ be a disc centred at $0\in\mathbb{C}$.
Suppose we have a bridge diagram $(P,A,B)$ which splits as the union of
bridge diagrams $(P_0,A_0,B_0)$ supported on $D$ and $(P_1,A_1,B_1)$ supported
outside $D$.

Then $\skh (P,A,B)$ is isomorphic to the K\"unneth product of $\skh (P_0,A_0,B_0)$ and
$\skh (P_1,A_1,B_1)$. Furthermore, this isomorphism is canonical
up to composition with automorphisms of the $\skh (P_0,A_0,B_0)$--factor.
\end{theorem}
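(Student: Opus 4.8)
Proof proposal for Theorem~\ref{theorem:splitskh}.

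The plan is to reduce the splitting to the product formula for relative invariants of symplectic associated bundles developed in Section~\ref{section:twistprod}, applied to the local model of Corollary~\ref{corollary:semican}. First I would use Lemma~\ref{lemma:shrinklink} to rescale the inside-$D$ part $(P_0,A_0,B_0)$, so that $P_0\in\operatorname{Conf}^0_{2l}(D)$ with all roots arbitrarily close to $0$, while $P_1\in\overline{\operatorname{Conf}}^0_{2k}(\mathbb{C}\setminus D)$ stays fixed; this is a canonical isomorphism on $\skh$ (canonical chain homotopy class at the cochain level), so it does not affect the statement. Writing $P=P_1P_0$, for $P_0$ small enough the configuration $PX^{2l}$ used in Corollary~\ref{corollary:semican} is a good basepoint: Corollary~\ref{corollary:semican} then gives a local biholomorphism, compatible with $\chi$, between a neighbourhood of $X^l\sfibre{k}{P_1}$ in $\chi^{-1}(P_1\confbar{2l})\subset S_{k+l}$ and a neighbourhood of $\sfibre{k}{P_1}\times\{X^l\}$ in $\sfibre{k}{P_1}\times S_l$, with base map $Q\mapsto P_1Q$ and total-space map restricting to $(y,X^l)\mapsto yX^l$.

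Next I would identify the two Lagrangians $L_A,L_B\subset\sfibre{k+l}{P}$ through this local model. The iterated vanishing cycle construction for a bridge diagram splitting as a disjoint union factors: because $A=A_0\sqcup A_1$ with $A_1$ supported outside $D$ and $A_0$ supported on the tiny disc around $0$, the iterated vanishing path for $A$ may be taken to first collapse all of $A_1$'s endpoints (staying inside $P_1\confbar{2l}$, i.e. keeping the $X^{2l}$ factor untouched) producing $X^l\cdot(\text{iterated vanishing cycle of }A_1)$, and then collapse the $A_0$-curves inside the small disc, which by Corollary~\ref{corollary:semican} happens entirely in the $S_l$-factor and produces the iterated vanishing cycle $\Lambda_{A_0}\subset\sfibre{l}{P_0}$ (after rescaling, $\sfibre{l}{0}$). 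Thus under the local identification $L_A$ corresponds, up to compactly supported Hamiltonian isotopy, to $L_{A_1}\times\Lambda_{A_0}$ and similarly $L_B$ to $L_{B_1}\times\Lambda_{B_0}$; here $L_{A_1},L_{B_1}\subset\sfibre{k}{P_1}$ are the iterated vanishing cycles of $(P_1,A_1,B_1)$ and $\Lambda_{A_0},\Lambda_{B_0}\subset\sfibre{l}{P_0}$ those of $(P_0,A_0,B_0)$, so that $HF(\Lambda_{A_0},\Lambda_{B_0})=\skh(P_0,A_0,B_0)$ and $HF(L_{A_1},L_{B_1})=\skh(P_1,A_1,B_1)$.

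Then I would invoke Lemma~\ref{lemma:fibreproduct}, or more precisely the product formula for Floer cohomology of product Lagrangians inside a product of Stein manifolds (Lemma~\ref{lemma:twistedproductfh} in the case of a trivial line bundle $\mathcal{F}$, so $P\times_{S^1}E$ is literally a product, which is what Corollary~\ref{corollary:semican} provides here — there is no twisting since the split is an honest product): choosing $J$ split as $J_{\sfibre{k}{P_1}}\times J_{\sfibre{l}{P_0}}$ regular on each factor gives a bijection of moduli spaces of holomorphic strips, hence a canonical isomorphism of Floer cochain complexes $CF^*(L_A,L_B)\cong CF^*(L_{A_1},L_{B_1})\otimes CF^*(\Lambda_{A_0},\Lambda_{B_0})$, which is the asserted K\"unneth splitting at chain level. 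The main obstacle, and the one point that needs real care, is the dependence of this splitting on auxiliary choices — in particular the holomorphically-varying transverse slice in Proposition~\ref{prop:semisplit} and the ordering of the vanishing paths; changing these induces a compactly supported Hamiltonian isotopy of the pair $(\Lambda_{A_0},\Lambda_{B_0})$ inside $\sfibre{l}{P_0}$, hence an automorphism of the $\skh(P_0,A_0,B_0)$-factor, while the outside-$D$ factor $\skh(P_1,A_1,B_1)$ is pinned down canonically because $P_1$ and its matchings are fixed throughout. This is exactly the asymmetry — ``canonical up to composition with automorphisms of the $\skh(P_0,A_0,B_0)$-factor'' — stated in the theorem, and establishing it amounts to tracking that all the ambiguities in the construction are supported near $X^l\sfibre{k}{P_1}$ in the $S_l$-direction and produce no ambiguity in the $\sfibre{k}{P_1}$-direction, which follows from the compatibility of the local model with $\chi$ and Lemma~\ref{lemma:isotopyoffibrations}.
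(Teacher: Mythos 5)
Your overall frame matches the paper's (localise to the holomorphic model of Corollary~\ref{corollary:semican}, identify Lagrangians up to isotopy as products, apply a product formula for Floer cochain complexes), but you silently pass over the one step that carries virtually all of the technical weight and which the paper itself flags as the main difficulty: ``The proof I give below necessarily uses a more complicated argument to control the position of the Lagrangian iterated vanishing cycles to ensure they are contained in the region where $\Psi$ is defined.'' Your assertion that ``under the local identification $L_A$ corresponds, up to compactly supported Hamiltonian isotopy, to $L_{A_1}\times \Lambda_{A_0}$'' is exactly the claim that needs an argument, not a claim that can be waved through with ``the compatibility of $\Psi$ with $\chi$''. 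Compatibility with $\chi$ only controls the base data; the iterated vanishing cycle construction is built from symplectic parallel transport for the ambient K\"ahler form $\Omega_{k+l}$ restricted from $S_{k+l}$, and $\Psi$ does \emph{not} intertwine $\Omega_{k+l}$ with a product form $\Omega_k\oplus\Omega_l$. So there is no a priori reason that the relative vanishing cycle to $X^lL_{A_1}$ built with $\Omega_{k+l}$ stays inside the image of $\Psi$, nor that, even if it does, it is Hamiltonian isotopic to $\Psi(L_{A_1}\times\Lambda_{A_0})$.

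The paper bridges this gap with two concrete devices you do not supply a replacement for: (i) it takes $\rho_{k+l}=\rho_k + s\rho_V$ and shows that for $s$ large the entire enclosure $\rho_{k+l}^{-1}[0,4R]$ is squeezed into an arbitrarily small neighbourhood of $X^l S_k$, hence into the range of $\Psi$; and (ii) it carries out a five-stage linear interpolation of exact $1$--forms $\Theta_{k+l}\rightsquigarrow\Theta_{k+l}+\epsilon g\Psi_*\Theta_{\mathrm{split}}\rightsquigarrow\Theta_{\mathrm{flat}}+\epsilon g\Psi_*\Theta_{\mathrm{split}}\rightsquigarrow\Psi_*\Theta_{\mathrm{split}}$ together with careful choices of deformations controlling symplectic parallel transport, tracking at each stage that the data remain K\"ahler in an annulus $3R\leq\rho_{k+l}\leq 4R$ so that convexity (and hence compactness of moduli spaces) survives, and producing continuation maps that realise the Hamiltonian isotopy of Lagrangians. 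Without some avatar of this interpolation, the step from ``the vanishing path factors'' to ``the vanishing \emph{cycle} factors up to Hamiltonian isotopy'' is unjustified. (A secondary issue: invoking Lemma~\ref{lemma:twistedproductfh} with a trivial $\mathcal{F}$ for the final product splitting is unnecessarily baroque and somewhat orthogonal — Lemma~\ref{lemma:fibreproduct} or the ordinary K\"unneth splitting of Floer cochain complexes in a product of Stein manifolds is what is wanted once the localisation to the model region is in place.)
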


\begin{remark}
We only apply this theorem in cases where one factor has no torsion (e.g. it is an
unlinked union of unknots), so the K\"unneth product on cohomology
is an honest tensor product.
\end{remark}

We shall denote by $\Psi$ the local isomorphism given in Corollary~\ref{corollary:semican}
\[\sfibre{k}{P_1}\times S_l\to^\Psi \chi^{-1}(P_1\confbar{2l})\]

The idea is now to \emph{localise} the calculation of Floer cohomology
to the model region $\sfibre{k}{P}\times S_l$ with Lagrangians and exact
symplectic structures which respect the splitting. In very specific cases (where
$(P_0,A_0,B_0)$ is the simplest bridge diagram for a single unknot, or also a union
of such bridge diagrams), one can use
the same argument as for Lemma~41 of \cite{seidelsmith:khsymp}.

The proof of Theorem~\ref{theorem:splitskh} below necessarily uses a more
complicated argument to control the position of the Lagrangian iterated vanishing cycles
to ensure they are contained in the region where $\Psi$ is defined.
Otherwise it follows essentially the same argument.
We begin by outlining the steps of the proof.

We start with the following information:
\begin{itemize}
 \item Configurations $P_1\in\confbar{2k}$ and $P_0\in\confbar{2l}$,
 \item Iterated vanishing paths $\gamma_{A_1},\gamma_{B_1}$ in $\confbar{2k}$
	representing the crossingless matching $(P_1,A_1,B_1)$,
 \item Iterated vanishing paths $\gamma_{A_0},\gamma_{B_0}$ in $\confbar{2l}$
	representing the crossingless matching $(P_0,A_0,B_0)$,
 \item Any choice of exhausting plurisubharmonic functions $\rho_l$, $\rho_k$
and $\rho_{k+l}$ on $S_l$, $S_k$, $S_{k+l}$ respectively, such that $\rho_k$ is
the restriction of $\rho_{k+l}$ to $X^lS_k\subset S_{k+l}$,
 \item Exact K\"ahler structures
	\( \Omega_k=d\Theta_k,\Omega_l=d\Theta_l,\Omega_{k+l}=d\Theta_{k+l} \)
	defined by the above plurisubharmonic functions,
 \item Our favourite choices of Lagrangian iterated vanishing cycles
	$K_A,K_B\subset\sfibre{k}{P_1}$ over paths $\gamma_{A_1},\gamma_{B_1}$ respectively.
\end{itemize}
Abusing notation, we refer to the restrictions of $\rho_k,\Omega_k,\Theta_k$ to
$\sfibre{k}{P_1}$ by the same names where it is clear from context
which is meant. Similarly, we refer to the restrictions of $\rho_{k+l},\Omega_{k+l},\Theta_{k+l}$
to $\chi^{-1}(P_1\confbar{2l})$ by the same names.

The vanishing paths $\gamma_{A_1}$ and $P_1\gamma_{A_0}$ compose to give an iterated
vanishing path $\gamma_A$ in $\confbar{2(k+l)}$. We refer to this as the \emph{composite}.
One defines $\gamma_B$ similarly.
The symplectic Khovanov homology cochain complex $\cskh (P,A,B)$ may be chosen
to be the Floer cochain complex for Lagrangian submanifolds $L_A,L_B\subset\sfibre{k}{P}$
defined as iterated vanishing cycles along the vanishing paths $\gamma_A,\gamma_B$ respectively.

It will be necessary to explicitly specify the deformations performed to
control symplectic parallel transport over the paths $\gamma_{A_0}$ and $\gamma_{B_0}$.
Smoothly varying these choices will then give continuation maps between Floer cohomology groups.
We may suppose (by isotopy of vanishing paths) that both $\gamma_{A_0}$ and $\gamma_{B_0}$ are the composite
of smooth paths in $\conf{2l}$ with the vanishing path constructed as a piecewise smooth composite
of linear paths $X^{2(n-1)}\prod_{i=n}^l(X^2-\epsilon_i)$ to $X^{2n}\prod_{i=n+1}^l(X^2-\epsilon_i)$
for some collection of real numbers $0<\epsilon_1\ll\epsilon_2\ll\ldots\ll\epsilon_l\ll 1$.
The iterated vanishing cycle construction
(cf. the relative vanishing cycle construction, Lemma~32 of \cite{seidelsmith:khsymp})
is well-defined using honest (neither deformed, nor rescaled) symplectic parallel
transport along this composite of linear paths when each $\epsilon_i$ is chosen sufficiently small
in terms of $\epsilon_{i-1}$ and the K\"ahler metric defined near $X^l\in S_l$.

To continue the construction along the rest of $\gamma_{A_0}$ and $\gamma_{B_0}$
it is in general necessary to deform symplectic parallel
transport (see Remark~\ref{remark:pathspt}).
We now deform the pullback $\gamma_{A_0}^*\Theta_l$
on the fibration $\gamma_{A_0}^*S_l$.
This deformation, performed on fibres away from a small neighbourhood of $0\in[0,1]$, suffices to
give a well-defined iterated vanishing cycle
in $\sfibre{l}{P_0}$.
The same works for $\gamma_{B_0}$.

We can view $K_{A_1},K_{B_1}$ as an intermediate step in the iterated vanishing cycle
construction for $\gamma_A,\gamma_B$ respectively, since these paths split with first sections
$\gamma_{A_1},\gamma_{B_1}$. One can continue the construction with deformed one forms
$\Theta_{A_0},\Theta_{B_0}$
to control symplectic parallel transport over the remainder of the paths
(i.e.\ over $P_1\gamma_{A_0},P_1\gamma_{B_0}$).
We denote the resulting Lagrangians in the fibre $\sfibre{k+l}{P}$ by 
\[L_A(\Theta_{k+l},\Theta_{A_0}),L_B(\Theta_{k+l},\Theta_{B_0}).\]

In general, for any choice of $\Theta$ and necessary deformations of it, the Lagrangians
$L_A(\Theta,\Theta_{A_0})$, $L_B(\Theta,\Theta_{B_0})$ depend only on
a neighbourhood containing the Lagrangians at each stage of the construction.
Suppose for some reason that all stages of the construction, starting with $K_A$ or $K_B$,
are guaranteed to remain entirely within the neighbourhood of $X^l\sfibre{k}{P_1}$ on which $\Psi$ is defined,
then $\Theta$ and the deformations need only be defined in that neighbourhood of $X^l\sfibre{k}{P_1}$.
Of particular interest are those $\Theta$ which respect the
splitting $\sfibre{k}{P_1}\times S_l$ together with deformations which change only the $S_l$ part of $\Theta$.

To prove Theorem~\ref{theorem:splitskh} one adjusts the choices of K\"ahler forms
(this gives canonical isomorphisms on cohomology
and corresponding homotopy equivalences on the cochain complexes).
Then the proof proceeds in two stages. Firstly one deforms $\Theta_{k+l}$ and
$\Theta_{A_0},\Theta_{B_0}$ until the Lagrangian iterated vanishing cycles are contained
inside a holomorphically convex neighbourhood on which $\Psi$ is defined. This
induces a `continuation map' on the Floer cochain complex. Then one
performs a deformation of exact symplectic forms (and the various 1--forms),
defined only in the convex region, ending with the pushforward under $\Psi$ of
$\Theta_k +\Theta_l$ (with deformations only of the $\Theta_l$ part). This gives
a second continuation map. The composite with the first continuation map is the cochain
homotopy equivalence given in the theorem. Some care is needed to show
the manner in which it is `canonical'.

\begin{proof}[Proof of Theorem~\ref{theorem:splitskh}]

The slice $S_{k+l}$ can be identified with $\mathbb{C}^{4(k+l)-1}$ (see either definition),
with the origin corresponding to $X^{k+l}$ and such that $X^lS_k$ is a linear subspace.
We decompose $S_{k+l}\cong X^lS_k\oplus V$, where $V$ is a linear complement
to $X^lS_k$. Now we choose the exhausting plurisubharmonic function $\rho_k$ on $S_k$ and
any exhausting plurisubharmonic function $\rho_V$ on $V$.

Consider the family of plurisubharmonic functions $\rho_k+s\rho_V$ for $s>0$.
This restricts as $\rho_k$ to $X^lS_k$ so, taking $\rho_{k+l}=\rho_k+s\rho_V$ for some such $s$,
the Lagrangian submanifolds $K_{A_1},K_{B_1}\subset\sfibre{k}{P_1}$
are well-defined independently of $s$.
Furthermore, we can choose some $R$ (independent of $s$) such that $X^lK_{A_1},X^lK_{B_1}$
lie strictly inside the $\rho_{k+l}=R$ level-set.

Choosing $s$ large enough, we can ensure that $\rho_{k+l}^{-1}[0,4R]$
is confined to any particular open neighbourhood of $S_k$ in $S_{k+l}$.
Now consider the local submanifolds $\Psi(\{y\}\times S_l)$ for $y$ in $\sfibre{k}{P_1}$.
These cannot be
tangent at $X^ly$ to $X^lS^k$ (they are not tangent to $X^l\sfibre{k}{P_1}$ and other tangencies
to $X^lS_k$ would imply that their projection (Proposition~\ref{prop:semisplit})
to $\confbar{2l}\times\mathfrak{\xi}$ varies in the $\mathfrak{\xi}$ direction).
Hence, by compactness of $\rho_{k+l}^{-1}[0,4R]$, we can also ensure that
$\rho_{k+l}^{-1}[0,4R]\cap\chi^{-1}(P_1\confbar{2l})$ is contained within
an arbitrarily small
neighbourhood of $X^l\sfibre{k}{P_1}$. Let $s$ be any number large enough such that
this small neighbourhood lies within the range of $\Psi$.

We now choose a candidate $\tilde\rho_l$ for the function $\rho_l$ on $S_l$ which is
$\vecnorm{\mathbf{z}}^2$ w.r.t. the natural linear coordinates one has from
entries of the matrices
$A_1\in \mathfrak{sl}_n(\mathbb{C})$ and $A_2,\ldots,A_n\in\mathfrak{gl}_n(\mathbb{C})$
defining $S_l$.
This gives an exhausting plurisubharmonic function
$\tilde\rho_{split}:=\rho_k+\tilde\rho_l$ on $\sfibre{k}{P_1}\times S_l$.
Scaling up $\tilde\rho_l$ by a large enough constant factor,
we can ensure that the $R$ level-set of $\tilde\rho_{split}$
in $\sfibre{k}{P_1}\times S_l$ maps by $\Psi$ to within the $2R$ level-set
of $\rho_{k+l}$.
This level-set necessarily contains $K_{A_1}\times \{X^l\}$ and $K_{B_1}\times \{X^l\}$.

The enclosures used in the localisation argument will be $(\rho_{k+l},4R)$ and
$(\tilde\rho_{split},R)$. In the appropriate setting they will be shown to be equivalent.

Now we consider in detail the process by which the Lagrangian iterated vanishing cycles
are defined, starting from $K_{A_1},K_{B_1}$. By comparison to a region of
$\sfibre{k}{P_1}\times S_l$ and choice of $\rho_{split}=\rho_l+\rho_k$, we show
how to deform the exact symplectic structure on the range of $\Psi$ and how to choose
the deformations controlling symplectic parallel transport, such that the iterated
vanishing cycles are contained in the image under $\Psi$ of the
$R$ level-set of $\tilde\rho_{split}$.

We begin by considering the paths $\gamma_{A_0},\gamma_{B_0}$ in $\confbar{2l}$ and
the iterated vanishing cycle construction in $S_l$ over them. We start with the exact K\"ahler form
\[\tilde\Omega_l:=d\tilde\Theta_l:=d(-d\tilde\rho_l\circ i)\] on $S_l$, and with
deformations $\tilde\Theta_{l,A_0},\tilde\Theta_{l,B_0}$ of 1--forms over the vanishing paths necessary for
the iterated vanishing cycle construction.
In general, the Lagrangian iterated vanishing cycles in $S_l$ one
defines in this way are not confined to the region where $\Psi$ is defined. We \emph{fix} this by
rescaling, using a holomorphic $\mathbb{C}^*$--action compatible with the fibration $S_l$.
For $\lambda\in\mathbb{C}^*$, this acts on $S^l$ as:
\[\func{\lambda}{X^nI-\sum_{i=1}^{n} X^{n-i}A_i}{X^nI-\sum_{i=1}^{n} X^{n-i}\lambda^iA_i}\]
and on $\confbar{2l}$ by multiplying all roots by $\lambda$. We will only be interested in
sufficiently small positive real values of $\lambda$.

We define a new exhausting plurisubharmonic function $\tilde\rho^\lambda_l$
as the pushforward by the action of $\lambda$. Similarly, we get $\tilde\Theta_l^\lambda$ and
deformations $\tilde\Theta_{l,\lambda(A_0)},\tilde\Theta_{l,\lambda(B_0)}$.
These deformations are defined over new iterated vanishing paths
\[\gamma_{\lambda(A_0)}:=\lambda(\gamma_{A_0}),\qquad\qquad\gamma_{\lambda(B_0)}:=\lambda(\gamma_{B_0}),\]
representing the scaled crossingless matchings $\lambda(A_0),\lambda(B_0)$ respectively.
The Lagrangian iterated vanishing cycles also respect this pullback, namely:

\[
L_{\lambda(A_0)}(\tilde\Theta_l^\lambda,\tilde\Theta_{l,\lambda(A_0)}) =
	\lambda(L_{A_0}(\tilde\Theta_l,\tilde\Theta_{l,A_0}))
\]
\[
L_{\lambda(B_0)}(\tilde\Theta_l^\lambda,\tilde\Theta_{l,\lambda(B_0)}) =
	\lambda(L_{B_0}(\tilde\Theta_l,\tilde\Theta_{l,B_0}))
\]

Choose $\lambda$ small enough that the new iterated vanishing cycles are contained
within so small a level set that the products
$K_{A_1}\times L_{\lambda(A_0)}(\tilde\Theta_l^\lambda,\tilde\Theta_{l,\lambda(A_0)})$
and
$K_{B_1}\times L_{\lambda(B_0)}(\tilde\Theta_l^\lambda,\tilde\Theta_{l,\lambda(B_0)})$
lie within the enclosure $(\tilde\rho_{split},R)$ in $\sfibre{k}{P_1}\times S_l$.
Now we define $\rho_l:=\rho_l^\lambda$ and $\rho_{split}:=\rho_l+\rho_k$.
Importantly, these are plurisubharmonic for the same
complex structure as $\tilde\rho_l,\tilde\rho_{split}$ respectively.
This also specifies $\Theta_l$ and $\Theta_{split}=\Theta_l+\Theta_k$. Similarly,
$\Theta_{split,A_0},\Theta_{split,B_0}$ are given by adding $\Theta_k$ to the
deformations $\Theta_{l,A_0},\Theta_{l,B_0}$. These define relative vanishing cycles to
$K_{A_1},K_{B_1}$ respectively, which split the products written above.
By careful isotopy of $\Theta_{k+l}$ to make it equal to $\Psi_*\Theta_{split}$ within
the image under $\Psi$ of the enclosure $(\tilde\rho_{split},R)$, we shall control
the position of the iterated Lagrangian vanishing cycles in $S_{k+l}$ in the required manner.

It is helpful to define one more exact 2--form $\Omega_{flat}=d\Theta_{flat}$
before defining an isotopy of exact symplectic forms relating $\Omega_{k+l}$ and $\Psi_*\Omega_{split}$.
For a suitable choice of smooth function
\footnote{
It suffices to choose $h$ such that $h(x)=0$ for $x\leq 2R$ and $h^\prime(x)>0,h^{\prime\prime}(x)\geq 0$ for $x>2R$.
Then for any vector $V\neq 0$ we have:
\(-d\left(d(h\circ\rho)\circ i\right)(V,iV)=\frac{h^{\prime\prime}(\rho)}{2}\left(d\rho(V)^2
  +d\rho(iV)^2\right)+h^\prime(\rho)\left(-d(d\rho\circ i)(V,iV)\right)>0\)
}
$\func{h}{\mathbb{R}}{\mathbb{R}}$, the function
$\rho_{flat}:=h(\rho_{k+l})$ vanishes where $\rho_{k+l}\leq 2R$, and is plurisubharmonic
elsewhere. We define $\Theta_{flat}=-d\rho_{flat}\circ i$.

Let $\func{g}{\mathbb{R}}{[0,1]}$ be any smooth function such that $g(t)=1$ for $t\leq 2R$ and
$g(t)=0$ for $t\geq 3R$. Then $g\Psi_*\Theta_{split}$ extends to a 1--form on all of
$\chi^{-1}(P_1\confbar{2l})$. Consider the compact family
\[d(r\Theta_{k+l}+s\Theta_{flat}+t\Psi_*\Theta_{split}+u\epsilon g\Psi_*\Theta_{split})\]
\noindent of 2--forms where $r,s,t,u\in[0,1]$ and $\max\{r,s,t\}\geq \frac{1}{2}$.
The space of all K\"ahler forms is convex, so these are K\"ahler for $\rho_{k+l}\leq 2R$
(and for trivial reasons for $\rho_{k+l}\geq 3R$). Non-degeneracy of
2--forms is an open condition, so there exists some $\epsilon>0$ small enough
such that the 2--forms are all symplectic.

We are now ready to define the isotopy of exact symplectic forms from
$\Omega_{k+l}$ to $\Psi_*\Omega_{split}$.
Namely, we take the linear isotopies between the following 1--forms (in the given order):

\begin{enumerate}[(1)]
 \item $\Theta_{k+l},$
 \item $\Theta_{k+l}+\epsilon g\Psi_*\Theta_{split},$
 \item $\Theta_{flat}+\epsilon g\Psi_*\Theta_{split},$
 \item $\Psi_*\Theta_{split}+\epsilon g\Psi_*\Theta_{split},$
 \item $\Psi_*\Theta_{split}.$
\end{enumerate}

From stage 1 to stage 3, this is an isotopy of exact symplectic forms defined on
all of $\chi^{-1}(P_1\confbar{2l})$ which are equal to $\Omega_{k+l}$ where $\rho_{k+l}\geq 3R$.
Then it continues to stage 5 as an isotopy of forms defined only where $\rho_{k+l}< 4R$.

We choose, along with the 1-forms from stages 1 to 3, smoothly varying families
of deformations over the paths
$P_1\lambda(\gamma_{A_0}),P_1\lambda(\gamma_{B_0})$ to control symplectic parallel transport.
These deformations are chosen to be supported where $\rho_{k+l}>4R$.
From stage 3 onwards we can use the deformations $\Theta_{split,A_0},\Theta_{split,B_0}$
of whichever constant multiple of $\epsilon g\Psi_*\Theta_{split}$
we have where $\rho_{k+l}<2R$.

The transition to these deformations at stage 3 is no problem.
Namely, the deformations given by $\Theta_{split,A_0},\Theta_{split,B_0}$ force
all stages of the construction
of the relative vanishing cycles to $K_{A_1},K_{B_1}$ to be contained in $\rho_{k+l}^{-1}[0,2R]$.
Hence, we can remove the previous deformations (by isotopy) without changing the Lagrangians. In fact, the
relative vanishing cycles are now the images under $\Psi$ of the product Lagrangians
$K_{A_1}\times L_{\lambda(A_0)}(\tilde\Theta_l^\lambda,\tilde\Theta_{l,\lambda(A_0)})$
and
$K_{B_1}\times L_{\lambda(B_0)}(\tilde\Theta_l^\lambda,\tilde\Theta_{l,\lambda(B_0)})$
described earlier.

$\Theta_{flat}+\epsilon g\Psi_*\Theta_{split}$ is K\"ahler for the complex structure we
have on $\chi^{-1}(P_1\confbar{2l})$ outside of the $\rho_{k+l}=3R$ level-set, so none
of the holomorphic strips defining the differential in the Floer cochain complex leaves
the $\rho_{k+l}\leq 4R$ locus. For the remaining stages, the forms remain K\"ahler
where $3R\leq\rho{k+l}\leq 4R$, so we can get away with calculating Floer cohomology
only within this neighbourhood. Also from stage 3 onwards we always have a constant
multiple of $\Psi_*\Theta_{split}$ where $\rho_{k+l}<2R$, so we may continue
to use the same deformations to control symplectic parallel transport. This means that
the Lagrangian relative vanishing cycles do not change.

Finally, once we reach the end of this isotopy of exact symplectic structures, we
can compare the Floer cochain complex to
\[
CF^*(
	\Psi^{-1}(K_{A_1})\times 
	L_{\lambda(A_0)}(\tilde\Theta_l^\lambda,\tilde\Theta_{l,\lambda(A_0)}),
	\Psi^{-1}(K_{B_1})\times 
	L_{\lambda(B_0)}(\tilde\Theta_l^\lambda,\tilde\Theta_{l,\lambda(B_0)}))
\]

\noindent within $\sfibre{k}{P_1}\times S_l$. We find that nothing has changed,
since all holomorphic strips used to define the differential here lie within
the $R$ level-set of $\rho_{split}$ and this is contained in the pre-image under $\Psi$
of the $\rho_{k+l}\leq 4R$ region of $\chi^{-1}(P_1\confbar{2l})$. All the data
defining this splits, so the cochain complex splits as a tensor product:
\[
CF^*(\Psi^{-1}(K_{A_1}),\Psi^{-1}(K_{B_1}))
\otimes
CF^*(L_{\lambda(A_0)}(\tilde\Theta_l^\lambda,\tilde\Theta_{l,\lambda(A_0)}), 
	L_{\lambda(B_0)}(\tilde\Theta_l^\lambda,\tilde\Theta_{l,\lambda(B_0)}))
\]

Using the continuation maps associated to the isotopies of exact symplectic structures
and simultaneous isotopy of the Lagrangians, we find this is isomorphic to the following:
\[
\cskh (P_1,A_1,B_1)\otimes \cskh (\lambda(P_0),\lambda(A_0),\lambda(B_0))
\]
which, by the continuation map induced by varying the rescaling parameter from $1$ to
$\lambda$ as in Lemma~\ref{lemma:shrinklink}, is isomorphic to
\[
\cskh (P_1,A_1,B_1)\otimes \cskh (P_0,A_0,B_0)
\]

It remains only to describe how these isomorphisms relate to the canonical isomorphisms
on symplectic Khovanov homology (and to give the cochain level version of this).
The cochain complex $CF^*(\Psi^{-1}(K_{A_1}),\Psi^{-1}(K_{B_1}))$
is canonically isomorphic to $\cskh (P_1,A_1,B_1)$.
However, for $(P_0,A_0,B_0)$ the corresponding statement is not
quite true. The $S_l$ factor in $\sfibre{k}{P_1}\times S_l$ is only identified
up to some automorphism of $S_l$.
Hence,
\[CF^*(L_{\lambda(A_0)}(\tilde\Theta_l^\lambda,\tilde\Theta_{l,\lambda(A_0)}), 
L_{\lambda(B_0)}(\tilde\Theta_l^\lambda,\tilde\Theta_{l,\lambda(B_0)}))\] is
identified with $\cskh (P_0,A_0,B_0)$ up to an automorphism of cochain complexes.

At the other end of the construction we have some choice of
$\cskh (P,A,B)$. Any two choices are related by
a chain homotopy equivalence inducing the canonical isomorphisms
on cohomology. The isotopy of exact symplectic structures given above
induces a well-defined continuation map
\[\cskh (P,A,B)\to \cskh (P_1,A_1,B_1)\otimes \cskh (P_0,A_0,B_0)\]
since the space of choices involved was connected.
\end{proof}

\begin{lemma}
\label{lemma:splitmap}
Let $D$ be a disc centred at $0\in\mathbb{C}$.
Suppose we have a bridge diagram $(P,A,B)$ which splits as in
Theorem~\ref{theorem:splitskh} as the union of bridge diagrams
$(P_1,A_1,B_1)$ supported away from $0$ and $(P_0,A_0,B_0)$ supported inside $D$.

Let $u$ be an admissible map from a disc $B$ into $\confbar{2(k+l)}$
with image contained in
\[P_1\operatorname{Conf^0_{2l}}(D)\subset\confbar{2(k+l)}\]
Suppose also that $u$ has a single input marked point
mapping to $P_1P_0$ and output marked point mapping to some $P_1P_0^\prime$.
We write $(P^\prime,A^\prime,B^\prime)$ for the bridge diagram given by the
output end of the saddle cobordisms induced by $u$. It is the union of
$(P_1,A_1,B_1)$ with some bridge diagram $(P_0^\prime,A_0\prime,B_0\prime)$
supported inside $D$.

Then the relative invariant factors up to homotopy through the
splittings (as K\"unneth products) of $\skh $ at each end given by Theorem~\ref{theorem:splitskh}

\[
\bfig

	\node a(0,800)[\skh (P,A,B)]
	\node b(2000,800)[\skh (P^\prime,A^\prime,B^\prime)]
	\node c(0,0)[\begin{array}{c}
		\skh (P_1,A_1,B_1)\\
		\otimes \\
		\skh (P_0,A_0,B_0)
	\end{array}]
	\node d(2000,0)[\begin{array}{c}
		\skh (P_1,A_1,B_1)\\
		\otimes \\
		\skh (P_0^\prime,A_0^\prime,B_0^\prime)
	\end{array}]

	\arrow[a`b;f_u]
	\arrow[a`c;]
	\arrow[b`d;]
	\arrow[c`d;f_1\otimes f_0]
 \efig
\]

Furthermore, the bottom map splits w.r.t. this product as $f_1\otimes f_0$
where $f_1$ is the identity on $\skh (P_1,A_1,B_1)$ and $f_0$ the map on
$\skh (P_0,A_0,B_0)$ induced by considering
$u$ as a map to $\operatorname{Conf^0_{2l}}(D)$.
\end{lemma}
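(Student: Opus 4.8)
The plan is to combine the splitting construction of Theorem~\ref{theorem:splitskh} with the product formula for relative invariants (Lemma~\ref{lemma:fibreproduct}), in the same spirit as the proof of Lemma~\ref{lemma:twistedproduct}, by running the localisation argument \emph{family-wise} over the base disc $B$ of the admissible map $u$ rather than over a single fibre. The key observation is that the admissible map $u$ takes values in $P_1\operatorname{Conf^0_{2l}}(D)$, so by Corollary~\ref{corollary:semican} the pullback fibration $u^*S_{k+l}$, restricted to a suitable convex region, is biholomorphic via $\Psi$ (applied fibrewise over $B$) to a region in $\sfibre{k}{P_1}\times (u_0^*S_l)$, where $u_0\co B\to\operatorname{Conf^0_{2l}}(D)$ is $u$ viewed as a map of configurations of the $2l$ moving points. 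In particular the exact MBL--fibration $u^*S_{k+l}$ splits, after the deformations, as a smooth fibre product over $B$ of the trivial fibration with fibre $\sfibre{k}{P_1}$ and the exact MBL--fibration $u_0^*S_l$.

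First I would set up the enclosed exact Lagrangian boundary conditions on $u^*S_{k+l}$ so that they split. The boundary condition is built (as in Section~\ref{section:cobordisms}) by extending the Lagrangian iterated vanishing cycles $L_A,L_B\subset\sfibre{k+l}{P}$ along $\partial B$ by symplectic parallel transport. Exactly as in the proof of Theorem~\ref{theorem:splitskh}, I would first choose the plurisubharmonic functions $\rho_k+s\rho_V$ on $S_{k+l}$ with $s$ large, then rescale the $S_l$--factor by a small $\lambda\in(0,1)^*$ (using the holomorphic $\mathbb{C}^*$--action compatible with $S_l$, as in Lemma~\ref{lemma:shrinklink}), arranging that all stages of the iterated vanishing cycle construction — now performed over all of $\partial B$, using the deformations of Lemma~\ref{lemma:sptdef} supported where $\rho_{k+l}$ is large — stay inside the convex region where $\Psi$ is defined and where the symplectic form has been isotoped to $\Psi_*(\Theta_k+\Theta_l)$. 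Since $u(\partial B)\subset P_1\operatorname{Conf^0_{2l}}(D)$ and $(P_1,A_1,B_1)$ is untouched along the whole boundary, the $\sfibre{k}{P_1}$--component of the boundary condition is \emph{constant} in the $B$--direction, i.e. the product boundary condition $Q = (K_{A_1}\times Q_{0,A})$ on the $I^-$ side and $(K_{B_1}\times Q_{0,B})$ on the $I^+$ side, where $Q_{0,\bullet}$ is the boundary condition on $u_0^*S_l$ giving $f_0$. The same isotopy-of-exact-symplectic-forms argument (stages (1)--(5)) as in Theorem~\ref{theorem:splitskh} produces continuation maps identifying the relative invariant of the original fibration with that of the split fibration; crucially, because the deformation region is disjoint from the ends, Lemma~\ref{lemma:isotopyoffibrations} and Lemma~\ref{lemma:switchenc} guarantee the relative invariant is unchanged up to the canonical isomorphisms over the ends, which are precisely the splitting isomorphisms of Theorem~\ref{theorem:splitskh}.

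Next I would apply Lemma~\ref{lemma:fibreproduct} to the split fibration. The fibration over $B$ is the smooth fibre product of the trivial fibration $\sfibre{k}{P_1}\times B$ (with constant product boundary condition $K_{A_1}$ on one edge, $K_{B_1}$ on the other, enclosure $\rho_k$) and $u_0^*S_l$ (with boundary condition defining $f_0$, enclosure $\rho_l^\lambda$ and its deformations); the combined enclosure $\rho_{split}=\rho_k+\rho_l^\lambda$ is $C^0$--close to $\max\{\rho_k,\rho_l^\lambda\}$ on the relevant shell, as required. Lemma~\ref{lemma:fibreproduct} then gives that $C\Phi_0^{rel}$ of the product is the tensor product of the two factor invariants. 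The first factor is a trivial MBL--fibration over the disc $B$ with \emph{constant} product boundary conditions, and by the same reasoning as the last Lemma of Section~\ref{section:holomorphic} (trivial fibrations induce the identity) its relative invariant is the identity map on $CF^*(K_{A_1},K_{B_1}) = \cskh(P_1,A_1,B_1)$; here one uses that $B$ is a disc with two boundary marked points, so that after filling in one recovers the strip and the only isolated sections are constant. The second factor is, by construction of $\Psi$ and $u_0$, exactly the relative invariant computing $f_0\co\skh(P_0,A_0,B_0)\to\skh(P_0^\prime,A_0^\prime,B_0^\prime)$, up to the rescaling-by-$\lambda$ continuation map of Lemma~\ref{lemma:shrinklink}. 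Assembling: $f_u$, conjugated by the splitting isomorphisms, equals $f_1\otimes f_0$ with $f_1 = \operatorname{id}$.

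The main obstacle, as in Theorem~\ref{theorem:splitskh}, is the \emph{uniformity of the convexity estimates over the whole base $B$}: one must choose the parameters $s$ and $\lambda$ and the deformations of $\Theta_{k+l}$ so that the symplectic parallel transport extending $L_A,L_B$ over all of $\partial B$ — not just near a single fibre — together with all the holomorphic strips and sections, stays confined to the region where $\Psi$ is defined and where everything splits. Since $u(B)$ is compact and contained in $P_1\operatorname{Conf^0_{2l}}(D)$, a compactness argument (as in the choice of $\epsilon,\delta$ in Section~\ref{section:transverseQ} and the choice of $s$ in Theorem~\ref{theorem:splitskh}) handles this, but it requires care to check that the enclosures $(\rho_{k+l},4R)$ and $(\Psi_*\rho_{split},R)$ remain equivalent in the sense of Definition~\ref{def:equivenc} uniformly over $B$, so that Lemma~\ref{lemma:switchenc} applies. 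A secondary technical point is verifying that the continuation maps from the isotopy of exact symplectic forms genuinely factor through the product decomposition on the \emph{cochain} level, which follows because the isotopy can be chosen to respect the splitting within the enclosure $\rho_{k+l}<2R$ exactly as in the proof of Theorem~\ref{theorem:splitskh}; hence the homotopy-commutativity of the square, and the splitting of the bottom map as $f_1\otimes f_0$, hold at the level of chain complexes, with the mild non-canonicity confined (as stated) to automorphisms of the $\skh(P_0,A_0,B_0)$--factor coming from the automorphism of $S_l$.
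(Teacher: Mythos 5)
Your overall strategy — deform the exact one-form until the fibration splits over the image of $\Psi$, then invoke Lemma~\ref{lemma:fibreproduct} to split the relative invariant as a tensor product, with the trivial first factor giving the identity and the second giving $f_0$ — is in the same spirit as the paper's proof and is the right shape of argument. However, you have misidentified the main obstacle, and as a result there is a genuine gap in the middle of your argument.

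You invoke Lemma~\ref{lemma:isotopyoffibrations} to assert that the isotopy of exact symplectic forms (stages (1)--(5) from Theorem~\ref{theorem:splitskh}), carried out fibrewise over $B$, leaves the relative invariant unchanged up to continuation maps at the ends. But Lemma~\ref{lemma:isotopyoffibrations} requires that ``the data remains valid at all stages'' for the definition of the relative invariant — in particular, condition (9) of Definition~\ref{definition:exactmbl}: $\Omega$ must be K\"ahler on a neighbourhood of $\crit\pi$. This is precisely where the argument of Theorem~\ref{theorem:splitskh} fails to transplant naively. Over the strip (no singular fibres) there is no $\crit\pi$ to worry about, so the intermediate forms $\Theta_{flat}$, $\Theta_{flat}+\epsilon g\Psi_*\Theta_{split}$, etc. are harmless even though $d\Theta_{flat}$ degenerates entirely on the region $\rho_{k+l}\le 2R$. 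Over the disc $B$ carrying singular values of $u$, however, the singular locus of $u^*S_{k+l}$ lies \emph{inside} that region, and the pullbacks of $d(\Theta_{flat}+\epsilon g\Psi_*\Theta_{split})$ (and other intermediate forms) are not K\"ahler near $\crit\pi$ — they are not even non-degenerate on a neighbourhood. So at those stages of the isotopy you do not have an exact MBL--fibration at all, and the lemma you cite does not apply. Your stated ``main obstacle'' — uniformity of the convexity estimates over $B$ and equivalence of enclosures — is a real but secondary concern and does not touch this.

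The paper's proof confronts this directly: one represents $u$ by a tree construction of holomorphic discs (Section~\ref{section:flattening}), supports the deformation controlling symplectic parallel transport within radial distance $\epsilon/2$ of each disc boundary (away from all singular values), and linearly interpolates the offending one-form $\Theta_{flat}+\epsilon g\Psi_*\Theta_{split}$ back to $\Theta_{k+l}$ (or any convex combination of $\Theta_{k+l}$ and $\Theta_{split}$) on the annulus of radial distance $[\epsilon/2,\epsilon]$. This way the one-form near each singular value remains an honest K\"ahler form, Definition~\ref{definition:exactmbl} is satisfied throughout the isotopy, and only then can one invoke the invariance of the relative invariant under deformation and pass to the product splitting. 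Without this step — or some alternative that keeps the data MBL-valid near $\crit\pi$ — your appeal to Lemma~\ref{lemma:isotopyoffibrations} is unjustified.
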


\begin{remark}
We use the same splitting to define, up to an overall sign ambiguity, chain maps induced by
creation and annihilation cobordisms (see Section~\ref{section:crean}).
Therefore, up to this sign ambiguity, the splitting of maps described above
in fact holds for general cobordisms supported near $0\in\mathbb{C}$.
\end{remark}

\begin{proof}
This is an adaptation of the proof of Theorem~\ref{theorem:splitskh}. One begins with a
tree construction (as in Section~\ref{section:flattening}) representing $u$.

Throughout the argument one must then consider the deformations of one forms over
iterated vanishing paths leading to $P=P_1P_0$ and also all discs in the tree construction.
The immediate problem is that the pullback of $d(\Theta_{flat}+\epsilon g\Psi_*\Theta_{split})$ is not
an allowed choice of 2--form over any of these discs containing a singular value, since
it is not in general K\"ahler near the singular locus. The same problem occurs for other of the
forms used in the argument.

Singular values are isolated in the interior of the discs, and we only need
control of symplectic parallel transport around the boundaries, so we can
perform the following correction.

First, choose the deformation controlling symplectic parallel transport to be supported
within distance $\epsilon/2$ of the boundary of the disc. Then between distance $\epsilon/2$
and $\epsilon$ of the boundary one linearly interpolates (w.r.t. the radial coordinate
on the disc) between the offending 1--form $\Theta_{flat}+\epsilon g\Psi_*\Theta_{split}$
and the 1--form $\Theta_{k+l}$.

The same trick can be performed with $\Theta_{k+l}$ replaced by $\Theta_{split}$ or any
convex combination of the two. Hence the entire localisation argument of
Theorem~\ref{theorem:splitskh} works for tree constructions as well.
\end{proof}

Suppose $P=P(X)\in\confbar{2n}$. We can translate all the roots of $P$ by $\mu\in\mathbb{C}$
by a change of variables replacing $P(X)$ by $P(X-\mu)$. Using this trick, we can map
$\operatorname{Conf}^0_{2k}(\mathbb{C}\setminus\{(1+\frac{k}{l})\mu\})$ into
$\confbar{2(k+l)}$ by
\[P(X)\mapsto (X-\mu)^{2l}P(X+\frac{l}{k}\mu)\]
such that the image is precisely the subset of $\confbar{2(k+l)}$ of polynomials
with root $\mu$ of multiplicity $2l$.

There is a map of fibrations $S_k\to S_{k+l}$ over this map which has a similar description.
Suppose $y(X)\in S_{k}$, then $y(X)$ is a polynomial of degree $k$ with matrix coefficients.
We define the map of fibrations as
\[y(X)\mapsto (X-\mu)^ly(X+\frac{k}{l}\mu)\]

This is an affine linear map (in terms of the entries of all the matrix coefficients of $y(X)$).
Restricting to fibres of $S_k$ over $\operatorname{Conf}^0_{2k}(\mathbb{C}\setminus\{(1+\frac{k}{l})\mu\})$,
this map has image in the singular locus of the fibres of $S_{k+l}$ which it hits.
In fact all the calculations of this section generalise straightforwardly to this setting.

\begin{remark}
\label{remark:centreshift}
This has the consequence that Theorem~\ref{theorem:splitskh} (and Lemma~\ref{lemma:splitmap})
hold when applied to unlinked components of a bridge diagram (or cobordism of bridge diagrams)
supported on a small disc not necessarily centred at $0\in\mathbb{C}$. The only extra
complication is that, in decomposing a bridge diagram into two, one has to translate
the resulting bridge diagrams in $\mathbb{C}$ to ensure they are of the form $(P,A,B)$ with $P\in\conf{2n}$.
\end{remark}

\subsection{Creation/annihilation cobordisms}
\label{section:crean}

Suppose we have two bridge diagrams $D$ and $D^\prime$ which are everywhere
identical except that $D^\prime$ contains, in some small neighbourhood,
an extra unlinked component formed from a single alpha and beta curve.
By Theorem~\ref{theorem:splitskh} there is an isomorphism
\[\skh (D^\prime)\to \skh (D)\otimes H^*(S^2)\]
which is canonical on the first factor.

We now define the creation and annihilation maps explicitly in terms
of this splitting. Namely the creation map, corresponding to the elementary
cobordism $D$ to $D^\prime$, shall be:

\begin{eqnarray*}
\skh (D)&\to&\skh (D)\otimes H^*(S^2) \\
r&\longmapsto&r\otimes 1 \\
\end{eqnarray*}

Here we view $H^*(S^2)$ as the ring $\mathbb{Z}[X]/(X^2)$.
The annihilation map corresponding to the elementary
cobordism $D^\prime$ to $D$, shall be:

\begin{eqnarray*}
\skh (D)\otimes H^*(S^2)&\to&\skh (D) \\
r\otimes 1~~ &\longmapsto& 0 \\
r\otimes X &\longmapsto& r \\
\end{eqnarray*}

In both cases, composing with a (grading preserving) automorphism of the
$H^*(S^2)$ factor can only change the map by a sign. Hence they are well
defined up to sign.

The motivation behind these two definitions is twofold. Firstly, they are
simply defined to copy the corresponding maps between Khovanov homology groups.
Secondly it is necessary in order to make the stabilisation and destabilisation
maps of Section~\ref{section:stab} independent of the vertex of a bridge diagram
at which one performs stabilisation.

\subsection{Stabilisation and destabilisation maps}
\label{section:stab}

As shown earlier, stabilisation of a bridge diagram yields a new bridge
diagram and an isomorphism between the old and new symplectic Khovanov homologies.
In this section,
we shall show that this isomorphism can be realised as the composite
of a creation map and a single saddle cobordism.
A similar construction will also be made for destabilisation.

\begin{figure}[h]
\begin{center}
\scalebox{0.7}{\input{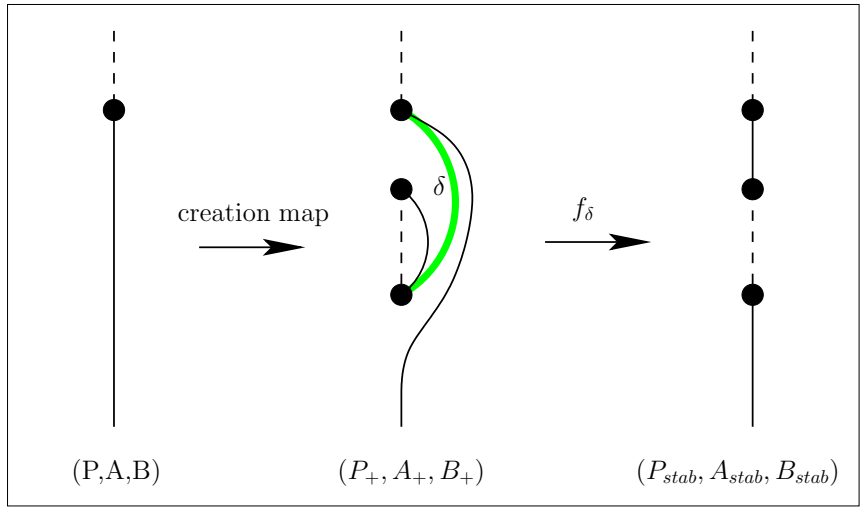}}
\end{center}
\caption{An illustration of the stabilisation map locally near a vertex
of the bridge diagram.}
\label{fig:stabmap}
\end{figure}

\begin{definition}
Let $(P,A,B)$ be any bridge diagram. Choose a vertex $v$ and define a new bridge
diagram $(P_+,A_+,B_+)$ by adding an unlinked unknot component consisting
of a single $\alpha$--~and $\beta$--curve both supported near $v$.
Let $\delta$ be any curve supported near $v$ and joining $v$ to a
vertex of the unknot component. Let $(P_{stab},A_{stab},B_{stab})$ be the bridge diagram obtained
by performing the saddle cobordism which $\delta$ specifies. Figure~\ref{fig:stabmap}
illustrates these constructions locally near $v$.

We define the \emph{stabilisation map} to be the composite
of two maps: the creation map
\[\skh (P,A,B)\to \skh (P_+,A_+,B_+)\]
and the map induced by $\delta$
\[\func{f_\delta}{\skh (P_+,A_+,B_+)}{\skh (P_{stab},A_{stab},B_{stab})}\]

We define the \emph{destabilisation map} as a similar composite. First one performs an elementary cobordism
$\skh (P_{stab},A_{stab},B_{stab})\to\skh (P_+,A_+,B_+)$, then the annihilation map.
\end{definition}

We begin by describing the iterated vanishing cycle construction in coordinates
which fit with the the model neighbourhood given
by Lemma~\ref{lemma:a2}.

Let $(XP,A,B)$ be a bridge diagram with $2m-2$ vertices, precisely one of which lies at $0\in\mathbb{C}$.
Let $\gamma_A,\gamma_B$ be iterated vanishing paths in $\confbar{2m-2}$
corresponding to the crossingless matchings $A,B$ respectively. With the appropriate
choices made, these give iterated vanishing cycles $K_A,K_B\subset\sfibre{m-1}{XP(X)}$.
These data are the start point of the constructions of this section.

The base $\mathbb{C}^2$ of the model neighbourhood corresponds locally near
$(0,0)$ to a subset of $\confbar{2m}$ by the map
\[(d,z)\mapsto (X^3-Xd+z)P(X)\]
In particular, the critical values are the points of the form $(3x^2,2x^3)$
which correspond to configurations $(X-x)^2(X+2x)P(X)$ with a root of multiplicity 2
at $x$.

Pick some small $x$ and let $d=3x^2$. We will discuss how small this $x$ has to be later.
We now extend $\gamma_A,\gamma_B$ by composing with the path
$t\mapsto (X+2tx-\frac{tx}{m-1})P(X-\frac{tx}{m-1})$ for $t$ ranging from 0 to 1.
This is the same as composing
$X^2\gamma_A,X^2\gamma_B$ with the path $(X-tx)^2(X+2tx)P(X)$, staying in the singular locus.
Denote the composite paths
by $\gamma_{A,x},\gamma_{B,x}$. To finish the iterated vanishing cycle construction we
choose vanishing paths $\psi_A,\psi_B$ in $\mathbb{C}$ for the Lefschetz fibration
\begin{equation*}
\xymatrix{
\mathbb{C}^3 \ar[d]^{\pi} & (a,b,c) \ar@{|->}[d]\\
\mathbb{C} & a^3-ad+bc}
\end{equation*}
We then compose the paths $(X^3-Xd+\psi_A)P(X),(X^3-Xd+\psi_B)P(X)$ with
$\gamma_{A,x},\gamma_{B,x}$ (considered as paths in $\confbar{2m}$) respectively
to give iterated vanishing paths $\gamma_A^\prime,\gamma_B^\prime$ in $\confbar{2m}$
ending at some regular value $P^\prime:=(X^3-Xd+z)P(X)$ of $\chi$.

Suppose we conclude that $x$ needs to be smaller, then $\psi_A,\psi_B$
can be scaled down continuously by $(d,z)\mapsto (\lambda^2 d,\lambda^3 z)$ for
small real $\lambda$. This replaces $x$ by $\lambda x$.

Let $S$ be a disc centred at $0\in\mathbb{C}$ which no roots of $XP$, except 0,
and only one section of a curve in each of $A,B$, each ending at 0.
The iterated vanishing paths $\gamma_A^\prime,\gamma_B^\prime$ come from
a bridge diagram $(P^\prime,A^\prime,B^\prime)$ with the following properties:
\begin{itemize}
\item $P^\prime=(X^3-Xd+z)P(X)$ has three roots in $S$.
\item $(P^\prime,A^\prime,B^\prime)$ is identical to $(XP,A,B)$ outside of $S$.
\item $A^\prime,B^\prime$ are each intersect $S$ in the end section of one curve and the whole of another.
\end{itemize}

In fact, by considering the effect of moving $\psi_A,\psi_B$ around the two
singularities $\pm2x^3$ of the Lefschetz fibration, one sees that all
$(P^\prime,A^\prime,B^\prime)$ with the above properties (for a fixed $(d,z)$)
are realised in this manner. Also, any two versions of the construction for
the same $(P^\prime,A^\prime,B^\prime)$, up to isotopy
supported on $S$, are essentially the same. i.e.\ the pairs
$\psi_A,\psi_B$ are isotopic.

Corresponding to a construction as above, we define $L_A,L_B$ to be the
vanishing cycles (in a regular fibre of $\mathbb{C}^3\rightarrow\mathbb{C}$),
for $\psi_A,\psi_B$.

\begin{lemma}
\label{lemma:vcycleconstraint}
Let $\psi$ be a vanishing path for the Lefschetz fibration $\mathbb{C}^3\longmapsto\mathbb{C}$,
as above, with some fixed value of $x\in\mathbb{C}^*$.

Then, for sufficiently small $\lambda>0$, the vanishing cycle construction for the vanishing path
$\lambda^3\psi$ (with $x$ replaced by $\lambda x$) is well-defined. Furthermore, it may be confined
to an arbitrarily small neighbourhood of $0\in\mathbb{C}^3$.
\end{lemma}

\begin{proof}
For fixed $d\neq 0$, at a point $(a,b,c)\neq(\pm x,0,0)$, the symplectic parallel
transport vector over $\dot{z})$ is:

\[\frac{\dot{z}}{9\norm{a^2-x^2}^2+\norm{b}^2+\norm{c}^2}\left(\begin{array}{c}
3\overline{(a^2-x^2)}\\
\bar{c}\\
\bar{b}
\end{array}\right)\]

\noindent so the locus $\{\norm{b}^2=\norm{c}^2\}$ is preserved by symplectic parallel
transport. The closure of this locus contains the singular points of the fibration,
so vanishing cycles must also lie in this locus. Here, the flow on the $a$--coordinate
is given by:

\begin{equation*}
\frac{\partial}{\partial t} a =
	\frac{3(\overline{a^2-x^2})}{9\norm{a^2-x^2}^2+2\norm{z+3ax^2-a^3}}
	\dot{z}(t)
\end{equation*}

In particular, one has
\[
\norm{\dd{}{t}a}\leq\frac{\norm{\dot{z}}}{3\norm{a^2-x^2}}
\]
\noindent and, as one scales down $x\mapsto\lambda x$, the path $\psi$
is replaced by $\lambda^3\psi$, so this bound is sufficient to contain the vanishing
cycles near $a=x$. This also contains $b,c$ near $0$, since $\norm{b}^2=\norm{c}^2=\norm{bc}=\norm{z-a^3+ad}$,
which vanishes at $a=x$.
\end{proof}

The same argument can be applied to constructing an exact Lagrangian boundary condition on
the restriction of $\mathbb{C}^3$ to a disk $D$ with two boundary marked points.
Given $\psi_A,\psi_B$ ending at the input boundary marked point, we get $L_A, L_B$ in the fibre over
that point and, extending by symplectic parallel transport around, $D$, we get
an exact Lagrangian boundary condition $Q$. Scaling down the whole construction, as above, suffices
to contain $Q$ near $0\in\mathbb{C}^3$.

In this, section we are interested in two particular constructions of $\psi_A,\psi_B$ and $D$.
For the stabilisation map ($f_\delta$ in Figure~\ref{fig:stabmap}), we take $\psi_A=\psi_B$
to be vanishing paths from a regular point near $z=-2x^3$ to the singular point $z=2x^3$, and
$D$ to embed (holomorphically) onto a small neighbourhood of the singular point $z=-2x^3$.
For the destabilisation map, we move one of $\psi_A,\psi_B$ around $z=-2x^3$ first.

\begin{lemma}
\label{lemma:a2splitmap}
Let $K_A,K_B$ be iterated vanishing cycles in a regular fibre $\sfibre{m-1}{XP}$
as above, representing a bridge diagram $(XP,A,B)$. Choose $\psi_A,\psi_B,D$ describing the stabilisation or destabilisation move locally near 0.

Then, the symplectic Khovanov homology chain complexes split canonically as the tensor
product of $\cskh (XP,A,B)=HF^*(K_A,K_B)$ with $CF^*(L_A, L_B)$ of the Lagrangian vanishing cycles
in $\mathbb{C}^3$ constructed over the same paths. $CF^*(L_A, L_B)$ is either $\mathbb{Z}$
or $\frac{\mathbb{Z}[X]}{(X^2)}\simeq\mathbb{Z}^2$, with zero differential.

The relative invariant splits in the same manner, as
the identity map on $\cskh (XP,A,B)$ and the following maps
on the remaining factor:

\[
\begin{array}{ccccccc}
	&\text{destabilisation}&&&&\text{stabilisation}&\\
	\mathbb{Z}&\to&\mathbb{Z}[X]/(X^2)&\qquad\qquad\qquad&\mathbb{Z}[X]/(X^2)&\to&\mathbb{Z}\\
	1&\longmapsto&X&&X&\longmapsto&0\\
	&&&&1&\longmapsto&1\\
\end{array}
\]

Furthermore, when $CF^*(L_A, L_B)=\frac{\mathbb{Z}[X]}{(X^2)}$
(a valid choice of the cochain complex for the simplest unknot diagram), the splitting
agrees with that given in Theorem~\ref{theorem:splitskh}.
\end{lemma}

\begin{proof}
The proof uses a localisation argument similar to that in the proof of Theorem~\ref{theorem:splitskh}, but
somewhat more involved.

We shall be using the model neighbourhood of Lemma~\ref{lemma:a2}, so
here it is again as a reminder.
\[
\bfig
 \Square[
	\chi^{-1}(\mathbb{D}^2)`
	\mathbb{C}^4\times_{\mathbb{C}^*}(F\setminus 0)`
	\mathbb{D}^2`\mathbb{C}^2;
	\text{local isomorphism $\Psi$ near } X^2\sfibre{m-1}{XP} ~\text{in}~ \chi^{-1}(0,0)`
	\chi`
	f`
	]
 \efig
\]

The term $F$, used on the right-hand side, is a holomorphic line bundle over $\sfibre{m-1}{XP}$,
which is a subbundle of the trivial $\mathbb{C}^2$--bundle. $\mathbb{C}^*$ acts on $\mathbb{C}^4$
by $\lambda:(a,b,c,d)\mapsto (a,\zeta b,\zeta^{-1} c,d)$, so $\mathbb{C}^4\times_{\mathbb{C}^*}(F\setminus 0)$
is isomorphic as a holomorphic line bundle to $\mathbb{C}\oplus F\oplus F^{-1} \oplus\mathbb{C}$.
Because $\sfibre{m-1}{XP}$ is Stein, $F\oplus F^{-1}$ is isomorphic to the trivial $\mathbb{C}^2$ bundle.

On $\sfibre{m-1}{XP}$, we have the exhausting, plurisubharmonic function on $S_{m-1}$, such that
$K_A, K_B\subset\rho^{-1}[0,R)$ for some $R\in\mathbb{R}$.

We construct an exact K\"ahler form $\tilde{\Omega}=d\tilde{\Theta}$
on $\mathbb{C}^4\times_{\mathbb{C}^*}(F\setminus 0)$ with the following properties:
\begin{itemize}
 \item $\tilde{\Theta}$ is $S^1$--equivariant,
 \item on an open neighbourhood $U$ of $0\times_{\mathbb{C}^*}(F\setminus 0)|_{\rho_{m-1}^{-1}[0,R)}$,
symplectic parallel
transport fixes the projection to $\sfibre{m-1}{XP}$ and is given, up to an $S^1$--ambiguity by
the symplectic parallel transport in $\mathbb{C}^4\longmapsto\mathbb{C}^2$ with the standard symplectic form,
\end{itemize}

There is a natural choice of exact $(1,1)$--form on $\mathbb{C}^4\times_{\mathbb{C}^*}(F\setminus 0)$
which we shall call $\Omega_{ass}=d\Theta_{ass}$ (see Section~\ref{section:sympassocbundle} and also the description
of symplectic associated bundles in \cite[Section~4.3]{seidelsmith:khsymp}). This form
is non-degenerate, so K\"ahler in a neighbourhood $0\times_{\mathbb{C}^*}(F\setminus 0)$
and on that neighbourhood satisfies the condition on symplectic parallel transport.
We shall make it K\"ahler everywhere by adding a \emph{sufficiently positive} form.

Identifying $\mathbb{C}^4\times_{\mathbb{C}^*}(F\setminus 0)$ biholomorphically
with the trivial $\mathbb{C}^4$--bundle over $\sfibre{m-1}{XP}$, we define the
exhausting plurisubharmonic function $\rho_{split}:=\rho_m+\lambda\vecnorm{z}^2$.
Averaging $\rho_{split}$ over the $S^1$--action, gives another 
exhausting plurisubharmonic function $\rho_{split}^{S^1}$.
For large enough $\lambda\in\mathbb{R}$, the neighbourhood $(\rho^{S^1}_{split})^{-1}[0,2R)$
is small enough in the $\mathbb{C}^4$--directions that $\Omega_{ass}$ is non-degenerate
there.

Let $\func{h}{[0,\infty)}{\mathbb{R}}$ be a smooth function, identically zero on $[0,2R)$
and with $h^\prime,h^{\prime\prime}$ strictly positive on $(R,\infty)$.
By choosing $h^\prime(t)$ large enough for each $t\in[2R,\infty]$, we ensure that:
\[-d(d(h\circ\rho_{split}^{S^1})\circ i)(V,JV)>\norm{\Omega_{ass}(V,iV)}\]
Therefore $\tilde{\Theta}:=-d(h\circ\rho_{split}^{S^1})\circ i+\Omega_{ass}$ is
as required.

By scaling down any choice of $x$ to $\lambda x$, symplectic parallel transport
applied to $K_A,K_B$, over the path to $(3x^2,2x^3)$ (described earlier)
and then along $\psi_A,\psi_B$ and a section of $\partial D$, respectively, can be contained within
the neighbourhood $U$ (see Lemma~\ref{lemma:vcycleconstraint}).
Furthermore, no matter how small we choose $U$,
we may still choose $x$ small enough to contain the symplectic parallel transport
in this way.

Having chosen an exact K\"ahler form on $\mathbb{C}^4\times_{\mathbb{C}^*}(F\setminus 0)$
which we would like to use, we now have to perform a localisation argument to show that we
may use it. We start by carefully choosing the exact K\"ahler form on $\chi^{-1}(\mathbb{D}^2)$.

$S_m$ is biholomorphic to
$S_{m-1}\times\mathbb{C}^4$ where $S_{m-1}$ is identified with $S_{m-1}\times \{0\}$.
This works because $S_{m-1}$ is a (complex) affine subspace of $S_m$.
Define $\rho_m$ on $S_m$ to be $\rho_{m-1}+\lambda\vecnorm{z}^2$ with respect to this splitting.
Then, $\rho_m|_{S_{m-1}}=\rho_{m-1}$ and, for large enough $\lambda\in\mathbb{R}$,
the neighbourhood $\rho_m^{-1}|_{\chi^{-1}(\mathbb{D}^2}[0,4R)$, is contained within the 
neighbourhood on which the local isomorphism with $\mathbb{C}^4\times_{\mathbb{C}^*}(F\setminus 0)$
is defined.

By using $h$, as above, we can also define a 2--form $-d((dh\circ\rho_m)\circ i)$, which
is identically 0 on $\rho_m^{-1}[0,2R)$ and K\"ahler elsewhere. We will also need a
smooth function $\func{g}{[0,\infty)}{\mathbb{R}}$ which takes value 1 on $[0,2R]$
and 0 on $[3R,\infty)$.

We now choose $U$, small enough that it corresponds to a subset of $\rho_m^{-1}[0,2R)$ under
the local isomorphism. Then, as described above we choose $x$ small, dependent on $U$.
We are now ready to define an isotopy of exact symplectic forms from a composite of linear
between the following 1--forms:

\begin{enumerate}[(1)]
 \item $\Theta_{m}$
 \item $\Theta_{m}+\epsilon g(\rho_m)\Psi^*\tilde{\Theta}$
 \item $-d(h\circ\rho_m)\circ i+\epsilon g(\rho_m)\Psi^*\tilde{\Theta}$
 \item $\Psi_*\tilde{\Theta}+\epsilon g(\rho_m)\Psi^*\tilde{\Theta}$
 \item $\Psi^*\tilde{\Theta}$
\end{enumerate}

\begin{remark}
The following is a summary of the significance of the stages of the isotopy. The aim is to get from the exact
K\"ahler form $d\Theta_m$ on $S_m$, to the $S^1$--equivariant form $d\Psi^*\tilde{\Theta}$ on the model neighbourhood.
The first half of the isotopy, ending at stage 3, allows the calculation of Floer homology and the relative invariant to
be performed locally on the model neighbourhood. The second half of the isotopy occurs only on that model neighbourhood
and ends with the particular symplectic form, required in order to apply the arguments of Section~\ref{section:twistprod}.
\end{remark}

Until stage 3, the 1--forms are defined on all of $\chi^{-1}(\mathbb{D}^2)$. From stage 3 onwards,
they are defined only on the holomorphically convex region $\rho^{-1}[0,4R]$.
So long as $\epsilon>0$ is chosen sufficiently small, the isotopy generates an isotopy of 
exact symplectic forms, where defined.

Corresponding to the isotopy we also get isotopies of iterated the vanishing cycles,
we get from finishing the iterated vanishing cycle constructions with $K_A,K_B$
over the path $(0,0)$ to $(3x^2,2x^3)$ and then along the paths $\psi_A,\psi_B$,
respectively. At stage 3 and later, this iterated vanishing cycle construction and the
extensions around $\partial D$ do not leave
the neighbourhood $U$. In fact it gives,
$L_A\times_{S^1}F_1|_{K_A}$ and
$L_B\times_{S^1}F_1|_{K_B}$, where $F_1$ is the unit circle subbundle of $F$ with respect
the hermitian metric used to define $\Omega_{ass}$.

By convexity of $\rho^{-1}[0,4R]$, holomorphic strips with boundary on these
Lagrangians, must remain within this region. Hence, to study Floer cohomology and the relative invariant,
the isotopy of exact symplectic forms need only be defined on this region. At stage 5, 
a similar convexity argument would show that we can work with
the total space of $\mathbb{C}^4\times_{\mathbb{C}^*}(F\setminus 0)$, rather
than just $\Psi(\rho^{-1}[0,4R])$. However, it is not necessary to do this.

Proposition~\ref{prop:stabmap} concludes the proof that the Floer cochain complexes and
the relative invariants split as described.

It remains only to show that the splitting of Floer cochain complexes
agrees with that of Theorem~\ref{theorem:splitskh} where it describes the
addition of a single unlinked unknot component to $(XP,A,B)$. The splittings
actually agree at the chain level, which can be shown as follows.

One constructs $\psi_A=\psi_B$ to be short enough that the Floer cohomology
calculation can be simultaneously localised in both ways with the same $\sfibre{m-1}{XP}$
factor. Namely, the neighbourhood to which one localises for Theorem~\ref{theorem:splitskh}
can be found within $\mathbb{C}^4\times_{\mathbb{C}^*}(F\setminus 0)$.
\end{proof}

Using this lemma to explicitly calculate the relative invariant
used in the stabilisation map and composing it with the creation map
of Section~\ref{section:crean}, we have explicitly calculated the stabilisation
map. In particular, we have shown that, for the correct choices
of chain complexes it arises from an isomorphism of chain complexes.

Similarly we now have an expression for the destabilisation map, constructed
with respect to the same splittings. Comparing these leads us to the following
observation:

\begin{corollary}
\label{corollary:inv}
The stabilisation
and destabilisation maps on symplectic Khovanov homology at a given vertex are inverses of each other.
\end{corollary}

\subsection{The ``switching move''}

\begin{figure}[h]
\begin{center}
\scalebox{0.6}{\includegraphics{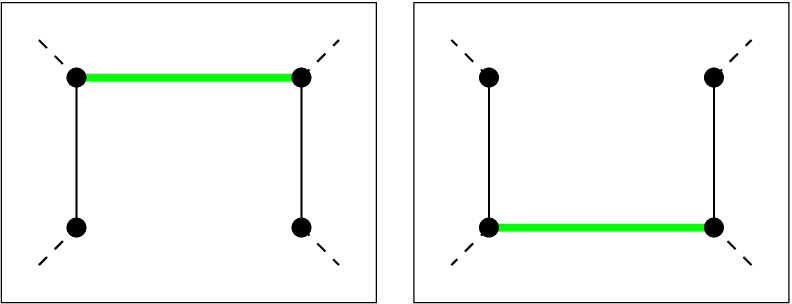}}
\end{center}
\caption{Two ways of performing the same cobordism of admissible links
locally with non-isotopic curve $\delta$ (indicated by grey lines).}
\label{fig:switchcob}
\end{figure}

The two cobordisms specified locally by Figure~\ref{fig:switchcob}
are isotopic as link cobordisms.
However, the curves specifying them are not isotopic. They are
related by a simple ``switching move'' simultaneously
sliding the curve $\delta$
along the two $\alpha$--curves with which it shares vertices.

\begin{proposition}
\label{prop:switchcob}
The two cobordisms specified locally by Figure~\ref{fig:switchcob}
induce the same maps (up to sign) on $\skh $.
\end{proposition}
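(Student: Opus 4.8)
The plan is to build both cobordism maps out of moves whose effect on $\skh $ is already understood and then to play those moves off against one another. Write $\delta$ for the curve of Figure~\ref{fig:switchcob} joining the vertices $v_1,v_2$, where $v_i$ is an endpoint of the $\alpha$--curve $a_i$ along which $\delta$ is being slid, let $w_i$ be the opposite endpoint of $a_i$, so that the switched curve $\delta'$ joins $w_1$ to $w_2$; after the switch the two output bridge diagrams agree up to isotopy fixing $P$ and a passing move, so the canonical identifications of the target groups are the standard ones and may be suppressed. Since $f_\delta$ is the relative invariant of an exact MBL--fibration built from the admissible map $u_\delta$ together with the iterated vanishing cycles $L_A,L_B$ (and likewise for $u_{\delta'}$), it suffices, by Lemma~\ref{lemma:isotopyoffibrations} and the classification of admissible tree constructions up to the three moves (isotopy through admissible maps, disc decomposition, and base restriction as in Lemma~\ref{lemma:baserestriction}), to relate the two sets of data by such moves while keeping $L_A,L_B$ enclosed throughout — using Lemma~\ref{lemma:sptdef} to enlarge the supports of the parallel--transport deformations whenever necessary.

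The key construction is a stabilisation. Stabilise the bridge diagram at the vertex $v_1$; by Section~\ref{section:stab} this stabilisation map is a creation map followed by a saddle $f_{\delta_0}$ along a short curve $\delta_0$ at $v_1$, and by Remark~\ref{remark:inv} the destabilisation at the new vertex is its two--sided inverse. In the stabilised diagram the endpoint of $\delta$ at $v_1$ has room to be pushed across the new unknot and then dragged along $a_1$; after this the dragged curve can be taken disjoint from $\delta_0$, so that $f_\delta$ and $f_{\delta_0}$ commute in the same order by the Corollary to Lemma~\ref{lemma:commuting}, while the dragged curve is isotopic to the image of $\delta'$ under the same stabilisation and hence, by the remark following the definition of $f_\delta$, induces the same map. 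Performing the analogous argument at $v_2$ for the other $\alpha$--curve $a_2$ and assembling the resulting identities yields
\[
(\mathrm{destab})\circ f_{\delta'}\circ(\mathrm{stab}) \;=\; (\mathrm{destab})\circ f_{\delta}\circ(\mathrm{stab})
\]
up to sign, and cancelling the stabilisation--destabilisation pairs via Remark~\ref{remark:inv} gives $f_\delta=f_{\delta'}$ up to the overall sign ambiguity.

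The main obstacle is the geometric bookkeeping in the middle step: turning the abstract isotopy of link cobordisms that relates the two pictures into an honest isotopy of the glued admissible maps, equivalently into an explicit finite sequence of the three moves applied to the base surfaces, while at every stage retaining admissibility and enclosedness of the Lagrangian boundary condition. A subsidiary point that needs care is that the switching move slides $\delta$ along \emph{both} $a_1$ and $a_2$ at once, so one must check that treating the two slides successively does not introduce an extra half--twist; this is exactly the reason one invokes the commutation of saddles along \emph{disjoint} curves in the same order (the Corollary to Lemma~\ref{lemma:commuting}) rather than the braided relation of Lemma~\ref{lemma:commuting} itself.
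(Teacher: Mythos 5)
Your overall strategy is in the right family as the paper's — both proofs precompose with stabilisation--type cobordisms, use the commutation of disjoint saddles from Lemma~\ref{lemma:commuting}, and then cancel stabilisation against destabilisation via Remark~\ref{remark:inv}. But there is a genuine gap at the crucial middle step, and it is exactly the place where the paper has to work hardest.

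The step ``the endpoint of $\delta$ at $v_1$ has room to be pushed across the new unknot and then dragged along $a_1$ \dots the dragged curve is isotopic to the image of $\delta'$ under the same stabilisation and hence, by the remark following the definition of $f_\delta$, induces the same map'' is circular. Pushing the endpoint of $\delta$ from one vertex of $P$ to another along an $\alpha$--curve is \emph{not} an isotopy of $\delta$ (it moves a boundary point of $\delta$ across $P$), so the remark after the definition of $f_\delta$ — which covers isotopy of $\delta$ and passing moves, both of which keep $\delta$'s endpoints on the same pair of vertices — does not apply. Sliding the endpoint across the inserted unknot is exactly the switching move, performed in the stabilised picture; the stabilisation has not made it disappear, only localised it. Likewise, the commutation Corollary lets you interchange two \emph{disjoint} saddle cobordisms, but it says nothing about replacing a curve with a slid version of itself. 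It therefore cannot be used to absorb the ``extra half--twist'' subtlety you flag at the end.

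What fills this gap in the paper is Lemma~\ref{lemma:switchcob2}: in the special case where the sliding happens across small unlinked unknot components (which is precisely the picture one sees after stabilisation), the cohomology groups involved are $H^*(S^2)^{\otimes 2}\to H^*(S^2)$, both saddle maps restrict to isomorphisms between the bottom--degree $\mathbb{Z}$ summands, and any two isomorphisms $\mathbb{Z}\to\mathbb{Z}$ agree up to sign. This cohomological rigidity argument, not an isotopy, is what makes the local slide ``free.'' The paper then bootstraps from that special case to the full Proposition by the four--row diagram of Figure~\ref{fig:switchcob3}, using double stabilisation (to land in the $\skh(D)\otimes 1\otimes 1$ summand), the commutation lemma to shuffle the order, and Lemma~\ref{lemma:switchcob2} in the middle. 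Your argument would become correct if you supplied a proof of that special case — it does not come out of Lemmas~\ref{lemma:commuting}, \ref{lemma:isotopyoffibrations}, \ref{lemma:baserestriction}, or Remark~\ref{remark:inv} alone.
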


We start with a weaker result in a very special case.

\begin{lemma}
\label{lemma:switchcob2}
In the case specified locally by Figure~\ref{fig:switchcob2}, the two
cobordisms induce the same maps (up to sign) on the $\mathbb{Z}$ summand
in the bottom degree of cohomology.
\end{lemma}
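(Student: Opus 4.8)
The plan is to localise the computation to a small disc around the support of the switching move and then compare the two relative invariants $f_\delta$ and $f_{\delta'}$ directly on the one generator that matters. First I would use Theorem~\ref{theorem:splitskh} and Lemma~\ref{lemma:splitmap} (with Remark~\ref{remark:centreshift}) to split off the part of the bridge diagram in which the move takes place, so that on the relevant tensor factor both maps are identified with relative invariants of admissible maps into a small bidisc in $\confbar{2m}$; by Lemma~\ref{lemma:a2} and Lemma~\ref{lemma:a2splitmap} these are in turn relative invariants of admissible maps into the Lefschetz fibration $\mathbb{C}^3\to\mathbb{C}$, $(a,b,c)\mapsto a^3-ad+bc$ studied in Section~\ref{section:equivariantj}. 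In the special case of Figure~\ref{fig:switchcob2} the bottom-degree $\mathbb{Z}$ summand of the cohomology corresponds, under this reduction, exactly to the $\mathbb{Z}$ (resp.\ the distinguished rank-one summand of $\mathbb{Z}[X]/(X^2)$) appearing in Lemma~\ref{lemma:stabmap}, which is generated by the critical point of $p$ (the projection to the $a$--coordinate) near $-2x$; this point lies on every $S^1$--equivariant Lagrangian boundary condition.

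Next I would invoke the transversality machinery of Section~\ref{section:transverseQ}: using Lemma~\ref{lemma:transversality} the Lagrangian boundary conditions for both admissible maps can be taken $S^1$--equivariant and transverse, so by Lemma~\ref{lemma:s1regularity} and Proposition~\ref{proposition:floerreg} there are regular $S^1$--equivariant almost complex structures and the relative invariants are computed by counting $S^1$--fixed, hence horizontal, isolated sections. In any regular fibre there are precisely three $S^1$--fixed points (the critical points of $p$); two of them would force the section through the singular point of the MBL--fibration and so are excluded, leaving the single horizontal section through the fixed point near $-2x$. The key observation is that this section is \emph{the same} for $\delta$ and for $\delta'$: the switching move slides $\delta$ along the two $\alpha$--curves, so after projecting by $p$ it alters the curves $\gamma=p(Q^1)$ and $\gamma'=p(Q^0)$ only near the critical values $q,q'$ of $p$ (neighbourhood of $x$), never near $-2x$, where both pairs of projected curves still meet transversely at the endpoint carrying the distinguished fixed point. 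Hence the distinguished horizontal section is admissible for both Lagrangian boundary conditions and is counted with the same multiplicity ($\pm 1$, or $0$ if the endpoint configuration precludes it), and the two relative invariants agree on the bottom-degree summand up to the overall sign ambiguity.

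The main obstacle I expect is the bookkeeping that translates the switching move of Figure~\ref{fig:switchcob2} into the local model of Lemma~\ref{lemma:a2} and verifies that it changes the projected curves $\gamma,\gamma'$ only by an isotopy near the endpoint over $-2x$ (together with the prescribed half-twist in $\alpha$ elsewhere), and that in this special case the topological obstruction discussed in Section~\ref{section:transverseQ} to choosing $S^1$--equivariant transverse projections vanishes, so that Lemma~\ref{lemma:transversality} applies verbatim to both curves. Once that geometric input is in place, equality on the bottom degree is immediate from the uniqueness of the counted section, exactly as in the calculation of Lemma~\ref{lemma:stabmap}.
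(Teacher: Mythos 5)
Your proposal heads down the same path as the paper's own argument (localise via Lemma~\ref{lemma:a2splitmap} into the model Lefschetz fibration of Section~\ref{section:equivariantj}, then count the unique $S^1$--fixed section as in Lemma~\ref{lemma:stabmap}), but the paper extracts a much simpler observation which your write-up obscures. The paper's proof notices that each of $\delta$ and $\delta'$ is literally the saddle curve of a stabilisation, so by the already-established Lemma~\ref{lemma:stabmap} and Lemma~\ref{lemma:a2splitmap} each of $f_\delta$, $f_{\delta'}$ restricts to an isomorphism $\mathbb{Z}\to\mathbb{Z}$ on the bottom degree. The lemma then follows purely formally: there are only two isomorphisms $\mathbb{Z}\to\mathbb{Z}$, namely $\pm1$, so the two restrictions agree up to sign. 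No identification of moduli spaces, sections, or splittings between the two cobordisms is needed.

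Your argument instead tries to show that ``the distinguished horizontal section is the same'' for $\delta$ and $\delta'$ and is counted with the same multiplicity, and this is where I think you run into trouble. The two relative invariants live in two genuinely different pulled-back fibrations $u_\delta^*E$ and $u_{\delta'}^*E$ (the admissible maps into $\confbar{2n}$ have different singular loci), with different Lagrangian boundary conditions, so there is no single ``section'' that both counts refer to. Moreover, the identification of the bottom-degree $\mathbb{Z}$ summand with the distinguished rank-one piece of the local model goes through the splitting of Theorem~\ref{theorem:splitskh}, which is only canonical up to automorphism of the small factor; you implicitly assume this identification is the same for both cobordisms without justifying it. You acknowledge the bookkeeping obstacle yourself in your last paragraph, but in fact the right move is to avoid that bookkeeping altogether. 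Once you have established (which your transversality argument does, and which the paper imports from Lemma~\ref{lemma:stabmap}) that each map is $\pm1$ on the bottom summand, uniqueness of $\pm1$ among isomorphisms $\mathbb{Z}\to\mathbb{Z}$ finishes the proof; the sections themselves do not need to be compared.
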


\begin{figure}[h]
\begin{center}
\scalebox{0.6}{\includegraphics{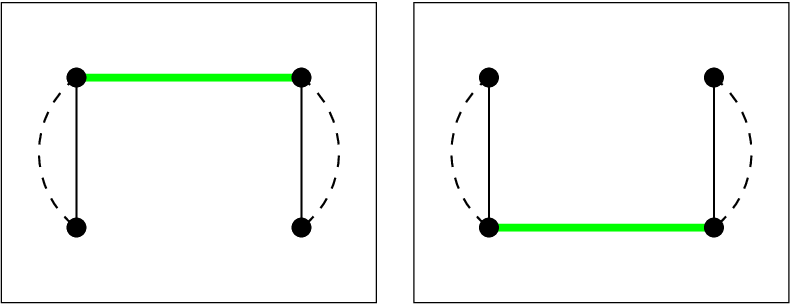}}
\end{center}
\caption{A trivial case of the switching move.}
\label{fig:switchcob2}
\end{figure}

\begin{proof}
Suppose first that we are dealing with the case specified globally by
Figure~\ref{fig:switchcob2}.

The domain of the maps is $H^*(S^2)^{\otimes 2}$ which has
a single $\mathbb{Z}$ summand in the top degree of the cohomology. Both maps
are induced by the saddle cobordism part of different stabilisations, so by the
previous section we know that there is, for each, a splitting in which
they can be written (up to sign) as:
\begin{eqnarray*}
H^*(S^2)\otimes\mathbb{Z}[X]/(X^2)&\to& H^*(S^2)\otimes\mathbb{Z} \\
a\otimes X&\longmapsto& 0 \\
a\otimes 1&\longmapsto& a\otimes 1
\end{eqnarray*}
However, the splitting in either case need not necessarily be the same.

In particular, this means that they induce isomorphisms between the copies of
$\mathbb{Z}$ in the bottom degree of cohomology on both sides. Up to a sign
ambiguity there is only one such isomorphism, so we are done.

In the slightly less trivial case where Figure~\ref{fig:switchcob2} is only
the local model we denote by $D$, the bridge diagram for the rest of the link.
The maps on symplectic Khovanov homology split (using Lemma~\ref{lemma:splitmap}) as the
identity on $\skh (D)$ tensored with the maps
$H^*(S^2)^{\otimes 2}\to H^*(S^2)$ compared in the lemma.
\end{proof}

Now, to prove Proposition~\ref{prop:switchcob}, we refer to
Figure~\ref{fig:switchcob3}. The figure has four numbered rows, each of which describes a
cobordism composed of three elementary saddle cobordisms. The thicker
grey lines indicate the cobordism to be performed as you pass to the
next diagram on that row. Several of the diagrams have two cobordisms
marked, but only when the lines specifying them do not intersect, so by
Lemma~\ref{lemma:commuting} it does not matter in which order they are
performed.

\begin{figure}[ht!]
\begin{center}
\scalebox{0.55}[0.35]{\includegraphics{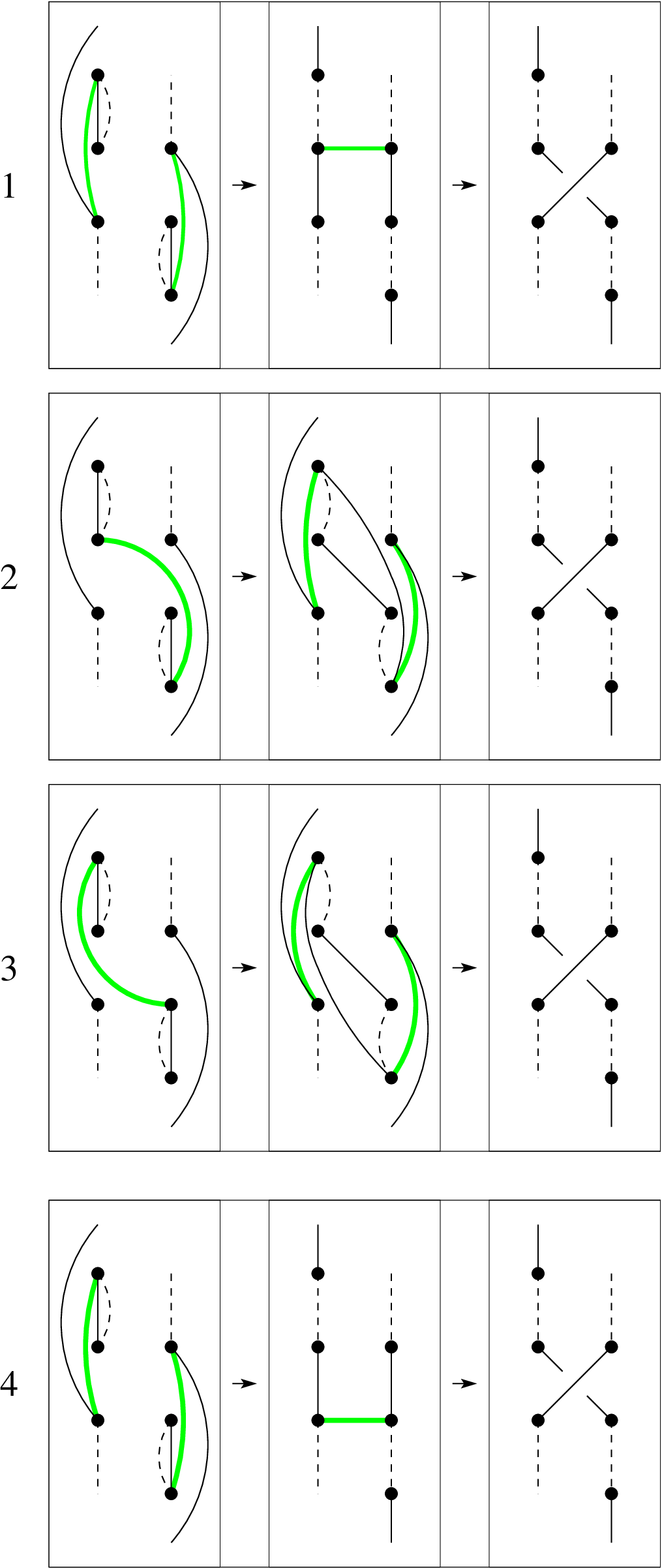}}
\end{center}
\caption{Four related cobordisms used to prove the invariance under the
switching move.}
\label{fig:switchcob3}
\end{figure}

We start with row 1. The first map is the composite of two of the
saddle cobordisms used in stabilisation. The symplectic Khovanov homology splits as
$\skh (D)\otimes H^*(S^2)^{\otimes 2}$ and all terms map to
zero, with the exception of $\skh (D)\otimes 1 \otimes 1$ which maps
isomorphically onto the symplectic Khovanov homology $\skh (D)$ of the next diagram.
The second
map in this row is the cobordism for which we would like to prove the
proposition. The final diagram in row 1 represents the output end of that cobordism.
We have taken the liberty of choosing an admissible position which is
not quite a bridge diagram, purely because it looks simpler (observe there
is a crossing between two $\alpha$--curves).

Row 2 is an application of Lemma~\ref{lemma:commuting} to swap the order
of the cobordisms in row 1. Hence, row 2 gives the same map on symplectic
Khovanov homology.

Row 3 is the same as row 2 except that the switching move has been applied
to the first cobordism. By Lemma~\ref{lemma:switchcob2} rows 2 and 3
give the same map (up to sign) on $\skh (D)\otimes 1 \otimes 1$.

Row 3 is an application of Lemma~\ref{lemma:commuting} to row 4, just as
row 2 was to row 1. Hence rows 3 and 4 give the same map.

In conclusion, rows 1 and 4 restrict to the same map (up to sign) on
$\skh (D)\otimes 1 \otimes 1$. Also the first map in both rows is
identical. Since $\skh (D)\otimes 1 \otimes 1$ maps isomorphically
onto the symplectic Khovanov homology of the second diagram, it must
be the case that the last maps in rows 1 and 4 are the same up to sign.
This concludes the proof of Proposition~\ref{prop:switchcob}.

\subsection{Symplectic Khovanov homology of crossing diagrams}
\label{section:khsympcross}

In this section, we explain how $\skh $ of a crossing diagram
is well-defined, with canonical isomorphisms, up to a possible overall sign ambiguity.
To do this one
defines $\skh $ of a crossing diagram to be $\skh (D)$ for any
bridge position which projects to that crossing diagram. The difficulty
is in specifying a consistent choice of canonical isomorphism between
the symplectic Khovanov homologies of any two bridge diagrams with
the same projection.

Two such bridge diagrams are related by
\begin{itemize}
 \item a sequence of stabilisations and destabilisations preserving the projection
 \item isotopy sliding the $\alpha$-- and $\beta$--curves and vertices
along the projection.
\end{itemize}
We take the stabilisation and destabilisation maps and the maps induced by
these isotopies to be the canonical isomorphisms.

\begin{lemma}
For a crossing diagram in which each component of the link is involved in at
least one crossing the symplectic Khovanov homology is well-defined (up to
sign). Namely, the isomorphisms mentioned above are consistent.
\end{lemma}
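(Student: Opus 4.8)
The plan is to show that the two types of "moves" relating two bridge diagrams with the same crossing-diagram projection generate a groupoid of canonical isomorphisms with no nontrivial relations. First I would fix a crossing diagram $C$ (in which every link component passes through at least one crossing) and consider the set $\mathcal{B}(C)$ of bridge diagrams projecting to $C$. The vertices of $\mathcal{B}(C)$ are connected by two families of edges: the stabilisation/destabilisation maps (which, by Remark~\ref{remark:inv}, are mutually inverse and hence well-defined canonical isomorphisms), and the isomorphisms induced by regular isotopies sliding vertices and $\alpha$--, $\beta$--curves along $C$ (which are canonical because they come from continuation maps, as established in Section~\ref{section:khsymp}). Consistency means exactly that any loop in this graph induces the identity (up to the overall sign ambiguity), so the task is to identify a generating set of loops and check each one.

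The key reduction is that isotopies of $C$ sliding curves and vertices along the projection, together with stabilisations and destabilisations, are essentially "local" operations, and two such operations supported in disjoint regions of the plane commute up to isotopy. So the relations to check are: (i) a loop consisting entirely of slide isotopies — this is trivial because the space of bridge positions projecting to $C$ with a fixed number of bridges is connected and the continuation maps depend only on the homotopy class, hence on nothing once we return to the start; (ii) a loop of the form stabilise-then-destabilise at the same vertex — trivial by Remark~\ref{remark:inv}; (iii) the commutation of a stabilisation at one point of $C$ with an isotopy or stabilisation supported elsewhere — this follows from the splitting Lemma~\ref{lemma:splitmap} (and Lemma~\ref{lemma:a2splitmap}), which shows that a cobordism supported near a point splits off as the identity tensor a local map, so operations in disjoint regions literally split as a tensor product of maps and therefore commute on the nose; (iv) the relation that stabilising at vertex $v$ then moving the new small unknot component over to a different vertex $v'$ and destabilising there agrees with a direct isotopy of $C$ — this is where the "switching move" invariance, Proposition~\ref{prop:switchcob}, and the fact (from Section~\ref{section:stab}) that the stabilisation map does not depend on the vertex at which one stabilises, both get used.

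Concretely, I would argue as follows. Pick a base vertex $D_0 \in \mathcal{B}(C)$. Given any closed edge-path from $D_0$ to itself, use relation (i) to absorb all slide isotopies into the endpoints, and use (iii) to push all stabilisations/destabilisations into a standard order; a stabilisation and its subsequent destabilisation can then be cancelled in pairs using (ii) and the vertex-independence of stabilisation (so it does not matter that the destabilisation happens at a possibly different vertex — one first slides the small unknot component along $C$ to the stabilisation vertex, which is a slide isotopy, then cancels). After all cancellations we are left with a pure slide-isotopy loop, which is the identity. This shows every loop is the identity up to sign, hence the assignment $D \mapsto \skh(D)$ together with these maps defines $\skh(C)$ up to canonical isomorphism and overall sign.

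The main obstacle I expect is step (iv): showing that the net isomorphism from a stabilisation at $v$ followed by a destabilisation at $v'$ equals the isomorphism induced by directly isotoping $C$, uniformly across all positions of the intermediate small unknot component. This is exactly the content that makes "the stabilisation map is independent of the vertex" into a usable statement, and it is why the paper needed both Proposition~\ref{prop:switchcob} (invariance under the switching move) and the explicit identification of the stabilisation map with the identity on the old factor in Section~\ref{section:stab}. The hypothesis that every link component meets a crossing is needed precisely here: a component with no crossing would have a bridge-number that cannot be forced down by destabilisations compatible with the fixed projection, so the small unknot used in a stabilisation could fail to be freely slidable to an arbitrary vertex, and the cancellation argument would break. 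I would therefore spend the most care verifying that, under the stated hypothesis, any two bridge diagrams projecting to $C$ are connected by a sequence of moves in which every stabilisation is eventually cancelled by a destabilisation at the \emph{same} vertex after an intervening slide isotopy, so that relation (iv) never actually needs to be invoked in a form stronger than vertex-independence of stabilisation plus Proposition~\ref{prop:switchcob}.
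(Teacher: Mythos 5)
There is a genuine gap, and it is in step (i), where you claim the case of a loop consisting entirely of slide isotopies is \emph{trivial} ``because the space of bridge positions projecting to $C$ with a fixed number of bridges is connected and the continuation maps depend only on the homotopy class, hence on nothing once we return to the start.'' Dependence only on homotopy class does \emph{not} make a closed loop of slide isotopies induce the identity: it makes the induced map depend on the class of the loop in $\pi_1$, which could be nontrivial. You need the space of bridge positions (with fixed vertex counts per edge of the projection) to be \emph{simply connected}, and that is exactly where the hypothesis that every component meets a crossing enters. Cutting the crossing diagram at all crossings decomposes it into edges; the hypothesis guarantees there are no closed-loop edges, so each edge is an arc and the configuration space of vertices on it is contractible. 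If a component had no crossings, it would be a circle in the projection, sliding vertices all the way around it would give a noncontractible loop, and your step (i) would fail. This is precisely the point the paper flags with ``any two such isotopies are isotopic (the crossing condition is vital here).''

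You instead place the role of the crossing hypothesis in step (iv), arguing that without it the small unknot created by a stabilisation might not be ``freely slidable to an arbitrary vertex.'' That is not the issue: the stab--slide--destab cancellation does not, per se, require the crossing condition, and the paper's argument for cancelling stab/destab uses only the switching move (Proposition~\ref{prop:switchcob}, giving vertex-independence of stabilisation) and Remark~\ref{remark:inv}. The rest of your strategy --- enumerating relations in the groupoid, using the switching move and the splitting lemmas for commutation of distant operations --- is consistent with the paper's approach, just organised differently; but the argument as written has a hole at exactly the place the hypothesis is needed, and the diagnosis of why the hypothesis is needed is wrong.
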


\begin{proof}

The condition on components being involved in crossings means that
the diagram can be decomposed into edges (by cutting both strands at each
crossing) without any closed loops remaining. Any bridge diagram projecting
to the crossing diagram will have some number of vertices on each edge (none
are possible at the crossings). Any two such bridge diagrams are isotopic
through such diagrams if and only if these numbers
agree on each edge. Moreover, any two
such isotopies are isotopic (the crossing condition is vital here) so
the maps they induce are the same.

Only the stabilisation and destabilisation maps can change the numbers of
vertices on an edge. By applying the switching move
(Proposition~\ref{prop:switchcob}) to the saddle cobordism of a
stabilisation it is immediate that it makes no difference (up to sign)
at which vertex one stabilises or destabilises. It
therefore suffices to show that a stabilisation followed by a destabilisation
at the same vertex gives the identity map. This is covered by
Corollary~\ref{corollary:inv}.
\end{proof}

Careful consideration of the switching move shows also that the isotopy
which moves two vertices from one edge past a crossing to another using
the passing move is the same as the map that destabilises one edge and
stabilises the next. Hence we can add these isotopies to the choice of 
canonical isomorphisms. This allows the condition that
each component is involved in at least one crossing to be lifted.

A consequence is that, given a neighbourhood in which an
elementary saddle cobordism is performed to a crossing diagram, it
does not matter which of the curves representing a saddle cobordism
one chooses on a bridge diagram to
define the map on symplectic Khovanov homology. Hence, the maps
from creation, annihilation and saddle cobordisms (and similarly isotopy)
are well-defined (up to sign) on the symplectic Khovanov homology of a
crossing diagram.

\subsection{General smooth cobordisms}

Given any smooth cobordism between links in
$\mathbb{R}^3\times\interval$, considerations
in Morse Theory show that it can be put in a position such that
the functional assigning to each point on the cobordism
the value of the time ($t$--coordinate on $\interval$)
has only non-degenerate critical points.
Hence any cobordism is a composite
of finitely many saddle cobordisms and creation/annihilation cobordisms.

In \cite{cartersaito:mmoves} cobordisms are further reduced into `movies',
i.e.\ finite sequences of the basic cobordisms and the Reidemeister moves
on crossing diagrams. A list of `movie moves' is given which relate
isotopic cobordisms, such that any two movies of isotopic cobordisms are
related by a finite sequence of these movie moves.

The movie moves (with the enumeration we shall refer to) are illustrated
on pages 1221 and 1222 of Jacobsson's paper \cite{jacobsson:cob}. 
To avoid confusion,
we should note that numberings in the literature vary and that the moves 9, 10 and 12
of Jacobsson are moves 11, 15 and 13, respectively, under Bar-Natan's more widely used numbering
\cite{barnatan:cob}.

In the
construction of invariants of link cobordisms for combinatorial Khovanov homology, there is a lot
of work required in checking invariance under the moves. However, in the case of
symplectic Khovanov homology, invariance under most of the movie moves is immediate,
since they compare isotopic isotopies of links
and hence induce the same maps on $\skh $. It is possible that a more intrinsically geometric
definition of the maps induced by cap/cup cobordisms would deal with the remaining moves.
An alternative definition of the maps induced by cobordisms was recently given by Rezazadegan
\cite{reza:quilts}. However, it defines the maps induced by cap/cup cobordisms in essentially the same
manner.

We now prove invariance under those movie moves, for which invariance is not immediately obvious by the above.

\begin{figure}[h]
\begin{center}
\scalebox{0.55}{\includegraphics{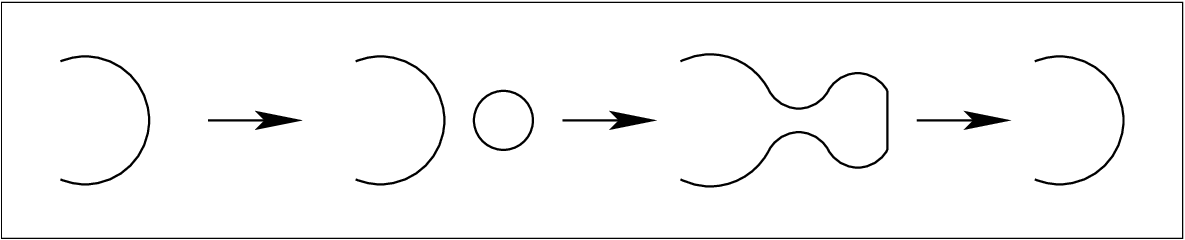}}
\end{center}
\caption{A non-trivial position of the trivial cobordism.}
\label{fig:movie9}
\end{figure}

As a warm-up, there is movie move 9, which relates the cobordism in
Figure~\ref{fig:movie9} to the trivial one. This is realised by
a stabilisation map. Hence, by definition of the canonical isomorphisms,
it gives the identity map.

Moves 10 and 12 are more interesting. They are illustrated in
Figures~\ref{fig:movie1012} as sequences of bridge diagrams with thicker
lines marking the saddle cobordism still to be performed. These lines are
removed once the cobordism is performed. The cobordisms should be read
from top to bottom in each column.

\begin{figure}[ht!]
\begin{center}
\scalebox{0.4}{\includegraphics{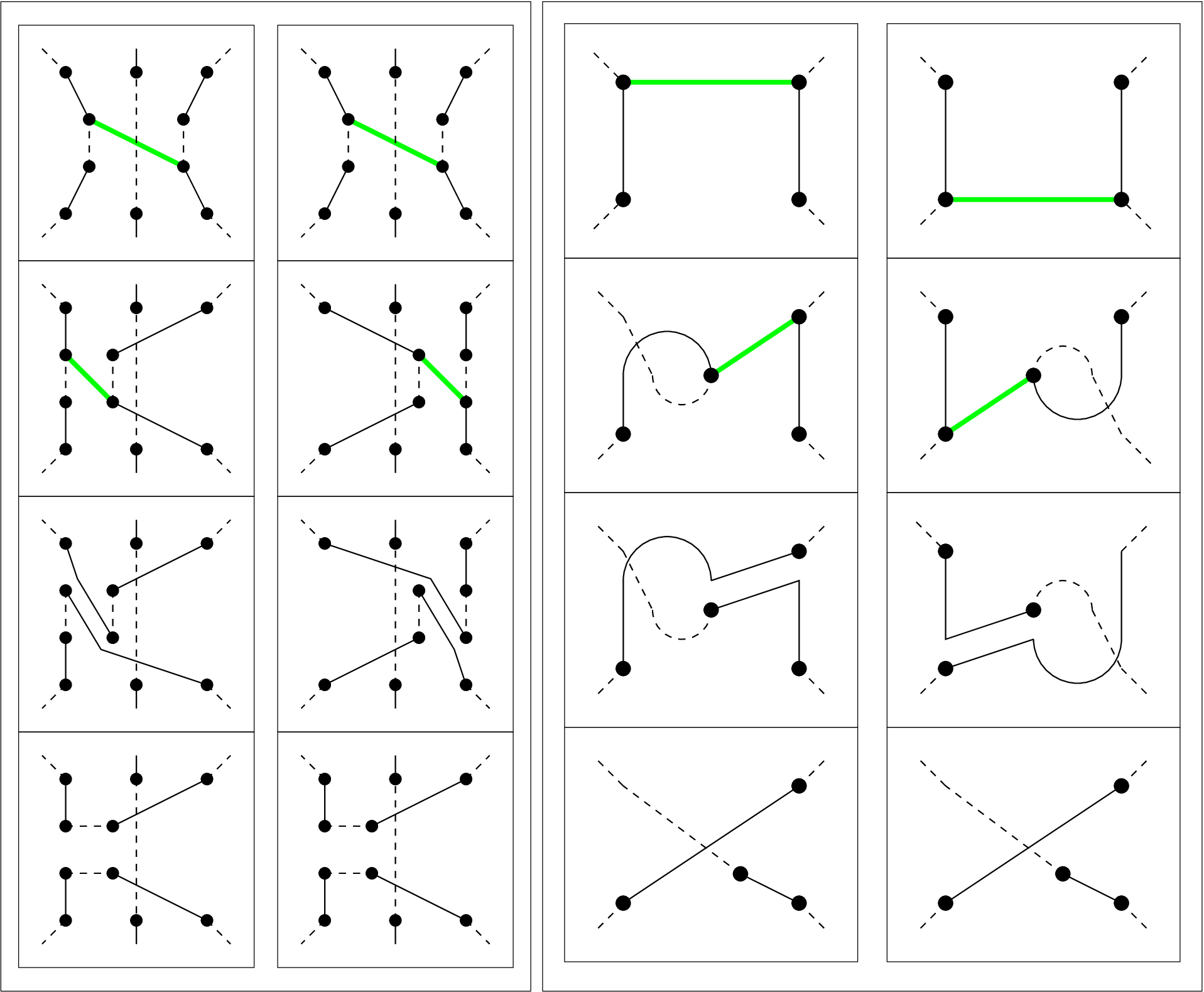}}
\end{center}
\caption{Movie moves 10 (between the two movies on the left) and 12 (between the two
movies on the right) in terms of bridge diagrams.
Each movie should be read from to to bottom.
The thicker lines indicate the cobordisms still to be performed in that movie.}
\label{fig:movie1012}
\end{figure}

Movie move 10 is covered by isotopies of admissible links together
with simultaneous isotopy of the curve $\delta$ defining a saddle
cobordism, which, by the discussion in Section~\ref{subsection:saddles}
proves invariance of the induced map (see Figure~\ref{fig:movie1012}).
Movie move 12 requires, on top of that, the use of the switching move,
Proposition~\ref{prop:switchcob} (see the first part of the movie).

We should also prove invariance under reflected and reversed movie moves,
but the proofs are identical.

As well as being invariant under the movie moves, an invariant of cobordisms up to isotopy must
be invariant under the commuting of `distant' cobordisms (i.e.\ supported
on disjoint neighbourhoods of the diagram). This is immediate from
Lemma~\ref{lemma:commuting}.

In conclusion, the maps (up to sign) on the symplectic Khovanov homology of a link,
induced by specific positions of link cobordisms, are actually invariants
of the isotopy class of smooth cobordism. This concludes the proof of
Theorem~\ref{theorem:main}.

\bibliography{papers}

\medskip

\noindent{\small\sc The Mathematical Institute, 24-29 St. Giles,
Oxford, OX1 3LB, U.K.}

\noindent{\small\sc E-mail: \tt waldron@maths.ox.ac.uk}

\end{document}